\definecolor{light-gray1}{gray}{0.90}
\definecolor{light-gray2}{gray}{0.80}
\newcommand{\q}[1]{\mathcal{#1}}
\newcommand{\mr}[1]{\mathrm{#1}}
\newcommand{\ee}{\end{gather}}
\newcommand{\tendf}{\rightharpoonup}
\DeclareMathOperator{\flux}{\mathrm{Flux}}
\DeclareMathOperator{\Id}{\mathrm{Id}}
\definecolor{deepgreen}{cmyk}{1,0,1,0.5}
\newcommand{\A}{\mathcal{A}}
\newcommand{\B}{\mathcal{B}}
\newcommand{\E}{\mathcal{E}}
\newcommand{\HH}{\mathcal{H}}
\newcommand{\cS}{\mathcal{S}}
\newcommand{\Sp}{\mathbb{S}}
\newcommand{\EE}{\mathscr{E}}
\newcommand{\T}{\mathcal{T}}
\newcommand{\N}{\mathbb{N}}
\newcommand{\R}{\mathbb{R}}
\newcommand{\al}{\alpha}
\newcommand{\de}{\delta}
\newcommand{\e}{\varepsilon}
\newcommand{\fy}{\varphi}
\newcommand{\om}{\omega}
\newcommand{\la}{\lambda}
\newcommand{\s}{\sigma}
\newcommand{\io}{\iota}
\newcommand{\Ga}{\Gamma}
\newcommand{\p}{\partial}
\newcommand{\na}{\nabla}
\newcommand{\supp}{\operatorname{supp}}
\newcommand{\ext}{\operatorname{ext}}
\newcommand{\precc}{\preccurlyeq}
\newcommand{\Rmnum}[1]{\expandafter\@slowromancap\romannumeral #1@}
\newcommand{\I}{\infty}
\newcommand{\ds}{\displaystyle}
\newcommand{\abs}[1]{\left\lvert{#1}\right\rvert}
\newcommand{\ant}[1]{\begin{align*}\begin{split} #1 \end{split}\end{align*}}
\newcommand{\EQ}[1]{\begin{equation}\begin{split} #1 \end{split}\end{equation}}
\newcommand{\Del}[1]{}
\numberwithin{equation}{section}
\newtheorem{thm}{Theorem}[section]
\newtheorem{cor}[thm]{Corollary}
\newtheorem{lem}[thm]{Lemma}
\newtheorem{prop}[thm]{Proposition}
\newtheorem{claim}[thm]{Claim}
\theoremstyle{remark}
\newtheorem{rem}{Remark}
\newtheorem{defn}{Definition}
\newcommand{\mand}{{\ \ \text{and} \ \  }}
\newcommand{\mor}{{\ \ \text{or} \ \ }}
\newcommand{\mas}{{\ \ \text{as} \ \ }}
 \def\Id{\mathrm{Id}}
\begin{document}

\title[Profiles for energy critical waves]{Profiles for the radial focusing $4d$ energy-critical wave equation}

\author{R.~C\^{o}te}
\author{C.~E.~Kenig}
\author{A.~Lawrie}
\author{W.~Schlag}

\begin{abstract} Consider a finite energy radial solution to  the focusing energy critical semilinear wave equation in $1+4$ dimensions. Assume that this solution exhibits type-II behavior, by which we mean that the critical Sobolev norm of the evolution stays bounded on the maximal interval of existence. We prove that along a sequence of times tending to the maximal forward time of existence, the solution decomposes  into a sum of dynamically rescaled solitons, a free radiation term, and an error tending to zero in the energy space. If, in addition, we assume that the critical norm of the evolution localized to the light cone (the forward light cone in the case of global solutions and the backwards cone in the case of finite time blow-up)  is less than $2$ times the critical norm of the ground state solution $W$, then the decomposition holds without a restriction to a subsequence. 

\end{abstract}

\thanks{The first author gratefully acknowledges Support of the European Research Council through the ERC Advanced Grant no. 291214, BLOWDISOL. Support of the National Science Foundation DMS-0968472 and DMS-1265249 for the second author,  DMS-1302782 for the third author, and  DMS-0617854, DMS-1160817 for the fourth author is gratefully acknowledged. }
\maketitle

\section{Introduction}\label{intro}

\subsection{History and setting of the problem}

Consider the Cauchy problem for the energy-critical, focusing wave equation in $\R^{1+4}$, namely
\EQ{\label{u gen eq}
&u_{tt}- \Delta u - u^3 = 0,\\
&\vec u(0) = (u_0, u_1), 
}
restricted to the radial setting. We study  solutions $\vec u(t)$ to~\eqref{u gen eq} in the energy space 
 \EQ{
 \vec u(t) :=(u(t), u_t(t)) \in  \HH:= \dot{H}^1 \times L^2( \R^4).
 } 
The conserved energy for solutions to~\eqref{u gen eq} is given by 
\ant{
E( \vec u)(t) := \int_{\R^3}  \left[\frac{1}{2}( \abs{u_t(t)}^2 + \abs{\nabla u(t)}^2) - \frac{1}{4} \abs{u(t)}^4\right] \, dx = \textrm{constant}.
}
As we will only be considering radial solutions to~\eqref{u gen eq}, we will slightly abuse notation by writing $u(t, x) = u(t, r)$ where here  $(r, \om)$ are polar coordinates on $\R^4$, i.e. $x= r  \om$, $r= \abs{x}$,  $\om \in  \Sp^3$. In this setting we can rewrite the equation~\eqref{u gen eq} as
\EQ{ \label{u eq}
&u_{tt}- u_{rr}- \frac{3}{r} u_r - u^3 = 0,\\
&\vec u(0) = (u_0, u_1),
}
 and the conserved energy (up to a constant multiple) by
 \EQ{\label{E}
E( \vec u)(t) = \int_0^{\infty} \left[ \frac{1}{2}( u_t^2(t) + u_r^2(t)) - \frac{1}{4} u^4(t) \right] \, r^3 \, dr.
}
We also define the local energy and localized $\HH$-norm  by 
\EQ{
&E_a^b( \vec u(t))  := \int_a^{b} \left[ \frac{1}{2}( u_t^2(t) + u_r^2(t)) - \frac{1}{4} u^4(t) \right] \, r^3 \, dr, \\
&\|\vec u(t)\|_{ \HH( a < r<b)}^2:=\int_a^{b} \left[  u_t^2(t) + u_r^2(t) \right] \, r^3 \, dr.
}
The Cauchy problem~\eqref{u eq} is invariant under the scaling
\EQ{
\vec u(t, r) \mapsto   \vec u_{\la}(t) := (\la^{-1} u( t/ \la, r/ \la), \la^{-2} u_t( t/ \la, r/ \la)).
}
One can also check that this scaling leaves unchanged the energy $E ( \vec u)$, as well as the $\HH$-norm of the initial data. It is for this reason that~\eqref{u eq} is called {\em energy-critical}.

\bigskip

This equation is locally well-posed in $\HH = \dot{H}^1 \times L^2( \R^3)$, which means that for all initial data,  $\vec u(0) = (u_0, u_1) \in \HH$ there exists a unique solution $\vec u(t)\in \HH$ to~\eqref{u eq} defined on a maximal interval of existence, $0 \in I_{\max}= I_{\max}( \vec u):=(T_-( \vec u), T_+( \vec u))$, with $\vec u \in C( I_{\max};  \HH)$ and for every compact $J \subset I_{\max}$ we have  $u  \in L^3_t( J; L^6_x(  \R^3))$.  

The Strichartz norm 
\EQ{\label{SI}
S(I):= L^3_t(I; L^6_x( \R^4))
} determines a criteria for both scattering and finite time blow-up. In particular, a solution $\vec u(t)$ globally defined for $t\in [0, \infty)$ scatters as $ t \to \infty$ to a free wave, i.e., a solution $\vec u_L(t) \in \HH$ of 
\[ \Box u_L =0 \]
if and only if  $ \|u \|_{S([0, \infty))}< \infty$. The local well-posedness theory gives the  existence of  a constant $\de>0$ so that  
\EQ{ \label{global small}
 \| \vec u(0) \|_{\HH} < \de \Longrightarrow  \| u\|_{S(\R)} \lesssim \|\vec u(0) \|_{\HH} \lesssim \de
 }
and hence $\vec u(t)$  scatters to free waves as $t \to \pm \infty$. Moreover,  we have the standard finite time blow-up criterion: 
\EQ{ \label{ftbuc}
T_+( \vec u)< \infty \Longrightarrow \|u \|_{S([0, T_+( \vec u)))} = + \infty.
}
A similar statement holds if $- \infty< T_-( \vec u)$. We also note that the same statements hold with $S(I)$ replaced with  $L^{5}_t(I; L^5_x( \R^4))$ as well, see for example~\cite{KM08}.

\bigskip

Here we will study the dynamics of solutions to~\eqref{u eq} that are bounded in the $\HH$-norm for positive times, i.e., 
\EQ{\label{type ii}
 \sup_{t\in [0, T_+( \vec u))} \| \vec u(t) \|_{ \HH}^2 :=\sup_{t\in [0, T_+( \vec u))} \| \nabla u(t)\|_{ L^2}^2 + \| u_t(t) \|_{L^2}^2 < \infty
 }
In general we will refer to such solutions as {\em type-II}, as the case with $T_+( \vec u) < \infty$ is called finite-time type-II blow-up. Type-I finite-time blow-up, also called ode blow-up, refers to solutions with, say $T_+( \vec u) < \infty$, and with the property that
\[ \liminf_{t \uparrow T_+(\vec u)} \| \vec u(t)\|_{\HH} = \infty. \]
Both type-I and type-II blow-up solutions were constructed for  \eqref{u eq} (see respectively \cite[Section 6.2]{DKM3} and \cite{KST3,HR}). In the study of long time dynamics, a crucial role is played by the stationary Aubin-Talenti solutions defined explicitly by
\EQ{\label{W}
W_{\la}(x) = \la^{-1} W( x/ \la),\quad W(x):= \left(1+\frac{\abs{x}^2}{8}\right)^{-1}.
}
$W=W(r)$ is a positive radial solution to the stationary elliptic equation 
\EQ{\label{W eq}
- \Delta W - \abs{W}^2 W =0.
}
$W$ is the unique (up to sign, dilation, and translation), amongst nonnegative nontrivial (not necessarily radial), $C^2$ solutions to~\eqref{W eq} and is unique (up to sign and dilation) amongst radial $\dot{H}^1$ solutions. $W$ is also the unique (up to translation and scaling) extremizer for the Sobolev inequality 
\[ \| f\|_{L^4( \R^4)} \le K(4, 2) \| \na f\|_{L^2( \R^4)} \]
in $\R^4$ where $K(4, 2)$ is the best constant, see \cite{Ta}. Because of this variational characterization, and its importance in variational estimates, (such as those found in \cite{DKM1, KM08}), $W$ is referred to as the ``ground state."

The second author and Merle, \cite{KM08}, gave a characterization  of the possible dynamics for~\eqref{u gen eq} for solutions with energy below the threshold formed by the ground state energy, i.e.
\[ E(\vec u) < E(W, 0). \]
For such sub-threshold solutions, the decisive factor is the size of the gradient of $u_0$ in $L^2$. Indeed, the following trichotomy holds: 
\begin{itemize}
\item If $\| \na u_0\|^2_{L^2}>\| \na W\|_{L^2}^2$ then $T_+( \vec u)< \infty$ and $T_-( \vec u)> - \infty$. In other words, $\vec u(t)$ blows up in finite time in both directions. 
\item If $\| \na u_0\|_{L^2}^2<\| \na W\|_{L^2}^2$ then $I_{\max}( \vec u) = \R$ and $\| u\|_{S( \R)}< \infty$, where $S(I)$ is a suitable Strichartz norm as in~\eqref{SI}. In other words, $\vec u(t)$ exists globally in time and scatters in both time directions. 
\item The case $\|\na u_0\|^2_{L^2} = \| \na W\|_{L^2}^2$ is impossible for sub-threshold solutions. 
\end{itemize}
Threshold solutions, namely those with energy $E( \vec u) = E(W, 0)$ were also classified by Duyckaerts, Merle \cite{DM}, see also~\cite{NKS12}.  

\bigskip

Let us now restrict to type-II solutions, i.e., those satisfying~\eqref{type ii}. 
It is known that $\| \na W\|_{L^2}^2$ is a sharp threshold for finite time blow-up and scattering. Indeed, the following generalization of the scattering part of the Kenig-Merle result in~\cite{KM08} was established in~\cite{DKM2} for $d=3, 4, 5$: If $\vec u(t)$ verifies~\eqref{type ii} and 
\ant{
&\sup_{0<t<T_+( \vec u)} \| \na u(t) \|^2_{L^2} + \frac{d-2}{2} \| \p_t u(t)\|_{L^2}^2 < \|\na W\|_{L^2}^2  \, \, \textrm{(non-radial case)} \\
&\quad \mor\\
& \sup_{0<t<T_+( \vec u)} \| \na u(t) \|^2_{L^2}< \|\na W\|_{L^2}^2  \, \, \textrm{(radial case)}
}
then $T_+( \vec u) = + \infty$ and $\vec u(t)$ scatters forward in time. 

When $d=3$, the fourth author, together with Krieger and Tataru \cite{KST3}, showed, by construction, that for every $\de>0$ there exists a type-II radial blow-up solution $\vec u(t)$ so that 
\EQ{\label{<2W de}
\sup_{t \in[0, T_+( \vec u))} \| \na u(t)\|_{L^2}^2 \le  \| \na W\|_{L^2}^2 + \de.
}
Moreover, the blow-up, say at time $T_+( \vec u) = 1$, occurs via the bubbling off of an elliptic solution $W$. In particular $\vec u(t)$ exhibits a decomposition of the form 
\EQ{ \label{kst}
 \vec u(t) = \la(t)^{-1/2}(W(r/ \la(t)), 0) + \vec  \eta(t)
 }
with $\la(t) =(1-t)^{1+ \nu}$, for $\nu>0$ (the case $0 < \nu \le 1/2$ is due to the Krieger and the fourth author, \cite{KS12}). Here the error $\vec \eta(t)$ is a regular function whose local energy inside the backwards light  cone $\{r \le 1-t\}$  vanishes as $t \nearrow 1$. 

In the $d=4$ case, Hillairet and Raphael,~\cite{HR}, exhibit $C^{\infty}$ type-II blow-up solutions $\vec u(t)$ so that~\eqref{<2W de} holds and again the blow-up at $T_+( \vec u) = 1$ occurs via the bubbling off of a $W$, with the decomposition
\ant{
\vec u(t) = \la(t)^{-1}(W(r/ \la(t)), 0) + \vec  \eta(t)
} 
where $\vec \eta (t)$ is as above and $\la(t) = (1-t) \exp(- \sqrt{\log\abs{1-t}}(1+o(1)))$ as $t \to 1$. 

It is believed that this type of bubbling behavior is characteristic of all radial type-II solutions, in the sense that all solutions $\vec u$ satisfying~\eqref{type ii}, for which $T_+(\vec u)< ~\infty$ or for which $T_+(\vec u) = + \infty$, but $\vec u$ does not scatter to zero,  exhibit a decomposition of the form~\eqref{kst} as $t \to T_+( \vec u)$, or more precisely~\eqref{bu dec} or~\eqref{global dec}, with possibly multiple profiles given by dynamic rescalings of  $W$ appearing on the right-hand side. This soliton-resolution type result was established for the radial case in $3$ space dimensions in the papers by the second author, Duykaerts, and Merle, \cite{DKM1, DKM3, DKM4}. The non-radial case, restricted to energies slightly above  the ground state energy for $d=3, 5$, was  treated in~\cite{DKM2}.

\subsection{Statements of the main results}

In this paper, we  treat the case of $4$-space dimensions by giving a characterization of the possible dynamics for radial type-II solutions to~\eqref{u eq}.
The following are the $1+4$ dimensional analogs of the main results for the $1+3$ dimensional energy critical wave equation in~\cite{DKM3}. 

We will use the notation $a_n \ll b_n$ to mean $a_n/b_n \to 0$ as $n \to \infty$, where $a_n$ and $b_n$ are two sequences of positive numbers.

Let us start with the blow up case.

\begin{thm}[Type-II blow-up solutions]\label{bu main} Let $\vec u(t)$ be a smooth solution to~\eqref{u eq} which satisfies~\eqref{type ii}, and blows-up, without loss of generality, at $T_+( \vec u)=1$. Then there exists $(v_0, v_1) \in \HH$,  a sequence of times $t_n \to 1$, an integer $J_0 \ge 1$, $J_0$ sequences  $\{\la_{j,n}\}_{n \in \N}$,  $j = 1, \dots J_0$ of positive numbers,  and signs $\io_j \in \{\pm 1\}$, such that 
\EQ{\label{bu dec}
 \vec u(t_n) =  \sum_{j=1}^{J_0} \left( \frac{\io_j}{  \la_{j,n}}W \left(\frac{\cdot}{\la_{j,n}} \right), 0\right) + (v_0, v_1) + o_{\HH}(1) \mas n \to  \infty,
 }
 with 
\EQ{
\la_{1, n} \ll \dots \ll\la_{ J_0,n} \ll 1-t_n.
}
 Furthermore, the local energy inside the light-cone is quantized: 
 \EQ{
 \lim_{t \to 1} E_0^{1-t} ( \vec u(t)) = J_0 E(W, 0),
 }
 and globally in space, we  have 
 \EQ{
 E(\vec u) = J_0 E( W, 0) + E(v_0, v_1).
 }
\end{thm}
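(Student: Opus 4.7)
The plan is to adapt the Duyckaerts--Kenig--Merle strategy from the $1+3$ dimensional analog in~\cite{DKM3} to the present four-dimensional setting, the key new technical input being a radial channel-of-energy estimate for the free $4d$ wave equation. First I would extract a radiation part from the exterior of the backward light cone: the type-II hypothesis~\eqref{type ii} together with finite speed of propagation makes the tail energy $\int_{1-t}^\infty (u_t^2 + u_r^2) r^3 \, dr$ monotone in $t$, so $\vec u(t)$ converges strongly in $\HH(\{r > 1-t\})$ as $t \to 1$. Extending the limit through the tip of the cone produces Cauchy data in $\HH$ for a free wave $\vec v_L$; setting $(v_0,v_1) := \vec v_L(1)$, strong continuity of the linear flow gives $\vec v_L(t_n) \to (v_0,v_1)$ in $\HH$ along any $t_n \to 1$. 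The singular part $\vec a(t) := \vec u(t) - \vec v_L(t)$ concentrates inside the backward cone in the limit, with exterior $\HH$-norm going to zero.

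Next, along an arbitrary sequence $\tau_n \to 1$, I would apply a radial Bahouri--G\'erard profile decomposition in $\HH$ to $\vec a(\tau_n)$, extracting radial free-wave profiles $U^j_L$ at concentration parameters $(\lambda_{j,n}, s_{j,n})$ with the usual pseudo-orthogonality, and a dispersive remainder $\vec w_n^J$ whose Strichartz norm vanishes as $J, n \to \infty$. The standard perturbation lemma for~\eqref{u eq} upgrades this to a nonlinear profile decomposition with nonlinear profiles $U^j$ solving~\eqref{u eq}.

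The rigidity step is to show each nontrivial nonlinear profile $U^j$ is $\pm W$ up to scaling. Any scattering component would, by the channel-of-energy estimate, radiate a positive fraction of its energy into the exterior of the cone, contradicting the fact that $\vec a(\tau_n)$ has vanishing exterior $\HH$-norm at scale $1-\tau_n$; hence each profile is non-radiative, and by the classification of radial non-radiative solutions of~\eqref{u eq} combined with uniqueness of nonnegative radial $\dot H^1$ solutions of~\eqref{W eq}, each profile is $\pm W_{\lambda_j}$. After passing to a subsequence $t_n$ with $\lambda_{1,n} \ll \cdots \ll \lambda_{J_0,n} \ll 1-t_n$ (the last separation again supplied by the channel estimate, which rules out a profile living at the cone scale), pseudo-orthogonality and the Pohozaev identity for~\eqref{W eq} yield
\[ \|\vec a(t_n)\|_{\HH}^2 = \sum_{j=1}^{J_0} \|(W_{\lambda_{j,n}}, 0)\|_{\HH}^2 + o(1), \qquad E(\vec u) = J_0\, E(W,0) + E(v_0,v_1), \]
and the local quantization inside the cone follows because $\vec v_L$ contributes $o(1)$ to the energy on the shrinking domain $\{r < 1-t\}$.

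The hard part is the $4d$ radial channel-of-energy estimate: in even dimensions the strong Huygens principle fails, so one cannot directly compare interior and exterior free-wave energies along characteristics. Instead one needs a detailed analysis of the radial free-wave propagator together with an explicit identification of the finite-dimensional subspace of data whose evolution fails to radiate into the exterior region. Controlling this obstruction, and verifying that it does not conflict with the rigidity argument in step three, is the central technical obstacle; it is also what forces one, in the general case, to pass to a subsequence rather than obtain the decomposition along every $t_n \to 1$.
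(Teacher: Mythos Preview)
Your proposal has a genuine gap at the rigidity step, stemming from a misidentification of what the ``hard part'' is in four dimensions.

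The $4d$ radial exterior-energy estimate of~\cite{CKS} (Proposition~\ref{lin ext estimate} here) holds \emph{only} for data of the form $(f,0)$; it is proved in~\cite{CKS} to \emph{fail} for data $(0,g)$ and hence for general $(f,g)$. Consequently your step three --- ``any scattering component would, by the channel-of-energy estimate, radiate a positive fraction of its energy into the exterior of the cone'' --- does not go through for an arbitrary profile $\vec U^j_L$ with nonzero time component, and there is no $4d$ classification of radial non-radiative solutions available along the lines of~\cite{DKM4} precisely because the full channel estimate is missing. Likewise, the dispersive remainder $\vec w_n^J$ in a Bahouri--G\'erard decomposition of $\vec a(\tau_n)$ has no reason a priori to have vanishing second component, so you cannot kill it with Proposition~\ref{lin ext estimate} either.

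What the paper actually does is supply the missing ingredient \emph{before} any profile decomposition: it proves (Theorem~\ref{self sim}) that no energy concentrates in the self-similar region $\lambda(1-t)\le r\le 1-t$, by passing to the $2d$ variable $\psi=r u$ and adapting the Christodoulou--Tahvildar-Zadeh / Shatah--Tahvildar-Zadeh flux-and-multiplier argument, after an inductive $L^\infty$ bootstrap restores positivity of the flux. This self-similar vanishing yields the time-averaged decay of $\int_0^{1-t} u_t^2\,r^3\,dr$, from which one extracts a sequence $t_n\to 1$ with $\|a_t(t_n)\|_{L^2}\to 0$ (Lemma~\ref{sig lem}). Only now is the profile decomposition taken: the vanishing time derivative forces every nonzero profile to be $(\pm W,0)$ (the argument of~\cite[Section~5.2]{DKM1}), and the error has the form $(w_n,0)$, so the restricted channel estimate for $(f,0)$ data suffices to show $\|w_n\|_{\dot H^1}\to 0$. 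Your outline skips the self-similar step entirely and tries to run the $3d$ channel argument directly; in $4d$ that argument is unavailable, and the self-similar analysis is not a technicality but the main new content of the paper.

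A smaller point: the regular part $(v_0,v_1)$ is the weak $\HH$-limit of $\vec u(t)$ as $t\to 1$, and $\vec v(t)$ is the \emph{nonlinear} evolution with this data at $t=1$ (so that $\vec a=\vec u-\vec v$ is exactly supported in the cone by finite speed of propagation), not a free wave.
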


Note that the above theorem holds only along a sequence of times. If we make an additional assumption regarding the size of the local $\dot{H}^1$-norm of $u(t)$ inside the backwards light cone, then we can prove a classification of type-II blow-up solutions which holds along all times $t \to 1$. 

\begin{thm}[Type-II blow-up below $2 \| \na W\|_{L^2}^2$]\label{bu 2W} Let $\vec u(t)$ be a smooth solution to~\eqref{u eq} which satisfies~\eqref{type ii}, and blows-up, without loss of generality, at $T_+( \vec u)=1$. Suppose in addition, that 
\EQ{
\sup_{0 \le t < 1-t} \|  u(t) \|_{\dot{H}^1( 0 < r < 1-t)}^2 < 2 \|  W\|_{\dot{H}^1}^2.
 }
 Then there exists $(v_0, v_1) \in \HH$ and a positive function $\la(t)$ with $\la(t) =o( 1-t)$ as $t \to 1$ so that 
 \EQ{ \label{2W dec}
 \vec u(t) =  \pm \left(\frac{1}{ \la(t)} W \left( \frac{\cdot}{ \la(t)} \right), 0 \right) + (v_0, v_1) + o_{\HH}(1) \mas t \to 0.
 }
\end{thm}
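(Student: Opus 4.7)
The plan is to start from the subsequential resolution in Theorem~\ref{bu main}, use the hypothesis on the localized $\dot{H}^1$ norm to force a one-bubble decomposition at the selected times, and then promote this to a statement valid along \emph{every} sequence $t \to 1$ via a contradiction argument combined with uniqueness of the exterior radiation profile. First, apply Theorem~\ref{bu main} to extract times $t_n \to 1$, an integer $J_0 \ge 1$, scales $\la_{1,n} \ll \cdots \ll \la_{J_0,n} \ll 1 - t_n$, signs $\io_j$, and a radiation profile $(v_0, v_1) \in \HH$ for which~\eqref{bu dec} holds. Because each bubble $W_{\la_{j,n}}$ is concentrated at scale $\la_{j,n} \ll 1 - t_n$ and the scales are pairwise separated, a standard Pythagorean expansion in $\dot{H}^1$ (the cross terms among bubbles, and between bubbles and $v_0$, vanish in the limit) gives
\[
\|u(t_n)\|_{\dot{H}^1(r < 1-t_n)}^2 \;=\; J_0\,\|W\|_{\dot{H}^1}^2 \;+\; \|v_0\|_{\dot{H}^1(r < 1-t_n)}^2 \;+\; o(1).
\]
Since $v_0 \in \dot{H}^1(\R^4)$ is radial and $1 - t_n \to 0$, the middle term tends to $0$. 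The hypothesis therefore forces $J_0 \|W\|_{\dot{H}^1}^2 < 2\|W\|_{\dot{H}^1}^2$, so $J_0 = 1$. (The case $J_0 = 0$ is ruled out by blow-up: otherwise $\vec u(t_n)$ would converge in $\HH$ and the solution could be extended past $t = 1$.)

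To upgrade this to convergence along every sequence, set
\[
d(t) \;:=\; \inf_{\la > 0,\ \io \in \{\pm 1\}}\, \Big\|\vec u(t) \,-\, (\io\, W_\la, 0) \,-\, (v_0, v_1)\Big\|_{\HH}.
\]
The previous step yields $d(t_n) \to 0$. Suppose for contradiction that some sequence $s_n \to 1$ satisfies $d(s_n) \ge \e_0 > 0$. Applying Theorem~\ref{bu main} to $\{s_n\}$ and repeating the analysis above, after extraction, produces a one-bubble decomposition of $\vec u(s_n)$ with scale $\mu_n \ll 1 - s_n$, sign $\io'$, and radiation $(v_0', v_1') \in \HH$. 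The key ingredient is the \emph{uniqueness of the exterior radiation}: both $(v_0, v_1)$ and $(v_0', v_1')$ must agree with the intrinsic profile of $\vec u$, namely the strong $\HH$-limit of $\vec u(t)$ restricted to $\{r > 1 - t\}$ as $t \to 1$, because in either decomposition the concentrating bubbles contribute only $o_\HH(1)$ in the exterior of the cone. Hence $(v_0', v_1') = (v_0, v_1)$, and the $s_n$-decomposition now forces $d(s_n) \to 0$, contradicting $d(s_n) \ge \e_0$.

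Finally, with $d(t) \to 0$ in hand, I would produce a continuous modulation by fixing $\la(t)$ via the orthogonality condition $\ang{\,u(t) - \io(t) W_{\la(t)} - v_0,\; \La W_{\la(t)}\,}_{\dot{H}^1} = 0$, where $\La W$ denotes the scaling generator of the soliton family. For $t$ sufficiently close to $1$ the solution lies in an arbitrarily small $\HH$-tube around the shifted soliton manifold $(v_0, v_1) + (\pm W_\la, 0)$, and the implicit function theorem yields a unique continuous $\la(t)$; since $+W_\la$ and $-W_\mu$ are separated by a fixed positive $\HH$-distance (at least $\sqrt{2}\,\|W\|_{\dot{H}^1}$, by a direct computation exploiting positivity of $W$), continuity of $t \mapsto \vec u(t)$ in $\HH$ forces the sign $\io(t)$ to stabilize to a single $\pm$. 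The relation $\la(t) = o(1-t)$ then follows by combining the energy quantization $E_0^{1-t}(\vec u(t)) \to E(W, 0)$ from Theorem~\ref{bu main} with the elementary fact that a bubble at scale comparable to $1 - t$ would leak a definite fraction of $E(W, 0)$ across the light cone. The main technical obstacle is the uniqueness of the radiation $(v_0, v_1)$; this is the $4d$ counterpart of the channel-of-energy and exterior Cauchy estimates used in the $3d$ treatment of~\cite{DKM3}, and must be furnished by the analysis of solutions outside the light cone developed in the preparatory portion of this work.
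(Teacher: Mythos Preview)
Your contradiction argument has a genuine gap: Theorem~\ref{bu main} is an \emph{existence} statement---it produces \emph{one particular} sequence $t_n\to 1$ along which the decomposition holds---and it cannot be ``applied to $\{s_n\}$'' to extract a subsequence of an arbitrary sequence with a bubble decomposition. The reason is that the proof of Theorem~\ref{bu main} (Section~\ref{mains}) hinges on Lemma~\ref{sig lem}, which selects a special sequence along which $\|a_t(t_n)\|_{L^2}\to 0$; it is precisely this vanishing of the time derivative that forces every nonzero profile in the decomposition of $\vec a(t_n)$ to be $(\pm W,0)$. For a general sequence $s_n\to 1$ there is no reason for $a_t(s_n)$ to vanish in $L^2$, and the profile decomposition of $\vec a(s_n)$ may well contain non-stationary, non-scattering profiles. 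Your argument therefore breaks exactly at the step ``after extraction, produces a one-bubble decomposition of $\vec u(s_n)$''.

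The paper's route is substantially more involved for this reason. For an arbitrary $\tau_n\to 1$ it takes an ordered profile decomposition of $\vec a(\tau_n)$ (Definition~\ref{re order}, Lemma~\ref{lem:3.7}) and invokes the minimization machinery of~\cite{DKM6}: one maximizes the number $J_0(\q T)$ of non-scattering profiles over all sequences, then minimizes the non-scattering energy $\EE(\q T)$, then minimizes $J_1(\q T)$, arriving at a sequence in $\cS_3$ for which Lemma~\ref{lem412} forces the first profile to be $(\pm W,0)$. Combining this with the energy identity $E(\vec u)=E(W,0)+E(v_0,v_1)$ (valid thanks to the $t_n$-sequence from Theorem~\ref{bu main}) and the positivity from Lemma~\ref{E<2W}, one shows $J_{\max}\in\{1,2\}$ and that all scattering profiles and the remainder vanish in $\HH$. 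A separate argument then rules out backward scattering of $\vec U^1$, forcing $t_{1,n}=0$; this yields precompactness of $\{\vec a(t)\}$ up to scaling, and the rigidity Theorem~\ref{compactness} (via \cite[Lemma~8.5]{DKM1}) identifies the limit as $(\pm W,0)$. Only then does a continuity/connectedness argument fix the sign. Your observations on the uniqueness of $(v_0,v_1)$ and on the sign stabilization are correct and do appear in the paper, but they sit downstream of the step you are missing.
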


Next we move to the case of globally defined solutions. Here we show that at least along a sequence of times, any  global solution $\vec u(t)$ satisfying~\eqref{type ii}, asymptotically decouples into a sum of dynamically rescaled $W$'s plus free radiation, i.e., a finite energy solution $\vec v(t)$ to the free radial wave equation 
\EQ{\label{free wave}
&v_{tt} - v_{rr} - \frac{3}{r} v_r =0, \\
&\vec v(0) = (v_0, v_1) \in \HH.
}

\begin{thm}[Type-II global solutions]\label{global main} Let $\vec u(t)$ be a smooth solution to~\eqref{u eq} satisfying~\eqref{type ii}, and which is global in positive time, i.e., $T_+( \vec u) = + \infty$. Then there exists a free wave, i.e., a solution $\vec v_L(t) \in \HH$ to~\eqref{free wave},  a sequence of times $t_n \to  \infty$, an integer $J_0 \ge 0 $, $J_0$ sequences  $\{\la_{j,n}\}_{n \in \N}$,  $j = 1, \dots J_0$ of positive numbers,  and signs $\io_j \in \{\pm 1\}$, such that 
\EQ{ \label{global dec}
 \vec u(t_n) =  \sum_{j=1}^{J_0} \left( \frac{\io_j}{  \la_{j,n}}W \left( \frac{\cdot}{\la_{j,n}} \right), 0\right) + \vec v_L(t_n) + o_{\HH}(1) \mas n \to  \infty,
 }
 with 
\EQ{
\la_{1, n} \ll \dots \ll\la_{J_0, n} \ll t_n.
}
 Furthermore, for all $A>0$ the limit as $t\to \infty$ of the localized energy $E_0^{t-A} ( \vec u(t))$ exists and satisfies 
 \EQ{
 \lim_{t \to \infty} E_0^{t-A} ( \vec u(t)) = J_0 E(W, 0),
 }

\end{thm}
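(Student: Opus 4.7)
The plan is to mirror the three-dimensional argument of Duyckaerts--Kenig--Merle, with substantial modifications for the even-dimensional setting. The proof has four main stages: (a) extract a free-wave radiation term, (b) apply a Bahouri--Gérard profile decomposition to the residual along a well-chosen subsequence $t_n \to \infty$, (c) identify each nonlinear profile via a rigidity theorem, and (d) derive the scale ordering and the energy quantization.

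\textbf{Step 1: Radiation extraction.} First I would construct a finite energy free radial wave $\vec v_L$ solving \eqref{free wave} such that, for every $A>0$,
\[
\lim_{t\to\infty}\|\vec u(t)-\vec v_L(t)\|_{\HH(r>t-A)} \;=\; 0.
\]
The argument proceeds by proving, for a sequence of dyadic times, that the piece of $\vec u(t)$ living in the region $\{r>t-A\}$ is asymptotically a free wave, using the exterior-energy estimate for the $4$d linear wave equation together with small-data scattering for the tail. One then glues these localized free waves into a single global $\vec v_L$, using uniqueness and finite speed of propagation in the overlapping regions.

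\textbf{Step 2: Profile decomposition of the residual.} Setting $\vec w(t_n):=\vec u(t_n)-\vec v_L(t_n)$, one has $\|\vec w(t_n)\|_{\HH(r>t_n-A)}\to 0$ and $\vec w(t_n)$ bounded in $\HH$ by \eqref{type ii}. Passing to a subsequence I would invoke the Bahouri--Gérard decomposition
\[
\vec w(t_n) \;=\; \sum_{j=1}^{J}\frac{1}{\la_{j,n}}\,\vec U_L^{\,j}\!\l(-\tfrac{t_{j,n}}{\la_{j,n}},\tfrac{\cdot}{\la_{j,n}}\r)+\vec w_n^{J},
\]
with pairwise orthogonal parameters and remainder $\vec w_n^J$ whose Strichartz norm becomes arbitrarily small as $J\to\infty$. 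Each linear profile $\vec U_L^j$ yields a nonlinear profile $U_{NL}^{\,j}$, and the standard nonlinear superposition lemma, combined with the perturbation theory for \eqref{u eq}, approximates $\vec u$ near $t_n$ by a sum of dynamically rescaled $U_{NL}^{\,j}$'s plus small errors.

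\textbf{Step 3: Rigidity and conclusion.} Here comes the heart of the argument. If some $U_{NL}^{\,j}$ were to scatter nontrivially (forward or backward in time), the corresponding radiation would leak into the region $\{r>t-A\}$ at time $t_n$, contradicting Step 1 and the orthogonality of the parameters. Hence every nonzero nonlinear profile must have a trajectory that is compact up to the natural scaling symmetry. A Duyckaerts--Kenig--Merle type rigidity theorem adapted to $d=4$ (using the variational characterization of $W$ and the absence of radiation) then forces each such profile to equal $\pm W$ up to scaling, giving the sum of solitons in \eqref{global dec}. Reordering, the scales satisfy $\la_{1,n}\ll\cdots\ll\la_{J_0,n}$ by orthogonality, and the concentration $\|\vec w(t_n)\|_{\HH(r>t_n-A)}\to 0$ combined with the scaling profile of $W$ forces $\la_{J_0,n}\ll t_n$. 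The Pythagorean expansion of the energy in the profile decomposition yields $E(\vec u)=J_0 E(W,0)+E(\vec v_L)$, and a cutoff to $\{r<t-A\}$ (where $\vec v_L$ contributes negligibly in the energy norm as $t\to\infty$ and $A\to\infty$) gives the quantization $\lim_{t\to\infty}E_0^{\,t-A}(\vec u(t))=J_0 E(W,0)$.

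\textbf{Main obstacle.} The principal difficulty is Step 1, the extraction of the radiation in an \emph{even} dimension, where the classical exterior-energy estimate for the free wave equation loses a nontrivial fraction of the outer energy (unlike the clean odd-dimensional case). The radial setting in $d=4$ is favorable, however, as one can exploit the explicit representation of radial waves via a suitable transform and combine it with the monotonicity of the exterior energy and careful dyadic bookkeeping. A secondary challenge is ensuring that the rigidity theorem applied in Step 3 is available in $d=4$, which requires the radial compactness/soliton classification in four dimensions rather than merely quoting the $d=3$ version.
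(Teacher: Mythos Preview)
Your outline has the right overall architecture, but Step~3 contains a genuine gap, and you have misidentified the main obstacle.

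\medskip

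\textbf{The gap in Step~3.} Your mechanism for forcing each nonlinear profile to be $\pm W$ does not work as stated. You argue that a scattering profile would ``leak radiation into $\{r>t-A\}$ at time $t_n$''. But this is false: a profile with $-t_{j,n}/\la_{j,n}\to+\infty$ scatters forward, yet at time $t_n$ its energy is concentrated near $r\simeq |t_{j,n}|$, and Lemma~\ref{lem:2.5} only gives $|t_{j,n}|\lesssim t_n$, so the profile can live deep inside the cone. The vanishing of $\vec w(t_n)$ on $\{r>t_n-A\}$ does not rule out such profiles. Even if you could show that every profile is non-scattering in both time directions, this is far weaker than the compactness-of-trajectory hypothesis needed in Theorem~\ref{compactness}; non-scattering solutions are not automatically compact modulo scaling.

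The paper's mechanism is completely different and relies on the main technical novelty (which you do not mention): the vanishing of energy in the self-similar region $\la t\le r\le t-A$ for every $\la\in(0,1)$, Theorem~\ref{thm:en_decay_global}. This is proved by passing to the $2d$ wave-maps-type equation for $\psi=ru$ and adapting the Christodoulou--Tahvildar-Zadeh and Shatah--Tahvildar-Zadeh flux/energy arguments, after first establishing positivity of the flux via an $L^\infty$ bootstrap on $\psi$. From this one deduces an \emph{averaged} decay of $\int u_t^2$ inside the cone (Lemma~\ref{global av dec}), and then extracts a sequence $t_n$ along which $\|a_t(t_n)\|_{L^2}\to 0$ together with a time-averaged version, Lemma~\ref{global sig lem}. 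It is this vanishing of the time derivative, not any exterior-leakage argument, that forces every profile to have $t_{j,n}=0$, zero second component, and stationary first component, hence $\pm W$ (Proposition~\ref{global prelim dec}, following \cite[Section~5.2]{DKM1}).

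\medskip

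\textbf{A second omission.} After Proposition~\ref{global prelim dec} one only has $\|w_n\|_{S(\R)}\to 0$; upgrading this to $\|(w_n,0)\|_{\HH}\to 0$ is a separate nontrivial argument (Proposition~\ref{global w to 0}) using the exterior energy estimate of Proposition~\ref{lin ext estimate} for data of the form $(f,0)$ and the channel-of-energy scheme of \cite{CKLS1,CKLS2,DKM2}. Your Step~3 implicitly assumes this.

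\medskip

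\textbf{The main obstacle.} You flag radiation extraction as the principal difficulty; the paper does carry this out carefully (Proposition~\ref{th:u-v_L}), but it is essentially the argument of \cite{DKM3}. The genuinely new ingredient in $d=4$ is the self-similar-region vanishing, precisely because the focusing semilinear equation lacks the monotonicity and flux positivity that are automatic for wave maps; see the discussion in Section~\ref{intro}.
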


As in the finite time blow-up case, we can prove the global-in-time decomposition along all times $t \to \infty$ if we assume a bound on the local $\dot{H}^1$-norm of $ u(t)$ which prevents there from being more than one profile $W$ in~\eqref{global dec}.

\begin{thm}[Type-II global solutions below $2 \| \na W\|_{L^2}^2$]\label{global 2W} Let $\vec u(t)$ be a smooth solution to~\eqref{u eq} satisfying~\eqref{type ii}, and which is global in positive time, i.e., $T_+( \vec u) = + \infty$. Suppose in addition that there exists an $A>0$ so that 
\EQ{ \label{2W bound}
\limsup_{t \to \infty}  \| u(t) \|_{ \dot{H}^1(0\le r \le t-A)}^2 < 2 \| W\|_{\dot{H}^1}^2.
}
Then, there exists a solution $\vec v_L(t)\in \HH$ to~\eqref{free wave}  so that one of the following holds:
\begin{itemize}
\item[$(i)$] $\vec u(t)$ scatters to the free wave $\vec v_L(t)$ as $t \to \infty$. 
\item[$(ii)$] There exists a positive function $\la(t)$ with $\la(t) =o(t)$ as $t \to \infty$ so that
\EQ{\label{2W glob dec}
 \vec u(t) =  \pm \left(\frac{1}{ \la(t)} W \left( \frac{\cdot }{ \la(t)} \right), 0 \right) + \vec v_{L}(t) + o_{\HH}(1) \mas t \to  \infty.
 }
 \end{itemize}
\end{thm}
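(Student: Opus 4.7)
The plan is to deduce Theorem~\ref{global 2W} from the subsequential Theorem~\ref{global main} by using~\eqref{2W bound} to bound the number of bubbles by one, and then upgrading the subsequential decomposition to one holding along all times. \emph{Step 1 (at most one bubble).} Fix any sequence $t_n\to\infty$ and apply Theorem~\ref{global main} to extract a subsequence for which~\eqref{global dec} holds, with $J_0$ bubbles at scales $\la_{1,n}\ll\cdots\ll\la_{J_0,n}\ll t_n$ and a free wave $\vec v_L$. Since $\la_{j,n}\ll t_n-A$ for $n$ large, each bubble has essentially its full $\dot{H}^1$-mass inside $r\le t_n-A$; combined with the mutual orthogonality of the bubble scales and the asymptotic concentration of $\vec v_L(t_n)$ near the light cone (i.e.\ $\|\vec v_L(t_n)\|_{\HH(r\le \epsilon t_n)}\to 0$ for any $\epsilon<1$, which follows from the radial $4d$ radiation profile $v_L\sim r^{-3/2}F(r-t)$), this yields the Pythagorean identity
\EQ{
\|u(t_n)\|^2_{\dot{H}^1(0\le r\le t_n-A)} = J_0\|W\|^2_{\dot{H}^1} + \|v_L(t_n)\|^2_{\dot{H}^1(0\le r\le t_n-A)} + o(1).
}
Since $\|v_L(t_n)\|^2_{\dot{H}^1(r\le t_n-A)}\ge 0$, the hypothesis~\eqref{2W bound} forces $J_0\le 1$.

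\emph{Step 2 ($J_0=0$ yields~(i)).} If $J_0=0$ along some extracting sequence $t_n\to\infty$, then $\vec u(t_n)-\vec v_L(t_n)\to 0$ in $\HH$. Writing $\vec w:=\vec u-\vec v_L$ gives $\Box w=(w+v_L)^3$, with $\vec w(t_n)\to 0$ in $\HH$ and $\|v_L\|_{S([t_n,\infty))}\to 0$ (since $\vec v_L$ scatters). Strichartz perturbation theory around the scattering free wave $v_L$---small initial data at $t_n$ together with source terms of small Strichartz norm arising from $v_L^3$, $v_L^2 w$, and $v_L w^2$---propagates the smallness to every $t\ge t_n$, giving $\|\vec u(t)-\vec v_L(t)\|_\HH\to 0$, which is conclusion~(i).

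\emph{Step 3 ($J_0=1$ on every extraction yields~(ii)).} If (i) fails, Step~2 rules out $J_0=0$ on every extracting subsequence, forcing $J_0=1$ each time. Setting
\EQ{
d(t):=\inf_{\la>0,\,\io\in\{\pm 1\}}\|\vec u(t)-\vec v_L(t)-(\io\, W_\la,0)\|_\HH,\qquad W_\la:=\la^{-1}W(\cdot/\la),
}
every sequence $t_n\to\infty$ admits a further subsequence on which $d(t_n)\to 0$, and hence $d(t)\to 0$ as $t\to\infty$ by the standard ``every subsequence has a sub-subsequence'' trick. A modulation argument based on the quantitative non-degeneracy of $\la\mapsto W_\la$ in $\dot{H}^1$---namely $\|W_\la\pm W_{\la'}\|_{\dot{H}^1}$ is bounded below as soon as $\la/\la'$ stays away from $1$---produces continuous $\la(t)>0$ and $\io(t)\in\{\pm 1\}$ that realize the infimum up to $o(1)$, giving~\eqref{2W glob dec}; the bound $\la(t)=o(t)$ follows from $\la_n\ll t_n$ on every extraction together with the same uniqueness.

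\emph{Main obstacle.} The delicate point throughout is the passage from a subsequential decomposition to one valid along all times. This hinges on two ingredients: first, the free radiation $\vec v_L$ must be intrinsic to $\vec u$---independent of the extracting subsequence, so that the minimization problem defining $d(t)$ is well-posed---which is secured by constructing $\vec v_L$ as the asymptotic exterior profile of $\vec u$, paralleling the $3d$ treatment of~\cite{DKM3}; second, the quantitative non-degeneracy of the $W_\la$ family used in Step~3, which ensures that near-minimizers are essentially unique once $d(t)\ll 1$ and hence that $\la(t)$, $\io(t)$ are well defined. With both ingredients in place, the dichotomy (i)--(ii) of Theorem~\ref{global 2W} follows cleanly from Steps~1--3.
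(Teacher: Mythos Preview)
Your Step~1 rests on a misreading of Theorem~\ref{global main}. That theorem produces \emph{one} specific sequence $t_n\to\infty$ along which the decomposition~\eqref{global dec} holds; it does \emph{not} assert that from an arbitrary sequence one may extract a subsequence with the bubble decomposition. The specific sequence in Theorem~\ref{global main} is obtained precisely because along it the time derivative of the singular part vanishes (Lemma~\ref{global sig lem}), and this vanishing is what forces every nonzero profile to be $\pm W$. Along a generic sequence $\tau_n\to\infty$ there is no reason for $\|a_t(\tau_n)\|_{L^2}$ to be small, and the profiles of $\vec a(\tau_n)$ are a priori arbitrary nonlinear solutions, not rescaled $W$'s. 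Consequently the ``every subsequence has a sub-subsequence'' trick in your Step~3 has no input to work with, and the definition of $d(t)$ does not help: you have not shown that $d(\tau_n)\to 0$ along any subsequence of an arbitrary $\tau_n$.

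The paper closes exactly this gap, and it is the substantive content of Section~\ref{2W global}. For an arbitrary sequence $\tau_n$ one takes a $\preccurlyeq$-ordered profile decomposition of $\vec u(\tau_n)$ and runs the minimization procedure of~\cite{DKM6} (Lemma~\ref{lem412} together with the energy identity of Claim~\ref{cl:energy_a} and the positivity from Lemma~\ref{E<2W}) to prove that there is a single non-scattering profile $\vec U^1$, that $E(\vec U^1)=E(W,0)$, that all other profiles besides $\vec v_L$ vanish, and---via a separate argument ruling out backward scattering---that $t_{1,n}=0$. This yields pre-compactness of $\vec a(t)$ up to scaling, after which the rigidity Theorem~\ref{compactness} identifies the limiting object as $(\pm W,0)$. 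Only then does the continuity argument of Section~\ref{step 4 th 1.2} (your Step~3 idea) apply. Your Step~2 is essentially correct, though it is cleaner to perturb around the nonlinear profile $\vec v$ associated to $\vec v_L$ rather than around $\vec v_L$ itself; the paper instead uses the energy identity $E(\vec a(t))\to \bar J\,E(W,0)$ together with Lemma~\ref{E<2W} to rule out $\bar J=0$.
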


\subsection{Comments on the proofs} 

While many of the techniques introduced in the series of papers~\cite{DKM1, DKM3, DKM2} carry over to the even dimensional setting, several key elements of the argument are quite different when one moves away from $3$ space dimensions.  In particular, the missing ingredients in even dimensions were:
\begin{itemize}
\item[(1)] Exterior energy estimates for the underlying free radial wave equation. 
\item[(2)] A proof that that the energy of a smooth solution cannot concentrate in the self-similar region of the light-cone. 
\end{itemize}

The first of these ingredients (1) was studied in~\cite{CKS}. In fact, the main argument of~\cite{DKM3} is the proof that (2) holds for the $3d$ radial energy critical wave equation, using the exterior energy estimates for the $3d$ linear, radial wave equation proved in~\cite{DKM1}. However, in~\cite{CKS}, it is proved that the crucial exterior energy estimates established in~\cite{DKM1, DKM2} are false in even dimensions, thus rendering the use of the channel of energy method of~\cite{DKM1, DKM3, DKM2, DKM4} in doubt for the case of even dimensions. In~\cite{CKS} it is proved that the exterior energy estimate established in~\cite{DKM2} fails for radial data of the form $(0, g)$, but does hold for radial data of the form $(f, 0)$. This was used for energy critical equivariant wave maps into $\Sp^2$, to prove a classification of degree one below $3$ times the energy of the harmonic map  in~\cite{CKLS1, CKLS2}, and the soliton resolution along a sequence of times in~
\cite{Cote13}, in the spirit of~\cite{DKM3}. 

In the case of equivariant wave maps, (2) is classical and was established by Christodoulou, Tahvildar-Zadeh, \cite{CTZduke, CTZcpam} and Shatah, Tahvildar-Zadeh, \cite{STZ92, STZ94}. The classical arguments rely crucially on multiplier identities, the monotonicity of the local energy, and on the positivity of the flux -- both of which appear to be absent in the semilinear wave equation set-up. In~\cite{CKLS1, CKLS2} and later in~\cite{Cote13}, one uses (2) as in the works mentioned above to show that, along a sequence of times,  the time derivative of the solution, restricted to a suitable cone, tends to $0$, thus making it possible to apply the $d=4$ exterior energy lower bound from~\cite{CKS}, for data of the form $(f, 0)$. 


The main new ingredient in this paper is the proof of (2) for solutions to the $4d$ equation~\eqref{u eq}. In fact, the proof uses a reduction to a $2d$ equation that bears many similarities to a wave map type equation. This is the opposite of what is usually done, when equivariant wave maps are transformed to look like an energy critical nonlinear wave equation.

The crucial monotonicity of the localized energy and the positivity of the flux are established in the relevant regions after the regular part of the solution is considered separately from the singular part. One can then follow the classical techniques for wave maps to prove $(2)$ for radial solutions to~\eqref{u eq}. With the weakened version of (1) proved in~\cite{CKS} for data $(f, 0)$, and (2) in hand, one can then follow the arguments in~\cite{DKM1,DKM2,DKM3}, and \cite{CKLS1,CKLS2} to establish the main results. New refined techniques from~\cite{DKM6} are also used to prove Theorem~\ref{bu 2W} and Theorem~\ref{global 2W}.

The vanishing of the energy in the self-similar regions proved in the previous sections allows one to deduce a  vanishing of the $L^2$ norm of the time derivative of the singular part of the solution along a sequence of times. The vanishing time derivative then allows one to conclude that all the profiles in the Bahouri-Gerard profile decomposition of the solution along this sequence must be either $0$ or $\pm W$. The error term in the profile decomposition is then shown to vanish in the energy space using the exterior energy estimates for the underlying free equation as in \cite{CKLS1, CKLS2}. One main difference with~\cite{CKLS1, CKLS2} in the argument is that there the harmonic map must be extracted before the machinery of profile decompositions can be applied due to the geometric nature of wave maps. Here one can work directly with a profile decomposition for $\vec u(t_n)$. 

\bigskip

In Section~\ref{prelim} we recall various preliminary results including the linear and nonlinear profile decompositions from~\cite{BG}, the exterior linear estimates for the free equation from~\cite{CKS},  and the rigidity of radial compact trajectories proved in~\cite{DKM1}. 

In Section~\ref{self sim bu} we show that no energy can concentrate in the self-similar region of the backwards light cone for type-II solutions that blow up in finite time, i.e., we prove (2) in the finite time blow-up case. In Section~\ref{self sim global} we prove the vanishing of energy in the self-similar region of the forward light cone for solutions that exist for all positive times, proving (2) for global solutions. These two sections contain the main technical novelties in this paper as the classical $2d$ geometric arguments from~\cite{CTZduke, CTZcpam, STZ92, STZ94} are adapted to a focusing  $4d$ semilinear equation once crucial positivity properties are revealed.  

In Section~\ref{mains} we prove Theorem~\ref{bu main} and Theorem~\ref{global main} using the arguments from~\cite{CKLS1, CKLS2} which in turn were based on the channel of energy methods introduced in~\cite{DKM1, DKM3, DKM2}, which we also rely on here.

Finally, in  Section~\ref{2W bu} and Section~\ref{2W global} we prove Theorem~\ref{bu 2W} and Theorem~\ref{global 2W}. Here the argument has its foundations in the techniques from~\cite{DKM1, DKM3} but also requires new methods recently developed in~\cite{DKM6}.

\subsection{Notation} As we are dealing strictly with radial functions, we will often abuse notation by writing $f(x) = f(\abs{x}) = f(r)$. For a space-time function $f(t, r)$ we will sometimes use the notation $\abs{\nabla_{t, x} f(t, r)}^2 = f_t^2(t, r) + f_r^2(t, r)$. For spacial integrals of radial functions we will ignore  a dimensional constant by writing
\ant{
\int_{\R^4} f(x) dx := \int_0^{\infty} f(r) \, r^3 \, dr.
}

\section{Preliminaries}\label{prelim}

 \subsection{Energy trapping}
 
 We recall a few variational results from~\cite{DKM1, KM08} which give a useful characterization of the threshold energy $E(W, 0)$.  The key point here is that $W$ is the unique minimizer, up to translation, scaling and constant multiplication of the Sobolev embedding: 
\ant{
 \|f \|_{L^4( \R^4)} \le K(4, 2) \| \na f\|_{L^2( \R^4)},
}
 where $K(4, 2)$ is the optimal Aubin-Talenti constant,~\cite{Au, Ta}. Using the equation~\eqref{W}, one can show that in fact, 
\EQ{
\frac{1}{4} \| \na W\|_{L^2}^2 = E(W, 0),
}
and a variational argument yields the following useful result from~\cite{DKM1,DKM2, KM08}. 

\begin{lem}[{\cite[Claim~$2.3$]{DKM1},\cite[Claim~2.4]{DKM2}}] \label{E<2W} 
Let $f \in \dot{H}^1( \R^4)$. Then 
\EQ{
\| \na f\|^2_{L^2} \le \|\na W\|_{L^2}^2 \mand E(f, 0) \le E(W, 0) \Longrightarrow \frac{1}{4} \| \na f\|_{L^2}^2 \le E(f, 0).
}
Moreover,  there exists $c>0$ such that if $\| \na f\|_{L^2}^2 \le 2 \| \na W\|_{L^2}^2$ then 
\EQ{
 E(f, 0) \ge c \min\{ \|\na f\|_{L^2}^2, \, 2\|\na W\|_{L^2}^2 - \|\na f\|_{L^2}^2\} \ge 0
}
\end{lem}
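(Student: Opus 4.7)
The plan is to reduce everything to the sharp Sobolev inequality. Recall that $W$ is the Aubin--Talenti extremizer, so $\|f\|_{L^4}^4 \le K(4,2)^4 \|\nabla f\|_{L^2}^4$ with equality for $f=W$. Testing the stationary equation $-\Delta W = W^3$ against $W$ yields the Pohozaev-type identity $\|W\|_{L^4}^4 = \|\nabla W\|_{L^2}^2$, which in turn pins down $K(4,2)^4 = 1/\|\nabla W\|_{L^2}^2$. Plugging this into the definition of the energy gives the master inequality
\[
E(f,0) \ge \frac{1}{2}\|\nabla f\|_{L^2}^2 - \frac{1}{4}\frac{\|\nabla f\|_{L^2}^4}{\|\nabla W\|_{L^2}^2}.
\]

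It is then convenient to introduce the dimensionless quantity $y := \|\nabla f\|_{L^2}^2/\|\nabla W\|_{L^2}^2$ and use $E(W,0) = \frac{1}{4}\|\nabla W\|_{L^2}^2$ to rewrite the above as
\[
E(f,0) \ge E(W,0)\, y\,(2-y).
\]
For the first assertion I would observe that the hypothesis $\|\nabla f\|_{L^2}^2 \le \|\nabla W\|_{L^2}^2$ means $y \le 1$, so $2-y \ge 1$ and the inequality above collapses to $E(f,0) \ge E(W,0)\cdot y = \frac14 \|\nabla f\|_{L^2}^2$, as desired. (The extra hypothesis $E(f,0)\le E(W,0)$ plays no role in this step and is included only for consistency with the way the lemma is invoked elsewhere.)

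For the second assertion, the hypothesis $y \le 2$ together with the same master inequality gives
\[
E(f,0) \ge \frac{\|\nabla W\|_{L^2}^2}{4}\, y\,(2-y).
\]
On $0 \le y \le 1$ one has $y(2-y) \ge y = \min\{y, 2-y\}$, and on $1 \le y \le 2$ one has $y(2-y) \ge 2-y = \min\{y,2-y\}$, so in either case $y(2-y)\ge \min\{y,2-y\}$. Multiplying through by $\|\nabla W\|_{L^2}^2$ yields the claimed bound with $c = 1/4$. Nonnegativity is automatic since $\min\{y,2-y\}\ge 0$ when $0\le y \le 2$.

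No step is genuinely difficult here: the whole lemma is a one-variable convexity observation about the polynomial $y(2-y)$, once the sharp Sobolev constant has been identified via Pohozaev on $W$. The only mild care required is making sure to distinguish which branch of $\min\{y,2-y\}$ is active in order to verify the constant $c = 1/4$ uniformly across $[0,2]$.
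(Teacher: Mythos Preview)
Your proof is correct and is exactly the standard variational argument one expects here; the paper itself does not prove this lemma but simply cites it from \cite[Claim~2.3]{DKM1} and \cite[Claim~2.4]{DKM2}, where essentially the same computation is carried out. Your observation that the hypothesis $E(f,0)\le E(W,0)$ is superfluous for the first implication is also correct.
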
 
 \subsection{Exterior energy estimates and linear theory}
 Exterior energy estimates for the free radial wave equation established by the first, second, and fourth authors in \cite{CKS} will play a crucial role.   In particular, we will use the fact that free radial waves $\vec v(t)$ in $4$ space dimensions with zero initial velocity, i.e., with data $(f, 0)$, maintain a fixed percentage of their energy on the exterior of the forward light cone emanating from the origin. 
 
 We will denote a solution $\vec v(t)$ to the free wave equation~\eqref{free wave}, with initial data $(f, g) \in \HH$, by 
\ant{ 
\vec v(t) = S(t)( f, g).
}

\begin{prop}[{\cite[Corollary $5$]{CKS}}] \label{lin ext estimate}  
There exists $\al_0>0$ such that for all $t  \in \R$ we have
\begin{align} \label{ext en est}
\|S(t)(f, 0)\|_{\HH( r \ge\abs{t})} \ge  \al_0 \|f\|_{\dot{H}^1}
\end{align}
for all radial data $(f, 0) \in \HH$. 
\end{prop}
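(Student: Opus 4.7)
My plan is to reduce the $4d$ radial wave equation to a $1d$ wave equation with inverse-square potential on the half-line, and then exploit the structure of the pure-cosine propagator via a Bessel-function analysis.

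First, I would use the Liouville substitution $\phi(t,r) := r^{3/2} u(t,r)$. A direct computation shows that $u_{tt} - u_{rr} - \tfrac{3}{r} u_r = 0$ becomes
\[ \phi_{tt} - \phi_{rr} + \frac{3}{4 r^2}\, \phi = 0, \qquad r > 0, \]
with the Dirichlet behaviour $\phi(t,0) = 0$ dictated by regularity of $u$ at the origin. A short integration by parts identifies
\[ \|f\|_{\dot H^1(\R^4)}^2 \;=\; \int_0^\infty \bigl(\phi_r(0,r)^2 + \tfrac{3}{4 r^2}\phi(0,r)^2\bigr)\, dr, \]
and this energy is conserved by the evolution. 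The quantity $\|S(t)(f,0)\|_{\HH(r \ge |t|)}^2$ is then comparable, up to a boundary correction at $r = |t|$ that can be absorbed, to the same $1d$ energy density integrated over $\{r \ge |t|\}$.

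Second, I would diagonalize the operator $-\partial_r^2 + \tfrac{3}{4 r^2}$ on $(0,\infty)$ with Dirichlet boundary by the order-one Hankel transform, whose generalized eigenfunctions are $\sqrt{\rho r}\, J_1(\rho r)$. Because $\phi_t(0,\cdot) \equiv 0$, the Hankel-side evolution is purely $\cos(t\rho)\, \widehat\phi(0,\rho)$, with no sine-kernel component. Such a pure cosine wave admits an outgoing/incoming decomposition analogous to d'Alembert's formula; in the free $1d$ Dirichlet model one verifies by odd extension that at least half of the initial energy remains on $\{r \ge |t|\}$, and I would show by stationary-phase together with the asymptotics of $J_1$ that the inverse-square correction preserves a definite positive fraction, yielding \eqref{ext en est} with some absolute $\alpha_0 > 0$.

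The main obstacle is that the potential $V(r) = 3/(4 r^2)$ is scale-critical and singular at $r = 0$, so no perturbative treatment off the free $1d$ wave is available, and one must work with the explicit Bessel-function kernel throughout. The quantitative delicacy lies in the interplay between the $O(\rho r)$ vanishing of $J_1(\rho r)$ at $r = 0$ and its $O((\rho r)^{-1/2})$ oscillatory decay at infinity; these are exactly the features that let the $(f,0)$ estimate hold while, as shown in \cite{CKS}, the analogous bound for data $(0,g)$ fails in even dimensions. A related subtlety is that the Dirichlet reflection at $r = 0$ should not cancel the outgoing component on $\{r \ge |t|\}$ by more than a fixed proportion; the positive sign of $V$ and the positivity of the Hankel spectral measure are what prevent this.
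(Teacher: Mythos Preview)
The paper does not give its own proof of this proposition: it is quoted verbatim as \cite[Corollary~5]{CKS} and used as a black box, so there is no in-paper argument to compare against. What one can compare to is the proof in the cited reference \cite{CKS}, and your outline is indeed along those lines: the reduction $\phi=r^{3/2}u$ to a half-line wave equation with potential $3/(4r^2)$, diagonalization by the order-one Hankel transform, and the observation that for data $(f,0)$ the propagator is purely $\cos(t\rho)$ are exactly the ingredients used there.

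That said, your plan stops short of the actual content of the proof. The sentence ``I would show by stationary-phase together with the asymptotics of $J_1$ that the inverse-square correction preserves a definite positive fraction'' is precisely the step that requires work, and stationary phase alone does not deliver it. In \cite{CKS} the exterior energy is expressed as an explicit bilinear form in the Hankel coefficients, and the lower bound comes from computing (not estimating) certain oscillatory integrals of Bessel functions and identifying the resulting kernel as a positive operator; this is where the parity of the dimension enters and where the $(f,0)$ versus $(0,g)$ dichotomy is seen concretely. Your analogy with the free $1d$ Dirichlet problem and odd extension is misleading here: that picture corresponds to the odd-dimensional case where strong Huygens holds, whereas in $d=4$ the solution has a tail inside the cone and the mechanism for the lower bound is genuinely different. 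So the framework is right, but the plan as written does not yet contain the idea that makes the estimate true.
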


\begin{rem}  We note that estimates~\eqref{ext en est} with data $(0, g)$ or $(f, g)$ with $g \neq 0$ are false, see~\cite{CKS}. In fact one recovers the analog of~\eqref{ext en est} for data $(f, 0)$ in dimension $d \equiv 0\mod 4$ and for data $(0, g)$ in dimensions $d \equiv 2 \mod 4$. This is different from the odd dimensional case, where the analog of~\eqref{ext en est} holds for general radial data $(f, g)$ for either all positive, or all negative times, see~\cite{DKM1}.
\end{rem} 

 We have the following vanishing of the energy away from the forward light cone proved in~\cite{CKS}. 

\begin{prop}[{\cite[Theorem $4$]{CKS}}] \label{prop:6}
Let $(f,g) \in (\HH)(\R^{d})$ be radial. Then we have the following vanishing of the energy away from the forward light-cone $\{|x|=t\ge0\}$:
\begin{equation*}
\lim_{T \to +\infty} \limsup_{t \to +\infty} \| \nabla_{t,x} S(t)(f,g) \|_{L^2(||x|-t| \ge T )}  =0.
\end{equation*}
\end{prop}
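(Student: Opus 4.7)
The plan is to reduce the problem to a one-dimensional half-line wave equation with an inverse-square potential and then exploit the outgoing/incoming decomposition of $1$D waves. By density of radial $C_c^\infty$ data in $\HH$ and the fact that $S(t)$ is an isometry of $\HH$, it suffices to establish the statement for radial initial data $(f,g)$ with $\supp(f,g)\subset\{|x|\le R\}$; the general case then follows by approximation, with an error that is uniformly small in $t$. For such data, finite speed of propagation yields
\[ \|\nabla_{t,x} S(t)(f,g)\|_{L^2(|x|\ge t+T)} = 0 \quad\text{for all } T\ge R, \]
so the whole problem reduces to controlling the interior piece $\|\nabla_{t,x} S(t)(f,g)\|_{L^2(|x|\le t-T)}$.

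For the interior, I would set $w(t,r):=r^{(d-1)/2}u(t,r)$ with $u=S(t)(f,g)$. A direct computation converts the radial free wave equation on $\R^{1+d}$ into the half-line equation
\[ w_{tt}-w_{rr}+\frac{(d-1)(d-3)}{4r^2}\,w=0,\qquad r>0,\quad w(t,0)=0, \]
and an integration by parts shows
\[ \|u(t)\|_\HH^2 = c_d\int_0^\infty\Big(w_t^2+w_r^2+\frac{(d-1)(d-3)}{4r^2}\,w^2\Big)\,dr, \]
so that (for $d\ge 3$) the $\HH$-energy of $u$ is a coercive energy for $w$. The data $w(0,\cdot),w_t(0,\cdot)$ remain supported in $[0,R]$.

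The heart of the argument is to establish an asymptotic outgoing/incoming decomposition
\[ w(t,r)=F(r-t)+G(r+t)+\rho(t,r),\qquad \|\partial_r\rho(t,\cdot)\|_{L^2}+\|\partial_t\rho(t,\cdot)\|_{L^2}\to 0, \]
with $F,G\in\dot H^1(\R)$ determined by the initial data through the Friedlander radiation fields of the (possibly perturbed) $1$D wave operator. Granting this, the incoming piece $G(r+t)$ has effective support $\{r+t\lesssim R\}$ and does not meet $\{r\le t-T\}$ once $t>R$; the outgoing piece contributes
\[ \int_{\{r\le t-T\}}|F'(r-t)|^2\,dr = \int_{-\infty}^{-T}|F'(s)|^2\,ds \xrightarrow[T\to\infty]{} 0 \]
since $F'\in L^2(\R)$; and $\rho$ contributes $o(1)$ by construction. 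Combined with the trivial exterior bound, this gives the claim.

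The main obstacle is establishing the outgoing/incoming decomposition with quantitative control of $\rho$. In odd dimensions, iterated Hadamard descent reduces everything to the free $1$D wave on the half-line, where for compactly supported data the decomposition is exact (this is essentially strong Huygens). In the even-dimensional case treated in \cite{CKS}, the inverse-square tail $(d-1)(d-3)/(4r^2)$ never vanishes and its long-range effect must be handled directly -- either perturbatively through Duhamel's formula against the free $1$D propagator combined with dispersive bounds, or abstractly via the Lax-Phillips scattering isometry for the perturbed operator. This last step is where the genuine technical content of the proposition lies.
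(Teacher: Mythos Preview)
The paper does not prove this proposition; it is cited from \cite{CKS} and used as a black box in the preliminaries section, so there is no proof in the paper to compare against.

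Your outline is sound. The density reduction to compactly supported smooth data and the use of finite speed of propagation to dispose of the exterior region $\{|x|\ge t+T\}$ are both correct and standard. The conjugation $w=r^{(d-1)/2}u$ to a half-line equation with inverse-square potential is also correct, and the energy identity you write is the right one (modulo boundary terms when localizing, which are harmless here since the weight $r^{-2}$ is integrable against $w^2$ near the origin for $\dot H^1$ data).

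You correctly identify the asymptotic outgoing/incoming decomposition as the crux, and you stop short of proving it. This is honest: in odd dimensions the coefficient $(d-1)(d-3)/4$ either vanishes ($d=3$) or can be removed by iterated conjugation, and strong Huygens then makes the interior contribution vanish exactly for compactly supported data. In even dimensions the long-range potential persists and one genuinely needs either the perturbative Duhamel argument you sketch, a direct analysis of the explicit Bessel-kernel representation of radial free waves, or an appeal to the existence of the Friedlander radiation field for the free wave equation in all dimensions (which is a known result and would give the decomposition with $\rho\to 0$ immediately). Any of these routes would close the argument; as written, your proposal is a correct reduction to the essential analytic step rather than a complete proof.
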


\subsection{Profile Decomposition} 
Another essential tool in our analysis will be the linear and nonlinear profile decompositions of Bahouri-Gerard, \cite{BG}. We begin with a profile decomposition for a bounded sequence $\vec u_n$ in the energy space in terms of free waves. The statement below was proved in $3$ space dimensions in \cite{BG} and extended to other dimensions, including $4$ space dimensions in~\cite{Bu}. 

\subsubsection{Linear profile decomposition}

\begin{thm}[{\cite[Main Theorem]{BG}, \cite[Theorem $1.1$]{Bu}}] \label{BG}
Consider a sequence  $\vec u_n =(u_{n, 0}, u_{n,1 }) \in \HH:= \dot{H}^1 \times L^2( \R^4)$, that is radial, and such that $ \|u_n\|_{\HH} \le C$. Then, up to extracting a subsequence,  there exists a sequence of free radial waves $\vec U_L^j  \in \HH$,  a sequence of times $\{t_{j,n}\}\subset \R$, and sequence of scales $\{\la_{j,n}\}\subset (0, \infty)$, and free wave $\vec w_n^k \in \q C(\R,\HH)$ (i.e., solution to \eqref{free wave}) such that\EQ{ \label{free prof} 
&u_{n, 0}(r) = \sum_{j=1}^k \frac{1}{\la_{j,n}} U_L^j \left( - \frac{t_{j,n}}{ \la_{j,n}}, \frac{r}{ \la_{j,n}} \right) + w_{n}^k(0,r)\\
& u_{n, 1}(r)= \sum_{j=1}^k \frac{1}{(\la_{j,n})^2} \partial_t U_L^j \left( - \frac{t_{j,n}}{\la_{j,n}}, \frac{r}{\la_{j,n}} \right) + \partial_t w_{n}^k(0,r)
}
and for any $j \le k$, that 
\begin{align} \label{w weak}
(\la_{j,n} w_n^k( \la_{j,n} t_{j,n},  \la_{j,n}\cdot) , \la_{j,n}^2 \partial_t w_n^k( \la_{j,n} t_{j,n},  \la_{j,n}\cdot)) \rightharpoonup 0\quad \textrm{weakly in} \quad \HH. 
\end{align}
In addition, for any $j\neq k$ we have
\EQ{ \label{scales}
\frac{\la_{j,n}}{\la_{k,n}} + \frac{\la_{k,n}}{\la_{j,n}} + \frac{\abs{t_{j,n}-t_{k,n}}}{\la_{j,n}} + \frac{\abs{t_{j,n}-t_{k,n}}}{\la_{k,n}} \to \infty \quad \mas n \to \infty.
}
Moreover, the errors $\vec w_n^k$ vanish asymptotically in the Strichartz space,  we have 
\EQ{ \label{w in strich}
\limsup_{n \to \infty} \left\| w_{n}^k \right\|_{L^{\infty}_tL^4_x \cap S( \R \times \R^4)}  \to 0 \quad \textrm{as} \quad k \to \infty.
}
Finally, we have the almost-orthogonality of the free energy as well as of the nonlinear energy~\eqref{E} of the decomposition: 
\begin{align} \label{free en dec}
&\|\vec u_n\|_{\HH}^2 = \sum_{1 \le j \le k} \left\| \vec U_L^j \left( - \frac{t_{j,n}}{\la_{j,n}} \right) \right\|_{\HH}^2  +  \|\vec w_n^k (0) \|_{\HH}^2 + o_n(1),\\
& E( \vec u_n) = \sum_{1 \le j \le k} E \left( \vec U_L^j \left( -\frac{ t_{j,n}}{\la_{j,n}} \right) \right)  +  E(\vec w_n^k(0)) + o_n(1),\label{nonlin en dec}
\end{align} 
as  $n \to \infty$.
\end{thm}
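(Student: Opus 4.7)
The approach is the standard Bahouri--Gerard concentration-compactness scheme: extract linear free-wave profiles one at a time by locating scales and times at which the evolution concentrates, then iterate on the residual. The plan is to reformulate the problem in terms of free waves by setting $\vec v_n(t) := S(t) \vec u_n$; conservation of the free energy gives $\|\vec v_n(t)\|_{\HH} \le C$ uniformly, so we have a bounded family of free solutions whose $L^\infty_t L^4_x$ and Strichartz norms we wish to decompose. All the data $(\vec U_L^j, t_{j,n}, \la_{j,n})$ in \eqref{free prof} is then read off from $\vec v_n$, and the error $\vec w_n^k$ is the free wave obtained after the first $k$ profiles have been subtracted.

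The extraction of a single profile goes through a concentration functional
\[
\nu(\{\vec v_n\}) := \limsup_{n \to \infty} \|v_n\|_{L^\infty_t L^4_x(\R \times \R^4)}.
\]
If $\nu(\{\vec v_n\}) = 0$, interpolation with the uniform $\HH$-bound (using Strichartz) yields $\|v_n\|_{S(\R)} \to 0$ and no further profile is extracted. Otherwise, by a standard argument combining dispersive decay for smooth approximants with the radial Sobolev embedding (and radiality to force the spatial core to be $0$), one finds, along a subsequence, a scale $\la_{1,n} > 0$ and a time $t_{1,n} \in \R$ such that
\[
\bigl(\la_{1,n} v_n(t_{1,n} + \la_{1,n} t, \la_{1,n} \cdot),\ \la_{1,n}^2 \partial_t v_n(t_{1,n} + \la_{1,n} t, \la_{1,n} \cdot)\bigr)
\]
converges weakly in $\HH$, evaluated at $t=0$, to the data of a nonzero radial free wave $\vec U_L^1$. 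Subtracting the corresponding rescaled profile from $\vec v_n(0)$ produces a new bounded free sequence $\vec w_n^1$ with strictly smaller $\dot H^1$ norm by the Hilbert-space Pythagoras identity for weak convergence.

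Iterating this produces the cores $(t_{j,n}, \la_{j,n})$ and profiles $\vec U_L^j$. The pairwise orthogonality \eqref{scales} of distinct cores is forced by the extraction: if it failed for some pair $j < k$, the rescaled sequence defining $\vec U_L^k$ as a weak limit would inherit a nonzero contribution from the $j$-th profile, contradicting the fact that that profile was already subtracted off. The weak-vanishing statement \eqref{w weak} and the quadratic identity \eqref{free en dec} are direct consequences of this construction. The nonlinear identity \eqref{nonlin en dec} then follows by combining \eqref{free en dec} with the asymptotic $L^4$-orthogonality of concentrated profiles at mutually orthogonal scales/times, which is a change-of-variables plus dominated-convergence argument that uses \eqref{scales} together with the density of smooth compactly supported functions in $\dot H^1$.

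The main obstacle is \eqref{w in strich}: the vanishing of the Strichartz norm of $\vec w_n^k$ as $k \to \infty$. The key is a quantitative bound of the form
\[
\|v_n\|_{S(\R)} \le C\bigl(\|\vec v_n\|_{\HH}\bigr) \cdot \nu(\{\vec v_n\})^{\theta}
\]
for some $\theta \in (0,1)$, obtained by interpolating the $4d$ Strichartz/Sobolev-Strichartz estimate in the pair $(3,6)$ between the energy space and $L^\infty_t L^4_x$. Combined with the fact that each extracted profile $\vec U_L^j$ has $\HH$-norm bounded below by a positive power of $\nu(\{\vec w_n^{j-1}\})$, and with the summability $\sum_j \|\vec U_L^j\|_{\HH}^2 \le C$ coming from \eqref{free en dec}, one concludes $\nu(\{\vec w_n^k\}) \to 0$ as $k \to \infty$, hence \eqref{w in strich}. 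The adaptation to $d=4$ is essentially notational once one has the correct critical pair $(3,6)$ and the $L^\infty_t L^4_x$ embedding available; this is the content of the extension in~\cite{Bu}.
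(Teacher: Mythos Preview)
The paper does not prove this theorem; it is stated as a preliminary result with proof deferred to the cited references \cite{BG} (for $d=3$) and \cite{Bu} (for the extension to $d=4$). Your sketch follows the standard Bahouri--G\'erard scheme from those references---extraction via a concentration functional on $L^\infty_t L^4_x$, iterated weak limits after rescaling, orthogonality of cores by contradiction, and Strichartz vanishing via interpolation---and is a faithful outline of that approach, so there is nothing to compare against here.
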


\begin{rem} \label{prof rem}
By rescaling and time-translating each profile $ \vec U^{j}_L$ appearing in~\eqref{free prof}, and by extracting subsequences we can, without loss of generality, assume  for each fixed $j$ that either we have 
\EQ{
 \forall n, \, \, t_{j,n} = 0,  \mor  \lim_{n \to \infty} \frac{t_{j,n}}{ \la_{j,n}} = \pm \infty.
}
Moreover, we can assume that for all $j$ the sequences $\{t_{j,n}\}$ and $\{\la_{j,n}\}$ have limits in $[-\infty, + \infty]$ and $[0, + \infty]$ respectively. 
\end{rem} 

We will also need the following refinement of the almost-orthogonality of the free energy, namely that the Pythagorean decomposition~\eqref{free en dec} of the $\HH$ norm of the sequence remains valid even after a spacial localization. This was proved for dimension $3$ in~\cite{DKM3} and for even dimensions in~\cite{CKS}. 

\begin{prop}[{\cite[Corollary $8$]{CKS}}] \label{loc en dec}Consider a sequence of radial data $\vec u_n \in \HH=\dot{H}^1 \times L^2( \R^4)$ such that $ \|u_n\|_{\HH} \le C$, and a profile decomposition of this sequence as in Theorem \ref{BG}. Let $\{r_n\}\subset (0, \infty)$ be any sequence. Then we have 
\ant{
 \| \vec u_n\|_{\HH(r \ge r_n)}^2  = \sum_{1 \le j \le k} \left\| \vec U_L^j \left( - \frac{t_n^j}{ \la_n^j} \right) \right\|_{\HH(r \ge r_n/ \la_n^j)}^2  +  \|\vec w_n^k(0)\|_{\HH(r \ge r_n)}^2 + o_n(1)
}
as  $n \to \infty$.
\end{prop}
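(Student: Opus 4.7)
The plan is to substitute the profile decomposition~\eqref{free prof} into the localized squared norm $\|\vec u_n\|_{\HH(r \ge r_n)}^2$ and expand the square. This produces a diagonal contribution from each profile which, after the change of variables $\rho = r/\la_{j,n}$, equals $\|\vec U_L^j(-t_{j,n}/\la_{j,n})\|_{\HH(\rho \ge r_n/\la_{j,n})}^2$, together with a diagonal contribution $\|\vec w_n^k(0)\|_{\HH(r \ge r_n)}^2$ from the error. These match the right-hand side of the claim exactly, so the problem reduces to showing that the finite collection of cross terms, namely the profile--profile pairings $\langle \vec\phi_{j,n}, \vec\phi_{\ell,n}\rangle_{\HH(r \ge r_n)}$ with $j \ne \ell$ and the profile--error pairings $\langle \vec\phi_{j,n}, \vec w_n^k(0)\rangle_{\HH(r \ge r_n)}$, are $o_n(1)$. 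Here $\vec\phi_{j,n}$ is the rescaled profile appearing in~\eqref{free prof}.

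For the profile--profile cross terms I would follow the standard orthogonality argument used to establish the unlocalized identity~\eqref{free en dec}. After extracting a subsequence and invoking Remark~\ref{prof rem}, each ratio $-t_{j,n}/\la_{j,n}$ is either identically $0$ or diverges to $\pm \infty$. In the first case $\vec U_L^j(0)$ is a fixed element of $\HH$, which I approximate to precision $\epsilon > 0$ in the energy norm by a smooth radial function compactly supported in a fixed annulus. In the second case, Proposition~\ref{prop:6} shows that, modulo an $\HH$-error of size $\epsilon$, the energy of $\vec U_L^j(-t_{j,n}/\la_{j,n})$ concentrates in an annulus around $|t_{j,n}/\la_{j,n}|$, where I may again replace the profile by a smooth compactly supported approximant. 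Undoing the rescaling, the approximants to distinct profiles live on annuli whose inner and outer radii differ by factors diverging to $0$ or $\infty$ thanks to~\eqref{scales}, so their supports are eventually disjoint. The localization $\mathbf 1_{\{r \ge r_n\}}$ is harmless since it only shrinks the domain of integration. Sending $\epsilon \to 0$ concludes.

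For the profile--error cross terms I change variables $\rho = r/\la_{j,n}$ to rewrite the pairing as an $\HH$-inner product, on the region $\{\rho \ge r_n/\la_{j,n}\}$, of $\vec U_L^j(-t_{j,n}/\la_{j,n})$ against the rescaled error data, the latter converging weakly to $0$ in $\HH$ by~\eqref{w weak} together with boundedness of the free wave propagator. Approximating the test function $\vec U_L^j(-t_{j,n}/\la_{j,n})\mathbf 1_{\{\rho \ge r_n/\la_{j,n}\}}$ by a smooth compactly supported radial function (again using Proposition~\ref{prop:6} to localize the energy when $|t_{j,n}/\la_{j,n}| \to \infty$) reduces the pairing to a weak--strong duality which tends to $0$ as $n \to \infty$, and sending the approximation error to zero finishes the proof.

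The main obstacle throughout is the treatment of profiles whose time parameters $-t_{j,n}/\la_{j,n}$ diverge: the naive density approximation by a fixed compactly supported function fails, and one must first trade the divergent time parameter for a moving spatial annulus via the vanishing of energy away from the light cone (Proposition~\ref{prop:6}). Once this spatial localization is in place, the pseudo-orthogonality~\eqref{scales} for distinct profiles and the weak convergence~\eqref{w weak} for the error do the rest, independently of the spatial truncation at $r = r_n$.
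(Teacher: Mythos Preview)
The paper does not supply its own proof of this proposition; it simply quotes the result from \cite{CKS}. Your outline is the standard argument and matches what one finds in \cite{CKS} and, for odd dimensions, in \cite[Lemma~2.4]{DKM3}: expand the square, identify the diagonal terms after rescaling, and kill each cross term. Your treatment of the profile--profile pairings is correct and complete, and the observation that the sharp cutoff $\mathbf 1_{\{r\ge r_n\}}$ only shrinks the domain of integration (hence cannot resurrect an inner product of functions with disjoint supports) is exactly the point.

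One place where your sketch is a bit thin is the profile--error pairing when $-t_{j,n}/\la_{j,n}\to\pm\infty$. There the object you want to test against the rescaled error, namely $\vec U_L^j(-t_{j,n}/\la_{j,n})\,\mathbf 1_{\{\rho\ge r_n/\la_{j,n}\}}$, depends on $n$ both through the time argument and through the cutoff radius, so ``weak--strong duality'' does not apply directly. The clean way to close this is to first pass to a subsequence along which $(r_n-|t_{j,n}|)/\la_{j,n}$ has a limit in $[-\infty,+\infty]$. If the limit is $-\infty$ the cutoff eventually contains the full concentration annulus $\{|\,|r|-|t_{j,n}|\,|\le R\la_{j,n}\}$ supplied by Lemma~\ref{lem:scaled_lin_disp}, and the localized pairing equals the unlocalized one up to $O(\epsilon)$; the latter vanishes by the standard Pythagorean expansion~\eqref{free en dec}. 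If the limit is $+\infty$ the intersection with the annulus is eventually empty and the pairing is $O(\epsilon)$ by Cauchy--Schwarz. The remaining finite-limit case needs one more ingredient (for instance the radiation-field description of free waves used in \cite{CKS} to reduce to a fixed one-dimensional profile, or an equivalent concentration-compactness argument). Since the full sequence statement follows once every subsequence has a further subsequence along which the cross terms vanish, this subsequence extraction is harmless. With that addition your argument is complete and coincides with the approach in the cited reference.
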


We also require the following technical lemmas for free waves proved in \cite{DKM1} in odd dimensions and in \cite{CKS} in even dimensions.  

\begin{lem}[{\cite[Lemma 4.1]{DKM1},\cite[Lemma 9]{CKS}}] \label{lem:scaled_lin_disp}
Let $\vec v(t)$ be a radial solution to the linear wave equation \eqref{free wave}, and $\{t_n\} \subset \R$, $\{\lambda_n\} \subset \R^*_+$ be two sequences. Define the sequence 
\EQ{\label{vn}
 v_n(t,x) = \frac{1}{\lambda_n} v \left( \frac{t}{\lambda_n}, \frac{x}{\lambda_n} \right). 
 }
 Assume that $\ds \frac{t_n}{\lambda_n} \to \ell \in \overline{\R}$. Then
\begin{align*} 
& \textrm{If } \ell \in \{ \pm \infty \}, & \quad \limsup_{n \to \infty}  \| \nabla_{x,t} v_n(t_n) \|^2_{L^2(||x| - |t_n|| \ge R \lambda_n)} & \to 0 \quad \textrm{as} \quad R \to +\infty, \\
& \textrm{If } \ell \in \R, & \quad 
\limsup_{n \to \infty} \| \nabla_{x,t} v_n(t_n) \|^2_{L^2(|\log (\abs{x}/\lambda_n)| \ge \log R)} & \to 0 \quad \textrm{as} \quad R \to +\infty.
\end{align*}
\end{lem}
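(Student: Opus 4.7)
The plan is to first absorb the scaling factor $\lambda_n$ by a change of variables, reducing both statements to assertions about the single free wave $\vec v$ evaluated at the times $s_n := t_n/\lambda_n$. Indeed, since $v_n(t,x) = \lambda_n^{-1} v(t/\lambda_n, x/\lambda_n)$, the chain rule and the substitution $y = x/\lambda_n$ give the scale-invariant identities
\[
\|\nabla_{t,x} v_n(t_n)\|_{L^2(\,||x|-|t_n||\ge R\lambda_n\,)} = \|\nabla_{t,x} v(s_n)\|_{L^2(\,||y|-|s_n||\ge R\,)},
\]
\[
\|\nabla_{t,x} v_n(t_n)\|_{L^2(\,|\log(|x|/\lambda_n)|\ge \log R\,)} = \|\nabla_{t,x} v(s_n)\|_{L^2(\,|\log|y||\ge \log R\,)}.
\]
Thus the problem becomes: control the energy of $\vec v(s_n)$ outside an $R$-neighborhood of the light cone (if $|s_n|\to\infty$), respectively outside the annular scale $1/R \le |y| \le R$ (if $s_n$ stays bounded).

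For the case $\ell = +\infty$, I would apply Proposition~\ref{prop:6} directly: since $s_n \to +\infty$, the limsup over $n$ of the localized energy is dominated by the limsup over $t \to +\infty$ of the same quantity, which vanishes as $R\to\infty$. For the case $\ell = -\infty$, I would reduce to the previous case by time reversal, setting $\tilde v(t,x) := v(-t,x)$, which is again a free radial wave with $|\nabla_{t,x}\tilde v(-s_n,y)| = |\nabla_{t,x} v(s_n,y)|$ and $|s_n| = -s_n$, so Proposition~\ref{prop:6} applied to $\tilde v$ along the sequence $-s_n \to +\infty$ closes the argument.

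For the case $\ell \in \R$, I would use continuity of the free flow on $\HH$: $\vec v(s_n) \to \vec v(\ell)$ strongly in $\HH$. Hence
\[
\limsup_{n\to\infty} \|\nabla_{t,x}v(s_n)\|_{L^2(|\log|y||\ge \log R)}^2 \le \|\nabla_{t,x}v(\ell)\|_{L^2(|\log|y||\ge \log R)}^2.
\]
The right-hand side tends to $0$ as $R\to\infty$ by dominated convergence, since $\nabla_{t,x}v(\ell) \in L^2(\R^4)$ and the set $\{|y|\ge R\}\cup\{|y|\le 1/R\}$ exhausts a Lebesgue-null set at each level and carries vanishing $L^2$ mass of any fixed $L^2$ function.

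The argument is short because the two delicate pieces have already been done: the vanishing of free-wave energy away from the light cone in even dimensions, established in \cite{CKS}, and the standard continuity of the free flow. The only real check is to track the scaling carefully in the definition of $v_n$ so that the energy identities above hold exactly and the spatial regions transform as claimed; this is straightforward but is where a sign or power of $\lambda_n$ could easily be miscounted, so I would expect the main (minor) care to be in verifying the change-of-variables computation.
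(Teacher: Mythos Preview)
Your argument is correct. The paper does not supply its own proof of this lemma; it simply cites \cite[Lemma~4.1]{DKM1} and \cite[Lemma~9]{CKS}, and your proof is precisely the natural one: rescale to reduce to $\vec v(s_n)$ with $s_n=t_n/\lambda_n$, then invoke Proposition~\ref{prop:6} (plus time reversal) when $|s_n|\to\infty$, and strong continuity of the flow together with dominated convergence when $s_n\to\ell\in\R$.
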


\begin{lem}[{\cite[Lemma 2.5]{DKM1}}] \label{lem:2.5} 
Let $\vec v_n$ be defined as in~\eqref{vn} and assume it has a profile decomposition as in Theorem~\ref{BG}. 
If 
\[ \lim_{R \to +\infty} \limsup_{n \to \infty} \int_{|x| \ge R \mu_n} |\nabla v_{0,n}|^2 + |v_{1,n}|^2 =0, \]
 then for all $j$ the sequences $\ds \left\{ \frac{\lambda_{j,n}}{\mu_n} \right\}_n$,  $\ds \left\{ \frac{t_{j,n}}{\mu_n} \right\}_n$ are bounded. Moreover, there exists at most one $j$ such that $\ds \left\{ \frac{\lambda_{j,n}}{\mu_n} \right\}_n$ does not converge to 0.
\end{lem}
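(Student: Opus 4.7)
The plan is to apply the localized Pythagorean decomposition of Proposition~\ref{loc en dec} at the radius $r_n := R\mu_n$ and to exploit the non-negativity of each summand together with Lemma~\ref{lem:scaled_lin_disp} to locate the energy of each profile. That proposition yields
\begin{equation*}
\|\vec v_n\|_{\HH(r \geq R\mu_n)}^2 = \sum_{1 \leq j \leq k} \left\| \vec U_L^j\left(-\tfrac{t_{j,n}}{\lambda_{j,n}}\right) \right\|_{\HH(\rho \geq R\mu_n/\lambda_{j,n})}^2 + \|\vec w_n^k(0)\|_{\HH(r \geq R\mu_n)}^2 + o_n(1),
\end{equation*}
and by hypothesis the left-hand side vanishes in the iterated limit $\lim_{R \to \infty} \limsup_{n \to \infty}$. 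Since every summand on the right is non-negative, for each fixed $j$ the profile contribution $\|\vec U_L^j(-t_{j,n}/\lambda_{j,n})\|_{\HH(\rho \geq R\mu_n/\lambda_{j,n})}$ vanishes in the same iterated limit; call this vanishing statement $(\mathrm{V})$.

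Next I would prove $\lambda_{j,n}/\mu_n$ is bounded. By Remark~\ref{prof rem} I may assume that either $t_{j,n} = 0$ for all $n$ or $t_{j,n}/\lambda_{j,n} \to \pm \infty$. Suppose toward contradiction that $\lambda_{j,n}/\mu_n \to +\infty$ along a subsequence, so $R\mu_n/\lambda_{j,n} \to 0$ for every fixed $R$. If $t_{j,n} = 0$, then $\vec U_L^j(0)$ is a fixed nontrivial element of $\HH$ whose norm on $\{\rho \geq \epsilon_n\}$ with $\epsilon_n \to 0$ tends to the full $\HH$-norm, contradicting $(\mathrm{V})$. If $t_{j,n}/\lambda_{j,n} \to \pm \infty$, Lemma~\ref{lem:scaled_lin_disp} (applied with $\lambda = 1$) shows that the energy of $\vec U_L^j(-t_{j,n}/\lambda_{j,n})$ is, up to an error vanishing as $R_0 \to \infty$, contained in the annulus $\bigl|\rho - |t_{j,n}/\lambda_{j,n}|\bigr| < R_0$; for $n$ large this annulus lies inside the region $\{\rho \geq R\mu_n/\lambda_{j,n}\}$, so the restricted norm again captures the full conserved $\HH$-norm of $\vec U_L^j$, contradicting $(\mathrm{V})$. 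A nearly identical argument yields boundedness of $t_{j,n}/\mu_n$: if $|t_{j,n}|/\mu_n \to \infty$, then $|t_{j,n}|/\lambda_{j,n} \to \infty$ (using the boundedness of $\lambda_{j,n}/\mu_n$), so Lemma~\ref{lem:scaled_lin_disp} places the profile's energy in an annulus around $\rho \sim |t_{j,n}|/\lambda_{j,n}$, which again sits inside $\{\rho \geq R\mu_n/\lambda_{j,n}\}$ for large $n$, contradicting $(\mathrm{V})$.

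For the final uniqueness statement, suppose two distinct indices $j \neq k$ satisfy $\lambda_{j,n}/\mu_n \to c_j$ and $\lambda_{k,n}/\mu_n \to c_k$ with $c_j, c_k > 0$. By the boundedness just established, $c_j, c_k \in (0,\infty)$, so $\lambda_{j,n}/\lambda_{k,n} \to c_j/c_k \in (0, \infty)$. The orthogonality relation~\eqref{scales} then forces $|t_{j,n} - t_{k,n}|/\lambda_{j,n} \to \infty$, hence $|t_{j,n} - t_{k,n}|/\mu_n \to \infty$, contradicting the boundedness of $t_{j,n}/\mu_n$ and $t_{k,n}/\mu_n$. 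The main technical obstacle throughout is the precise localization of the energy of the free wave $\vec U_L^j$ at times $t_{j,n}/\lambda_{j,n} \to \pm \infty$, which is exactly the content of Lemma~\ref{lem:scaled_lin_disp}; without it one cannot rule out the possibility that the concentration scale $\mu_n$ is much smaller than $\lambda_{j,n}$ simply because the profile's mass has translated far from the origin.
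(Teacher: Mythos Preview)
The paper does not supply its own proof of this lemma; it is quoted from \cite[Lemma~2.5]{DKM1} and simply invoked where needed (for instance in Claim~\ref{mu_tau_bounds}). Your argument is correct and is in fact the standard route: apply the localized Pythagorean expansion of Proposition~\ref{loc en dec} at radius $R\mu_n$, use non-negativity to isolate each profile, and then invoke Lemma~\ref{lem:scaled_lin_disp} to pin down where the energy of $\vec U_L^j(-t_{j,n}/\lambda_{j,n})$ lives. Steps 1--4 go through exactly as you wrote.

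One small point in your final uniqueness step: you write ``suppose two distinct indices $j\neq k$ satisfy $\lambda_{j,n}/\mu_n\to c_j>0$ and $\lambda_{k,n}/\mu_n\to c_k>0$,'' but the hypothesis is only that neither ratio converges to~$0$, not that either converges at all. In principle the two sequences could oscillate out of phase so that no common subsequence has both ratios bounded below. The fix is routine: after extracting the profile decomposition one may, by a further diagonal extraction, assume that each $\lambda_{j,n}/\mu_n$ has a limit in $[0,\infty]$ (this is implicit in Remark~\ref{prof rem} and is exactly how the lemma is used in the paper, e.g.\ in Claim~\ref{mu_tau_bounds}). With that convention your contradiction via~\eqref{scales} is clean.
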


We will also need the following result about sequences of radial free waves with vanishing Strichartz norms established in \cite{CKS} for even dimensions and which is the analog \cite[Claim $2.11$]{DKM1}, where the result was proved in odd dimensions only.

\begin{lem}[{\cite[Lemma $11$]{CKS}, \cite[Claim $2.11$]{DKM1}}] \label{local scat lem} 
Let $\vec w_n(0)= (w_{n,0}, w_{n,1})$ be a radial uniformly bounded sequence in $\HH= \dot{H}^1 \times L^2(\R^4)$ and let $\vec w_n(t)\in \HH$ be the corresponding sequence of radial $4d$ free waves. Suppose that 
\ant{
\|w_n \|_{S(\R)} \to 0 \mas  n \to \infty,
}
where $S(I)$ is as in~\eqref{SI}.  Let $\chi \in C^{\I}_0(\R^4)$ be radial so that $\chi \equiv 1$ on $\abs{x} \le 1$ and $\textrm{supp} \chi \subset \{ \abs{x} \le 2\}$. Let $\{\la_n\} \subset (0, \infty)$ and consider the truncated  data 
\ant{
\vec v_n(0):=  \fy(r/ \la_n) \vec w_n(0),
}
where either $\fy= \chi$ or $\fy= 1- \chi$. Let $\vec v_n(t)$ be the corresponding sequence of free waves. Then 
\ant{
\|v_n \|_{S(R)} \to 0 \quad \textrm{as} \quad n \to \infty.
}
\end{lem}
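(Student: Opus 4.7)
The plan is to apply the Bahouri--Gerard profile decomposition to $\vec v_n(0)=\varphi(r/\lambda_n)\vec w_n(0)$ and show that every extracted profile vanishes; by the asymptotic orthogonality of the $L^3_tL^6_x$-Strichartz norm in such a decomposition, this yields $\|v_n\|_{S(\R)}\to 0$. First I would use the scale-invariance of the statement (replacing $\vec w_n$ by $\lambda_n\vec w_n(\lambda_n\,\cdot)$) to reduce to $\lambda_n\equiv 1$, so that $\varphi\in\{\chi,1-\chi\}$ is a fixed smooth bounded cutoff and $\vec v_n(0)$ is uniformly bounded in $\HH$; Theorem~\ref{BG} then produces profiles $\vec V^j_L$ with parameters $(\mu_{j,n},t_{j,n})$.

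The crucial starting observation is that $\vec w_n(0)$ itself has a trivial BG decomposition: the same Strichartz orthogonality applied to the decomposition of $\vec w_n(0)$, combined with $\|w_n\|_{S(\R)}\to 0$, forces every profile of $\vec w_n(0)$ to have zero $S(\R)$-norm, hence to vanish. Equivalently, for every sequence of scales $\nu_n>0$ and times $\tau_n\in\R$, the rescaled and time-evolved data
\[
G_{\nu_n,\tau_n}\vec w_n(0):=\bigl(\nu_n w_n(\tau_n,\nu_n\,\cdot),\ \nu_n^2\partial_t w_n(\tau_n,\nu_n\,\cdot)\bigr)\rightharpoonup 0 \quad\text{weakly in }\HH.
\]
Now fix $j$. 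By Remark~\ref{prof rem} I may assume either $t_{j,n}\equiv 0$ or $|t_{j,n}/\mu_{j,n}|\to\infty$, and after extracting a subsequence $\mu_{j,n}$ has a limit in $[0,\infty]$. The profile $\vec V^j_L(0)$ is characterized as the weak $\HH$-limit of $R_{\mu_{j,n}}\vec v_n(t_{j,n})$, where $R_\mu$ denotes the $\HH$-preserving rescaling. I would then decompose $\vec v_n(t)=\varphi\vec w_n(t)+\vec e_n(t)$, where $\vec e_n$ solves the inhomogeneous wave equation $\Box \vec e_n=\bigl(0,(\Delta\varphi)w_n+2\nabla\varphi\cdot\nabla w_n\bigr)$ with zero Cauchy data. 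The main piece rescales to $\varphi(\mu_{j,n}\,\cdot)\cdot G_{\mu_{j,n},t_{j,n}}\vec w_n(0)$, which tends weakly to zero by the no-profiles property of $\vec w_n$ together with the fact that multiplication by the bounded cutoff $\varphi(\mu_{j,n}\,\cdot)$ preserves weak convergence in $\HH$ (a short case analysis on $\mu_{j,n}\to 0$, $\infty$, or a positive limit, using that the gradient of the multiplier has support either receding to infinity or contracting to the origin, so its $L^2$-pairing with any fixed test vector vanishes by absolute continuity, as in the analysis behind Lemma~\ref{lem:scaled_lin_disp}).

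The main obstacle is controlling the error term $R_{\mu_{j,n}}\vec e_n(t_{j,n})$ when $|t_{j,n}|\to\infty$, since the naive energy bound gives only $\|\vec e_n(t_{j,n})\|_{\HH}\lesssim |t_{j,n}|$. To circumvent this I would exploit that the forcing is spatially localized on the fixed compact annulus $A=\mathrm{supp}\,\varphi'\cup\mathrm{supp}\,\Delta\varphi$, on which the interpolation $\|w_n\|_{L^5_tL^5_x(A)}\lesssim\|w_n\|_{S(\R)}^{3/5}\|w_n\|_{L^\infty_tL^4_x}^{2/5}\to 0$ provides a decaying Strichartz norm; combined with a suitable dual-Strichartz inhomogeneous estimate for the $4d$ wave equation, together with finite speed of propagation --- which confines $\vec e_n(t_{j,n})$ to the shell $r\in[\max(0,1-|t_{j,n}|),2+|t_{j,n}|]$ --- one obtains that the rescaled error $R_{\mu_{j,n}}\vec e_n(t_{j,n})$ has vanishing weak limit in $\HH$. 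Together with the main-term analysis this forces $\vec V^j_L(0)=0$ for every $j$, completing the proof.
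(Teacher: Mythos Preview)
Note first that the paper does not prove this lemma; it is quoted from \cite{CKS} and \cite{DKM1}, so there is no in-paper argument to compare against. Your overall strategy---reduce to $\lambda_n\equiv 1$, extract a profile decomposition of $\vec v_n(0)=\varphi\,\vec w_n(0)$, and kill every profile using the absence of profiles for $\vec w_n$---is the natural one, and your treatment of the ``main piece'' $\varphi(\mu_{j,n}\cdot)\,G_{\mu_{j,n},t_{j,n}}\vec w_n(0)$ via the case split on $\lim\mu_{j,n}$ is essentially correct (and indeed the case $t_{j,n}\equiv 0$ is then immediate, since $\vec e_n(0)=0$).

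The genuine gap is in the error term when $|t_{j,n}/\mu_{j,n}|\to\infty$. The forcing is
\[
\Box e_n=-(\Delta\varphi)\,w_n-2\,\nabla\varphi\cdot\nabla w_n,
\]
and the decaying norm $\|w_n\|_{L^5_tL^5_x(A)}\to 0$ that you invoke controls only the first summand. The second summand involves $\nabla w_n$, for which the only available information is the conserved bound $\|\nabla w_n(t)\|_{L^2}=\|\nabla w_{n,0}\|_{L^2}$; no Strichartz or dual-Strichartz norm of $\nabla\varphi\cdot\nabla w_n$ tends to zero. If you try to integrate by parts to push the gradient onto the rescaled test wave $\Psi_n(s,x)=\mu_{j,n}^{-2}\Theta_2\bigl((s-t_{j,n})/\mu_{j,n},x/\mu_{j,n}\bigr)$, the extra factor $\mu_{j,n}^{-1}$ from $\nabla_x\Psi_n$ makes the required dual norm blow up like a negative power of $\mu_{j,n}$. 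In particular the sub-case $\mu_{j,n}\to 0$ with $|t_{j,n}/\mu_{j,n}|\to\infty$ but $t_{j,n}$ bounded---which is allowed by Remark~\ref{prof rem}---is not covered, and the finite-speed shell you cite contains the origin once $|t_{j,n}|\ge 1$, so it does not separate the error from the small-scale test function either. In short, the commutator $[\Box,\varphi]w_n$ is not small in any norm that closes your inhomogeneous estimate, and a different mechanism (as in the cited references) is needed to rule out profiles with $|t_{j,n}/\mu_{j,n}|\to\infty$.
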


\subsubsection{Nonlinear profiles}\label{nonlinear prof}

\begin{defn}\label{nonlin prof def}
Let $\vec U_{\mr L}$ be a linear solution to \eqref{free wave}, and $\ell \in [-\infty, +\infty]$. We define the nonlinear profile associated to $(\vec U_{\mr L}, \ell)$ as the unique nonlinear solution $\vec U(t)$ to \eqref{u eq}, defined on a neighborhood of $\ell$, and such that
\[ \| \vec U(t) - \vec U_{\mr L}(t) \|_{\HH} \to 0 \quad \text{as} \quad t \to \ell. \]
\end{defn}
Existence and uniqueness of $\vec U(t)$ are consequences of the local Cauchy theory for \eqref{u eq},~\cite{KM08,SS93, SS94}, and more precisely of the existence of wave operators if $\ell$ is infinite. It is important to note that in the latter case $\ell \in \{ +\pm \infty \}$, the nonlinear profile $\vec U$ scatters at $\ell$: for example if $\ell=+\infty$,
\EQ{
s_0> T_{-}( \vec U) \Longrightarrow  \| U \|_{S((s_0, \infty))}< \infty.
}
A similar statement holds for $\ell=-\infty$.

In the case of a profile decomposition as in \eqref{free prof} with profiles $\{\vec U^j_L\}$ and parameters $\{t_{j,n}, \la_{j,n}\}$ we will denote by $\{\vec U^j\}$ the non-linear profiles associated to $\ds \left( \vec U^j_L, \lim_{n \to +\infty} - \frac{t_{j,n}}{ \la_{j,n}} \right)$ (we recall that this limit exists by assumption, as explained in Remark \ref{prof rem}). For convenience, we will often use the notation 
\EQ{ \label{V}
U_{L, n}^j(t, r) & :=  \frac{1}{\la_{j,n}} U_L^j\left( \frac{t-t_{j,n}}{\la_{j,n}}, \frac{r}{ \la_{j,n}}\right),\\
U_{n}^j(t, r) & := \frac{1}{\la_{j,n}} U^j\left( \frac{t-t_{j,n}}{\la_{j,n}}, \frac{r}{ \la_{j,n}}\right).
}

\begin{prop}[Nonlinear profile decomposition] \cite{CKLS1, DKM1}\label{nonlin profile} Let $(u_{n, 0}, u_{n, 1}) \in \HH$ be a bounded sequence together with its profile decomposition as in~\eqref{free prof}. Let $ \{\vec U^j\}$, be the associated nonlinear profiles. Let $\{s_n\}  \subset (0, \infty)$ be any sequence of times so that for all $j \ge 1$,
\EQ{
\forall n, \, \, \frac{s_n - t_{j,n}}{ \la_{j,n}} < T_+( \vec U^{j}) \mand   \limsup_{n \to \infty}  \|U^j\|_{S\left( \frac{-t_{j,n}}{ \la_{j,n}},  \frac{ s_n- t_{j,n}}{ \la_{j,n}} \right)} < \infty, \\
  }
  If $\vec u_n(t) \in \HH$ is the solution to~\eqref{u eq} with initial data $\vec u_n(0) = (u_{n, 0}, u_{n, 1})$ then $\vec u_n(t)$ is defined on $[0, s_n)$ and 
\[    \limsup_{n \to \infty} \|u_n\|_{S([0, s_n))} < \infty.  \]
Moreover the following nonlinear profile decomposition holds: For $\eta_n^k$ defined by 
\begin{gather} \label{BG nonlin}
 \vec u_{n}(t,r) = \sum_{1\le j<k}  \vec U^j_{ n}(t, r)  + \vec w_{n}^k(t) + \vec{\eta}_{n}^k(t), 
\end{gather}
we have 
\[
\lim_{k \to \infty} \limsup_{n \to \infty}  \left(\| \eta_n^k\|_{S([0,  s_n))} + \|  \vec {\eta}_n^k\|_{L^{\infty}_t([0, s_n); \HH)} \right) = 0.
\]
Here $w_{n}^k(t) \in \HH$ is as in Proposition~\ref{BG} and  $V_{ n}^j$ is defined as in~\eqref{V}. Also, we note that an analogous statement holds for $ s_n<0$. 
\end{prop}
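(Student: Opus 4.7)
The plan is to construct an approximate solution out of the non-linear profiles together with the free error term, show that it solves \eqref{u eq} up to an error that vanishes in a suitable dual Strichartz norm, and then invoke the long-time perturbation lemma from the local Cauchy theory to identify this approximate solution with $\vec u_n$ up to a vanishing remainder $\vec\eta_n^k$.

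First I would set
\[ \vec U_{\mathrm{app},n}^k(t) := \sum_{j=1}^{k-1} \vec U_n^j(t) + \vec w_n^k(t), \]
which by \eqref{free prof} satisfies $\vec U_{\mathrm{app},n}^k(0) = \vec u_n(0)$ exactly. The first substantive task is a bound $\limsup_n \|U_{\mathrm{app},n}^k\|_{S([0,s_n))} \leq C$ uniformly in $k$. For each fixed $j < k$, scaling invariance of $S$ transports the hypothesis $\limsup_n \|U^j\|_{S(-t_{j,n}/\la_{j,n},\,(s_n-t_{j,n})/\la_{j,n})} < \infty$ to a uniform bound on $\|U_n^j\|_{S([0,s_n))}$. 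For $j$ large, the Pythagorean decomposition \eqref{free en dec} forces $\|\vec U_L^j(-t_{j,n}/\la_{j,n})\|_{\HH}$ to be small, so the small data theory \eqref{global small} applied to the associated nonlinear profile yields $\|U^j\|_{S(\R)} \lesssim \|\vec U_L^j\|_{\HH}$. Summing in $j$ and exploiting the pseudo-orthogonality \eqref{scales} to decouple the space-time supports of distinct $U_n^j$ after rescaling then gives the desired Strichartz bound on $U_{\mathrm{app},n}^k$.

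Next, since $\Box w_n^k = 0$ and $\Box U_n^j = (U_n^j)^3$, the error is
\[ e_n^k := \Box U_{\mathrm{app},n}^k - (U_{\mathrm{app},n}^k)^3 = \sum_{j=1}^{k-1} (U_n^j)^3 - \Big(\sum_{j=1}^{k-1} U_n^j + w_n^k\Big)^3, \]
which expands into two families of terms: (i) mixed products $U_n^{j_1} U_n^{j_2} U_n^{j_3}$ with $(j_1,j_2,j_3)$ not all equal, and (ii) terms containing at least one factor of $w_n^k$. I would then establish
\[ \lim_{k\to\infty}\limsup_{n\to\infty}\, \|e_n^k\|_{L^1_t L^2_x([0,s_n)\times\R^4)} = 0. \]
For (i), a change of variables centered on, say, the $j_1$-profile together with \eqref{scales} shows that copies $U_n^{j_2}$ with $j_2 \neq j_1$ leave any fixed compact region of the rescaled space-time in the limit, while the $j_1$-profile has Strichartz mass concentrated on a bounded set (since each non-linear profile scatters at $\pm\infty$ and hence has finite $S(\R)$ norm); Hölder then yields the vanishing. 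For (ii) I would combine the uniform $L^\infty_t L^4_x$ bound on $w_n^k$ (from $\HH$-boundedness and Sobolev), the $S(\R)$-smallness \eqref{w in strich}, and Lemma \ref{local scat lem} applied to $w_n^k$ truncated at each scale $\la_{j,n}$ to handle the interaction with localized profiles.

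Finally, the Strichartz bound on $U_{\mathrm{app},n}^k$ together with the vanishing of $e_n^k$ feed into the standard long-time Strichartz perturbation lemma from the local theory: for $k \geq K_0$ and $n \geq n_0(k)$, the true solution $\vec u_n$ exists on $[0,s_n)$, one has $\limsup_n \|u_n\|_{S([0,s_n))} < \infty$, and the remainder $\vec\eta_n^k := \vec u_n - \vec U_{\mathrm{app},n}^k$ satisfies the asserted double-limit vanishing in $S \cap L^\infty_t \HH$. The main obstacle, and the heart of the proof, is the cross-term analysis in the middle step: extracting decoupling of distinct $U_n^j$ in the Strichartz norm from only the pseudo-orthogonality \eqref{scales} requires a careful combination of the scattering of each nonlinear profile at $\pm\infty$ (to reduce the relevant space-time to a compact set) with a rescaling argument showing that other profiles leave that compact set in the limit $n \to \infty$.
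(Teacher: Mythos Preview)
The paper does not supply its own proof of this proposition; it is quoted as a known result from \cite{CKLS1, DKM1}. Your sketch is precisely the standard argument found in those references (and ultimately in Bahouri--G\'erard and Kenig--Merle): build an approximate solution from the nonlinear profiles plus the linear remainder, show the equation error vanishes in a dual Strichartz norm via the pseudo-orthogonality \eqref{scales} and the smallness \eqref{w in strich} of $w_n^k$, then invoke the long-time perturbation lemma.

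Two small corrections. First, $\vec U_{\mathrm{app},n}^k(0)$ is not equal to $\vec u_n(0)$ \emph{exactly}: the decomposition \eqref{free prof} involves the linear profiles $\vec U_{L,n}^j(0)$, not the nonlinear ones $\vec U_n^j(0)$. The discrepancy is $\sum_{j<k}\bigl(\vec U_n^j(0)-\vec U_{L,n}^j(0)\bigr)$, which is $o_n(1)$ in $\HH$ by the definition of nonlinear profiles (either $t_{j,n}=0$ and they agree at $0$, or $-t_{j,n}/\la_{j,n}\to\pm\infty$ and the difference tends to zero); this is all the perturbation lemma needs. Second, invoking Lemma~\ref{local scat lem} for the $w_n^k$ cross-terms is unnecessary: a direct H\"older estimate in $L^1_tL^2_x$ bounds, e.g., $\bigl(\sum_j U_n^j\bigr)^2 w_n^k$ by $\bigl\|\sum_j U_n^j\bigr\|_{S}^2\,\|w_n^k\|_{S}$, and the last factor is taken care of by the double limit.
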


\begin{defn}[Ordering of the profiles, \cite{DKM6}] \label{re order}
Let $\{ \vec U^j_{\mr L}, \{t_{j,n}, \la_{j,n}\} \}$ be a profile decomposition as in~\eqref{free prof}, and let $\vec U^j$ their nonlinear profiles.
We introduce the following pre-order $\preccurlyeq$ on the profiles as follows. For $j,k \ge 1$, we say that
\[ \{\vec U_{\mr L}^j, \{t_{j,n}, \la_{j,n}\} \} \preccurlyeq \{\vec U_{\mr L}^k, \{t_{k,n}, \la_{k,n}\} \} \quad \text{(or simply } j \preccurlyeq k \text{ if there is no ambiguity)}   \]
if one of the following holds:
\begin{enumerate}
\item the nonlinear profile $\vec U^k$ scatters forward in time.
\item the nonlinear profile $\vec U^j$ does not scatter forward in time, and
\[ \forall\,  T < T_+(\vec U^j), \quad \lim_{n \to +\infty} \frac{\lambda_{j,n} T+ t_{j,n} - t_{k,n}}{\lambda_{k,n}} < T_+(\vec U^k). \]
(The above limit exists due to the arguments in~\cite[Discussion after (3.16) and Appendix A.1]{DKM6}.)
\end{enumerate}
We say that $\{\vec U_{\mr L}^j, \{t_{j,n}, \la_{j,n}\} \} \prec \{\vec U_{\mr L}^k, \{t_{k,n}, \la_{k,n}\} \}$ if 
\[ \{\vec U_{\mr L}^j, \{t_{j,n}, \la_{j,n}\} \} \preccurlyeq \{\vec U_{\mr L}^k, \{t_{k,n}, \la_{k,n}\} \} \ \  \text{and} \ \ \{\vec U_{\mr L}^k, \{t_{k,n}, \la_{k,n}\} \} \not\preccurlyeq \{\vec U_{\mr L}^j, \{t_{j,n}, \la_{j,n}\} \} . \]
\end{defn}

\begin{lem}[{\cite[Claim 3.7]{DKM6}}] \label{lem:3.7}
Let $(u_{0,n},u_{1,n}) \subset  \HH$ be a bounded sequence with profile decomposition $\{\vec U_{\mr L}^j, \lambda_{j,n},t_{j,n}\}_{j \in \N}$. Then one can assume without loss of generality that the profiles are ordered, that is
\[ \forall i \le j, \quad \{\vec U_{\mr L}^i, \lambda_{i,n}, t_{i,n}\} \preccurlyeq \{\vec U^j_{\mr L}, \lambda_{j,n}, t_{j,n}\}. \]
\end{lem}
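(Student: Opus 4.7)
The plan is to verify that $\preccurlyeq$ is a pre-order, show that any two profiles are comparable (so it is in fact a total pre-order modulo equivalence), and then produce the re-indexing by a combinatorial sorting argument. The re-indexing itself is purely formal; the content is in the pre-order axioms and totality.

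\textbf{Pre-order.} Reflexivity $j \preccurlyeq j$ follows from condition (1) of Definition~\ref{re order} if $\vec U^j$ scatters forward in time, and otherwise from condition (2), since the defining limit reduces to $T < T_+(\vec U^j)$. For transitivity, assume $i \preccurlyeq j$ and $j \preccurlyeq k$; the only case needing work is when $\vec U^k$ does not scatter. Then condition (2) applied to $j \preccurlyeq k$ forces $\vec U^j$ to be non-scattering, and applied to $i \preccurlyeq j$ forces $\vec U^i$ to be non-scattering. For $T < T_+(\vec U^i)$, I would use the algebraic identity
\[ \frac{\lambda_{i,n} T + t_{i,n} - t_{k,n}}{\lambda_{k,n}} = \frac{\lambda_{j,n}}{\lambda_{k,n}} \cdot \frac{\lambda_{i,n} T + t_{i,n} - t_{j,n}}{\lambda_{j,n}} + \frac{t_{j,n} - t_{k,n}}{\lambda_{k,n}}, \]
set $T' := \lim_{n} \tfrac{\lambda_{i,n} T + t_{i,n} - t_{j,n}}{\lambda_{j,n}} < T_+(\vec U^j)$, whose existence is the content of the analysis recalled from \cite[Appendix A.1]{DKM6}, and then apply the condition $j \preccurlyeq k$ at any time $T'' \in (T', T_+(\vec U^j))$ to conclude that $\lim_n \tfrac{\lambda_{i,n} T + t_{i,n} - t_{k,n}}{\lambda_{k,n}} < T_+(\vec U^k)$, giving $i \preccurlyeq k$.

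\textbf{Finiteness of non-scattering profiles and totality.} By the small-data scattering bound \eqref{global small}, there exists $\delta > 0$ such that any profile with $\|\vec U_L^j\|_{\HH} < \delta$ scatters globally. Since the free flow preserves the $\HH$-norm, the Pythagorean identity \eqref{free en dec} gives a uniform bound on $\sum_j \|\vec U_L^j\|_{\HH}^2$, so only finitely many profiles $\vec U^1, \dots, \vec U^{J_*}$ can fail to scatter forward in time. For a pair of such non-scattering profiles $j, k$, the pseudo-orthogonality \eqref{scales} forces, after extracting a common subsequence (diagonalized over the countable collection of pairs), one of three alternatives: $\lambda_{j,n}/\lambda_{k,n} \to 0$, $\lambda_{j,n}/\lambda_{k,n} \to +\infty$, or $\lambda_{j,n}/\lambda_{k,n} \to \mu \in (0,\infty)$ with $|t_{j,n} - t_{k,n}|/\lambda_{j,n} \to \infty$. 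In each case one computes the limits appearing in condition (2) for both candidate orderings (they all live in $[-\infty,+\infty]$) and a direct check verifies that at least one of $j \preccurlyeq k$ or $k \preccurlyeq j$ holds.

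\textbf{Re-indexing.} Fix a linear extension of $\preccurlyeq$ on the finite set $\{\vec U^1, \dots, \vec U^{J_*}\}$ and list the remaining (scattering) profiles after in arbitrary order. Condition (1) ensures that every non-scattering profile precedes every scattering profile and that any two scattering profiles are $\preccurlyeq$-equivalent, so this global list is consistent with $\preccurlyeq$. Relabelling the indices in the profile decomposition \eqref{free prof} by this permutation only permutes summands, so Theorem~\ref{BG} and Proposition~\ref{nonlin profile} remain valid, yielding the ordered decomposition claimed in the lemma. The main technical burden is the transitivity step: the algebra itself is elementary, but deducing the correct strict inequality $\lim_n \tfrac{\lambda_{i,n} T + t_{i,n} - t_{k,n}}{\lambda_{k,n}} < T_+(\vec U^k)$ requires combining the pseudo-orthogonality \eqref{scales} with the inequality at stage $j$ in a way that handles all configurations of sizes and signs of the scale and time-shift ratios.
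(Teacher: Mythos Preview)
The paper does not prove this lemma; it is quoted directly from \cite[Claim~3.7]{DKM6}, so there is no in-paper argument to compare against. Your sketch (verify that $\preccurlyeq$ is a pre-order, observe via small-data scattering and \eqref{free en dec} that only finitely many profiles are non-scattering, establish totality by case analysis on the limits of $\lambda_{j,n}/\lambda_{k,n}$ and $(t_{j,n}-t_{k,n})/\lambda_{k,n}$, then re-index) is exactly the strategy carried out in \cite{DKM6}, and the structure is correct.

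Two remarks on presentation. First, the phrase ``diagonalized over the countable collection of pairs'' is slightly off: once you know only $J_*$ profiles are non-scattering, there are only finitely many pairs to handle, and the subsequence extraction needed for the limits in Definition~\ref{re order} (and in \cite[Appendix~A.1]{DKM6}) is finite, not a diagonal argument. Second, the genuine content you defer---totality in all three cases and the edge behaviour of the transitivity identity when $T'=-\infty$ or when $\lambda_{j,n}/\lambda_{k,n}$ degenerates---is precisely what \cite[Appendix~A.1]{DKM6} supplies; your last sentence correctly identifies this as the technical burden, but in a self-contained write-up you would need to carry out at least one of those case checks explicitly rather than asserting it.
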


\subsection{Classification of pre-compact solutions} Finally, we recall the following classification of finite energy solutions $\vec u(t) \in \HH$ to~\eqref{u eq} that have pre-compact trajectories in $\HH$ up to symmetries. In particular, we say that a solution $\vec u(t)$ has the {\em compactness property} on an interval $I \subset \R$ if there exists a function $\la:  I \to (0, \infty)$ so that the trajectory 
\ant{
K= \left\{  \left(\frac{1}{ \la(t)} u\left(t, \frac{\cdot}{ \la(t)}\right),  \frac{1}{ \la^2(t)} \p_t u\left(t, \frac{\cdot}{ \la(t)} \right)\right) \mid t \in I\right\} \subset \HH
}
is pre-compact in $\HH$. A complete classification of solution $\vec u(t)$ with the compactness property was obtained in \cite{DKM1}. In particular there it was shown that $\vec u(t)$ is either identically $0$ or is $W$ up to a rescaling. 
\begin{thm} \cite[Theorem $2$]{DKM1} \label{compactness} Let $\vec u(t) \in \HH$ be a nontrivial solution to~\eqref{u eq} with the compactness property on its maximal interval of existence $I_{\max}$. Then  there exists $\la_0>0$ so that 
\ant{
 u(t, r) = \pm\frac{1}{\la_0} W\left( \frac{ r}{\la_0} \right).
 }
 \end{thm}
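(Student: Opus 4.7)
The plan is to reduce the classification to showing that any solution with the compactness property must be stationary in the sense that $u_t \equiv 0$, after which the uniqueness of radial $\dot H^1$ solutions to $-\Delta u = u^3$ recalled just before Lemma~\ref{E<2W} immediately gives $u = \pm \lambda_0^{-1} W(\cdot/\lambda_0)$ for some $\lambda_0 > 0$.

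First I would verify that the solution extends to $I_{\max} = \R$. If instead $T_+(\vec u) < \infty$, one uses finite speed of propagation together with the concentration in $\HH$ of the rescaled family
\[ \bigl(\lambda(t)^{-1} u(t, \cdot/\lambda(t)),\ \lambda(t)^{-2} u_t(t, \cdot/\lambda(t))\bigr) \]
to pin down the rate $\lambda(t) \to 0$ as $t \nearrow T_+(\vec u)$, and then a standard flux/support argument (energy cannot escape, yet must concentrate at scale $\lambda(t) \ll T_+ - t$) contradicts nontriviality. An analogous argument handles $T_-(\vec u) > -\infty$, so $I_{\max} = \R$.

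The crucial step is the rigidity $u_t \equiv 0$. By compactness of $K$ one extracts a sequence $t_n$ and a limit $(\varphi, \psi) \in \HH$ such that $\bigl(\lambda_n^{-1} u(t_n, \cdot/\lambda_n),\ \lambda_n^{-2} u_t(t_n, \cdot/\lambda_n)\bigr) \to (\varphi, \psi)$ in $\HH$, with $\lambda_n := \lambda(t_n)$. Applying the linear profile decomposition of Theorem~\ref{BG} to the rescaled sequence and passing to nonlinear profiles via Proposition~\ref{nonlin profile}, the leading nonlinear profile is driven by $(\varphi, \psi)$. I would then split $(\varphi, \psi) = (\varphi, 0) + (0, \psi)$ and apply the exterior energy estimate of Proposition~\ref{lin ext estimate} to the $(\varphi, 0)$ component, which forces a definite fraction $\alpha_0 \|\varphi\|_{\dot H^1}$ of linear energy to escape along the cone $\{r \ge |\tau|\}$. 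On the other hand, the compactness of $K$, together with Proposition~\ref{loc en dec}, forces the energy of $\vec u(t_n + \tau/\lambda_n)$ outside such large rescaled regions to vanish. Comparing the two via the Pythagorean decomposition, and averaging over both past and future cones as in~\cite{DKM1, CKLS1, CKLS2} to absorb the contribution of the $(0, \psi)$ piece for which no direct channel estimate is available in four dimensions, one concludes $\psi = 0$. Reapplying this argument at every time $t$ (using that $K$ is invariant under the flow) yields $u_t \equiv 0$ on $I_{\max} = \R$.

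Once $u_t \equiv 0$, equation~\eqref{u eq} reduces to the stationary elliptic equation $-\Delta u = u^3$ on $\R^4$ with $u \in \dot H^1$, and by the uniqueness of radial $\dot H^1$ solutions recorded after~\eqref{W eq} we obtain $u = \pm \lambda_0^{-1} W(\cdot/\lambda_0)$. The main obstacle is the rigidity step forcing $\psi = 0$: in even dimensions the \cite{CKS} exterior energy estimate is valid only for radial data of the form $(f, 0)$, so one cannot apply it directly to the full limiting datum $(\varphi, \psi)$. The argument must therefore exploit the symmetry of the compact trajectory under the flow and combine forward/backward channels to cancel the obstruction from the velocity profile; this is precisely the technical heart of the adaptation of the rigidity argument of \cite{DKM1} to four space dimensions along the lines of \cite{CKLS1, CKLS2}.
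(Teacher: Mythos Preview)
The paper does not prove this theorem. It is quoted in Section~\ref{prelim} as a preliminary result from \cite[Theorem~2]{DKM1} and is used as a black box (in Step~4 of Sections~\ref{2W bu} and~\ref{2W global}); there is no proof in the present paper to compare against.

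As for your proposal on its own merits, the rigidity step has a genuine gap. You apply Proposition~\ref{lin ext estimate} to the $(\varphi,0)$ component and then assert that ``averaging over both past and future cones'' absorbs the $(0,\psi)$ contribution so as to conclude $\psi=0$. Two problems. First, the logic is inverted: the channel estimate for data $(\varphi,0)$, combined with the vanishing of exterior energy forced by compactness, would constrain $\varphi$, not $\psi$. Second, and more seriously, there is no known averaging mechanism that upgrades the $(f,0)$-only estimate of \cite{CKS} to control the velocity component; the message of \cite{CKS} is precisely that the exterior lower bound \emph{fails} for $(0,g)$ data in $d=4$, and time-reversal does not help since the estimate is already stated for all $t\in\R$. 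In the body of this paper the vanishing of the time derivative along a sequence (Lemmas~\ref{sig lem} and~\ref{global sig lem}) is obtained by an entirely different route---the $2d$ reduction and flux positivity of Sections~\ref{self sim bu}--\ref{self sim global}---and only \emph{after} that does the $(f,0)$ channel estimate enter to kill the dispersive error. Your sketch conflates that two-stage argument with a direct channel-of-energy proof of rigidity, which in $d=4$ is not available.
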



\section{Self-similar and exterior regions: blow-up solutions}\label{self sim bu}
The goal of this section is to show that a type-II blow-up solution $\vec u(t)$, with, say, $T_+( \vec u) = 1$, cannot concentrate any energy in the self similar region $ r \in [\la (1-t), 1-t]$ for any fixed $0 < \la<1$. 

\begin{thm} \label{self sim} Let $ \la \in (0, 1)$. Then for any smooth solution $\vec u(t)$ to~\eqref{u eq} such that 
\EQ{
 \sup_{t \in [0, 1)} \| \vec u(t) \|_{\HH} < \infty,
}
we have  
\EQ{
\lim_{t \nearrow 1} \int_{ \la (1-t)}^{ (1-t)}  \left[u_t^2(t, r) + u_{r}^2(t, r) + \frac{u^2(t,r)}{r^2} \right] \, r^3 \, dr = 0.
}

\end{thm}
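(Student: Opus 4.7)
The plan is to follow the classical wave-map strategy of Christodoulou--Tahvildar-Zadeh and Shatah--Tahvildar-Zadeh, after recasting \eqref{u eq} as a $2$-dimensional equation of wave-map type. Setting $v(t,r) := r\, u(t,r)$, a direct computation gives
\EQ{
v_{tt} - v_{rr} - \frac{1}{r} v_r + \frac{v - v^3}{r^2} = 0,
}
and the $4d$ energy of $u$ equals, up to a total-derivative term that vanishes by radial Sobolev embedding, the $2d$ ``wave-map'' energy $\int_0^\infty [\tfrac{1}{2}(v_t^2+v_r^2)\,r + G(v)/r]\,dr$ with potential $G(v) := v^2/2 - v^4/4$. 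Rewriting the integrand of Theorem~\ref{self sim} under $u = v/r$, one checks $(u_t^2+u_r^2+u^2/r^2)\,r^3 = (v_t^2+v_r^2)\,r + 2 v^2/r - (v^2)_r$, so the claim reduces to the vanishing, on the annulus $\Gamma_t := \{\la(1-t) \le r \le 1-t\}$, of the $2d$ energy of $v$ modulo harmless boundary traces in $v^2$.

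Next, I would extract the regular part of the solution. The type-II bound \eqref{type ii} together with finite speed of propagation gives, via the standard argument from \cite{DKM1}, initial data $(u_0^\ast,u_1^\ast)\in\HH$ whose solution $\vec u^\ast$ extends regularly through $t=1$ and satisfies
\EQ{
\mathrm{supp}\bigl(\vec u(t)-\vec u^\ast(t)\bigr) \subset \{r \le 1-t\} \mas t \nearrow 1.
}
Writing $a := u - u^\ast$ and $v_s := r\, a$, the contribution of $u^\ast$ to $\Gamma_t$ vanishes trivially because $u^\ast$ is smooth at $(t,r)=(1,0)$; so the task reduces to proving the same vanishing for $v_s$, which is supported in the backward cone.

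The heart of the argument is then a multiplier analysis for $v_s$ on $\Gamma_t$. Testing the $v_s$-equation against $(v_s)_t$ yields the $2d$ continuity identity $\partial_t\bigl[\bigl(\tfrac{1}{2}(v_{s,t}^2+v_{s,r}^2)+G(v_s)/r^2\bigr)r\bigr] = \partial_r\bigl[r\, v_{s,t} v_{s,r}\bigr]$. Integrating over $\Gamma_t$ and accounting for the moving boundaries, the lateral flux at $r = 1-t$ reduces in null coordinates to $\tfrac{r}{2}(v_{s,t}+v_{s,r})^2 + G(v_s)/r$, which is nonnegative provided $|v_s| \le \sqrt{2}$ on the outer edge. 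The decisive observation is that on $\Gamma_t$ the Aubin--Talenti bubbles $\io_j \la_{j,n}^{-1} W(\cdot/\la_{j,n})$ transform under $v = ru$ to profiles of $L^\infty$ size $O(\la_{j,n}/r)\to 0$ because $\la_{j,n} \ll 1-t$; coupling this with a soft compactness argument for $v_s$ outside the bubble scales yields $\|v_s\|_{L^\infty(\Gamma_t)} \to 0$ as $t \nearrow 1$. This restores positivity of the flux, gives monotonicity of the truncated energy, and, combined with a virial identity driven by the self-similar scaling multiplier $t v_t + r v_r + v$ written in coordinates $(s,\rho) = (-\log(1-t),\, r/(1-t))$, produces a sign-definite bulk term that forces the $2d$ energy of $v_s$ on $\Gamma_t$ to vanish. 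Undoing $u = v/r$ with the weight $r^3\,dr$ then recovers the stated conclusion.

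The principal obstacle is the sign-indefiniteness of the potential $G$: unlike a genuine wave-map potential, $G(v) < 0$ for $|v| > \sqrt{2}$, and $v = rW$ applied to the Aubin--Talenti ground state attains exactly the value $\sqrt{2}$, so a blanket positivity of the flux is out of reach. The regular/singular split together with the scale separation $\la_{j,n} \ll 1-t$ enforced by type-II behavior is precisely what confines the dangerous region $|v| \gtrsim 1$ to scales $r \ll 1-t$ and restores $G \ge 0$ on the self-similar annulus. Making this confinement quantitative, uniformly along every sequence $t_n \nearrow 1$ and compatibly with the Bahouri--G\'erard profile-decomposition remainder from Theorem~\ref{BG}, is the technical core of the proof.
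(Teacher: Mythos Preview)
Your overall architecture---pass to $\psi = ru$, split off the regular part, and run a Christodoulou--Tahvildar-Zadeh/Shatah--Tahvildar-Zadeh flux argument---matches the paper. The gap is in how you obtain the pointwise bound $|v_s|\le\sqrt{2}$ on the annulus, which is the crux since it is exactly what forces $G\ge 0$ and makes the flux nonnegative.

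You justify this bound by invoking the bubble decomposition and the scale separation $\la_{j,n}\ll 1-t$, together with a ``soft compactness argument for $v_s$ outside the bubble scales.'' This is circular. The statement that the singular part resolves into rescaled copies of $W$ with $\la_{j,n}\ll 1-t$ is Theorem~\ref{bu main}, whose proof \emph{uses} Theorem~\ref{self sim} as its first substantive input (it is what produces the sequence along which $\|a_t\|_{L^2}\to 0$). A raw Bahouri--G\'erard decomposition of $\vec a(t_n)$ only gives $\la_{j,n}\lesssim 1-t_n$ (not $\ll$), does not identify the profiles as $\pm W$, and---most damagingly---controls the remainder $w_n^J$ only in Strichartz norms, not in $L^\infty$ or in localized $\dot H^1$, so no ``soft compactness'' on the annulus is available. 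The claim $\|v_s\|_{L^\infty(\Gamma_t)}\to 0$ is essentially a restatement of what you are trying to prove.

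The paper breaks the circularity by an inductive bootstrap that makes no reference to profiles. One first proves the \emph{conditional} statement (Proposition~\ref{prop2}): if $|\psi|\le \sqrt{2}/2$ on $[\la(1-t),1-t]$, then the energy there vanishes. Then one shows the $L^\infty$ hypothesis holds for \emph{some} $\la_0$ close to $1$: on a thin annulus the fundamental theorem of calculus gives $|\psi-\phi|\le B\,(\log\la_0^{-1})^{1/2}$ with $B$ the uniform energy bound~\eqref{B}, and $\phi=r v$ is small by regularity of the weak limit. Having the energy vanish on $[\la_0(1-t),1-t]$ then upgrades the estimate for $\psi-\phi$ at $r=\la_0(1-t)$, which lets one push the $L^\infty$ bound out to $[\la_0^2(1-t),1-t]$, and so on; induction gives every $\la_0^n$ and hence every $\la\in(0,1)$. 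This replaces your compactness step entirely and is the main new idea of the section. Your virial multiplier $t v_t + r v_r + v$ is also not how the conditional step is executed; the paper runs the Shatah--Tahvildar-Zadeh null-coordinate argument with $\mathcal A^2,\mathcal B^2$ and a Gronwall inequality, which needs only $F(\psi)\ge 0$ and the pointwise bound $f^2\le CF$---both consequences of $|\psi|\le\sqrt{2}/2$.
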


\subsection{Extraction of the regular part}\label{reg part}
First, we define the regular and singular parts of a solution $\vec u(t)$ which blows up at $T_+( \vec u)$ and satisfies~\eqref{type ii}, following the notation in~\cite{DKM1, DKM3}. Indeed, by~\cite[Section~$3$]{DKM1}, there exists $\vec v = (v_0, v_1) \in \HH$,  so that 
\ant{
\vec u(t) \rightharpoonup  (v_0, v_1) \mas t \to 1,
}
weakly in $\HH$. Moreover, if we denote by $\vec v(t)$ the solution to~\eqref{u eq} with initial data at time $t=1$, $\vec v(1) =(v_0, v_1)$, and maximal interval of existence $I_{\max}( \vec v) = (T_-( \vec v), T_+( \vec v))$, then for all $t \in [t_-, 1)$ with $t_-> \max(T_-( \vec u), T_- ( \vec v))$ we have
\EQ{\label{supp a}
\vec u(t, r) = \vec v(t, r)  \quad   \forall\, \,  r \ge 1-t.
}
We thus define the {\em singular part} of $\vec u(t)$ as the difference, 
\EQ{\label{a def}
\vec a(t) :=  \vec u(t) - \vec v(t),
}
and we remark  that $\vec a(t)$ is well defined for $t \in [t_-, 1)$ for all $t_-> \max(T_-( \vec u), T_- ( \vec v))$ and that $\vec a(t)$ is supported in the backwards light cone $$\{(t, r) \mid t_- \le t < 1, \, \, 0 \le r \le 1-t\}.$$ 
We call $\vec v(t)$ the {\em regular part} of $\vec u(t)$.

We will require the following simple estimates for $\vec v(t)$, which follow easily from the fact that the evolution $ t \mapsto \vec v(t)$ is continuous in $\HH$ at $t=1$,
\begin{lem} \label{v loc decay}Let $\vec v(t)$ be the regular part  of $\vec u(t)$ as defined above. Then 
\begin{align}\label{v loc en}
&\lim_{t \to 1} \int_0^{1-t} \left[ v_t^2(t, r)+ v_r^2(t, r) + \frac{v^2(t, r)}{r^2} \right] \, r^3 \, dr = 0,\\ \label{v pw}
&\sup_{0 \le r\le 1-t} \abs{r v(t, r)} \to 0 \mas t \to 1.
\end{align}
\end{lem}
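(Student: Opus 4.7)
The plan is to exploit the fact that $\vec v(t) \to (v_0, v_1)$ strongly in $\HH$ as $t \to 1$, so that on the shrinking ball $\{0 \le r \le 1-t\}$ the localized norm of $\vec v(t)$ can be compared to the localized norm of the fixed limit $(v_0, v_1)$, the latter of which vanishes by absolute continuity of the integral. Concretely, for the two ``flat'' terms in \eqref{v loc en}, I would use the triangle inequality
\EQ{
\int_0^{1-t}\!\!\bigl[v_t^2(t,r) + v_r^2(t,r)\bigr]\, r^3\, dr
\le 2\,\|\vec v(t) - (v_0,v_1)\|_{\HH}^2 + 2 \int_0^{1-t}\!\!\bigl[v_1^2(r) + (v_0)_r^2(r)\bigr]\, r^3\, dr,
}
where the first term tends to zero by continuity of the flow in $\HH$ at $t=1$, and the second by absolute continuity of the Lebesgue integral applied to $v_1$ and $(v_0)_r$ in $L^2(r^3\,dr)$.

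For the Hardy term $\int_0^{1-t} v^2(t,r)/r^2 \, r^3\, dr$ I would split the same way, using the radial Hardy inequality in $\R^4$ to absorb the difference:
\EQ{
\int_0^{1-t}\! \frac{v^2(t,r)}{r^2} \, r^3\, dr
\le 2\int_0^{1-t}\! \frac{(v(t,r)-v_0(r))^2}{r^2}\, r^3\, dr + 2 \int_0^{1-t}\! v_0^2(r)\, r\, dr,
}
the first being controlled by $C\|v(t)-v_0\|_{\dot H^1}^2 \to 0$, and the second vanishing as $1-t \to 0$ since Hardy gives $v_0 \in L^2(r\,dr)$. This proves \eqref{v loc en}.

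For the pointwise bound \eqref{v pw} I would again split $r v(t,r) = r(v(t,r)-v_0(r)) + r v_0(r)$. The standard radial Sobolev embedding in $\R^4$, $|r f(r)| \lesssim \|f\|_{\dot H^1}$ for radial $f\in \dot H^1(\R^4)$, handles the difference uniformly in $r$: its supremum over $r$ is $\lesssim \|v(t)-v_0\|_{\dot H^1}\to 0$. For the second piece I would show directly that $r v_0(r) \to 0$ as $r\to 0^+$: starting from
\EQ{
r^2 v_0^2(r) = \int_0^r \frac{d}{ds}\bigl(s^2 v_0^2(s)\bigr)\, ds = \int_0^r \bigl(2 s v_0^2(s) + 2 s^2 v_0(s)v_0'(s)\bigr)\, ds,
}
Cauchy--Schwarz and Hardy give
\EQ{
r^2 v_0^2(r) \lesssim \int_0^r v_0'^2(s)\, s^3\, ds + \Bigl(\int_0^r v_0^2(s)\, s\, ds\Bigr)^{1/2}\Bigl(\int_0^r v_0'^2(s)\, s^3\, ds\Bigr)^{1/2},
}
and both integrals vanish as $r \to 0^+$ by absolute continuity. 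Combining the two pieces yields $\sup_{0\le r\le 1-t}|rv(t,r)|\to 0$.

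There is no real obstacle here: the lemma is essentially a quantitative statement of strong continuity of $\vec v(t)$ in $\HH$ at $t=1$. The only mild subtlety is verifying that $rv_0(r) \to 0$ at the origin for a general radial $\dot H^1(\R^4)$ function (as opposed to merely being bounded by $\|v_0\|_{\dot H^1}$), which is handled by the absolute-continuity computation above.
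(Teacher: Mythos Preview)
Your approach is correct and essentially identical to the paper's: both use continuity of $t\mapsto\vec v(t)$ in $\HH$ at $t=1$ together with Hardy/radial-Sobolev estimates, which the paper packages into a separate technical lemma (Lemma~\ref{lem:hardy_loc}) for later reuse. One small slip is worth noting: your identity $r^2v_0^2(r)=\int_0^r\frac{d}{ds}\bigl(s^2v_0^2(s)\bigr)\,ds$ presupposes $s^2v_0^2(s)\to 0$ as $s\to 0^+$, which is the very thing being proved; the paper avoids this circularity by first establishing the two-sided bound $|r^2w^2(r)-s^2w^2(s)|\le 3\int_s^r w^2\rho\,d\rho+\int_s^r w_r^2\rho^3\,d\rho$, deducing that the limit exists, and then arguing it must vanish from $\int_0^\infty w^2\rho\,d\rho<\infty$. (Also, the first term in your displayed bound should read $\int_0^r v_0^2\,s\,ds$: there is no Hardy inequality localized to $[0,r]$, though this does not affect your conclusion.)
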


\begin{proof}
Indeed, the continuity of $t \mapsto \vec v(t)\in \HH$ at $t =1$ gives the result for the first two terms in the integral~\eqref{v loc en}. The third term in~\eqref{v loc en} and~\eqref{v pw} then follow as direct consequences of the following techical lemma which we will also use in Section $4$. 
\begin{lem} \label{lem:hardy_loc}
Assume $\displaystyle \int_0^{+\infty} |\partial_r w(\rho)|^2 \rho^3 d\rho < +\infty$ and can be approximated by $C^{\infty}$ functions in this norm, i.e.,  $w \in \dot{H}^1(\R^4)$. Then $rw(r) \to 0$ as $r \to 0$ and as $r \to +\infty$, and for all $r \ge 0$,
\begin{align}
\abs{ r w(r)}^2 & \le \frac{1}{2} \int_r^{\infty} \abs{w_r( \rho)}^2\, \rho^3 \, dr, \label{bound:infty_loc_infty}\\
\int_r^\infty |w(\rho)|^2 \rho \, d\rho &  \le   \int_r^\infty |\partial_r w(\rho)|^2 \rho^3 d\rho. \label{bound:hardy_loc_infty}
\end{align}
and for $0 < s <r$ we have 
\begin{align}
\abs{ r^2 w^2(r) - s^2 w^2(s)} & \le 3\int_s^r w^2( \rho) \, \rho \, dr + \int_s^r \abs{w_r(\rho)}^2 \rho^3 \, d \rho, \label{rs}\\
 \sup_{0< s \le r}|s w(s)|^2 & \le 3\int_0^r w^2( \rho) \, \rho \, dr + \int_0^r |\partial_r w(\rho)|^2 \rho^3 d\rho . \label{bound:infty_loc}
\end{align}
\end{lem}
\begin{proof} By density, we can prove the lemma for $w \in C^{\infty}_0$. First, we note that 
\ant{
\abs{w(r)} &= \abs{ - \int_r^{\infty} \p_r w(\rho) \, d\rho} \le \left( \int_{r}^\infty w_r^2( \rho) \, \rho^3 \, d \rho\right)^{\frac{1}{2}} \left( \int_{r}^\infty \rho^{-3} \, d \rho\right)^{\frac{1}{2}}\\
& = \left( \int_{r}^\infty w_r^2( \rho) \, \rho^3 \, d \rho\right)^{\frac{1}{2}} \frac{1}{r \sqrt{2} }
}
from which~\eqref{bound:infty_loc_infty} follows. Next, we have 
\EQ{ \label{pr id}
\p_r( r^2 w^2(r)) = 2 r \, w^2(r) + 2 r^2 w(r) w_r(r).
}
Thus, 
\ant{
r^2 w^2(r) = - 2\int_r^{\infty} w^2(\rho) \, \rho\, d \rho - 2 \int_r^{\infty}w_r^2(\rho) w( \rho)  \rho^2 \, d \rho,
}
so that 
\ant{
2 \int_r^{\infty} w^2(\rho) \, \rho \, d \rho& \le - 2 \int_r^{\infty}w_r^2(\rho) w( \rho)  \rho^2 \, d \rho\\
& \le 2 \left( \int_{r}^\infty w_r^2( \rho) \, \rho^3 \, d \rho\right)^{\frac{1}{2}}\left( \int_{r}^\infty w^2( \rho) \, \rho \, d \rho\right)^{\frac{1}{2}}
}
which gives~\eqref{bound:hardy_loc_infty}. To prove~\eqref{rs}, we integrate~\eqref{pr id} to obtain
\ant{
\abs{ r^2 w^2(r) - s^2 w^2(s)} &\le 2\int_s^r w^2( \rho) \, \rho \, dr +2 \int_s^r \abs{w(r)}\abs{w_r^2(\rho)} \rho^2 \, d \rho\\
& \le 3\int_s^r w^2( \rho) \, \rho \, dr + \int_s^r w_r^2(\rho) \rho^3 \, d \rho
}
as desired. By~\eqref{bound:hardy_loc_infty} with $r=0$ we see that $\int_0^{\infty} w^2 (\rho) \,\rho\,  d  \rho < \infty$. Hence~\eqref{rs} implies that there exists $\ell \in\R$ so that 
\EQ{
\lim_{r \to 0} r^2 w^2(r) = \ell
}
exists. Assume, for contradiction that $\ell \neq 0$. Then, there exists $r_0>0$ so that $$w^{2}(r) \ge \frac{\ell}{2r^2}$$ for all $r \le r_0$. But this contradicts the fact that $\int_0^{\infty} w^2 (\rho)  \rho \, d  \rho < \infty$. Finally,~\eqref{bound:infty_loc} follows from~\eqref{rs} now that we know $sw(s) \to 0$ as $s \to 0$. 
\end{proof}

This also completes the proof of Lemma~\ref{v loc decay}.
\end{proof} 

\subsection{Reduction to a $2d$ equation} 
The proof of Theorem~\ref{self sim} relies crucially on the observation that~\eqref{u eq} can be reduced to a $2d$ wave maps-type equation on which the fundamental techniques introduced by Christodoulou, Tahvildar-Zadeh, \cite{CTZduke, CTZcpam}, and Shatah, Tahvildar-Zadeh, \cite{STZ92, STZ94} can be applied after we have localized to the light cone and identified the regular part $\vec v(t)$ of the solution $\vec u(t)$. Indeed, define 
\EQ{\label{psi def}
\vec \psi(t, r):= r \,\vec u(t, r).
}
Since $\vec u(t)$ solves~\eqref{u eq}, we see that $\vec \psi(t)$ solves 
\EQ{\label{psi eq}
 \psi_{tt}- \psi_{rr} - \frac{1}{r} \psi_r + \frac{\psi- \psi^3}{r^2} = 0.
 }
We define 
\EQ{ \label{f F def}
&f( \psi) := \psi- \psi^3,\\
&F(\psi) := \int_0^\psi f(\al) \,d \al = \frac{1}{2} \psi^2 - \frac{1}{4}\psi^4 = \frac{1}{2} \psi^2[1- \psi^2/2].
}
Similarly, for the regular part $\vec v(t)$ we define 
\EQ{ \label{phi def}
\phi(t, r) := r\,  \vec v(t, r).
}
Using Lemma~\ref{v loc decay} and the fact that $\phi_r =  r v_r +v$ we obtain the following estimates for $\vec \phi(t)$: 
\begin{lem}\label{phi decay}
Let $\vec \phi(t)$ be defined as in~\eqref{phi def}. Then 
\begin{align}
&\lim_{t \to 1}  \int_0^{1-t} \left[ \phi_t^2(t, r) + \phi_r^2(t, r)\right] r \, dr = 0,\label{phi en dec}\\
& \sup_{0 \le r \le 1-t} \abs{ \phi(t,r)}  \to 0 \mas t \to 1. \label{phi pw dec}
\end{align}

\end{lem}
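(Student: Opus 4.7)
My plan is to read off both statements as direct algebraic consequences of Lemma~\ref{v loc decay}, after expressing $\phi_t$ and $\phi_r$ in terms of $v, v_t, v_r$.

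First I would handle the pointwise statement \eqref{phi pw dec}, which is essentially free: since $\phi(t,r) = r v(t,r)$, one has
\[ \sup_{0 \le r \le 1-t} |\phi(t,r)| = \sup_{0 \le r \le 1-t} |r v(t,r)|, \]
which tends to $0$ as $t \to 1$ by the second estimate of Lemma~\ref{v loc decay}.

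For the energy statement \eqref{phi en dec}, I would write $\phi_t = r v_t$ and, using the hint already quoted in the excerpt, $\phi_r = v + r v_r$. This gives
\[ \int_0^{1-t} \phi_t^2(t,r) \, r\, dr = \int_0^{1-t} v_t^2(t,r)\, r^3\, dr, \]
which tends to $0$ directly by \eqref{v loc en}. For the spatial derivative, expanding the square yields
\[ \int_0^{1-t} \phi_r^2 \, r\, dr = \int_0^{1-t} v^2\, r\, dr + 2\int_0^{1-t} v\, v_r\, r^2\, dr + \int_0^{1-t} v_r^2\, r^3\, dr. \]
The third term vanishes in the limit by \eqref{v loc en}. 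The first term is exactly $\int_0^{1-t} (v^2/r^2)\, r^3\, dr$, which also vanishes by \eqref{v loc en}. For the cross term I would apply Cauchy--Schwarz,
\[ \Bigl|\int_0^{1-t} v\, v_r\, r^2\, dr\Bigr| \le \Bigl(\int_0^{1-t} \frac{v^2}{r^2}\, r^3\, dr\Bigr)^{1/2} \Bigl(\int_0^{1-t} v_r^2\, r^3\, dr\Bigr)^{1/2}, \]
and both factors tend to zero, so the cross term does too. Summing the three pieces gives \eqref{phi en dec}.

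There is no real obstacle here: everything reduces to bookkeeping once the change of variable $\phi = rv$ is performed, and no use of Lemma~\ref{lem:hardy_loc} beyond what is already packaged in Lemma~\ref{v loc decay} is needed. The only mildly non-trivial point worth highlighting in the write-up is that the cross term is controlled purely by quantities already appearing in \eqref{v loc en}, so that no additional hypothesis on $v$ is required.
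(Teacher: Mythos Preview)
Your proof is correct and is exactly the approach the paper takes: the paper simply states that the lemma follows from Lemma~\ref{v loc decay} together with the identity $\phi_r = r v_r + v$, and you have spelled out the elementary algebra and Cauchy--Schwarz step that make this precise.
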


We also note that by Hardy's inequality,~\eqref{bound:hardy_loc_infty}, and since $\psi_r -\phi_r =  r(u_r-v_r) + (u-v)$ we have the uniform estimate
\EQ{\label{B}
\sup_{t \in[t_-, 1)} \int_0^{\infty} \left[(\psi_t- \phi_t)^2(t, r) +  (\psi_r - \phi_r)^2(t, r)  \right] \, r\, dr \le B < \infty,
}
where again $t_-> (T_-( \vec u) , T_-( \vec v))$. 

We can now deduce Theorem~\ref{self sim} as a consequence of the following  proposition which is phrased in terms of $\psi:=r u$. 

\begin{prop}\label{prop2} Assume that there exists $\la \in (0, 1)$ and $t_0 \in [t_-, 1)$ so that for all $t \in [t_0, 1)$ we have 
\EQ{ \label{small psi}
 \sup_{\la(1-t) \le r \le 1-t} \abs{ \psi(t, r)} \le \frac{\sqrt{2}}{2}.
 }
 Then, 
 \EQ{ \label{psi en dec2}
\lim_{t \nearrow 1} \int_{\la(1-t)}^{1-t} \left[ \psi_t^2(t,r) + \psi_r^2(t, r) \right] \, r\, dr = 0.
}
\end{prop}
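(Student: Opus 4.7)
The approach is to adapt the classical $2d$ wave-map arguments of Christodoulou--Tahvildar-Zadeh \cite{CTZduke, CTZcpam} and Shatah--Tahvildar-Zadeh \cite{STZ92, STZ94} to the reduced equation~\eqref{psi eq}. The structural fact that makes these arguments go through is that under hypothesis~\eqref{small psi} the potential $F(\psi) = \tfrac12\psi^2(1 - \psi^2/2)$ satisfies $F(\psi) \ge \tfrac38\psi^2 \ge 0$ on the annulus $\lambda(1-t) \le r \le 1-t$; moreover, since $\psi(t, 1-t) = \phi(t, 1-t)$ and $\phi \to 0$ uniformly by Lemma~\ref{phi decay}, the nonnegativity of $F(\psi)$ also holds on the outer boundary $r = 1-t$ of the backward light cone for $t$ close to $1$. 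This restores the positive-flux/monotone-energy structure of the classical wave-map setting.

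First I would establish the flux identity for the cone-localized energy
\[
\mathcal{E}(t) := \int_0^{1-t}\Big[\tfrac12(\psi_t^2+\psi_r^2) + \tfrac{F(\psi)}{r^2}\Big] r\, dr.
\]
Multiplying~\eqref{psi eq} by $r\psi_t$ yields the pointwise identity $\partial_t\bigl[r\bigl(\tfrac12(\psi_t^2+\psi_r^2) + F(\psi)/r^2\bigr)\bigr] = \partial_r(r\psi_t\psi_r)$, from which a direct computation gives
\[
\frac{d\mathcal{E}}{dt}(t) = -(1-t)\Big[\tfrac12(\psi_t-\psi_r)^2 + \tfrac{F(\psi)}{r^2}\Big]_{r=1-t} \le 0
\]
for $t$ close enough to $1$. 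Hence $\mathcal{E}(t) \searrow L \ge 0$ as $t \to 1$, and the outgoing null flux
\[
\int_{t_0}^1 (1-s)\Big[\tfrac12(\psi_t-\psi_r)^2 + \tfrac{F(\psi)}{r^2}\Big]_{r=1-s}\,ds
\]
is finite.

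Next I would apply a Morawetz-type identity tailored to self-similar scaling: using the scaling multiplier $M\psi := (1-t)\psi_t + r\psi_r$ (the generator of dilations centered at the blow-up point $(t, r) = (1, 0)$) on the truncated cone $K(t_1, t_2) = \{t_1 \le s \le t_2,\; 0 \le r \le 1-s\}$ and integrating by parts, one obtains a bulk $L^2$-type bound on $M\psi$ weighted by $(1-s)^{-1}$, with boundary contributions at $s = t_i$ controlled by $\mathcal{E}(t_i)$ and cone contributions controlled by the finite flux above. Letting $t_1, t_2 \to 1$, the right-hand side vanishes, so the bulk integral vanishes on the self-similar annulus $\{(s, r) : t_1 \le s < 1,\; \lambda(1-s) \le r \le 1-s\}$. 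This gives control of the scaling derivative $M\psi$ in $L^2$; the transverse null derivative $\psi_t + \psi_r$ on the annulus is then controlled directly by the finite-flux bound. Together these two derivatives recover the full gradient density $\psi_t^2 + \psi_r^2$ in the annulus.

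To upgrade from this space-time vanishing to the pointwise-in-$t$ conclusion~\eqref{psi en dec2}, I would extract a subsequence $t_n \to 1$ along which the annular gradient energy vanishes, and then control the oscillation between $t_n$ and a general $t$ using the monotonicity of $\mathcal{E}$ together with the flux-tail bound (which dominates the change in annular energy between nearby times). The main obstacle will be the Morawetz step: one must choose the multiplier and the associated cutoff so that the bulk integrand has a definite sign purely from the smallness of $\psi$ on the annulus, without needing any control on $\psi$ near the origin where bubble concentration may occur. Handling the inner-radius cutoff at $r = \lambda(1-t)$ -- reusing the uniform smallness of $\psi$ there to dominate the corresponding boundary term -- is the delicate technical point of the argument.
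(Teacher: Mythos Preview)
Your flux/monotonicity setup in the first step is correct and matches the paper's starting point (local energy identity~\eqref{loc en id}); the positivity of the flux follows from~\eqref{small psi} on the mantle $r=1-t$, exactly as you say. However, there is a genuine gap in the rest of the argument.

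The core problem is the upgrade from space-time vanishing to the pointwise-in-$t$ conclusion. You propose to extract a subsequence $t_n$ along which the annular energy vanishes and then ``control the oscillation between $t_n$ and a general $t$ using the monotonicity of $\mathcal{E}$.'' But $\mathcal{E}$ is the \emph{full-cone} energy $\int_0^{1-t}$, whereas what you need to control is the \emph{annular} energy $\int_{\lambda(1-t)}^{1-t}$. Differentiating the latter in $t$ produces, in addition to the flux term at $r=1-t$, an inner-boundary term at $r=\lambda(1-t)$ of the form $\lambda(1-t)\bigl(\lambda\, e - m\bigr)$, which has no a~priori sign and is not controlled by anything in your argument (this is precisely where bubble concentration can cross into the annulus). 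So the annular energy is not monotone, and the oscillation between $t_n$ and general $t$ cannot be bounded by the flux tail alone. Relatedly, your claim that ``the transverse null derivative $\psi_t+\psi_r$ on the annulus is controlled directly by the finite-flux bound'' is not right: the flux lives on the outer cone $r=1-t$ and says nothing about $\psi_t+\psi_r$ in the bulk of the annulus.

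The paper obtains the pointwise result by a different mechanism. Working in null coordinates $\eta=t+r$, $\xi=t-r$, one sets $\mathcal{A}^2=r(e+m)$, $\mathcal{B}^2=r(e-m)$ and verifies $\partial_\xi\mathcal{A}^2=L$, $\partial_\eta\mathcal{B}^2=-L$ with $L$ defined in~\eqref{e m L def}. The key structural estimate, valid precisely because~\eqref{small psi} forces $f^2(\psi)\le CF(\psi)$ on $K_{\mathrm{ext}}^\lambda$, is $L^2\le C\mathcal{A}^2\mathcal{B}^2/r^2$, hence $|\partial_\xi\mathcal{A}|\le C\mathcal{B}/r$ and $|\partial_\eta\mathcal{B}|\le C\mathcal{A}/r$. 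Integrating these on null rectangles inside $K_{\mathrm{ext}}^\lambda$ and applying Gr\"onwall to the quantity $h(\eta,\xi)=\sup_{\eta\le\eta'\le 0}\sqrt{\eta'-\xi}\,\mathcal{A}(\eta',\xi)$ propagates the smallness of the flux from the mantle into the interior of the annulus, yielding a \emph{pointwise} bound $\mathcal{A}(\eta,\xi)\le C\varepsilon/\sqrt{\eta-\xi}$. Integrating the energy identity over the triangle with one side on $\{t=\mathrm{const}\}$ then gives~\eqref{psi en dec2} directly. This Gr\"onwall-in-null-coordinates step is what replaces your missing control of the inner boundary; a Morawetz/scaling-multiplier argument of the type you outline yields the time-averaged statement (cf.\ the proof of Lemma~\ref{av time dec}) but not Proposition~\ref{prop2} itself.
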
 

 The size restriction~\eqref{small psi} will guarantee the positivity of $F( \psi) = \frac{1}{2} \psi^2[1- \psi^2/2]$ for $\la(1-t) \le r \le (1-t)$, $t  \in[t_0, 1)$. This positivity enters crucially in the methods introduced in \cite{CTZduke, CTZcpam, STZ92} as the $F$ term there is of the form $F = g^2$, and is always positive. Thus we do not prove Theorem~\ref{self sim} directly in terms of $\vec \psi$, as is done in, say \cite[Lemma $2.2$]{STZ92}, but rather deduce it as a consequence of Proposition~\ref{prop2}. Then, by assuming the  smallness assumption~\eqref{small psi} holds for a particular $\la \in (0, 1)$ we prove Proposition~\ref{prop2} using the methods in~\cite{STZ92}.

We momentarily postpone the proof of Proposition~\ref{prop2} and first use it to establish Theorem~\ref{self sim}. 
\begin{proof}[Proof that Proposition~\ref{prop2} implies Theorem~\ref{self sim}]{\ }

{\bfseries Step 1:} The main observation is that we can get rid of the $L^\infty$ assumption in Proposition~\ref{prop2} via an inductive argument, which is the content of the following:

\begin{claim}\label{prop1} Let $\vec u(t)$ be as in Theorem~\ref{self sim} and define $\vec \psi(t)$ as in~\eqref{psi def}. Then for every fixed $\la \in (0, 1)$ we have 
\EQ{ \label{psi en dec}
\lim_{t \nearrow 1} \int_{\la(1-t)}^{1-t} \left[ \psi_t^2(t,r) + \psi_r^2(t, r) \right] \, r\, dr = 0.
}
\end{claim}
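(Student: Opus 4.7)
The plan is to remove the pointwise size restriction~\eqref{small psi} required by Proposition~\ref{prop2} through a finite iteration that slides $\lambda$ inward from values close to $1$. The engine is the uniform bound
\[
M := \sup_{t \in [t_-, 1)} \int_0^{\infty} \psi_r^2(t,\rho)\, \rho \, d\rho < \infty,
\]
which follows from the type-II hypothesis~\eqref{type ii} upon writing $\psi_r = r u_r + u$, integrating by parts, and using Hardy's inequality in $\R^4$. Combined with the fundamental theorem of calculus and Cauchy--Schwarz this yields the oscillation estimate
\begin{equation}\label{plan:osc}
|\psi(t,r) - \psi(t,s)|^2 \le \log(s/r) \int_r^s \psi_\rho^2(t,\rho)\,\rho\, d\rho \qquad (0 < r \le s),
\end{equation}
and in particular $|\psi(t,r) - \psi(t,s)| \le \sqrt{M \log(s/r)}$. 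Note also that by the support property~\eqref{supp a} together with continuity in $r$, one has $\psi(t, 1-t) = \phi(t, 1-t) \to 0$ as $t \to 1$ by Lemma~\ref{phi decay}.

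For the base case, fix $\lambda_0 \in (0,1)$ with $\sqrt{M \log(1/\lambda_0)} \le 1/4$. Estimate~\eqref{plan:osc} applied with $s = 1-t$ gives $\sup_{\lambda_0(1-t) \le r \le 1-t} |\psi(t,r)| \le \sqrt{2}/2$ for $t$ sufficiently close to $1$. Proposition~\ref{prop2} then yields
\[
\lim_{t \to 1} \int_{\lambda_0(1-t)}^{1-t} \bigl(\psi_t^2(t,r) + \psi_r^2(t,r)\bigr)\, r\, dr = 0.
\]

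The induction propagates this $\lambda_0$ inward by a fixed multiplicative factor. Choose $\kappa \in (0,1)$ once and for all with $\sqrt{M \log(1/\kappa)} \le 1/4$, and set $\lambda_{k+1} := \kappa\, \lambda_k$. Assume by induction that $\int_{\lambda_k(1-t)}^{1-t}(\psi_t^2 + \psi_r^2)\,r\,dr \to 0$. Applying the localized version of~\eqref{plan:osc} on $[\lambda_k(1-t), 1-t]$ and using $\psi(t,1-t) \to 0$ yields the stronger uniform bound
\[
\sup_{\lambda_k(1-t) \le r \le 1-t} |\psi(t,r)| \longrightarrow 0 \quad \text{as } t \to 1.
\]
In particular $|\psi(t,\lambda_k(1-t))| \to 0$. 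Applying~\eqref{plan:osc} on $[\lambda_{k+1}(1-t), \lambda_k(1-t)]$ with the global bound $M$ then produces $|\psi(t,r)| \le \sqrt{2}/2$ on this new annulus for $t$ close to $1$. Combined with the smallness on the outer annulus from the induction hypothesis, condition~\eqref{small psi} holds at scale $\lambda_{k+1}$, so Proposition~\ref{prop2} delivers the vanishing of the energy at scale $\lambda_{k+1}$.

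Since $\lambda_k = \kappa^k \lambda_0 \to 0$, any prescribed $\lambda \in (0,1)$ is reached after finitely many steps, completing the proof. The only real obstacle is ensuring that at each inductive step the sup-norm smallness needed to invoke Proposition~\ref{prop2} at scale $\lambda_{k+1}$ holds on the entire interval $[\lambda_{k+1}(1-t), 1-t]$ and not merely on the newly adjoined annulus; this is precisely why one must carry the uniform pointwise smallness of $\psi(t,\cdot)$ forward through the induction alongside the vanishing of the local energy.
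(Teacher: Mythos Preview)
Your argument is correct and follows essentially the same inductive scheme as the paper's proof: a geometric sequence of scales $\lambda_k$, the oscillation estimate via Cauchy--Schwarz to propagate the pointwise bound $|\psi|\le\sqrt{2}/2$ inward one annulus at a time, and an application of Proposition~\ref{prop2} at each step. The only cosmetic difference is that the paper subtracts the regular part and works with $\psi-\phi$ (using the uniform bound~\eqref{B} on the difference and the exact identity $(\psi-\phi)(t,1-t)=0$), whereas you work with $\psi$ directly via the bound $M$ and the limit $\psi(t,1-t)\to 0$; both routes use the same ingredients and yield the same conclusion.
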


\begin{proof}[Proof of the claim]
Consider the set $I \subset (0, 1)$ to be the collection of all $\la \in (0, 1)$ so that there exists $ t_0 = t_0( \la) \in [t_-, 1)$ such that
\ant{
\forall t \ge t_0, \ \forall r \in[\la (1-t), (1-t)], \quad
\abs{\psi(t, r)} \le \frac{\sqrt{2}}{2}. 
} 
Observe that if $\lambda \in I$, then $[\lambda,1) \subset I$.  Indeed, for such a $\la'$ one can take $t_0( \la') = t_0( \la')$. Also, by Proposition ~\ref{prop2}, then \eqref{psi en dec} holds this particular $\la$. Therefore, to prove the claim, it suffices to prove that $I$ contains a sequence $\la _n \to 0$. 
 
We begin by showing that $I$ is not empty. Fix $\la_0 \in (0, 1)$ to be determined below. Observe that since $\psi(t, 1-t) = \phi(t, 1-t)$ for $t \ge t_-$, we have for all $\la_0(1-t) \le r \le 1-t$ that 
 \EQ{ \label{psi-phi1}
\abs{\psi(t, r) - \phi(t, r)}  &= \abs{\int_r^{1-t}( \psi_r- \phi_r)(t, \rho) \, d\rho} \\
&\le \left( \int_r^{1-t} ( \psi_r- \phi_r)^2(t, \rho) \, \rho\, d\rho \right)^{\frac{1}{2}} \left( \int_r^{1-t} \rho^{-1} d\rho \right)^{\frac{1}{2}}\\
&\le  B(\log{\la_0^{-1}})^{\frac{1}{2}} \le C_0 B \abs{1- \la_0}^{\frac{1}{2}}
}
where the constant $B$ is fixed in~\eqref{B} and we have chosen $C_0>0$ so that for all $1/2 \le \la \le 1$ we have $\log( \la^{-1}) \le C_0^2 \abs{1- \la}$. Next, observe that by~\eqref{phi pw dec} we can find $t_0<1$ so that for all $t\ge t_0$ we have 
\EQ{
\abs{ \phi(t, r)} \le \frac{1}{3}, \, \, \forall \, 0 \le r \le 1-t.
} 
Hence for all $t \in [t_0, 1)$ we have 
\ant{
\sup_{ r \in [\la_0(1-t), \, 1-t]}\abs{\psi(t, r)} \le \frac{1}{3} + C_0 B \abs{1- \la_0}^{\frac{1}{2}}.
}
Choosing $\la_0$ close enough to $1$ so that $C_0 B \abs{1- \la_0}^{\frac{1}{2}}\le \frac{1}{3}$ we then guarantee that 
\ant{
\sup_{ r \in [\la_0(1-t), \, 1-t]}\abs{\psi(t, r)} \le \frac{2}{3} < \frac{ \sqrt{2}}{2}, \quad \forall t_0 \le t <1,
} 
which proves that $I$ is not empty, and  in fact $[\la_0, 1) \subset I$. 

Next, we need to prove that in fact $I= (0,1)$. Note that it will suffice to show that there exists a sequence $\la_n \to 0$ such that $\la_n \in I$ for all $n \in \N$. We define 
\EQ{
\la_n:= \la_0^n, \, \, \forall n \in \N.
}
Note that we have proved that $\la_1 = \la_0^1 \in I$. Now we argue by induction. Assume that $\la_n \in I$ for some $n \ge 1$ and fix this $\la_n$. We seek to prove that $\la_{n+1} \in I$.  We record a few additional consequences of our inductive hypothesis. Since $\la_n \in I$, Proposition~\ref{prop2} implies that 
\ant{
\lim_{t \nearrow 1} \int_{\la_n(1-t)}^{1-t} \left[ \psi_t^2(t,r) + \psi_r^2(t, r) \right] \, r\, dr = 0.
}
Using~\eqref{phi en dec} we in fact have that 
\ant{
\lim_{t \nearrow 1} \int_{\la_n(1-t)}^{1-t} \left( \psi_r- \phi_r \right)^2(t, r) \, r\, dr = 0.
}
Thus we can argue as in~\eqref{psi-phi} to deduce that there exists $0<t_n<1$  so that 
\EQ{
\abs{(\psi- \phi)(t, \la_n(1-t))} \le \sqrt{\log( \la_n^{-1})} \sqrt{ \int_{\la_n(1-t)}^{1-t} \left( \psi_r- \phi_r \right)^2(t, r) \, r\, dr} 
< \frac{1}{6} .
} 
for all $t_n \le t <1$. Using~\eqref{phi pw dec} we can also ensure that $t_n$ is large enough so that 
\EQ{
\abs{ \phi(t,r)} < \frac{1}{6}, \, \, \forall\,  0 \le r \le 1-t,
}
for all $t_n \le t <1$. Next for all $r \in [\la_{n+1}(1-t), \la_n(1-t)]$ we can argue as in~\eqref{psi-phi1} to bound the term 
\ant{
&\abs{(\psi(t, r) - \phi(t, r)) -( \psi(t, \la_n(1-t)) - \phi(t, \la_n(1-t))}  \le\\
& \quad\quad \le  \sqrt{ \int_{\la_{n+1}(1-t)}^{\la_n(1-t)}( \psi_r - \phi_r)^2(t, \rho)  \, \rho \, d \rho} \sqrt{\int_{\la_{n+1}(1-t)}^{\la_n(1-t)} \rho^{-1} \, d \rho}\\
& \quad \quad \le B (\log( \la_n/ \la_{n+1}))^{\frac{1}{2}} = B (\log( \la_0^{-1}))^{\frac{1}{2}}\\
&\quad \quad  \le \frac{1}{3}
}
where $B$  is as in ~\eqref{B} and since  $\la_n/ \la_{n+1} = \la_0^n/ \la_0^{n+1} = \la_0^{-1}$ and we have chosen $\la_0$ close enough to $1$ so that the last line above holds. 
Now for each $t \in [t_n, 1)$ and  $r \in [\la_{n+1}(1-t), \la_n(1-t)]$ write  
\ant{
\abs{\psi(t, r)}  &\le \abs{(\psi(t, r) - \phi(t, r)) -( \psi(t, \la_n(1-t)) - \phi(t, \la_n(1-t))} \\
&\quad+ \abs{ \phi(t, r)} + \abs{ \psi(t, \la_n(1-t)) - \phi(t, \la_n(1-t))}\\
& <\frac{1}{3} + \frac{1}{6}+ \frac{1}{6}  = \frac{2}{3} \le \frac{ \sqrt{2}}{2}.
}
As we also know that $\sup_{r \in [\la_n(1-t), 1-t]}\abs{\psi(t, r)} \le \frac{ \sqrt{2}}{2}$ for large enough $t<1$ by assumption, we have now proved that $\la_{n+1} \in I$ as well. Thus, by induction, $\la_n \in I$ for all $n$ and this completes the proof.
\end{proof}

{\bfseries Step 2:} We now transfer the result of the Claim \ref{prop1} to $\vec u$ and conclude the proof of Theorem \ref{self sim}. Since $\psi_r = r u_r+ u$ we see that 
\ant{
\int_{\la(1-t)}^{1-t}& \left[ u_t^2(t, r) + u_r^2(t, r) + \frac{u^2(t, r)}{r^2} \right] \, r^3 \, dr
 \\ &= \int_{\la(1-t)}^{1-t} \left[ \psi_t^2(t, r) + ( \psi_r(t, r) - u(t, r))^2(t, r) + u^2(t, r) \right] \, r \, dr\\
 & \le 2\int_{\la(1-t)}^{1-t} \left[ \psi_t^2(t, r) +  \psi_r^2(t, r)\right] \, r \, dr +3\int_{\la(1-t)}^{1-t}u^2(t, r) \, r \, dr.
 }
Hence it suffices to prove the vanishing of the Hardy term 
\EQ{ \label{hardy to 0}
\int_{\la(1-t)}^{1-t}u^2(t, r) \, r \, dr \to 0 \mas t \to 1.
}
To see this, we first note that~\eqref{psi en dec} together with~\eqref{phi en dec} imply that 
\EQ{
\lim_{t \nearrow 1} \int_{\la(1-t)}^{1-t} \left[ (\psi_t - \phi_t)^2(t,r) + (\psi_r - \phi_r)^2(t, r) \right] \, r\, dr = 0.
}
Next, note that~\eqref{supp a} implies that  $\psi(t, 1-t) = \phi(t, 1-t)$ for all $t \in [t_-, 1)$. From this we see that  that for every $r \in [\la(1-t),\, (1-t)]$ we have 
\EQ{ \label{psi-phi}
\abs{\psi(t, r) - \phi(t, r)}  &= \abs{\int_r^{1-t}( \psi_r- \phi_r)(t, \rho) \, d\rho} \\
&\le \left( \int_r^{1-t} ( \psi_r- \phi_r)^2(t, \rho) \, \rho\, d\rho \right)^{\frac{1}{2}} \left( \int_r^{1-t} \rho^{-1} d\rho \right)^{\frac{1}{2}}\\
&\le  (\log{\la^{-1}})^{\frac{1}{2}}\left( \int_{\la(1-t)}^{1-t} ( \psi_r- \phi_r)^2(t, \rho) \, \rho\, d\rho \right)^{\frac{1}{2}}.
}
Using~\eqref{psi en dec}  we can then conclude that 
\EQ{
  \sup_{ r \in[\la(1-t), (1-t)]}\abs{\psi(t, r) - \phi(t, r)} \to 0 \mas t \to 1.
 }
 Then by the definitions~\eqref{psi def},~\eqref{phi def} we have
 \EQ{
  \sup_{ r \in[\la(1-t), (1-t)]} r \, \abs{  u(t, r) - v(t, r)}   \to 0 \mas t \to 1.
  }
As a direct consequence we obtain, 
\EQ{ \label{u-v dec}
  \int_{\la(1-t)}^{1-t} \left[ u(t, r) - v(t,r)\right]^2 \, r \, dr \to 0 \mas t \to 1.
  }
  Combining~\eqref{u-v dec} with~\eqref{v loc en} we obtain~\eqref{hardy to 0}, which finishes the proof of Theorem \ref{self sim}. 
  \end{proof}

\subsection{Proof of Proposition~\ref{prop2}}

We have thus reduced the matter of proving  Theorem~\ref{self sim} to proving Proposition~\ref{prop2}. This will follow from the techniques introduced by Christodoulou, Tahvildar-Zadeh, \cite{CTZduke, CTZcpam}, and Shatah, Tahvildar-Zadeh, \cite{STZ92, STZ94}. 

Recall that $\vec \psi(t)$ satisfies the wave maps type equation~\eqref{psi eq} except that $f \neq g g'$. By translating in time, we can, without loss of generality assume that $T_+( \vec u) = 0$ so that $T_+( \vec \psi) = 0$ in order to adjust to the notation used in Shatah, Tahvildar-Zadeh \cite[Lemma $2.2$]{STZ92}. 

The conserved energy for~\eqref{psi eq} is given by 
\ant{
\E( \vec \psi) = \int_0^{\infty}\left( \frac{1}{2}[ \psi_t^2(t, r) + \psi_r^2(t, r) ]+ \frac{F( \psi)}{r^2}\right) \, r \, dr,
}
where $F( \psi) =  \frac{1}{2} \psi^2[ 1- \psi^2/2]$ as defined in~\eqref{f F def}. 
After translating in time so that $T_+(\vec \psi) = 0$ we see that the hypothesis of Proposition~\ref{prop2} give us a $\la_0 \in (0, 1)$ and a $t_0<0$ so that for all $t>t_0$, $t<0$, we have 
\EQ{\label{psi pw small}
\sup_{ \la \abs{t} \le r \le \abs{t}} \abs{ \psi(t, r)} \le \frac{\sqrt{2}}{2}.
} 
Note also that 
\EQ{\label{F>0}
\abs{\psi(t, r)} \le \frac{\sqrt{2}}{2} \Longrightarrow F( \psi(t,r)) \ge 0.
} 
This leads us to reduce the proof of Proposition~\ref{prop2} to the following lemma: 
\begin{lem} \label{l1}Let $\la \in (0, 1)$ be given as in Proposition~\ref{prop2} so that~\eqref{psi pw small} and~\eqref{F>0} holds. Then 
\EQ{\label{Ela}
\E_{\ext}^{\la}(t) := \int_{\la\abs{t}}^{\abs{t}}\left( \frac{1}{2}[ \psi_t^2(t, r) + \psi_r^2(t, r) ]+ \frac{F( \psi)}{r^2}\right) \, r \, dr \to 0 \mas t \nearrow 0.
}
\end{lem}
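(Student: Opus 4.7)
The plan is to adapt the classical techniques of Christodoulou--Tahvildar-Zadeh~\cite{CTZduke, CTZcpam} and Shatah--Tahvildar-Zadeh~\cite{STZ92, STZ94} to the reduced $2d$ equation~\eqref{psi eq}. The key observation is that the hypothesis~\eqref{psi pw small}, together with $\psi(t,|t|) = \phi(t,|t|) \to 0$ from Lemma~\ref{phi decay}, provides positivity of $F(\psi)$ both in the annular self-similar strip $\la|t| \le r \le |t|$ and on the outer boundary $r = |t|$ of the backward light cone. Recall that $F(\psi) = \tfrac12\psi^2(1 - \tfrac12\psi^2)$ is nonnegative exactly when $|\psi| \le \sqrt{2}$.

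The first step is backward cone energy monotonicity. Setting $e = \tfrac12(\psi_t^2 + \psi_r^2) + F(\psi)/r^2$ and $m = \psi_t\psi_r$, the equation~\eqref{psi eq} yields the continuity identity $\partial_t(re) = \partial_r(rm)$, so the full backward cone energy $E(t) := \int_0^{|t|} e(t,r)\, r\, dr$ satisfies
\[
\frac{dE}{dt}(t) = -(-t)\left[\tfrac{1}{2}(\psi_t - \psi_r)^2 + \tfrac{F(\psi)}{r^2}\right]_{r=-t} \le 0
\]
for $t$ close to $0^-$, since $F(\psi(t,|t|)) \ge 0$ for such $t$. Hence $E(t)$ has a limit $E^\ast$, and integration yields the flux vanishing
\[
\int_{t_1}^{t_2}(-s)\left[\tfrac{1}{2}(\psi_t - \psi_r)^2 + \tfrac{F(\psi)}{r^2}\right]_{r=-s}\, ds = E(t_1) - E(t_2) \to 0
\]
as $t_1, t_2 \to 0^-$, giving a weighted $L^2$ decay of the ingoing null derivative $\psi_t - \psi_r$ and of the potential along the cone.

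Next, I would pass to self-similar variables $\tau = -\log|t|$, $y = r/|t|$, with $\Psi(\tau, y) = \psi(t, r)$. A direct calculation gives $\psi_t = |t|^{-1}(\Psi_\tau + y\Psi_y)$ and $\psi_r = |t|^{-1}\Psi_y$, whence
\[
\E_\ext^\la(t) = \int_\la^1 \left[\tfrac{1}{2}\bigl((\Psi_\tau + y\Psi_y)^2 + \Psi_y^2\bigr) + \tfrac{F(\Psi)}{y^2}\right] y\, dy,
\]
and~\eqref{psi pw small} reads $|\Psi(\tau, y)| \le \sqrt{2}/2$ for $y \in [\la, 1]$, so $F(\Psi) \ge 0$ there. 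The central computation is to produce a dissipative monotonicity in $\tau$ by multiplying~\eqref{psi eq} by a conformal multiplier adapted to the dilation field---e.g. $(1 - r^2/t^2)(t\psi_t + r\psi_r) = -(1-y^2)\Psi_\tau$---and integrating over the backward cone. The outer boundary contributions at $r = |t|$ are absorbed by the flux identity from Step~1; the inner boundary contributions at $r = \la|t|$ are controlled via~\eqref{psi pw small} and the positivity of $F(\Psi(\tau, \la))$; the interior term yields a nonnegative dissipation proportional to $\int\!\!\int \Psi_\tau^2 \cdot (\textrm{positive weight})\,dy\,d\tau$. This gives $\int_0^\infty \|\Psi_\tau(\tau,\cdot)\|_{L^2_{y\,dy}([\la,1])}^2\, d\tau < \infty$.

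To conclude, extract a sequence $\tau_n \to \infty$ with $\|\Psi_\tau(\tau_n,\cdot)\|_{L^2_{y\,dy}} \to 0$; any subsequential limit $\Psi_\infty(y)$ is $\tau$-independent and satisfies a degenerate self-similar ODE on $[\la, 1]$, which together with the boundary trace $\Psi_\infty(1) = 0$ (from $\phi(t,|t|) \to 0$) and the sign of $F$ forces $\Psi_\infty \equiv 0$; hence $\E_\ext^\la(t_n) \to 0$ along the subsequence, and a monotonicity upgrade leveraging Step~1 converts this into the full limit. The main obstacle is the construction of the dissipation identity: in the genuine wave maps framework of~\cite{STZ92, CTZduke}, the potential $F = g^2$ is nonnegative globally and the classical manipulations go through without restriction, whereas here $F$ can be arbitrarily negative for large $|\psi|$. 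All boundary and interior manipulations must therefore be arranged so that positivity is invoked only where~\eqref{psi pw small} applies---this is precisely why the size restriction is built into the hypothesis of the proposition, and why the extraction of the regular part $\vec\phi$ in Section~\ref{reg part} is essential to guarantee the favorable sign on the outer boundary $r=|t|$ and at the ends of the cutoff.
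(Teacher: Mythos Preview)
Your Step~1 (flux positivity and vanishing along the mantle) is correct and matches the paper. The difficulty is in Steps~2--3, which constitute the heart of the argument and are not carried out. The paper does \emph{not} pass to self-similar variables or invoke a compactness/classification scheme; instead it follows the characteristic method of \cite{STZ92} directly. One introduces null coordinates $\eta=t+r$, $\xi=t-r$ and the quantities $\mathcal{A}^2=r(e+m)$, $\mathcal{B}^2=r(e-m)$, so that \eqref{re rm} becomes $\partial_\xi\mathcal{A}^2=L$, $\partial_\eta\mathcal{B}^2=-L$. The hypothesis $|\psi|\le\sqrt{2}/2$ enters through the pointwise inequality $f^2(\psi)\le 4F(\psi)$ on $K_{\ext}^\la$, which yields $L^2\le C\mathcal{A}^2\mathcal{B}^2/r^2$ and hence $|\partial_\xi\mathcal{A}|\le C\mathcal{B}/r$, $|\partial_\eta\mathcal{B}|\le C\mathcal{A}/r$. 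Integrating on null rectangles, controlling the $\mathcal{B}$-boundary term by the flux from Step~1, and applying Gr\"onwall to $h(\eta,\xi)=\sup_{\eta\le\eta'\le0}\sqrt{\eta'-\xi}\,\mathcal{A}(\eta',\xi)$ gives $\mathcal{A}(\eta,\xi)\le C\epsilon/\sqrt{\eta-\xi}$ near the tip, and one finishes by integrating \eqref{en id} over a triangle.

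Your alternative route has two concrete gaps. First, the multiplier $(1-y^2)\Psi_\tau$ does not produce a clean dissipation identity on the annulus $[\la,1]$: the cross term coming from $F(\psi)/r^2$ contributes $\partial_\tau F(\Psi)/y^2$, whose integral in $\tau$ is not sign-definite, and the boundary term at $y=\la$ carries an uncontrolled flux $\Psi_\tau\Psi_y|_{y=\la}$ which the hypothesis~\eqref{psi pw small} does not bound. Second, even granting $\int\|\Psi_\tau\|^2\,d\tau<\infty$, the ``monotonicity upgrade'' is unavailable: $\mathcal{E}_{\ext}^\la(t)$ is \emph{not} monotone (only the full cone energy $E(t)$ is), so sequential convergence to zero does not promote to a limit without a further quantitative ingredient --- which is precisely what the Gr\"onwall argument supplies.
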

We remark that Proposition~\ref{prop2} is an immediate consequence of Lemma~\ref{l1} since~\eqref{F>0} implies that $F( \psi) \ge 0$ in the domain of integration in~\eqref{Ela}. To prove Lemma~\ref{l1} we will need a few multiplier identities 
\begin{align} \label{en id}
 &\p_t \left(  \frac{r}{2} \psi_t^2 + \frac{r}{2} \psi_r^2 + \frac{F(\psi)}{r} \right) - \p_{r}(r \psi_t, \psi_r) = 0, \\
& \p_t\left( r^2 \psi_t \psi_r\right) - \p_r\left( \frac{r^2}{2} \psi_t^2 + \frac{r^2}{2} \psi_r^2 - F( \psi)\right) + r \psi_t^2 = 0, \label{psi t id}
\end{align}
which are obtained by multiplying~\eqref{psi eq} by $\psi_t$ and $r t \psi_t$ respectively. We denote the truncated backwards light-cone emanating from $(t, r) = (0, 0)$ and its mantel by 
\begin{align}
&K( \tau, \e):= \{ (t, r) \mid  \tau \le t \le \e <0, \, \, 0 \le r \le  \abs{\tau}\},\\
&C( \tau, \e):=\{ (t, r) \mid  \tau \le t \le \e <0, \, \,  r =  \abs{\tau}\}.
\end{align}
For $\tau<0$ and $\e<0$ small with $\tau< \e< 0$, we also define the local energy and the flux:
\EQ{
&\E( \tau):=  \int_{0}^{\abs{\tau}}\left( \frac{1}{2}[ \psi_t^2(\tau, r) + \psi_r^2(\tau, r) ]+ \frac{F( \psi( \tau, r))}{r^2}\right) \, r \, dr\\
& \flux( \tau, \e):= -c_0 \int_{ \tau}^{ \e} \left[ \frac{1}{2}(  \chi'( \ell))^2 + \frac{F( \chi( \ell))}{\ell}\right] \,  \ell \, d \ell
}
where $\chi( \ell):= \psi(\ell, -\ell)$,  $\tau< \ell< \e<0$, and $c_0>0$ is a universal dimensional constant so that the following local energy identity holds: 
\EQ{\label{loc en id}
\E( \tau) = \E(\e)+ \flux( \tau, \e).
}
Note that although we don't know that $F( \psi) \ge 0$ on the entire domain of integration in $\E( \tau)$ and $\E( \e)$ above, the hypothesis of Proposition~\ref{prop2} guarantee that $F( \chi) \ge 0$, for $\tau$ small  in the Flux term and hence $\flux( \tau, \e) \ge 0$ since $\tau< \e<0$. From~\eqref{loc en id} we can then deduce that $\E( \tau) \ge \E( \e)$ for $ \tau< \e <0$. 

Next, since $\abs{ \E( \tau)} \le A <  \infty$ and since $\E( \tau)$ is monotonically decreasing as $\tau \nearrow 0$, we observe that 
\ant{ 
\lim_{\e \nearrow 0} \E( \e) =: \E( 0)
} 
exists and is finite. Using~\eqref{loc en id} again we see that 
\ant{
\flux(\tau): =  \lim_{\e \to 0} \flux( \tau, \e) \le \E(\tau) - \E(0)
} 
exists by monotone convergence and that $0 \le\flux( \tau)< \infty$ as well as  $\flux( \tau) \to 0$ as $\tau \to 0$. We can now replicate the argument in~\cite[Lemma $2.2$]{STZ92} which we include here for completeness and to show where precisely we will use the hypothesis in Proposition~\ref{prop2}. We also refer the reader to the book~\cite[Proof of Lemma~$8.2$]{SSbook}. 

Define, 
\EQ{ \label{e m L def}
&e(t, r):=  \frac{1}{2} \psi_t^2(t, r) + \frac{1}{2} \psi_r^2(t, r) + \frac{1}{r^2}F( \psi(t, r)),\\
&m(t, r):= \psi_t(t, r) \psi_r(t, r),\\
&L(t, r):=  - \frac{1}{2} \psi_t^2(t, r) + \frac{1}{2} \psi_r^2(t, r) + \frac{1}{r^2}F( \psi(t, r)) - \frac{2}{r} f( \psi(t, r)) \psi_r(t, r).
}
Then, using~\eqref{en id},~\eqref{psi t id} we see that 
\EQ{\label{re rm}
\p_t( r e) - \p_r(r m) = 0,\\
\p_t(rm) - \p_r(r e) = L.
}
We also introduce null coordinates 
\ant{
\eta = t+r, \, \, \xi = t-r
}
as well as the functions
\EQ{
&\q A^2( \eta, \xi):= r(e+m) =  \frac{r}{2} (\partial_t \psi + \partial_r \psi)^2  + \frac{F(\psi)}{r}, \\
&\q B^2(\eta, \xi):= r(e- m) =  \frac{r}{2}(\partial_t \psi - \partial_r \psi)^2  + \frac{F(\psi)}{r}. 
}
Note that the assumptions of Proposition~\ref{prop2} ensure the positivity of $F( \psi(t, r))$ in the region 
\EQ{ \label{K ext}
K_{\ext}^{\la}:= \{(t, r) \mid t_0<t<0, \, \, \la \abs{t} \le r \le \abs{t}  \},
}
and thus the interpretation of the functions $\A^2, \B^2$ as squares in $K^{\la}_{\ext}$ is justified. We can rewrite~\eqref{re rm} in terms of $\A^2, \B^2$ as 
\EQ{\label{d A B}
&\p_{\xi} \A^2 =L,\\
&\p_{\eta} \B^2 =- L.
}
We next claim the bound 
\begin{claim} On $K^\la_{\mr{ext}}$,
\EQ{ \label{L2}
L^2 \le C \frac{\A^2 \B^2}{r^2}.
}
\end{claim}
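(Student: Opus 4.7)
The plan is to expand both sides explicitly and then apply a three-term Cauchy–Schwarz.

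First I would compute the right-hand side by direct multiplication. Using $(\psi_t+\psi_r)(\psi_t-\psi_r) = \psi_t^2-\psi_r^2$ and $(\psi_t+\psi_r)^2+(\psi_t-\psi_r)^2 = 2(\psi_t^2+\psi_r^2)$, one finds
\begin{equation*}
\frac{\A^2 \B^2}{r^2} \;=\; \frac{1}{4}\bigl(\psi_t^2-\psi_r^2\bigr)^2 \;+\; \frac{F(\psi)}{r^2}\bigl(\psi_t^2+\psi_r^2\bigr) \;+\; \frac{F(\psi)^2}{r^4}.
\end{equation*}
All three terms are nonnegative on $K^\la_{\ext}$, since $F(\psi)\ge 0$ there by~\eqref{F>0}.

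Next, rewrite $L$ by grouping: from~\eqref{e m L def},
\begin{equation*}
L \;=\; -\tfrac{1}{2}\bigl(\psi_t^2-\psi_r^2\bigr) \;+\; \frac{F(\psi)}{r^2} \;-\; \frac{2}{r}\,f(\psi)\,\psi_r,
\end{equation*}
so that $(a+b+c)^2 \le 3(a^2+b^2+c^2)$ gives
\begin{equation*}
L^2 \;\le\; 3\left[\tfrac{1}{4}(\psi_t^2-\psi_r^2)^2 \;+\; \frac{F(\psi)^2}{r^4} \;+\; \frac{4}{r^2}\,f(\psi)^2\,\psi_r^2\right].
\end{equation*}
The first two terms match entries in the expansion of $\A^2\B^2/r^2$ exactly.

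The only real issue is the third term, and here is where the pointwise bound $|\psi|\le \sqrt{2}/2$ on $K^\la_{\ext}$ is used. Since $f(\psi)=\psi(1-\psi^2)$ and $F(\psi)=\tfrac12\psi^2(1-\tfrac12\psi^2)$, one has on this region
\begin{equation*}
f(\psi)^2 \;=\; \psi^2(1-\psi^2)^2 \;\le\; \psi^2 \;\le\; \frac{8}{3}\,F(\psi),
\end{equation*}
using $1-\tfrac12\psi^2 \ge \tfrac34$ in the denominator. Therefore
\begin{equation*}
\frac{4}{r^2}\,f(\psi)^2\,\psi_r^2 \;\le\; \frac{32}{3}\cdot \frac{F(\psi)}{r^2}\,\psi_r^2 \;\le\; \frac{32}{3}\cdot \frac{F(\psi)}{r^2}\bigl(\psi_t^2+\psi_r^2\bigr),
\end{equation*}
which again fits into the expansion of $\A^2\B^2/r^2$. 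Combining the three bounds gives $L^2 \le C\,\A^2\B^2/r^2$ for some universal constant $C$, proving the claim.

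The main (and only) subtlety is recognizing that the dangerous cross term $\frac{2}{r}f(\psi)\psi_r$ in $L$ is exactly controlled by the interaction of $F(\psi)/r^2$ with $\psi_r^2$ in $\A^2\B^2/r^2$, and that this control requires the smallness hypothesis $|\psi|\le\sqrt{2}/2$ to ensure $f^2 \lesssim F$ pointwise — i.e., that $F$ does not vanish to higher order than $f^2$, which fails near the zero $\psi=\pm\sqrt{2}$ of $F$ but holds in the range we have restricted to.
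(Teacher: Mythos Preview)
Your proof is correct and follows essentially the same approach as the paper: expand $\A^2\B^2/r^2 = (e+m)(e-m)$, bound $L^2$ by a sum of three squares via Cauchy--Schwarz, and control the cross term $\frac{4}{r^2}f(\psi)^2\psi_r^2$ using the pointwise smallness $|\psi|\le \sqrt{2}/2$ to obtain $f^2(\psi)\lesssim F(\psi)$. The only differences are cosmetic (order of presentation and slightly sharper constants from using $1-\tfrac12\psi^2\ge \tfrac34$ rather than $\ge \tfrac12$).
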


\begin{proof}
Indeed, a direct computation and simple algebra yields 
\EQ{\label{L2 1}
L^2 \le  \frac{1}{2}( \psi_r^2 - \psi_t^2)^2  + \frac{4}{r^4} F^2( \psi) + \frac{16}{r^2} f^2( \psi) \psi_r^2.
}
Next we note that the assumptions of Proposition~\ref{prop2} imply that for all $(t, r)\in K_{\ext}^\la$ we have 
\ant{
\abs{\psi(t, r)}^2 \le  \frac{1}{2}. 
}
It follows then that 
\ant{
&\abs{f ( \psi)} =  \abs{\psi( 1- \psi^2)} \le \abs{\psi},\\
&\abs{F(\psi)}  = \abs{\frac{\psi^2}{2}\left(1-\frac{ \psi^2}{2}\right)} \ge \frac{\psi^2}{4}.
}
Combining the above inequalities gives 
\EQ{\label{f2 F}
f^2 (\psi) \le \abs{\psi}^2 \le 4 F(\psi), \quad \forall \,\,(t, r) \in K_{\ext}^\la.
}
Plugging~\eqref{f2 F} into~\eqref{L2 1} we obtain
\EQ{ \label{L2 bu}
L^2 &\le \frac{1}{2}( \psi_r^2- \psi_t^2)^2 +  \frac{4}{r^4} F^2( \psi) + \frac{64}{r^2} F( \psi) \psi_r^2, \\
& \le C \left[ \frac{1}{4}( \psi_r^2- \psi_t^2)^2 +  \frac{1}{r^4} F^2( \psi) + \frac{2}{r^2} F( \psi) (\psi_r^2+ \psi_t^2)\right].
}
On the other hand, 
\ant{
\frac{ \A^2 \B^2}{r^2} = (e+m)(e-m) = \frac{1}{4}( \psi_r^2- \psi_t^2)^2 +  \frac{1}{r^4} F^2( \psi) + \frac{2}{r^2} F( \psi) (\psi_r^2+ \psi_t^2),
} 
which, together with the preceding inequality, establishes~\eqref{L2}. 
\end{proof}
Now, we can combine~\eqref{d A B} with~\eqref{L2} to see that 
\EQ{\label{A B in}
\abs{\p_{\xi} \A} \le \frac{C}{r} \B, \quad \abs{ \p_{\eta} \B} \le \frac{C}{r} \A.
}
Now consider the rectangle 
\ant{
\Ga:=\Ga( \eta, \xi):= [ \eta, 0] \times[ \xi_0, \xi] \subset K^{\la}_{\ext}.
}
Integrating on $\Ga$ and using~\eqref{A B in} we have the inequality
\EQ{
\A( \eta, \xi) \le \A( \eta, \xi_0) + C \int_{\xi_0}^{\xi} \frac{ \B(0, \xi')}{ \eta- \xi'} \, d \xi' + C^2 \int_{\xi_0}^{\xi} \int_{\eta}^0 \frac{ \A(\eta', \xi')}{ (\eta- \xi')( \eta'- \xi')} \, d \eta' \, d \xi'.
}
We estimate the $2$nd term on the right as follows 
\begin{align*}
\MoveEqLeft \int_{\xi_0}^{\xi} \frac{ \B(0, \xi')}{ \eta- \xi'} \, d \xi' \le \left( \int_{\xi_0}^{\xi}  \B^2(0, \xi') \, d \xi' \right)^{\frac{1}{2}}\left(\int_{\xi_0}^{\xi} \frac{ 1}{ (\eta- \xi')^2} \, d \xi' \right)^{\frac{1}{2}}\\
& =  \left( \int_{\xi_0}^{\xi}  \left[ \psi_{\xi}^2(0, \xi') + \frac{F( \psi(0, \xi')}{ ( \xi')^2} \right](- \xi')\, d \xi' \right)^{\frac{1}{2}} \left( \frac{1}{ \eta- \xi} - \frac{1}{ \eta- \xi_0} \right)^{\frac{1}{2}}\\
& \le C \sqrt{\frac{\flux( \xi_0)}{\eta- \xi}}.
\end{align*}
Now, define 
\ant{
h(\eta, \xi):= \sup_{\eta \le \eta' \le 0} \ \sqrt{ \eta'- \xi} \A( \eta', \xi).
}
We then have 
\ant{
\A( \eta, \xi) &\le \A( \eta, \xi_0) + C \sqrt{\frac{\flux( \xi_0)}{\eta- \xi}} + C^2 \int_{\xi_0}^{\xi} \int_{\eta}^0 \frac{ h(\eta, \xi')}{ (\eta- \xi')( \eta'- \xi')^{\frac{3}{2}}} \, d \eta' \, d \xi'\\
& \le \A( \eta, \xi_0) +\sqrt{\frac{\flux( \xi_0)}{\eta- \xi}} + C^2\int_{\xi_0}^{\xi}  \frac{ h(\eta, \xi')}{ (\eta- \xi')} \left( \frac{1}{ \sqrt{ \eta- \xi'}}- \frac{1}{ \sqrt{- \xi'}} \right) \,  d \xi'  .
}
Using the above and the fact that  $\Ga \subset K^\la_{\ext}$ we then obtain, 
\ant{
h( \eta, \xi) \le \frac{ \sqrt{- \xi}}{ \sqrt{- \xi_0}} h( \eta, \xi_0) + C \sqrt{\flux( \xi_0)} + C \int_{\xi_0}^{\xi} h( \eta, \xi')  \frac{\eta}{ \xi'( \eta- \xi')} \, d \xi'.
}
Next, define $\la':= (1- \la)/(1+ \la)<1$. Fix $\eta$ and consider $\xi \in [ \xi_0, \eta/ \la']$. Applying the integral form of Gronwall's inequality gives 
\begin{align*}
\MoveEqLeft h( \eta, \xi) \le \frac{ \sqrt{- \xi}}{ \sqrt{- \xi_0}} h( \eta, \xi_0) + C \sqrt{\flux( \xi_0)} \\
& + C \int_{\xi_0}^{\xi}  \left[ \frac{ \sqrt{- \xi'}}{ \sqrt{- \xi_0}} h( \eta, \xi_0) + C \flux^{\frac{1}{2}}( \xi_0)\right]h( \eta, \xi')  \frac{\eta}{ \xi'( \eta- \xi')} \,  e^{\left( \int_{\xi'}^{\xi} \frac{\eta}{ \xi''( \eta- \xi'')} \, d \xi'' \right)}d \xi'.
\end{align*}
Setting $\eta = \la' \xi$ with $ \xi_0 < \xi'< \xi$ we have 
\ant{
\int_{\xi'}^{\xi} \frac{\eta}{\xi''( \eta- \xi'')} \, d \xi'' =  \log\left( \frac{\xi( \la' \xi- \xi')}{ \xi'( \la' \xi- \xi)} \right) \le \log( \frac{1}{1- \la'}).
}
Note that since $\psi$ is regular away from $(0, 0)$, $\A$ is bounded at $(\eta',  \xi_0)$ for $\eta \le \eta' \le 0$  by a constant that depends on $\xi_0$ and thus 
\ant{
h( \eta, \xi) \le \sup_{\eta \le \eta' \le 0} \sqrt{ \eta'- \xi_0} \sup_{\eta \le \eta' \le 0} \A( \eta', \xi_0) \le C( \xi_0) \sqrt{- \xi_0}.
}
Let $\e>0$ be given. Fix $\xi_0$ small enough so that $C \sqrt{\flux( \xi_0)} \le \e$. Then, 
\ant{
h( \la' \xi, \xi) &\le C( \xi_0) \sqrt{- \xi} + \e + C( \xi_0) \int_{\xi_0}^\xi \frac{\la' \xi}{ \sqrt{- \xi_0}( \la' \xi- \xi')} \, d \xi' + C \e\\
& \le C\e + C( \xi_0) \sqrt{- \xi}\\
& \le 2C \e
}
provided $\xi$ is small enough. Therefore, 
\ant{
\A( \eta, \xi)  \le \frac{h( \eta, \xi)}{ \sqrt{ \eta- \xi}} \le \frac{C \e}{ \sqrt{ \eta- \xi}}
} 
for $( \eta, \xi)$ small inside $K_{\ext}^\la$. This means that 
\EQ{\label{int A}
\int^{0}_\eta \A^2( \eta', \xi)\,  d \eta' \le C \e^2 \int_{\eta}^0 \frac{d \eta'}{ \eta'- \xi} = C \e^2 \log\left( \frac{- \xi}{( \la'-1) \xi}\right) \le C \e^2.
}
With the above in hand, we can now conclude by integrating~\eqref{en id} over the triangle with vertices $( \eta, \xi), \, (0, \xi), \, (0, \eta + \xi)$ and $\eta= \la' \xi$. We obtain
\EQ{
0 &= - \int_{\la \abs{t}}^{\abs{t}} e(t, r)  \, r \, dr - \int_{\eta}^0 r \, (e+m) \, d \eta' + \int_{\eta + \xi}^{\xi}  r(e-m) \, d \xi' \\
 &= \textrm{I} + \textrm{II} +\textrm{III}.
 } 
 We note that III is the $\flux$ which tends to $0$ as $\abs{t} \to 0$ and II is exactly~\eqref{int A} which tends to $0$ as we have just shown. This means that I also tends to $0$ which proves Proposition~\ref{prop2}.


\section{Self-similar and exterior regions: global solutions}\label{self sim global}

In this section, we consider a global type II solution $\vec u(t)$: we assume that $[0,+\infty) \subset I$ and that for some $M \ge 0$,
\EQ{\label{typeII}
 \sup_{t \in [0,+\infty)} \| \vec u(t) \|_{\HH} \le M. 
 }
We identify the radiation term $\vec v_{L}(t)$  and establish the analog of Theorem~\ref{self sim} for $\vec u(t)$.

\subsection{Extraction of the radiation term} \label{rad term}

We begin by extracting the radiation term, that is, we find the unique solution $\vec v_{L}(t)$ to the linear equation~\eqref{free wave} which $\vec u(t)$ approaches outside the forward light cone. This is a somewhat more involved procedure than in the finite time blow-up case where taking a weak limit suffices. In particular, we prove:

\begin{prop}~\label{th:u-v_L}
There exists $(v_0,v_1) \in \HH$ such that $\| (v_0,v_1) \|_{\HH} \le M$ and
\[ \forall R \in \R, \quad \int_{|x| \ge t-R} | \nabla_{t,x} (u - v_{\mr{L}})(t,x)|^2 dx \to 0 \quad \text{as} \quad t \to +\infty, \]
where $v_{ L}$ is the free wave, i.e., solution to the linear equation \eqref{free wave}, with initial data $(v_0,v_1)$.
\end{prop}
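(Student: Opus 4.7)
The strategy is to identify the radiation term $\vec v_L$ as the free wave associated to a weak limit of the backward linear flow of $\vec u(t)$, and then to upgrade this weak convergence to strong convergence on the exterior of the light cone using the Bahouri-Gerard linear profile decomposition together with the four-dimensional dispersive properties from~\cite{CKS}. For the first step (weak extraction), given any sequence $t_n \to +\infty$, the backward-evolved sequence $\vec a_n := S(-t_n)\vec u(t_n)$ is bounded in $\HH$ by~\eqref{typeII} and the isometry of the free flow, so one can extract a subsequence (not relabeled) with weak limit $(v_0,v_1) \in \HH$ satisfying $\|(v_0,v_1)\|_{\HH}\le M$; set $\vec v_L(t) := S(t)(v_0,v_1)$.

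\emph{Step 2 (Exterior convergence along the subsequence).} Apply the linear Bahouri-Gerard decomposition (Theorem~\ref{BG}) to the bounded sequence $\vec u(t_n)$. Among the profiles, after relabeling, we arrange one to correspond to the weak limit from Step~1, with parameters $(\vec U_L^1, t_{1,n}, \lambda_{1,n}) = (\vec v_L, -t_n, 1)$; its contribution to $\vec u(t_n)$ is precisely $\vec v_L(t_n)$, and the weak-null condition~\eqref{w weak} is consistent with $S(-t_n)\vec u(t_n)\rightharpoonup (v_0,v_1)$. Applying the localized Pythagorean identity (Proposition~\ref{loc en dec}) with $r_n = t_n - R$,
\[ \|\vec u(t_n) - \vec v_L(t_n)\|_{\HH(r \ge t_n - R)}^2 = \sum_{j \ge 2} \|\vec U_L^j(-t_{j,n}/\lambda_{j,n})\|_{\HH(r \ge (t_n - R)/\lambda_{j,n})}^2 + \|\vec w_n^k(0)\|_{\HH(r \ge t_n - R)}^2 + o_n(1). \]
For each $j \ge 2$, the pseudo-orthogonality~\eqref{scales} relative to profile~$1$ forces either $\lambda_{j,n}/t_n \to 0$ or $|t_{j,n} + t_n|/\lambda_{j,n} \to +\infty$; in either case, Lemma~\ref{lem:scaled_lin_disp} implies that the $j$-th profile's $\HH$-mass concentrates outside the rescaled region $\{r \ge (t_n-R)/\lambda_{j,n}\}$, so the corresponding term vanishes as $n \to \infty$. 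The Strichartz-vanishing error term is handled by Lemma~\ref{local scat lem} upon letting $k \to \infty$, yielding $\|\vec u(t_n) - \vec v_L(t_n)\|_{\HH(r \ge t_n - R)} \to 0$.

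\emph{Step 3 (Uniqueness and independence of the subsequence).} Suppose two candidates $(v_0,v_1)$ and $(v_0',v_1')$ both satisfy the conclusion of the proposition. Then $\vec V_L := \vec v_L - \vec v_L'$ is a radial free wave with $\|\vec V_L(t)\|_{\HH(r \ge t - R)} \to 0$ for every $R \in \R$. Fixing $R > 0$, the region $\{r \ge t - R\}$ contains the shell $\{|r-t| \le T\}$ for any $T < R$, and by Proposition~\ref{prop:6} the $\HH$-norm of $\vec V_L(t)$ asymptotically concentrates on this shell as $t \to \infty$ followed by $T \to \infty$. Combined with the conservation of free-wave energy, this forces $\|\vec V_L\|_{\HH} = 0$ and hence $(v_0,v_1) = (v_0',v_1')$. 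The exterior convergence of Step~2 therefore holds along every sequence $t \to \infty$, yielding the full conclusion of the proposition.

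\emph{Main obstacle.} The crux of the argument is Step~2, namely the claim that no profile other than the radiation one contributes to the exterior norm in the limit. This requires a careful case split on the profile parameters $(t_{j,n}, \lambda_{j,n})$ together with an essential use of the pseudo-orthogonality~\eqref{scales} and the dispersion lemma~\ref{lem:scaled_lin_disp}. In contrast to the three-dimensional setting of~\cite{DKM1, DKM3}, the absence of the strong Huygens principle means free waves do not have exactly compactly supported exterior energy, so one has to rely on the weaker (asymmetric) exterior estimates from~\cite{CKS}, which makes the bookkeeping in both Steps~2 and~3 more subtle.
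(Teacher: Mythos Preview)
Your Step~2 contains a genuine gap: the claim that every profile $j\ge 2$ has vanishing $\HH$-mass on $\{r\ge t_n-R\}$ is false in general. Consider a profile with $t_{j,n}=-t_n$ and $\lambda_{j,n}\to 0$; it is pseudo-orthogonal to your profile~1 (since $\lambda_{j,n}/\lambda_{1,n}+\lambda_{1,n}/\lambda_{j,n}\to\infty$), yet $-t_{j,n}/\lambda_{j,n}=t_n/\lambda_{j,n}\to+\infty$, so Lemma~\ref{lem:scaled_lin_disp} says its energy concentrates in the shell $\bigl||x|-t_n\bigr|\lesssim\lambda_{j,n}$, which lies \emph{entirely} inside $\{r\ge t_n-R\}$ for large $n$. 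Your dichotomy ``$\lambda_{j,n}/t_n\to 0$ or $|t_{j,n}+t_n|/\lambda_{j,n}\to\infty$'' is also not a consequence of~\eqref{scales} (take $\lambda_{j,n}=t_n$, $t_{j,n}=0$), and in any case neither branch forces the profile inside the cone. The error term is likewise uncontrolled: Lemma~\ref{local scat lem} bounds Strichartz norms of truncations, not localized $\HH$-norms, so $\|\vec w_n^k(0)\|_{\HH(r\ge t_n-R)}$ need not be small.

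The underlying issue is that the conclusion is a \emph{nonlinear} scattering statement in the exterior region, and a purely linear decomposition of $\vec u(t_n)$ cannot deliver it. The paper's proof is essentially different: it first establishes (Lemma~\ref{lem:1}), via a delicate two-stage profile analysis of the \emph{truncated} data $\varphi_\delta(x/t_n)\vec u(t_n)$, that after an intermediate nonlinear evolution all surviving profiles have $-t_{j,n}/\lambda_{j,n}\to+\infty$, hence their nonlinear profiles scatter forward. Proposition~\ref{nonlin profile} then gives a forward-scattering nonlinear solution agreeing with $u$ outside the cone by finite speed of propagation, producing for each $R$ a free wave $\vec v_L^R$ with the desired exterior convergence. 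Only at the very end is the weak limit $(v_0,v_1)$ identified with these $\vec v_L^R$ via the localized Pythagorean expansion, which is where your Step~3 idea does enter. You are missing the nonlinear ingredient that shows the exterior part of the solution actually scatters.
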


The rest of this subsection is devoted to the proof of this result, which follows closely the proof  of the corresponding result in~\cite{DKM3}. For $\delta >0$, let $\varphi_\delta: \R^4 \to  \R$ be a radial smooth function , such that 
\[ 0 \le \varphi_{\delta} \le 1, \quad | \nabla \varphi_\delta| \le \frac{C}{\delta}, \quad \varphi(x) = \begin{cases}
1 & \text{for } |x| \ge 1-\delta \\
0 & \text{for } |x| \le 1 - 2\delta
\end{cases}. \]

\begin{lem} \label{lem:1}
Let $\e>0$ be given. Then there exists $t_n \uparrow +\infty$, $\delta >0$ small such that
\[ \varphi_\delta \left( \frac{x}{t_n} \right) \vec u(t_n,x) = \varphi_{\delta} \left( \frac{x}{t_n} \right) \left( u(t_n,x), \partial_t u(t_n,x) \right) \]
has a profile decomposition with profiles $(U_{\mr L}^j)$ and parameters $(\lambda_{j,n}, t_{j,n})$ such that 
\[ \| (U_0^1, U_1^1) \|_{\HH} \le \e \text{ and } \forall n, \ t_{1,n} =0, \quad \text{and} \quad  \forall j \ge 2, \ - \frac{t_{j,n}}{\lambda_{j,n}} \to +\infty. \]
\end{lem}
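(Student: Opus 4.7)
The strategy is to apply the Bahouri-Gerard linear profile decomposition (Theorem~\ref{BG}) to the bounded sequence $\vec w_n := \varphi_\delta(\cdot/t_n) \vec u(t_n) \in \HH$ (bounded by $M$ thanks to \eqref{typeII} and $|\varphi_\delta|\le 1$), for a suitably chosen sequence $t_n\to\infty$ and small $\delta>0$. This yields profiles $\{\vec U^j_L\}$ with parameters $\{\lambda_{j,n},t_{j,n}\}$. Using Remark~\ref{prof rem} and Lemma~\ref{lem:3.7}, I will normalize so that for each $j$ either $t_{j,n}\equiv 0$ or $-t_{j,n}/\lambda_{j,n}\to \pm\infty$, and order them so the $t_{j,n}=0$ profiles (if any) come first, in decreasing order of $\HH$-norm. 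The goal is then to select $t_n,\delta$ so that (a) no profile of the decomposition has $-t_{j,n}/\lambda_{j,n}\to -\infty$, and (b) at most one profile has $t_{j,n}=0$, and that one has $\HH$-norm $\le\varepsilon$.

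For (a), I would extract $t_n$ along an approximate \emph{minimizing} sequence for a monotone exterior functional. Concretely, define
\[
\alpha_\delta := \liminf_{t\to+\infty} \|\vec u(t)\|_{\HH(|x|\ge(1-2\delta)t)}^2,
\]
which is bounded by $M^2$, and pick $t_n\to\infty$ realizing this liminf. If there were a profile with $-t_{j,n}/\lambda_{j,n}\to -\infty$, then by Proposition~\ref{nonlin profile} (applied forward from $t_n$) the corresponding nonlinear profile $\vec U^j$ develops a genuine nonlinear concentration at some time $t_n^\ast>t_n$. By the localized Pythagorean decomposition (Proposition~\ref{loc en dec}) applied at time $t_n^\ast$ together with the linear dispersion estimate (Lemma~\ref{lem:scaled_lin_disp}), the exterior $\HH$-energy at $t_n^\ast$ would then exceed $\alpha_\delta$ by a definite positive constant, contradicting the definition of the liminf along $t_n$.

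For (b), I would exploit the \emph{monotonicity in $\delta$} of $\alpha_\delta$. The remaining profiles fall into two classes: $t_{j,n}=0$ profiles (``trapped'' pieces) and $-t_{j,n}/\lambda_{j,n}\to+\infty$ profiles (``radiating'' free waves). By Proposition~\ref{prop:6} and Lemma~\ref{lem:scaled_lin_disp}, the radiating profiles concentrate their $\HH$-mass on the light cone $|x|\sim t$, so their contribution to the annulus $(1-2\delta)t\le|x|\le(1-\delta)t$ vanishes as $n\to\infty$. Applying Proposition~\ref{loc en dec} at the two radii $r_n=(1-2\delta)t_n$ and $r_n=(1-\delta)t_n$ and subtracting, one finds that the total $\HH$-mass of the $t_{j,n}=0$ profiles is at most $\alpha_\delta - \alpha_{\delta/2} + o_n(1)$. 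Since $\delta\mapsto\alpha_\delta$ is monotone and bounded, it has a limit as $\delta\to 0$, so by choosing $\delta$ small we make this difference $\le\varepsilon$; in particular the largest $t_{j,n}=0$ profile has $\HH$-norm $\le\varepsilon$, and all remaining profiles satisfy $-t_{j,n}/\lambda_{j,n}\to+\infty$ (after relabeling).

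The main obstacle is the rigorous implementation of step (a): one must show that a ``past'' profile really does force exterior energy concentration at a later time, which requires controlling the non-linear evolution of $\vec u_n$ past $t_n$ through Proposition~\ref{nonlin profile}. A secondary technical point is ensuring that, after re-ordering so that $\vec U^1$ is the largest $t_{1,n}=0$ profile, the contributions from $j\ge 2$ profiles with $t_{j,n}=0$ are absorbed into the first one (or shown to be absent) so that only a single such profile appears in the decomposition.
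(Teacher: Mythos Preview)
Your overall two-step plan (control the ``trapped'' $t_{j,n}=0$ profile by shrinking $\delta$, and rule out profiles with $-t_{j,n}/\lambda_{j,n}\to-\infty$) is reasonable, but the specific mechanism you propose for step~(a) does not work, and this is the essential gap.

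You argue that a profile with $-t_{j,n}/\lambda_{j,n}\to-\infty$ would, under forward nonlinear evolution, ``develop a genuine nonlinear concentration'' and thereby make the exterior energy at some later time $t_n^\ast$ \emph{exceed} $\alpha_\delta$, contradicting the liminf. There are two problems. First, such a profile scatters \emph{backward} by definition of the nonlinear profile; it need not concentrate forward at all, and even if $T_+(\vec U^j)<\infty$ there is no reason the blow-up happens in the exterior region. Second, and more fundamentally, the contradiction goes the wrong way: $\alpha_\delta$ is a $\liminf$, so finding times where the exterior energy is \emph{larger} than $\alpha_\delta$ contradicts nothing. If you reverse the inequality and try to argue the profile moves inward and lowers the exterior energy below $\alpha_\delta$, you would need to control what the \emph{other} profiles (those with $-t_{j,n}/\lambda_{j,n}\to+\infty$, which are moving outward) do at the same time $t_n^\ast$; the localized Pythagorean expansion does not give you this for the nonlinear flow at a time not in your original sequence.

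The paper's proof handles this point by an entirely different device. One first takes an \emph{arbitrary} sequence $s_n\to\infty$ and decomposes $\vec u(s_n)$ (not yet truncated). The unique macroscopic profile (scale $\sim s_n$) is shown, via the exterior decay \eqref{dagger}, to be supported in $\{|y|\le 1\}$ after rescaling, so truncation by $\varphi_{\delta'}$ with $\delta'$ small makes it $\le\varepsilon$; this is a cleaner version of your step~(b) and avoids the issue that your $\alpha_\delta$ and $\alpha_{\delta/2}$ may be realized along different time sequences. For the incoming profiles ($-s_{j,n}/\mu_{j,n}\to-\infty$), one then \emph{evolves the truncated data forward} by $s_n/2$ via Proposition~\ref{nonlin profile} and sets $t_n=\tfrac32 s_n$, $\delta=\delta'/3$. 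Because these profiles have $|s_{j,n}|\approx s_n$, after the shift their linear mass sits at radius $\approx s_n/2$, which lies strictly inside the new cutoff $\{|x|\ge(1-2\delta)t_n\}$; hence they disappear from the truncated decomposition. No variational or minimizing-sequence argument is needed.

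In short: your step~(b) is morally right but overcomplicated; your step~(a) is where the idea breaks, and the remedy is not a sharper liminf argument but rather the forward time-shift that physically carries the incoming profiles out of the exterior annulus.
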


\begin{proof}
\textbf{Step 1:} We claim that there exists $\delta_1 >0$, $s_n \to +\infty$ such that $$\ds \left\{ \varphi_{\delta_1} ( x /s_n ) \vec u(s_n) \right\}$$ has a profile decomposition with profiles $\{\vec V_{\mr L}^j\}_j$ and parameters $\{\mu_{j,n}, s_{j,n}\}$ such that
\begin{align*}
\forall j \ge 2, \quad & \lim_{n \to +\infty} - \frac{s_{j,n}}{\mu_{j,n}} \in \{ \pm \infty \} \quad  \\
\text{and} \quad & \lim_{n \to \infty} - \frac{s_{j,n}}{s_n} \in [-1, 2\delta_1-1] \cup [1-2\delta_1,1], \\
\forall n \ge 1, \quad & s_{1,n} =0, \quad \text{and} \quad \| (V_0^1, V_1^1 ) \|_{\HH} \le \e/2.
\end{align*}
In fact, note that finite speed of propagation and small data theory imply
\begin{equation} \label{dagger}
\limsup_{t \to +\infty} \int_{|x| \ge t + R} | \nabla_{t,x} u(t,x)|^2 dx \to 0 \quad \text{as} \quad R \to +\infty.
\end{equation}
Indeed, let $\eta >0$ be given. Choose $R_0$ large enough such that 
\[  \int_{|x| \ge R_0} | \nabla u_0|^2 + u_1^2 \le \eta^2. \]
Let
\[ \tilde u_{0,R_0}(x) = \begin{cases}
u_0(R_0) & \text{if } |x| \le R_0 \\
u_0(x) & \text{if } |x| \ge R_0
\end{cases}, \qquad 
\tilde u_{1,R_0}(x) = \begin{cases}
0 & \text{if } |x| \le R_0 \\
u_1(x) & \text{if } |x| \ge R_0
\end{cases}. \]
Then
\[ \| (\tilde u_{0,R_0}, \tilde u_{1,R_0}) \|_{\HH} \le \eta. \]
If $\eta$ is small, by small data theory, the solution $\tilde u_{R_0}$ to \eqref{u eq} with data $(\tilde u_{0,R_0}, \tilde u_{1,R_0})$ exists for all time and 
\[ \sup_{t \ge 0} \| (\tilde u_{R_0}, \partial_t \tilde u_{R_0})(t) \|_{\HH} \le C \eta. \]
By finite speed of propagation, for $|x| \ge R+t$, $\tilde u_{R_0}(t,x) = u(t,x)$. The claim \eqref{dagger} follows.

Let $s_n \to \infty$, then $ \vec u(s_n)$ has a decomposition with profiles $\{\tilde V_{\mr L}^j\}_j$ (with initial data $(v_{0,j}, v_{1,j})$), parameters $\{\mu_{j,n}, s_{j,n}\}_n$, and remainder $(\tilde w_{0,n}^J,\tilde w_{1,n}^J)$. As usual for the profile decomposition, we denote
\[ \tilde V_{\mr L,n}^j(t,x) := \frac{1}{\mu_{j,n}} \tilde V_{\mr L}^j \left( \frac{t-s_{j,n}}{\mu_{j,n}}, \frac{x}{\mu_{j,n}} \right). \]
As we can always extract subsequences without loss of generality, we will systematically assume that all real valued sequences converge (in $\overline{\R}$). We next recall that we can assume 
\begin{equation} \label{sjn}
\text{either} \quad \lim_{n \to +\infty} - \frac{s_{j,n}}{\mu_{j,n}} = \pm \infty, \quad \text{or} \quad \forall n, \ s_{j,n} =0.
\end{equation}
Define
\[ \tau_j := \lim_{n \to +\infty} - \frac{s_{j,n}}{s_n}. \]
\begin{claim} \label{mu_tau_bounds} For all $j$, 
$|\tau_j| \le 1$, and $\ds \lim_{n \to +\infty} \frac{\mu_{j,n}}{s_n} =0$, except for at most one $j$, for which the limit is finite.
\end{claim}

\begin{proof}
For this, consider $v_{0,n} = u(s_n)$, $v_{1, n} = \partial_t u(s_n)$, $\mu_n = s_n$. By \eqref{dagger} 
\[ \lim_{R \to +\infty} \limsup_{n \to \infty} \int_{|x| \ge R s_n} |\nabla v_{0,n(x)}|^2 + |v_{1,n}(x)|^2 dx =0. \]
Hence we can apply Lemma \ref{lem:2.5}, and deduce that for all $j$, $\ds \lim_{n \to +\infty} \frac{\mu_{j,n}}{s_n} < +\infty$, and for all $j$ except at most one, the limit is 0. Moreover, $|\tau_j| < \infty$.

In the second case of \eqref{sjn}, $\tau_i =0$, and we are done. Now consider the first case of \eqref{sjn}. Assume $|\tau_j| = 1 +\eta$, $\eta>0$. Note first that
\[ \limsup_{n \to \infty} \int_{|x| \ge s_n + R} \left| \nabla_{t,x} \tilde V_{\mr L,n}^j (0,x) \right|^2 dx \to 0 \quad \text{as} \quad R \to +\infty.  \]
This follows from the Pythagorean expansion with cutoffs, i.e.,  Proposition~\ref{loc en dec}. We combine this with Lemma \ref{lem:scaled_lin_disp}: let $\e>0$, there exists $R$ and $N_0$ such that
\[ \forall n \ge N_0, \quad \int_{||x| - |s_{j,n}|| \ge R \mu_{j,n}} \left| \nabla_{t,x} \tilde V_{\mr L,n}^j (0,x) \right|^2 \le \e. \]
We note that for $n$ large and $\tilde R$ large,
\begin{equation} \label{subset_1}
\{ x \mid  ||x| - |s_{j,n}|| \le R \mu_{j,n} \} \subset \{ x \mid |x| \ge s_n + \tilde R \}.
\end{equation}
Indeed, if  $||x| - |s_{j,n}|| \le R \mu_{j,n}$, then $|x| \ge |s_{j,n}| -  R \mu_{j,n}$. But since $\ds - \frac{s_{j,n}}{\mu_{j,n}} \to +\pm \infty$, for any $\delta >0$ small, if $n \ge N_1$ is large enough $\mu_{j,n} \le \delta |s_{j,n}|$ so that
\[ |x| \ge (1-R\delta) |s_{j,n}| \ge (1+\delta_0/2)(1-R\delta) s_n. \]
Fix $\delta$ small enough so that $(1+\delta_0/2)(1-R\delta) >1$; thus, since $s_n \to \infty$, our claim \eqref{subset_1} follows. But then
\[ \forall n \ge N_0, \quad \int \left| \nabla_{t,x} \tilde V_{\mr L,n}^j (0,x) \right|^2 dx \le 2\e. \]
By invariance of the linear energy, $\tilde V_{\mr L}^j =0$, which is  a contradiction. Hence $|\tau_j| \le 1$, and this establishes our claim.
\end{proof}

Next, note that if $j$ is such that $\ds \lim_{n \to +\infty} \frac{\mu_{j,n}}{s_n} >0$ (and finite by Claim \ref{mu_tau_bounds}), we cannot have $\ds \lim_{n \to +\infty} \frac{|s_{j,n}|}{\mu_{j,n}} = +\infty$, hence $s_{j,n} =0$ for all $n$. This happens for at most one $j$, by Claim \ref{mu_tau_bounds}. We assume this is $j=1$, and we can also assume $\mu_{1,n} =s_n$. Now we claim that

\begin{equation}\label{Supp_V1}
\supp (\tilde V_0^1, \tilde V_1^1) \subset \{ x \mid |x| \le 1 \}.
\end{equation}
Indeed, take $\vec \theta \in \q D(\R^4)$ such that $\supp(\vec \theta) \subset \{ x \mid  |x| > 1 + \eta \}$. Then by \eqref{dagger},
\[ \int \vec \theta \left( \frac{x}{s_n} \right). \frac{1}{s_n^{d/2}} \nabla_{t,x} u(s_n,x) dx \to 0 \quad \text{as} \quad n \to +\infty. \]
But since $\ds \lim_{n \to +\infty} \frac{\mu_{j,n}}{s_n} =0$ for $j \ge 2$, by the profile decomposition and the weak convergence to 0 of the rescaled $w_n^J$, this gives
\[ \lim_{n \to +\infty} \int \frac{1}{s_n^{d/2}} \vec \theta \left( \frac{x}{s_n} \right) . \frac{1}{s_n^{d/2}} \left( \nabla_{x} \tilde V_0^1 \left(\frac{x}{s_n} \right), \tilde V_1^1 \left( \frac{x}{s_n} \right) \right) dx =0, \]
i.e.,  $\ds \int \vec \theta . (\nabla \tilde V_0^1, \tilde V_1^1) =0$ and \eqref{Supp_V1} follows. Then we define the first profile
\begin{align} \label{def:V1}
(V_0^1, V_1^1)(y) := \varphi_{\delta'}(y). ( \tilde V_0^1(y), \tilde V_1^1(y)), \\
\quad \text{with parameters} \quad \mu_{1,n} = s_n, \quad s_{1,n} =0. \nonumber
\end{align}
For $\delta'$ small, \eqref{Supp_V1} shows that $\| (V_0^1, V_1^1) \|_{\HH} < \e/2$.

Let now $j \ge 2$. Then $\ds \lim_{n \to +\infty} \frac{\mu_{j,n}}{s_n} =0$ (recall $j=1$ is the only one for which this is possibly not true by Claim \ref{mu_tau_bounds}). We distinguish two cases according to \eqref{sjn}. Let 
\begin{align*}
\mathcal{J}_1 & := \{ j \in \{ 2, \dots,  J \} \mid \forall n, \ s_{j,n} =0 \} \quad \text{and} \\
\mathcal{J}_2 & := \{ j \in \{ 2, \dots, J \} \mid \ \lim_{n \to +\infty} - \frac{s_{j,n}}{\mu_{j,n}} = \pm \infty \}.
\end{align*}
If $j \in \q J_1$, using $\mu_{j,n}/s_n \to 0$ and Lemma \ref{lem:scaled_lin_disp}, we see that:
\[ \left\| \nabla_{x,t} \tilde V^j_{\mr L,n}(0) \right\|_{L^2(s_n/2 \le |x| \le s_n)} \to 0 \quad \text{as} \quad n \to + \infty. \]
Next note that $\ds |\nabla \varphi_{\delta'} (y)| \le C_{\delta'} \frac{\varphi(y)}{|y|}$, from where we deduce, due to Hardy's inequality
\begin{align*}
 \left\| \nabla_x \left( \varphi_{\delta'} \left( \frac{x}{s_n} \right) \right) \tilde V^j_{\mr L,n}(0) \right\|_{L^2}  
& \le C_{\delta'} \left\| \varphi_{\delta'} \left( \frac{x}{s_n} \right) \frac{1}{|x|} \tilde V^j_{\mr L,n}(0) \right\|_{L^2} \\
& \le C_{\delta'}  \| \varphi_{\delta'} \|_{L^2} \left \| \frac{\tilde V^j_{\mr L,n}(0,x)}{|x|} \right\|_{L^2(s_n/2 \le |x| \le s_n)} \\
 &  \le C_{\delta'} \left \| \nabla_x \tilde V^j_{\mr L,n}(0,x) \right\|_{L^2(s_n/2 \le |x| \le s_n)} \to 0.
\end{align*}
Combining these two limits, we get
\begin{equation} \label{VJ1}
\forall j \in \q J_1, \quad \lim_{n \to +\infty} \left\| \nabla_{x,t} \left( \varphi_{\delta'} \left( \frac{x}{s_n} \right)  \tilde V^j_{\mr L,n}(0,x) \right) \right\|_{L^2} =0.
\end{equation}

If $j \in \q J_2$ (recall $\ds \tau_j := \lim_{n \to +\infty} - \frac{s_{j,n}}{s_n} \in [-1,1]$), we claim that
\begin{equation} \label{Vj}
\lim_{n \to \infty} \left\| \nabla_{x,t} \left( \varphi_{\delta'} \left( \frac{x}{s_n} \right) \tilde V^j_{\mr L,n} (0,x) \right) - \varphi_{\delta'}(|\tau_j|) \nabla_{x,t} \tilde V^j_{\mr L,n} \left( 0,x  \right) \right\|_{L^2} =0.
\end{equation}
Keeping in mind that $\ds \varphi_{\delta'}(|\tau_j|) - \varphi_{\delta'} \left( - \frac{s_{j,n}}{s_n} \right)$ is small for $n$ large, we rewrite
\begin{multline*}
 \nabla_{x,t} \left( \varphi_{\delta'} \left( \frac{x}{s_n} \right) \tilde V^j_{\mr L,n} (0,x) \right) - \varphi_{\delta'}(|\tau_j|) \nabla_{x,t} \tilde V^j_{\mr L,n} \left( 0,x  \right) \\
  = \frac{1}{s_n} \nabla_x \varphi \left( \frac{x}{s_n} \right) \tilde V^j_{\mr L,n} (0,x) 
 + \left( \varphi_{\delta'} \left( \frac{x}{s_n} \right)  - \varphi \left( - \frac{s_{j,n}}{s_n} \right) \right) \nabla_{x,t} \tilde V^j_{\mr L,n} (0,x)  \\ + \left( \varphi \left( - \frac{s_{j,n}}{s_n} \right) - \varphi \left( |\tau_j| \right) \right) \nabla_{x,t} \tilde V^j_{\mr L,n} (0,x).
\end{multline*}
We will show that the $L^2$-norm of all three terms tends to 0. For the first two terms, we use Lemma \ref{lem:scaled_lin_disp}:
\[ \limsup_{n \to \infty} \int_{||x| - |s_{j,n}|| \ge R \mu_{j,n}} \left| \nabla_{t,x} \tilde V_{\mr L,n}^j \left(0,x \right) \right|^2 dx \to 0 \quad \text{as} \quad R \to +\infty. \]
Now, for the first term: we use H\"older's inequality, the Sobolev embedding \\$\dot H^1(\R^4)~\hookrightarrow~L^4(\R^4)$ and conservation of the linear energy for $V^j_{\mr L}$ to compute
\begin{align} 
\MoveEqLeft \int_{||x| - |s_{j,n}|| \le R \mu_{j,n}} \frac{1}{s_n^2} | \nabla_x \varphi(x/s_n) |^2 |\tilde V^j_{\mr L,n} (0,x)|^2 dx  \nonumber \\
& \le \frac{\| \nabla \varphi \|_{L^\infty}^2}{s_n^2} \mu( \{ ||x| - |s_{j,n}|| \le R \mu_{j,n} \})^{\frac{1}{2}}  \| V^j_{\mr L,n}(0) \|_{L^{4}}^2 \nonumber \\
& \le \frac{R^2 \mu_{j,n}^2}{s_n^2} \| \nabla \varphi \|_{L^\infty}^2 \| \nabla_{x,t} V^j_{\mr L}(0) \|_{L^2}^2 \to 0, \label{phixVj}
\end{align}
for all $R \in \R$, because $\mu_{j,n}/s_n \to 0$. For the second term, if $||x|| - |s_{j,n}|| \le R \mu_{j,n}$, then $\ds \left| \frac{|x|}{s_n} - \frac{|s_{j,n}|}{s_n} \right| \le R \frac{\mu_{j,n}}{s_n}$. As $\ds \frac{\mu_{j,n}}{s_n} \to 0$, we see that $\ds \varphi_{\delta'} \left( \frac{x}{s_n} \right) - \varphi_{\delta'} \left( - \frac{s_{j,n}}{s_n} \right) \to 0$ (uniformly on the interval), and
\[  \int_{||x| - |s_{j,n}|| \le R \mu_{j,n}}  \left| \varphi_{\delta'} \left( \frac{x}{s_n} \right) - \varphi_{\delta'} \left( - \frac{s_{j,n}}{s_n} \right) \right|^2  \left| \nabla_{t,x} \tilde V_{\mr L,n}^j (0,x) \right|^2 dx \to 0 \]
for all $R \in \R$. Finally for the third term, $\ds \varphi_{\delta'}(|\tau_j|) - \varphi_{\delta'} \left( - \frac{s_{j,n}}{s_n} \right)$ is small for $n$ large, so it tends to 0. Thus the limit \eqref{Vj} holds.

In particular, if $|\tau_j| \le 1 -2 \delta'$, $\ds \varphi_{\delta'}\left( \frac{x}{s_n} \right)  \nabla_{t,x}  \tilde V^j_{\mr L,n} (0) \to 0$ in $L^2$. Thus we define 
\[ \q J := \left\{ j \in \{ 2, \dots, J \} \middle| \ \lim_{n \to +\infty} - \frac{s_{j,n}}{\mu_{j,n}} = \pm \infty \text{ and } 1 -2 \delta' \le |\tau| \le 1  \right\}. \]
We can define our new profiles
\begin{equation} \label{def:Vj}
\forall j \in \q J, \quad \vec V^j = \varphi_{\delta'} \left( | \tau_j| \right) \vec{\tilde V}^j, \quad \text{with parameters} \quad \{\mu_{j,n}, s_{j,n}\}_n
\end{equation}
Thus, using \eqref{def:V1}, \eqref{VJ1} and \eqref{def:Vj} we deduce the existence of a remainder term $(w_{0,n}^J, w_{1,n}^J)$ such that
\begin{gather*}
\varphi_{\delta'} \left( \frac{x}{s_n} \right) ( u(s_n), \partial_t u(s_n)) = \vec V^1_{L,n}(0) + \sum_{j \in \q J}   \vec V_{\mr L,n}^j(0) +  (w_{0,n}^J, w_{1,n}^J), \\
\text{where} \quad  \lim_{J \to +\infty} \limsup_{n \to +\infty} \| S(t) (w_{0,n}^J, w_{1,n}^J) \|_{S(\R)} =0.
\end{gather*}
This gives Step 1.

\textbf{Step 2:} Let $u_n$ be solution to \eqref{u eq} with initial data $\ds \varphi_{\delta'} \left( \frac{x}{s_n} \right) \vec u(s_n)$. Let $\vec V^j$ be the nonlinear profile associated to $\ds \left\{ \vec V^j_{\mr L},   \mu_{j,n} ,s_{j,n}  \right\}$, and
\[  V^j_{n}(s,x) =  \frac{1}{\mu_{j,n}} V^j \left( \frac{s-s_{j,n}}{\mu_{j,n}}, \frac{x}{\mu_{j,n}} \right). \]
We use Proposition \ref{nonlin profile} (with $t_n = s_n/2$): $u_n$ is defined on $[0,s_n/2]$  and
\begin{gather*}
\vec u_n(s_n/2) = \sum_{j=1}^J \vec V_n^j(s_n/2) + \vec w_n^J (s_n/2) + \vec r_n^J(s_n/2), \\
\text{where} \quad \lim_{n \to +\infty}  \limsup_{J \to +\infty} \left( \| r_n^J \|_{S([0,s_n/2])} + \sup_{0 \le t \le s_n/2} \| \vec r_n^J(t) \|_{\HH} \right) =0.
\end{gather*}
Then $\ds \frac{\theta_n - s_{j,n}}{\mu_{j,n}} = \frac{s_n/2 - s_{j,n}}{\mu_{j,n}}$, for $j=1$, $s_{1,n} =0$ and $\| (V_0^1, V_1^1) \|_{\HH} \le \e/2$, $\mu_{1,n} = s_n$; for $j \ge 2$, $\ds \lim_{n \to \infty} - \frac{s_{j,n}}{\mu_{j,n}} = \pm \infty$ and $\ds \lim_{n \to \infty} \left| \frac{s_{j,n}}{s_n} \right| = | \tau_j| \in [1-2\delta', 1]$, so that
\begin{align*}
\text{if }& \lim_{n \to \infty} - \frac{s_{j,n}}{\mu_{j,n}} = + \infty, \quad \text{then} \quad \lim_{n \to +\infty} \frac{s_n/2 - s_{j,n}}{\mu_{j,n}} = +\infty; \\
\text{and if }&  \lim_{n \to \infty} - \frac{s_{j,n}}{\mu_{j,n}} = - \infty, \quad \text{then} \quad \lim_{n \to +\infty} \frac{s_n/2 - s_{j,n}}{\mu_{j,n}} = -\infty. 
\end{align*}
The last limit follows from 
\[ \frac{s_n/2 - s_{j,n}}{\mu_{j,n}} = - \frac{s_{j,n}}{\mu_{j,n}} \left( - \frac{s_n}{2 s_{j,n}} + 1 \right), \quad \text{and} \quad -\frac{s_n}{2 s_{j,n}} +1 \to \frac{1}{2\tau_j}+1>0. \] 
(we recall $\tau_j \in [-1,-1+2\delta']$).

Let 
\[ t_n = \frac{3}{2} s_n, \quad t_{j,n} = s_{j,n} - \frac{s_n}{2}, \quad \delta = \frac{\delta'}{3}. \]
Now by definition of $\varphi_{\delta'}$,
\[ \text{if} \quad  | x | \ge (1 - \delta') s_n, \quad \text{then} \quad \vec u_n(0,x) = \varphi_{\delta'} \left( \frac{x}{s_n} \right) \vec u (s_n,x) = \vec u(s_n,x), \]
so, by finite speed of propagation 
\[  \text{if} \quad |x| \ge (3/2 -\delta')s_n = (1-2\delta)t_n, \quad \text{then} \quad \vec u_n (s_n/2, x) = \vec u (t_n,x). \]
Thus
\begin{multline*} 
 \varphi_{\delta} \left( \frac{x}{t_n} \right) \vec u(t_n) = \varphi_{\delta} \left( \frac{x}{t_n} \right) \vec u(s_n/2)= \\
 = \sum_{j=1}^J \varphi_{\delta} \left( \frac{x}{t_n} \right) \vec V_n^j (s_n/2) + \varphi_{\delta} \left( \frac{x}{t_n} \right) \vec w_n^J(s_n/2) + \varphi_{\delta} \left( \frac{x}{t_n} \right) \vec r_n^J(s_n/2).
\end{multline*}
Next note the for $n$ large, $J$ large, $\| \varphi_{\delta} (x/t_n) \vec r_n^J(s_n/2) \|_{\HH}$ is small, so we can ignore this term. Also observe that $\| S(t) (w_{0,n}^J ,w_{1,n}^J ) \|_{S(\R)}$ is small for $J$ large, $n$ large, hence the same is true for $\| S(t) (\varphi_\delta(x/t_n) \vec w_n^J(s_n/2) ) \|_{S(\R)}$ by Lemma \ref{local scat lem}.

\medskip 

Next for $j=1$, recall that $\supp (V_0^1, V_1^1) \subset \{ x | \ |x| \le 1 \}$ and $\| (V_0^1, V_1^1 ) \|_{\HH} \le \e/2$, so that by small data theory, $\| (V_n^1(t), \partial_t V_n^1(t) ) \|_{\HH} \le C \e/2$ where
\[ V_n^1(t,x) = \frac{1}{s_n} V^1 \left( \frac{t}{s_n}, \frac{x}{s_n} \right), \quad \partial_t V_n^1(t,x) = \frac{1}{s_n^2} \partial_t V^1 \left( \frac{t}{s_n}, \frac{x}{s_n} \right). \]
Let 
\[  (U_0^1, U_1^1) = \left( \varphi_{\delta} \left( \frac{x}{t_n} \right) \frac{1}{s_n} V^1 \left( \frac{1}{2} , \frac{x}{s_n} \right), \quad \varphi_{\delta} \left( \frac{x}{t_n} \right) \frac{1}{s_n^2} \partial_t V^1 \left( \frac{1}{2} , \frac{x}{s_n} \right) \right). \]
We will let $t_{1,n} =0$, $\lambda_{1,n} =s_n$ (and recall $t_n = 3s_n/2)$. Then $\| (U_0^1, U_1^1) \|_{\HH} \le C \e$.

For $j \ge 2$, consider first those $j$ such that $\ds \lim_{n \to +\infty} - \frac{s_{j,n}}{\mu_{j,n}} = -\infty$. We claim that 
\begin{equation} \label{Vj-infty}
\lim_{n \to +\infty} \left\|  \varphi_{\delta} \left( x/t_n \right) \vec V_n^j (s_n/2) \right\|_{\HH} =0. 
\end{equation}
In fact, since $\ds \frac{s_n/2 - s_{j,n}}{\mu_{j,n}} \to -\infty$ and
\[ \vec V_n^j(s_n/2,x) = \left( \frac{1}{\mu_{j,n}} V^j\left( \frac{s_n/2-s_{j,n}}{\mu_{j,n}}, \frac{x}{\mu_{j,n}} \right) , \frac{1}{\mu_{j,n}^2} \partial_t V^j \left( \frac{s_n/2-s_{j,n}}{\mu_{j,n}}, \frac{x}{\mu_{j,n}} \right) \right), \]
then $\| \vec V_n^j(s_n/2) - \vec V_{\mr L,n}^j (s_n/2) \|_{\HH} \to 0$ as $n \to +\infty$. We are left to bound $$\| \varphi_\delta(x/t_n) \vec V_{\mr L,n}^j (s_n/2) \|_{\HH}.$$ Recall
\begin{gather*}
\vec V_{\mr L, n}^j (t, x) = \frac{1}{\mu_{j,n}} \vec V^j_{\mr L} \left( \frac{t-s_{j,n}}{\mu_{j,n}}, \frac{x}{\mu_{j,n}} \right), \\
 \text{so that} \quad \vec V_{\mr L, n}^j (s_n/2, x) = \frac{1}{\mu_{j,n}} \vec V_{\mr L}^j \left( \frac{s_n/2-s_{j,n}}{\mu_{j,n}}, \frac{x}{\mu_{j,n}} \right).
\end{gather*}
Recall that $\ds \frac{s_n/2 - s_{j,n}}{\mu_{j,n}} \to -\infty$ in this case. Let $\ds -t_{j,n} = \frac{s_n}{2} - s_{j,n}$, so that $\ds -\frac{t_{j,n}}{\mu_{j,n}} \to~-\infty$.

Let $\e>0$ be given, apply Lemma \ref{lem:scaled_lin_disp} and choose $R$ large so that 
\[ \limsup_{n \to +\infty} \int_{||x| - |t_{j,n}|| \ge R \mu_{j,n}} |\nabla_{t,x} V_{\mr L,n}^j (-t_{j,n},x)|^2 dx \le \e. \]
On the other hand, on the  support of $ \varphi_\delta(x/t_n)$ we have $(1-2\delta) t_n \le |x|$. This means that $(1- 2\delta) 3s_n/2 \le |x|$, which implies that 
\[ (1-2\delta) \frac{3}{2} \frac{s_n}{\mu_{j,n}} \le \frac{|x|}{\mu_{j,n}}. \]
We claim that for $n$ large, 
\[ \{ x \mid (1-2\delta) 3s_n/2 \le |x| \} \cap \{ x | \ ||x| - |t_{j,n}|| \le R \mu_{j,n} \} = \varnothing. \]
Indeed, if $x$ lies in the intersection
\[ (1-2\delta) \frac{3s_n}{2} \le |x| \le R \mu_{j,n} + |t_{j,n}| = R \mu_{j,n} + s_n \left| \frac{1}{2} - \frac{s_{j,n}}{s_n} \right|. \]
As $\ds - \frac{s_{j,n}}{s_n} \to \tau_j \in [-1,2\delta' -1]$, if $\delta$ is small, $\ds \frac{1}{2} s_n \le R \mu_{j,n}$, but $\ds \lim_{n \to +\infty} \frac{s_n}{\mu_{j,n}} = +\infty$, a contradiction. This shows that our claim holds, and hence
\[ \limsup_{n \to +\infty} \| \varphi_\delta(x/t_n) \vec V_{\mr L,n}^j (s_n/2) \|_{\HH} \le \e. \]
This establishes \eqref{Vj-infty}.

The third case is when $j \ge 2$, $\ds \lim_{n \to +\infty} - \frac{s_{j,n}}{\mu_{j,n}} = +\infty$. We claim that in this case,  we have
\begin{equation} \label{Vj+infty}
 \lim_{n \to +\infty} \left\| \varphi_{\delta} \left( \frac{x}{t_n} \right) \vec V_n^j \left( \frac{s_n}{2} \right) - \varphi_{\delta} \left( \frac{1}{3} + \frac{2}{3} \tau_j \right) \vec V_{\mr L,n}^j \left( \frac{s_n}{2} \right) \right\|_{\HH} =0.
 \end{equation}
We proceed similarly to \eqref{Vj}:
\begin{align*}
 &\nabla_{t,x} \left( \varphi_{\delta} \left( x/t_n \right) V_n^j \left( s_n/2 \right) \right) - \varphi_{\delta} \left( \frac{1}{3} + \frac{2}{3} \tau_j \right) \nabla_{x,t} V_{\mr L,n}^j \left( \frac{s_n}{2} \right) \\
& = \frac{1}{t_n} \nabla \varphi_{\delta} (x/t_n) V_{\mr L,n}^j \left( s_n/2 \right) \\
& \qquad + \left( \frac{1}{t_n} \nabla \varphi_{\delta} (x/t_n) + \varphi_{\delta} \left( x/t_n \right) \right) \nabla_{x,t}  \left( V_n^j \left( s_n/2 \right) - V_{\mr L,n}^j (s_n/2) \right) \\
& \qquad + \left( \varphi_{\delta} \left( x/t_n \right) - \varphi_{\delta} \left( \frac{t_{j,n}}{\mu_{j,n}} \right) \right) \nabla_{x,t} V_{\mr L,n}^j (s_n/2) \\
& \qquad + \left( \varphi_{\delta} \left( \frac{t_{j,n}}{\mu_{j,n}} \right) - \varphi_{\delta} \left( \frac{1}{3} + \frac{2}{3} \tau_j \right) \right) \nabla_{x,t} V_{\mr L,n}^j (s_n/2)
\end{align*}
We now show that each of the four terms in the right hand-side tends to 0 as $n \to +\infty$.
 Let $\e>0$. Lemma \ref{lem:scaled_lin_disp} provides us with $R$ so that 
\[ \int_{||x| - |t_{j,n}|| \ge R \mu_{j,n}} |\nabla_{t,x} V_{\mr L,n}^j (s_n/2,x)|^2 dx \le \e. \] 
Proceeding as in \eqref{phixVj}, as $\mu_{j,n} /t_n \to 0$, we see that  
\[ \forall R \in \R, \quad \int_{||x| - |t_{j,n}|| \le R \mu_{j,n}} \left| \frac{1}{t_n} \nabla \varphi_{\delta} (x/t_n) V_{\mr L,n}^j \left( s_n/2,x \right) \right|^2 dx \to 0. \]
Hence the first term tends to 0. For the third term, it also suffices to consider the case when $||x| - |t_{j,n}|| \le R \mu_{j,n}$, but then
\[ \left| \varphi_{\delta} \left( \frac{x}{t_n} \right) - \varphi_{\delta} \left( - \frac{t_{j,n}}{t_n} \right) \right| \le C_\delta R \frac{\mu_{j,n}}{t_n} = \tilde C_{\delta} \frac{\mu_{j,n}}{s_n} \to 0 \quad \text{as} \quad n \to +\infty, \]
Hence the third term tends to 0. For the second term, as we saw before $$\ds \frac{s_n/2 - s_{j,n}}{\mu_{j,n}} \to~\infty$$ in this case (and $t_n \to +\infty)$), so that by the definition of nonlinear profiles
\[ \frac{1}{\mu_{j,n}^{1/2}} \vec V^j \left( \frac{s_n/2-s_{j,n}}{\mu_{j,n}} , \frac{x}{\mu_{j,n}} \right) - \frac{1}{\mu_{j,n}^{1/2}} \vec V_{\mr L}^j \left( \frac{s_n/2-s_{j,n}}{\mu_{j,n}} , \frac{x}{\mu_{j,n}} \right) \to 0 \quad \text{in} \quad \HH. \]
This shows that the second term tends to 0. Finally, note that $$\ds \frac{1}{3} + \frac{2}{3} \tau_j = \lim_{n \to \infty}  - \frac{t_{j,n}}{\mu_{j,n}},$$ thus the fourth term also tends to 0.  Claim~\ref{Vj+infty} follows.

Thus, it only remains to check the pseudo-orthogonality of $\{\mu_{j,n}, t_{j,n}\}$ for $j \ge 2$ of the second class ($\tau_j >0$). But
\[ \frac{\mu_{j,n}}{\mu_{k,n}} + \frac{\mu_{k,n}}{\mu_{j,n}} + \frac{|t_{j,n} - t_{k,n}|}{\mu_{j,n}} = \frac{\mu_{j,n}}{\mu_{k,n}} + \frac{\mu_{k,n}}{\mu_{j,n}} + \frac{|s_{j,n} - s_{k,n}|}{\mu_{j,n}} \to +\infty. \]
This finishes the proof of  Lemma~\ref{lem:1}.
\end{proof}

\begin{proof}[Proof of Proposition~\ref{th:u-v_L}]
\textbf{Step 1.} Let us prove first that for each $R \in \R$, there exists a solution $\vec v^R_{\mr L}$ of the linear equation \eqref{free wave}  such that 
\[ \int_{|x| \ge t-R} | \nabla_{t,x} (u - v_{\mr L}^R)(t,x)|^2 dx \to 0 \quad \text{as} \quad t \to +\infty. \]
Indeed, for each $n$ consider the solution $u_n$ to \eqref{u eq} with initial data $\varphi_{\delta} (x/t_n) \vec u(t_n)$ at $t=0$. Because of Lemma \ref{lem:1} and Proposition \ref{nonlin profile}, $u_n$ is globally defined and scatters for positive times. Fix $n$ large, let $w_{\mr L,n}$ be the solution of the linear equation \eqref{free wave} such that
\[ \lim_{t \to +\infty} \| \vec u_n(t) - {\vec w}_{\mr L,n} \|_{\HH} = 0. \]
By finite speed of propagation, $\vec u(t_n+t,x) = \vec u_n(t,x)$ for $|x| \ge (1-\delta) t_n +t$, and $t \ge 0$. Hence,
\[ \int_{|x| \ge -\delta t_n+t} | \nabla_{t,x} u(t,x) - \nabla_{t,x} w_{\mr{L},n}(t-t_n,x))|^2 dx \to 0 \quad \text{as} \quad t \to +\infty. \]
We choose $n$ so large that $\delta t_n \ge R$, and define $\vec v_{\mr L}^R(t,x) := \vec w_{\mr L,n}(t-t_n,x)$: this step follows.

\bigskip

\textbf{Step 2:}
Choose $t_n$ as before, $S(-t_n) \vec u(t_n)$ has weak limit $(v_{0}, v_{1})$ in $\HH$: notice that due to \eqref{typeII}, we have
\[ \| (v_{0}, v_{1}) \|_{\q H} \le M. \]
Let $\vec v_{\mr L}$ be the free wave, solution to the linear equation \eqref{free wave}, with initial data $(v_0,v_1)$. We also have a profile decomposition
\[ \vec u(t_n) = \vec v_{\mr L} (t_n) + \sum_{j=2}^J \vec U_{\mr L,n}^j (0) + \vec w_n^J. \]
(Here we choose $\vec U_{\mr L}^1 = \vec v_{\mr L}$, $\lambda_{1,n} =1$, $t_{1,n} = -t_n$, which is allowed by construction of a profile decomposition, with profiles as weak limits). Also,
\[ \vec u(t_n) - \vec v_{\mr L}^R (t_n) = \vec v_{\mr L} (t_n) - \vec v_{\mr L}^R(t_n) + \sum_{j=2}^J \vec U_{\mr L,n}^j (0) + \vec w_n^J \]
is a profile decomposition. By Proposition~\ref{loc en dec}, we have an almost orthogonality:
\begin{align*} 
\MoveEqLeft\int_{|x| \ge t_n-R} |\nabla_{x,t} u(t_n,x) - \nabla_{x,t} v_{\mr L}^R (t_n,x)|^2 dx = \int_{|x| \ge t_n-R} | \nabla_{t,x} (v_{\mr L} - v_{\mr L}^R)(t_n,x)|^2 dx \\
& + \sum_{j=2}^J \int_{|x| \ge t_n-R} |\nabla_{x,t} U_{\mr L,n}^j (0,x)|^2 dx + \int_{|x| \ge t_n-R} |\nabla_{x,t} w^J_n(0,x)|^2 dx + o_n(1).
\end{align*}
The left hand side tends to 0, and as all the terms on the right hand side are non negative, we deduce
\[ \ds \lim_{n \to +\infty} \int_{|x| \ge t_n-R} | \nabla_{t,x} (v_{\mr L} - v_{\mr L}^R)(t_n,x)|^2 dx =0. \]
Since $v_{\mr L} - v_{\mr L}^R$ is a solution to the linear wave equation \eqref{free wave}, by decay of outer free energy, we have
\[  \lim_{t \to +\infty} \int_{|x| \ge t-R} | \nabla_{t,x} (v_{\mr L} - v_{\mr L}^R)(t,x)|^2 dx =0, \]
which gives our result.
\end{proof}


\subsection{Vanishing energy in the self similar region for global solutions}

In this subsection we prove the analog of Theorem~\ref{self sim} for smooth global solutions to~\eqref{u eq}. 

\begin{thm} \label{thm:en_decay_global}
Assume that $\vec u(t)$ is a smooth finite energy solution to~\eqref{u eq}. Let $\lambda \in (0,1)$. Then  
\[ \limsup_{t \to +\infty} \int_{\lambda t}^{t-R} \left( |\nabla_{t,r} u(t,r)|^2 + \frac{|u(t,r)|^2}{r^2} \right) r^3 dr \to 0 \quad \text{as } R \to +\infty. \]
\end{thm}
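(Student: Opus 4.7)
The proof parallels that of Theorem~\ref{self sim} in Section~\ref{self sim bu}, with the radiation term $\vec v_L$ from Proposition~\ref{th:u-v_L} playing the role of the regular part $\vec v$ from Section~\ref{reg part}. First I would set $\psi = ru$ and $\phi_L = r v_L$, so that $\psi$ satisfies the semilinear 2d equation \eqref{psi eq} and $\phi_L$ solves its linearization around zero. Proposition~\ref{prop:6}, combined with the Hardy-type estimates of Lemma~\ref{lem:hardy_loc}, then gives the analog of Lemma~\ref{phi decay}:
\[
\lim_{R \to \infty} \limsup_{t \to \infty} \int_{\lambda t}^{t-R} \left( v_{L,t}^2 + v_{L,r}^2 + \frac{v_L^2}{r^2} \right) r^3 \, dr = 0,
\]
together with $\sup_{\lambda t \le r \le t-R} |\phi_L(t,r)| \to 0$ in the same double limit.

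Next I would formulate and prove the analog of Proposition~\ref{prop2}: if the a priori smallness hypothesis $|\psi(t,r)| \le \sqrt{2}/2$ holds on $[\lambda t, t-R]$ for all $t,R$ sufficiently large, then the local energy of $\psi$ there vanishes in the same double limit. The argument would run the Shatah--Tahvildar-Zadeh multiplier scheme -- the identities \eqref{en id}--\eqref{psi t id}, the pointwise control $L^2 \le C \A^2\B^2/r^2$, and the Gronwall-type estimate on $\A, \B$ -- now implemented on the self-similar region attached to the forward light cone rather than to the backward cone from a finite blow-up point. Positivity of $F(\psi) = \tfrac12 \psi^2(1-\psi^2/2)$ and of the flux on the null boundary curve $r = t-R$ follows from the $L^\infty$ hypothesis exactly as in Section~\ref{self sim bu}. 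The key boundary input is that $\psi(t,t-R) - \phi_L(t,t-R) \to 0$ as $t \to \infty$ and then $R \to \infty$, which replaces the exact identity $\psi(t,1-t) = \phi(t,1-t)$ from the finite time blow-up case and follows from Proposition~\ref{th:u-v_L} together with Lemma~\ref{lem:hardy_loc}.

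With this conditional statement in hand, the $L^\infty$ hypothesis is removed by the inductive argument of Claim~\ref{prop1}, run on the sequence of scales $\lambda_n = \lambda_0^n$ for some $\lambda_0 < 1$ close to $1$: the base case uses the pointwise decay of $\phi_L$ and the boundary closeness of $\psi$ and $\phi_L$, while the inductive step propagates smallness across each shell $[\lambda_n t,\, \lambda_{n-1} t]$ using the uniform $\dot H^1$ bound \eqref{typeII} on $\psi - \phi_L$ and the vanishing of the energy in the outer shells already established at the previous level. Finally, the transfer from $\psi$ back to $\vec u$ -- in particular absorbing the Hardy term $u^2/r^2$ -- proceeds via the identity $\psi_r = r u_r + u$ and the uniform smallness of $\psi - \phi_L$ on the self-similar region, exactly as in Step~2 of the proof of Theorem~\ref{self sim} (see the derivation of \eqref{hardy to 0}).

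The principal new difficulty, compared to Section~\ref{self sim bu}, is the absence of an exact boundary identity: in the finite time blow-up case, finite speed of propagation and the weak-limit construction of $\vec v$ give $\psi = \phi$ on $r = 1-t$, so the flux integrals on the mantle of the backward cone have clean boundary data, whereas here one only has the $\dot H^1$-vanishing provided by Proposition~\ref{th:u-v_L}. Consequently, every boundary contribution appearing in the multiplier identities must be shown to be asymptotically negligible in the double limit $\limsup_{t \to \infty}$ followed by $R \to \infty$, and the whole analysis has to be formulated uniformly in $R$; this is where I expect the main technical work to lie.
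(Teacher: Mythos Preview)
Your proposal is correct and follows essentially the same approach as the paper: reduce to the 2d equation via $\psi=ru$, use $\phi=rv_L$ and Proposition~\ref{th:u-v_L} together with Proposition~\ref{prop:6} and Lemma~\ref{lem:hardy_loc} in place of the exact boundary identity, prove a conditional vanishing under the $L^\infty$ hypothesis \eqref{hypo:lambda_psi}, remove that hypothesis by the inductive argument on $\lambda_n=\lambda_0^n$, and transfer back to $u$. The one point worth flagging is that the multiplier/Gronwall step is not a verbatim transcription of the blow-up argument: because the forward-cone geometry forces improper integrals at $\eta=+\infty$, the paper first builds and controls two separate flux quantities $\mathrm{Flux}(\eta_0,\xi_0)=\int_{\eta_0}^\infty \mathcal{A}^2(\eta,\xi_0)\,d\eta$ and $\mathcal{F}(\xi_0,\xi_1)=\lim_{\eta_1\to\infty}\int_{\xi_0}^{\xi_1}\mathcal{B}^2(\eta_1,\xi)\,d\xi$, and then runs the Gronwall in $\eta$ on the $\xi$-integrated quantity $\mathcal{F}(\eta;\xi_0,\xi_1)$ rather than on a pointwise sup of $\mathcal{A}$ as in Section~\ref{self sim bu}; this is exactly the ``uniform-in-$R$'' reformulation you anticipate in your last paragraph.
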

We will also require the following simple consequence of Theorem~\ref{thm:en_decay_global}.

\begin{cor}\label{u-vL}
Let $\lambda \in (0,1)$. Then as $t \to +\infty$
\begin{gather*}
\| \nabla_{t,r} u(t) - \nabla_{t,r} v_{\mr L}(t) \|_{L^2(r \ge \lambda t)} \to 0 \quad \text{and} \quad \| r u(t,r) \|_{L^\infty(r \ge \lambda t)} \to 0.
\end{gather*}
\end{cor}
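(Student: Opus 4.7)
My plan is to partition $\{r \ge \lambda t\}$ into three pieces: the interior annulus $A_R := \{\lambda t \le r \le t - R\}$, the channel $C_{R,T} := \{t - R \le r \le t + T\}$, and the far exterior $E_T := \{r \ge t + T\}$, with parameters $R, T$ eventually sent to infinity after $t \to \infty$. The point is that on $A_R$ (which for large $t$ lies inside $\{|r - t| \ge R\}$) both $u$ and $v_{\mr L}$ have small energy---the former by Theorem~\ref{thm:en_decay_global}, the latter by Proposition~\ref{prop:6}; on $E_T$, Proposition~\ref{prop:6} again controls $v_{\mr L}$; and on $C_{R,T} \cup E_T = \{r \ge t - R\}$, Proposition~\ref{th:u-v_L} identifies $u$ with $v_{\mr L}$ in $\dot H^1$.

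For the first assertion, I would invoke the triangle inequality
\[
\|\nabla_{t,r}(u - v_{\mr L})\|_{L^2(r \ge \lambda t)} \le \|\nabla_{t,r}(u - v_{\mr L})\|_{L^2(r \ge t - R)} + \|\nabla_{t,r} u\|_{L^2(A_R)} + \|\nabla_{t,r} v_{\mr L}\|_{L^2(A_R)}.
\]
The three right-hand terms vanish, in the order $t \to \infty$ then $R \to \infty$, by Proposition~\ref{th:u-v_L}, Theorem~\ref{thm:en_decay_global}, and Proposition~\ref{prop:6}, respectively (for the last, using $A_R \subset \{|r - t| \ge R\}$).

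For the pointwise assertion, I treat each region in turn. On $E_T$, Lemma~\ref{lem:hardy_loc}\,\eqref{bound:infty_loc_infty} combined with the triangle inequality gives
\[
|r u(t,r)|^2 \le \frac{1}{2}\int_r^\infty u_\rho^2 \rho^3 \, d\rho \le \|\nabla(u - v_{\mr L})\|_{L^2(\rho \ge \lambda t)}^2 + \|\nabla v_{\mr L}\|_{L^2(\rho \ge t + T)}^2,
\]
both terms vanishing by the first assertion and Proposition~\ref{prop:6}. On $A_R$, Theorem~\ref{thm:en_decay_global} yields $\int_{A_R} u^2 r\, dr + \int_{A_R} u_r^2 r^3\, dr \to 0$; an integral mean value argument then produces $r_0 \in A_R$ with $r_0 |u(t, r_0)| \to 0$, and identity~\eqref{rs} of Lemma~\ref{lem:hardy_loc} propagates this to $\sup_{r \in A_R} r|u(t,r)| \to 0$.

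The main obstacle is the channel $C_{R,T}$, where $v_{\mr L}$ may retain order-one energy so that $L^2$ bounds on $\nabla v_{\mr L}$ do not give smallness of $r v_{\mr L}$. Setting $\psi = r u$ and using that $\psi(t, t - R) \to 0$ from the interior step, I would estimate
\[
\sup_{r \in C_{R,T}} |\psi(t, r) - \psi(t, t - R)|^2 \le (T + R) \int_{t - R}^{t + T} \psi_r^2\, d\rho \le \frac{T + R}{t - R} \int_{t - R}^{t + T} \psi_r^2 \rho\, d\rho
\]
via the one-dimensional Sobolev embedding. The weighted integral on the right is uniformly bounded in $t$ by the total energy of $\vec u$ (using $\psi_r^2 \le 2u^2 + 2 r^2 u_r^2$ and Hardy), so the right-hand side is $O(1/t) \to 0$. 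Combining the three regions yields the claimed $L^\infty$ decay of $r u$ on $\{r \ge \lambda t\}$.
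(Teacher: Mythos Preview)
Your argument for the first assertion is essentially identical to the paper's: split $\{r\ge\lambda t\}$ at $r=t-R$, apply Proposition~\ref{th:u-v_L} on the outer piece, and bound $u$ and $v_{\mr L}$ separately on the inner annulus via Theorem~\ref{thm:en_decay_global} and Proposition~\ref{prop:6}.

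For the second assertion your route is correct but genuinely different. The paper observes that, once the first assertion is in hand, Lemma~\ref{lem:hardy_loc}\,\eqref{bound:infty_loc_infty} immediately gives $\|r(u-v_{\mr L})(t)\|_{L^\infty(r\ge\lambda t)}\to 0$, and then disposes of $\|rv_{\mr L}(t)\|_{L^\infty_r}$ globally via the $4d$ dispersive estimate $|v_{\mr L}(t,r)|\lesssim t^{-3/2}$ (first for smooth compactly supported data, then by density). There is no need to partition the exterior region or to treat the channel $C_{R,T}$ separately. Your approach instead works directly with $u$: on $E_T$ and $A_R$ you recover smallness from the energy estimates already used, and on the channel you exploit that its width $T+R$ is $O(1)$ while $r\sim t$, so the one-dimensional Sobolev bound yields $|\psi(t,r)-\psi(t,t-R)|^2 \lesssim (T+R)/t \to 0$. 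This is a nice self-contained argument that avoids invoking any pointwise dispersive decay for the free wave. One small wording issue: your phrase ``$\psi(t,t-R)\to 0$ from the interior step'' should more precisely read ``$\limsup_{t\to\infty}|\psi(t,t-R)|\le\epsilon(R)$ with $\epsilon(R)\to 0$''; the final limit is obtained only after sending $R,T\to\infty$. With that understood, your three-region bounds combine correctly.
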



\begin{proof}[Proof of Corollary~\ref{u-vL}]
From Theorem \ref{thm:en_decay_global} and Proposition \ref{prop:6},
\begin{align*}
& \limsup_{t \to +\infty} \| \nabla_{t,r} u(t) - \nabla_{t,r} v_{\mr L}(t) \|_{L^2(r \in [\lambda t, t-R])}  \\
& \le \limsup_{t \to +\infty} \| \nabla_{t,r} u(t) \|_{L^2(r \in [\lambda t, t-R])} + \| \nabla_{t,r} v_{\mr L}(t) \|_{L^2(r \in [\lambda t, t-R])} \\
& \to 0 \quad \text{as } R \to +\infty.
\end{align*}
Now, we use Proposition~\ref{th:u-v_L} on the interval $[t-R,+\infty)$, and this gives the convergence
\[ \| \nabla_{t,r} u(t) - \nabla_{t,r} v_{\mr L}(t) \|_{L^2(r \ge \lambda t)} \to 0. \]
Then it follows from Lemma \ref{lem:hardy_loc} that
\[ \| |x| (u(t,x) -v_{\mr L}(t,x)) \|_{L^\infty(|x| \ge \lambda t)} \to 0. \]
But as $v_{\mr L}$ is a linear solution, $\| r v_{\mr L} (t,r) \|_{L^\infty_r} \to 0$ as $t \to +\infty$. Indeed, if $v_{\mr L}(0)$ is smooth with compact support, we have the well-known dispersive estimate 
\EQ{\label{disp est}
\forall t >0, \ \forall r \ge 0, \quad \abs{v_{L}(t, r) }\le C t^{-\frac{3}{2}}.
}
Combined with finite speed of propagation yields the result in this case. It follows in the general case via a density argument. This gives the second convergence.
\end{proof}

We use the linear solution $v_{\mr L}(t)$ constructed in the previous section, and  rely on our assumption of spherical symmetry. As in the finite time blow-up case we pass to a $2d$ formulation by introducing the functions
\[ \psi(t,r) = r u(t,r), \quad \phi(t,r) = r v_{\mr L}(t,r). \]

\begin{claim} \label{cl:phi_psi}
We have the convergences
\begin{gather*}
\limsup_{t \to +\infty} \int_{0}^{t-R} \left( |\nabla_{t,r} v_{\mr L}(t,r)|^2 + \frac{|v_{\mr L}(t,r)|^2}{r^2} \right) r^3 dr \to 0 \quad \text{as } R \to +\infty, \\
\limsup_{t \to +\infty} \int_{0}^{t-R} |\nabla_{t,r} \phi(t,r)|^2 rdr \to 0 \quad \text{as } R \to +\infty, \\
\limsup_{t \to +\infty} \sup_{r \in [0,t-R]} |\phi(t,r)| \to 0 \quad \text{as } R \to +\infty. \\
\forall R \ge 0, \quad \lim_{t \to +\infty} \int_{t-R}^\infty \left| \nabla_{t,r} \phi(t,r) - \nabla_{t,r} \psi(t,r) \right|^2  rdr \to 0,
\end{gather*}
and the bounds, for some constant $C(M)$ depending only on the constant $M$ (defined in~\eqref{typeII}), and all $t \ge 0$:
\begin{gather*}
 \int_0^\infty \left(  |\nabla_{t,r} \psi(t,r) |^2 + \frac{|\psi(t,r)|^2}{r^2} \right) rdr \le C(M)^2, \\
  \int_0^\infty \left(  |\nabla_{t,r} \phi(t,r) |^2 + \frac{|\phi(t,r)|^2}{r^2} \right) rdr \le C(M)^2.
\end{gather*}
\end{claim}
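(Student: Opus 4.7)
The six assertions split into three uniform bounds in $M$ (plus the pointwise vanishing of $\phi$) and three vanishing statements. My plan is to translate between the $4d$ unknowns $u, v_{\mr L}$ and the $2d$-style functions $\psi = ru$, $\phi = rv_{\mr L}$ via the identities $\psi_r = u + ru_r$, $\psi_t = ru_t$, $\psi/r = u$ (and analogously for $\phi$), and then combine these with: Lemma~\ref{lem:hardy_loc} (the radial $4d$ Hardy inequality and its local versions), Proposition~\ref{prop:6} (vanishing of the free outer energy inside $r \le t-R$), Proposition~\ref{th:u-v_L} (asymptotic agreement of $u$ and $v_{\mr L}$ near and outside the forward light cone), and the auxiliary decay $\|r v_{\mr L}(t,\cdot)\|_{L^\infty_r}\to 0$. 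This last fact follows from the dispersive estimate~\eqref{disp est} for smooth, compactly supported data together with the uniform pointwise bound $|rw(r)|^2 \le \tfrac12 \int_r^\infty w_r^2 \rho^3\,d\rho$ from~\eqref{bound:infty_loc_infty} and a density argument.

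The two uniform bounds are then immediate: the identities above give $\int |\nabla_{t,r}\psi|^2 r\,dr + \int (\psi/r)^2 r\,dr \lesssim \int |\nabla_{t,r} u|^2 r^3\,dr + \int u^2 r\,dr$, and the second integral is absorbed into the first via $4d$ Hardy ($\int u^2 r\,dr \le \int u_r^2 r^3 dr$), producing $\le C M^2$ since $\|\vec u(t)\|_\HH \le M$; the same argument works for $\phi$ using $\|\vec v_{\mr L}(t)\|_\HH \le M$. The pointwise statement on $\phi$ is equally immediate: $\sup_{r\in[0,t-R]} |\phi(t,r)| \le \|r v_{\mr L}(t,\cdot)\|_{L^\infty_r}\to 0$ as $t \to +\infty$, uniformly in $R$.

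For the first vanishing statement, the $|\nabla_{t,r} v_{\mr L}|^2 r^3\,dr$ piece vanishes in the double limit directly from Proposition~\ref{prop:6}. For the $v_{\mr L}^2/r^2 \cdot r^3 dr = v_{\mr L}^2 r\,dr$ piece, I integrate by parts: using $rv_{\mr L}(t,r)\to 0$ as $r\to 0$ from Lemma~\ref{lem:hardy_loc},
\begin{equation*}
\int_0^{\rho_0} v_{\mr L}^2\, r\, dr = \tfrac12 \rho_0^2 v_{\mr L}(t,\rho_0)^2 - \int_0^{\rho_0} r^2 v_{\mr L}\, \partial_r v_{\mr L}\, dr,
\end{equation*}
and applying Young to the cross term (with $a = r^{1/2} v_{\mr L}$, $b = r^{3/2}\partial_r v_{\mr L}$) followed by absorption yields
\begin{equation*}
\int_0^{\rho_0} v_{\mr L}^2\, r\, dr \le \rho_0^2 v_{\mr L}(t,\rho_0)^2 + \int_0^{\rho_0} |\partial_r v_{\mr L}|^2 r^3\,dr.
\end{equation*}
Setting $\rho_0 = t-R$, the boundary term is bounded by $\|r v_{\mr L}(t)\|_\infty^2$, which $\to 0$, and the remaining term vanishes in the double limit by Proposition~\ref{prop:6}. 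For the $\phi$-gradient statement, the pointwise expansion $|\nabla_{t,r}\phi|^2 r \lesssim v_{\mr L}^2 r + |\nabla_{t,r} v_{\mr L}|^2 r^3$ reduces matters to the statement just proved. Finally, the assertion on $\phi - \psi = r(v_{\mr L}-u)$ is proved by the same expansion on $[t-R,+\infty)$: Proposition~\ref{th:u-v_L} handles the $r^3$-weighted gradient term, and the localized Hardy estimate~\eqref{bound:hardy_loc_infty} applied to $w = v_{\mr L} - u$ controls the $r$-weighted $L^2$ term in terms of it.

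The main technical point is the boundary term $\rho_0^2 v_{\mr L}(t,\rho_0)^2$ produced by the integration by parts: it is only uniformly bounded via the energy, and its smallness as $t\to\infty$ rests entirely on $\|rv_{\mr L}(t)\|_{L^\infty}\to 0$, itself a consequence of the dispersive decay of the linear $4d$ flow combined with the density argument above.
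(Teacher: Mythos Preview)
Your proof is correct and follows essentially the same route as the paper: Proposition~\ref{prop:6} for the gradient of $v_{\mr L}$, an integration-by-parts argument for the Hardy term with the boundary contribution controlled by dispersive decay of the free wave, the identities $\psi_r = u + r u_r$, $\psi_t = r u_t$ together with the $4d$ Hardy inequality for the uniform bounds, and Proposition~\ref{th:u-v_L} plus~\eqref{bound:hardy_loc_infty} for the $\phi-\psi$ statement. The only organizational difference is that you establish $\|r v_{\mr L}(t,\cdot)\|_{L^\infty_r}\to 0$ up front and reuse it both for the boundary term in the integration by parts and for the pointwise bound on $\phi$, whereas the paper reduces to smooth compactly supported data before using the dispersive estimate for the Hardy term and invokes the local bound~\eqref{bound:infty_loc} for $\sup|\phi|$; your treatment of the third statement is in fact slightly sharper, since it gives $\sup_{r\ge 0}|\phi(t,r)|\to 0$ without any double limit in $R$.
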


\begin{proof}
Proposition \ref{prop:6} yields that
\[ \limsup_{t \to +\infty} \int_{0}^{t-R} |\nabla_{t,r} v_{\mr L}(t,r)|^2 r^3 dr  \to 0 \quad \text{as } R \to +\infty. \]
By~\eqref{bound:hardy_loc_infty} in Lemma~\ref{lem:hardy_loc}, with $r=0$, we sees that by density and preservation of the linear energy, it suffices to establish the convergence for $\vec v_L$ with initial data that is $C_0^{\infty}$ (and radial). 

We now use \eqref{disp est} and \eqref{pr id} and the fact that by Lemma~\ref{lem:hardy_loc} we know that $s \abs{v_L (t, s)} \to 0$ as $s \to 0$, to integrate~\eqref{pr id} between $r=0$ and $r = t-R$ to obtain
\ant{
2 &\int_0^{t-R}v_L^2(t, r) \, r \, dr = (t-R)^2v_L^2(t, , t-R) -2 \int_0^{t-R} v_L(t, r) \, \p_r v_L(t, r)\, r^2 \, dr\\
 &\quad\le (t-R)^2 v_L^2(t, t-R) + \int_0^{t-R} v_L^2(t, r) \, r \, dr + \int_0^{t-R} \abs{\p_r v_L(t, r)}^2 \, r^3 \, dr.
 }
 Hence, 
 \ant{
 \int_0^{t-R}v_L^2(t, r) \, r \, dr & \le C \frac{(t-R)^2}{t^3} + \int_0^{t-R} \abs{\p_r v_L(t, r)}^2 \, r^3 \, dr,
 } 
 which combined with Proposition~\ref{prop:6} gives the first statement.  Now, $\p_r \phi = r \p_r v_{L} + v_{L}$ and the second statement follows. The third statement is then a consequence of~\eqref{bound:infty_loc} in Lemma~\ref{lem:hardy_loc}. The fourth and last convergence is a reformulation of the extraction of the linear term Proposition~\ref{th:u-v_L}, in light of Lemma~\ref{lem:hardy_loc}, \eqref{bound:hardy_loc_infty}.

 Finally, the first bound is a consequence of the type-II bound \eqref{typeII} combined with Lemma~\ref{lem:hardy_loc}, \eqref{bound:hardy_loc_infty} with $r =0$. For the second bound we also use \eqref{bound:hardy_loc_infty},  Proposition~\ref{th:u-v_L} and conservation of the linear energy. 


\end{proof}

The proof of Theorem \ref{thm:en_decay_global} follows the same general outline as for the finite time blow-up case. First, we prove desired vanishing of the energy for a particular $\lambda \in (0, 1)$, conditional to an $L^{\infty}$ bound which guarantees the positivity of the flux. We then prove that this implies the general case of the theorem via an inductive argument. 

\begin{prop} \label{prop:en_decay_global_1}
Assume that there exist $\lambda \in (0,1)$ and $A \ge 0$ and $T \ge A/(1-\lambda)$ such that
\begin{equation} \label{hypo:lambda_psi}
\forall t \ge T, \ \forall r \in [ \lambda t, t- A], \quad  |\psi(t,r)| \le \frac{\sqrt{2}}{2}.
\end{equation}
Then
\[ \limsup_{t \to +\infty} \int_{\lambda t}^{t-R} \left( |\partial_t \psi(t,r)|^2 + |\partial_r \psi(t,r)|^2 \right) rdr \to 0 \quad \text{as } R \to \infty. \]
\end{prop}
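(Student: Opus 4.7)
The proof will mirror that of Proposition~\ref{prop2}, with the backward light cone from the blow-up singularity replaced by the forward light cone $\{r=t-A\}$, and the singular limit $t\nearrow 0$ replaced by $t\to +\infty$. The hypothesis~\eqref{hypo:lambda_psi} guarantees $F(\psi)\ge 0$ throughout the region $K^\la_\ext:=\{(t,r):t\ge T,\ \la t\le r\le t-A\}$, which is the positivity that fuels the entire argument. All computations are carried out on the 2d wave-map-type equation~\eqref{psi eq} for $\psi=ru$.

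\textbf{Monotonicity and flux decay.} Define the interior energy
\[
\mathcal{E}(t):=\int_0^{t-A}\left[\tfrac{1}{2}(\psi_t^2+\psi_r^2)+\frac{F(\psi)}{r^2}\right] r\,dr,
\]
and integrate the conservation identity~\eqref{en id} over the frustum $\{t_1\le t\le t_2,\ 0\le r\le t-A\}$ to obtain
\[
\mathcal{E}(t_2)-\mathcal{E}(t_1)=\int_{t_1}^{t_2}(t-A)\left[\tfrac{1}{2}(\psi_t+\psi_r)^2+\frac{F(\psi)}{r^2}\right]_{r=t-A}dt.
\]
Since $|\psi(t,t-A)|\le \sqrt{2}/2$ by~\eqref{hypo:lambda_psi}, the right-hand side is non-negative, so $\mathcal{E}$ is non-decreasing in $t$. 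Combining the type-II bound~\eqref{typeII} with Claim~\ref{cl:phi_psi} and the Hardy/Sobolev inequalities shows that $\mathcal{E}(t)$ is uniformly bounded above, so it admits a finite limit $\mathcal{E}_\infty$ and the total flux on $[t_1,\infty)$ tends to $0$ as $t_1\to\infty$.

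\textbf{Null coordinates and Gronwall.} Setting $\eta=t+r$, $\xi=t-r$, the outer cone becomes $\{\xi=A\}$ and the self-similar boundary $r=\la t$ becomes $\{\xi=\la'\eta\}$ with $\la'=(1-\la)/(1+\la)$. With $e,m,L$ as in~\eqref{e m L def} and $\mathcal{A}^2=r(e+m)$, $\mathcal{B}^2=r(e-m)$, the positivity of $F$ on $K^\la_\ext$ legitimises these squares, the null identities~\eqref{d A B} are unchanged, and the pointwise bound $L^2\le C\mathcal{A}^2\mathcal{B}^2/r^2$ carries over verbatim from~\eqref{L2}--\eqref{L2 bu}, using only $|\psi|^2\le 1/2$. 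The Gronwall iteration of Proposition~\ref{prop2} is then run from the cone $\xi=A$ (playing the role of $\xi_0\nearrow 0$ there): given $\epsilon>0$, choose $t_1$ so large that the residual flux is less than $\epsilon^2$, and integrate $|\partial_\xi\mathcal{A}|\le (C/r)\mathcal{B}$ and $|\partial_\eta\mathcal{B}|\le (C/r)\mathcal{A}$ outward along null characteristics in $K^\la_\ext$. The argument of~\eqref{A B in}--\eqref{int A} then yields $\mathcal{A},\mathcal{B}\lesssim \epsilon/\sqrt{\eta-\xi}$ on the relevant subregion, and integrating the energy identity over a triangle inside $K^\la_\ext$ gives the claimed vanishing exactly as in the final step of Proposition~\ref{prop2}.

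\textbf{Main obstacle.} The principal new point compared to Proposition~\ref{prop2} is the uniform $L^\infty_t$ bound on $\mathcal{E}(t)$ (from which flux integrability follows): in the blow-up case it was immediate from the support property of $\vec a(t)$, but here one must use Claim~\ref{cl:phi_psi} together with Proposition~\ref{th:u-v_L} to control both the radiation $\phi=rv_{\mr L}$ and the difference $\psi-\phi$ on the outer region, and cover the interior by Hardy and Sobolev. The geometric reversal --- propagating outward from the radiation cone $\xi=A$ rather than inward toward a singularity at $\eta=0$ --- demands careful bookkeeping of signs and null directions, but introduces no essentially new analytic difficulty beyond Proposition~\ref{prop2}.
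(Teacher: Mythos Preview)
Your outline correctly identifies the positivity input from~\eqref{hypo:lambda_psi}, the energy--flux monotonicity on the cone $\xi=A$, the null-coordinate identities, and the pointwise bound $L^2\le C\mathcal A^2\mathcal B^2/r^2$. The gap is in the Gronwall step. In the blow-up argument of Proposition~\ref{prop2} the iteration closes because there are \emph{two} small inputs: the flux on the mantle $\eta=0$, and the boundary term $\mathcal A(\eta,\xi_0)$ on the segment $\xi=\xi_0$, which is merely bounded by a $\xi_0$-dependent constant but enters the final estimate through $h(\eta,\xi_0)\le C(\xi_0)\sqrt{-\xi_0}$ and hence becomes small because $\sqrt{-\xi}\to 0$ near the cone tip. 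In the global setting there is no tip: both $\eta$ and $\xi$ tend to $+\infty$, and no geometric shrinking kills the analogue of that boundary term. Your single flux on $\xi=A$ only controls $\int_{\eta_0}^\infty\mathcal A^2(\eta,A)\,d\eta$; when you then integrate $\partial_\eta\mathcal B$ you must supply a boundary value $\mathcal B(\eta_1,\xi)$ for some large $\eta_1$ (or as $\eta_1\to\infty$), and nothing you have written controls it. The sentence ``the argument of~\eqref{A B in}--\eqref{int A} then yields $\mathcal A,\mathcal B\lesssim \epsilon/\sqrt{\eta-\xi}$'' therefore does not go through as stated.

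The paper fills this gap by introducing a \emph{second} flux-type quantity
\[
\mathcal F(\xi_0,\xi_1):=\lim_{\eta_1\to\infty}\int_{\xi_0}^{\xi_1}\mathcal B^2(\eta_1,\xi)\,d\xi,
\]
obtained by integrating the energy identity on quadrangles $Q(\eta_1;\xi_0,\xi_1)$, and equal to $\mathrm{Flux}(\xi_0,\xi_0)-\mathrm{Flux}(\xi_1,\xi_1)$. One shows $\mathcal F(\xi_0):=\lim_{\xi_1\to\infty}\mathcal F(\xi_0,\xi_1)$ is nonnegative, uniformly bounded (via integration on triangles and the type-II bound), and tends to $0$ as $\xi_0\to\infty$. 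With both $\mathrm{Flux}(\eta_0,\xi_0)\to 0$ and $\mathcal F(\xi_0)\to 0$ in hand, the Gronwall is run not on a pointwise quantity like $h$, but on $\mathcal F(\eta;\xi_0,\xi_1):=\int_{\xi_0}^{\xi_1}\mathcal B^2(\eta,\xi)\,d\xi$ in the variable $\eta$, giving $\mathcal F(\eta_0;\xi_0,\xi_1)\le C(\lambda')\bigl(\mathcal F(\xi_0)+\mathrm{Flux}(\eta_0,\xi_0)\bigr)$. Only then does the final triangle integration yield the claimed vanishing. Without this second flux, the iteration simply does not close.
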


Let us postpone the proof of Proposition \ref{prop:en_decay_global_1} and use it to  prove Theorem \ref{thm:en_decay_global}. 

\begin{proof}[Proof  that Proposition \ref{prop:en_decay_global_1} implies Theorem \ref{thm:en_decay_global}] The proof follows in two steps.

\textbf{Step 1:} We begin by establishing the following claim which establishes the desired vanishing in terms of $\vec \psi$.  
\begin{claim} \label{cl:l0}
For all $\lambda \in (0,1)$, 
\begin{gather}
\limsup_{t \to +\infty} \int_{\lambda t}^{t-R} \left( |\partial_t \psi(t,r)|^2 + |\partial_r \psi(t,r)|^2 \right) rdr \to 0 \quad \text{as } R \to \infty.  \label{eq:cond_I_2} 
\end{gather}
\end{claim}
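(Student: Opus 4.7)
The plan is to adapt the inductive argument used to prove Claim~\ref{prop1} in the finite-time blow-up case. Define
\[ I := \Big\{ \lambda \in (0,1) : \exists A \ge 0,\ T \ge A/(1-\lambda)\text{ with } |\psi(t,r)| \le \tfrac{\sqrt{2}}{2}\ \forall t \ge T,\ r \in [\lambda t, t-A] \Big\}. \]
By Proposition~\ref{prop:en_decay_global_1}, every $\lambda \in I$ satisfies the desired vanishing \eqref{eq:cond_I_2}, and $I$ is upward-closed in $(0,1)$. Hence it suffices to produce a sequence $\lambda_n \downarrow 0$ in $I$, which I will construct by induction. Set $\lambda_n := \lambda_0^n$ for a fixed $\lambda_0 \in (0,1)$ sufficiently close to $1$, and denote
\[ B := \sup_t \left( \int_0^\infty \bigl[ (\psi_t - \phi_t)^2 + (\psi_r - \phi_r)^2 \bigr] r\,dr \right)^{1/2}, \]
which is finite by the uniform bounds in Claim~\ref{cl:phi_psi}. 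I then choose $\lambda_0$ so that $B(\log \lambda_0^{-1})^{1/2} \le 1/6$.

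For the base case, I combine two inputs: first, the uniform smallness of $\phi$ on $[0, t-A]$ from the third line of Claim~\ref{cl:phi_psi} (choosing $A, t$ large makes $|\phi| \le 1/6$); second, the smallness of $|\psi - \phi|$ at the anchor $r = t-A$, obtained from Proposition~\ref{th:u-v_L} together with Lemma~\ref{lem:hardy_loc}\,\eqref{bound:infty_loc_infty} applied to $u - v_{\mr L}$ (which yields $\sup_{r \ge t-A}|\psi - \phi| \to 0$ for any fixed $A$ as $t \to \infty$). Then, for $r \in [\lambda_0 t, t-A]$, a Cauchy--Schwarz integration from $r$ to $t-A$ analogous to \eqref{psi-phi1} gives
\[ |(\psi - \phi)(t,r)| \le |(\psi - \phi)(t,t-A)| + B \bigl( \log \tfrac{t-A}{r} \bigr)^{1/2} \le o_t(1) + B(\log \lambda_0^{-1})^{1/2} + o_A(1), \]
so that $|\psi| \le 1/6 + 1/6 + o(1) < \sqrt{2}/2$ on the annulus. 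Thus $\lambda_0 \in I$.

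For the inductive step, assume $\lambda_n \in I$. Then Proposition~\ref{prop:en_decay_global_1} gives vanishing of $\int_{\lambda_n t}^{t - R} |\nabla_{t,r} \psi|^2 r\,dr$ in the double limit $t,R \to \infty$, and subtracting the negligible $\phi$-energy yields the same vanishing for $\psi - \phi$. A Cauchy--Schwarz integration from the anchor $r = t - A$ inward (as in the base case) then shows $|(\psi - \phi)(t, \lambda_n t)| \to 0$ as $t \to \infty$. Next, for $r \in [\lambda_{n+1} t, \lambda_n t]$, a second Cauchy--Schwarz from $r$ to $\lambda_n t$, using only the uniform bound $B$, gives
\[ |(\psi - \phi)(t, r)| \le |(\psi - \phi)(t, \lambda_n t)| + B \bigl(\log(\lambda_n/\lambda_{n+1})\bigr)^{1/2} \le o_t(1) + B(\log \lambda_0^{-1})^{1/2} \le 1/6 + o_t(1), \]
since $\lambda_n/\lambda_{n+1} = \lambda_0^{-1}$. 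Combining with $|\phi| \le 1/6$ gives $|\psi| < \sqrt{2}/2$ on $[\lambda_{n+1} t, \lambda_n t]$, and hence on the full interval $[\lambda_{n+1} t, t - A]$ by the inductive hypothesis. This establishes $\lambda_{n+1} \in I$ and completes the induction.

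The main obstacle, compared with the blow-up case, is that we only have smallness of $\phi$ (and of $|\psi - \phi|$ on the exterior) on regions bounded away from the full light cone---namely on $[0, t-A]$ respectively $[t-A, \infty)$, rather than up to $r = 1-t$ as in the finite-time setting. This forces all constructions to take place on the truncated annulus $[\lambda t, t-A]$ and requires careful bookkeeping of the parameters $A$, $R$, and $t$. Once this is set up, the inductive mechanism transfers essentially verbatim from the blow-up case.
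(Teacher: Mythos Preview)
Your proposal is correct and follows essentially the same inductive scheme as the paper: define the set $I$ of $\lambda$ for which the $L^\infty$ bound \eqref{hypo:lambda_psi} holds, show $I$ is nonempty and upward-closed, then propagate membership from $\lambda_n$ to $\lambda_{n+1}=\lambda_0\lambda_n$ via a Cauchy--Schwarz step controlled by $B\sqrt{\log\lambda_0^{-1}}$. The only minor technical difference is in how you obtain $|(\psi-\phi)(t,\lambda_n t)|\to 0$ in the inductive step: you anchor at $t-A$ and use Cauchy--Schwarz with a (fixed) $\log\lambda_n^{-1}$ factor, whereas the paper first combines the interior estimate on $[\lambda_n t,t-R]$ with the exterior estimate on $[t-R,\infty)$ from Claim~\ref{cl:phi_psi} to get $\int_{\lambda_n t}^\infty |\nabla_{t,r}(\psi-\phi)|^2\,r\,dr\to 0$, and then applies \eqref{bound:infty_loc_infty} directly, avoiding the $\log$ factor altogether. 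Both routes are valid.
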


\begin{proof}[Proof of Claim~\ref{cl:l0}]
Consider the collection $I$  of the $\lambda \in (0,1)$ such that there exist $R(\lambda) \ge 0$ and $T(\lambda) \ge R(\lambda)/(1-\lambda)$ such that
\begin{gather}
\forall t \ge T(\lambda), \ \forall r \in [\lambda t, t-R(\lambda)], \quad | \psi(t,r) | \le \frac{\sqrt{2}}{2}.
\label{eq:cond_I_1}
\end{gather}

Observe that if $\lambda \in I$ then $[\lambda,1) \subset I$ (for any $\lambda' \in [\lambda,1)$, notice that $R(\lambda') = R(\lambda)$ and $T(\lambda') = \max (R(\lambda)/(1-\lambda'), T(\lambda))$ work, because $\lambda' t \ge \lambda t$). Also, in view of Proposition \ref{prop:en_decay_global_1}, if $\lambda \in I$, then
\eqref{eq:cond_I_2} holds for this particular $\la$.

Hence it is enough to prove that $I$ contains a sequence $\{ \la_n \} \subset (0,1)$ which converges to 0: this is our goal from now on.


Let us first show that $I$ is non empty. First observe that there exists $R_0>0$ and $T_0>0$ such that for all $t \ge T_0$,
\begin{gather*}
|\phi(t,t-R_0) - \psi(t,t-R_0)| \le 1/6, \quad \text{and} \quad \sup_{r \in [0,t-R_0]} |\phi(t,r)| \le 1/6.
\end{gather*}
Indeed, we invoke Claim \ref{cl:phi_psi} (and \eqref{bound:infty_loc_infty}).

Let $\lambda_0 \in (0,1)$ to be determined later. Let $t \ge T_0$ and $r \in [\lambda_0 t, t-R_0]$. Then, (assuming that $T_0 \ge R_0/(1-\la_0)$, using Claim \ref{cl:phi_psi} repeatedly,
\begin{align*}
& | \psi(t,r) - \phi(t,r)| \\
 & \le | \psi(t,t-R_0) - \phi(t,t-R_0)|  +  \int_r^{t-R_0} \sqrt{r'} |\partial_r \psi(t,r') - \partial_r \phi(t,r')| \frac{dr'}{\sqrt{r'}} \\
&  \le \frac{1}{6} + \left( \int_r^{t-R_0} |\partial_r \psi(t,r') -\partial_r \phi(t,r')|^2 r'dr' \right)^{1/2}  \left( \int_r^{t-R_0} \frac{dr'}{r'} \right)^{1/2} \\
& \le 1/6 + 2 C(M) \sqrt{ \log \frac{t-R_0}{r}} \le 1/3 + 2 C(M) \sqrt{|\log \lambda_0|}
\end{align*}
Thus, for $t \ge T_0$, $r \in [\lambda_0 t, t-R_0]$, and provided that $T_0 \ge R_0/(1- \la_0)$
\[ |\psi(t,r)| \le |\phi(t,r)| + | \psi(t,r) - \phi(t,r)| \le 1/3 + 2 C(M) \sqrt{|\log \lambda_0|}. \]
Choose now $\lambda_0 \in (1/2,1)$ such that $|\lambda_0 -1 | \le 1/(144 C(M)^2) )$ and  use the fact that for such $\la_0$ we have $|\log \lambda_0| \le 2|\lambda_0 - 1|$. Now define $T(\lambda_0) := \max ( T_0, R_0/(1-\lambda_0) )$: for $t \ge T(\lambda_0)$, the interval $[\lambda t, t-R_0]$ is never empty. From the definition of $\lambda_0$, we get
\[ \sup_{t \ge T_0, \ r \in [\lambda_0 t, t-R_0]} | \psi(t,r) | \le 2/3 \le \frac{\sqrt{2}}{2}. \]
Hence condition \eqref{eq:cond_I_1} is fulfilled with $R(\lambda_0) := R_0$, and  $\lambda_0 \in I$.


Denote $\lambda_n := \lambda_0^n$, and let us now prove by induction that $\lambda_n \in I$, with $R(\la_n) = R_0$. We just proved that $\la_1 \in I$ (with $R(\la_1)=R_0$). Assume that $n \ge 1$ and $\la_n \in I$ with $R(\la_n) = R_0$. First, for all $R>0$ and $t \ge R/(1-\la_n)$
\begin{align*}
& \int_{\la_n t}^\infty |\nabla_{t,r} \phi(t,r) - \nabla_{t,r} \psi(t,r)|^2 rdr \le  \int_{t-R}^\infty |\nabla_{t,r} \phi(t,r) - \nabla_{t,r} \psi(t,r)|^2 rdr
 \\
& \qquad + 2 \int_{\la_n t}^{t-R} |\nabla_{t,r} \phi(t,r)|^2 rdr + 2 \int_{\la_n t}^{t-R} |\nabla_{t,r} \psi(t,r)|^2 rdr.\end{align*}
As $\la_n \in I$, \eqref{eq:cond_I_2} holds; using Claim \ref{cl:phi_psi}, and after taking the limsup in $t \to +\infty$ and letting $R \to +\infty$, we infer
\[ \int_{\la_n t}^\infty |\nabla_{t,r} \phi(t,r) - \nabla_{t,r} \psi(t,r)|^2 rdr \to 0 \quad \text{as } t \to +\infty. \]
Using \eqref{bound:infty_loc_infty}, there exists $T_n$ such that for all $t \ge T_n$
\[ |\phi(t,\la_n t) - \psi(t,\la_n t)| \le 1/6. \]
Then define $T(\la_{n+1}) = \max( T_n, T(\la_n))$. For $t \ge T(\la_{n+1})$ and $r \in [\la_{n+1} t, \la_n t]$ (notice that $\la_n t \le t-R_0$), there holds
\begin{align*}
& | \psi(t,r) - \phi(t,r)| \\
& \le | \psi(t,\la_n t) - \phi(t,\la_n t)| +  \int_r^{\la_n t} \sqrt{r'} |\partial_r \psi(t,r') - \partial_r \phi(t,r')| \frac{dr'}{\sqrt{r'}} \\
&  \le \frac{1}{6} + \left( \int_r^{\la_n t} |\partial_r \psi(t,r') -\partial_r \phi(t,r')|^2 r'dr' \right)^{1/2}  \left( \int_r^{\la_n t} \frac{dr'}{r'} \right)^{1/2} \\
& \le 1/6 +  2 C(M) \sqrt{ \log \frac{\la_n t}{r}} \le 1/6 + 2 C(M) \sqrt{ |\log \lambda_0|} \le 1/2.
\end{align*}
Thus, by our choice of $\lambda_0$, we have for all $t \ge T(\la_{n+1})$,
\[ \sup_{r \in [\la_{n+1} t, \la_{n} t]} | \psi(t,r) | \le \frac{\sqrt{2}}{2}. \]
As $\mu_{n} \in I$ with $R(\la_{n}) = R_0$ by assumption, we see that
\[ \sup_{t \ge T(\la_{n+1}), \ r \in [\la_{n+1} t, t-R_0]} | \psi(t,r) | \le \frac{\sqrt{2}}{2}, \]
so that $\la_{n+1} \in I$ with $R(\la_{n+1})=R_0$. This completes the induction.
 
 
Finally as $\la_n \to 0$, and $\la_n \in I$ for all $n \ge 1$, we conclude that $I = (0,1)$ and for all $\la \in (0,1)$, \eqref{eq:cond_I_2}  holds, as desired.
\end{proof}

\textbf{Step $2$:} To complete the proof we now transfer these results to $\vec u(t)$.  Let $\lambda \in (0,1)$.  Claim \ref{cl:l0} combined with the second and fourth statements of Claim \ref{cl:phi_psi} show that
\[ \int_{\lambda t}^\infty \left( |\partial_t (\psi - \phi)(t,r)|^2 + |\partial_r (\psi - \phi)(t,r)|^2 \right) rdr \to 0 \quad \text{as} \quad t \to +\infty. \]
This already gives that
\[  \int_{\lambda t}^\infty |\partial_t u(t,r) - \partial_t v_{\mr L}(t,r)|^2 r^3 dr \to 0. \]
From the fact that 
\ant{
\int_{\la t}^{\infty} \abs{ \p_r( \psi- \phi)(t, r)}^2 r \, dr \to 0,
} 
we see that for $\la t < r < t-R$, we have
\ant{
\psi(t, r) - \phi(t, r) = \psi(t, t-R) - \phi(t, t-R) - \int_{r}^{t-R}( \psi- \phi)_r(t, \rho) \, d \rho, 
}
so that 
\begin{multline*}
\abs{\psi(t, r) - \phi(t, r)} \le \abs{\psi(t, t-R) - \phi(t, t-R)} + \int_{\la t}^{t-R} \abs{ (\psi - \phi)_r(t, \rho)} \, d \rho\\
 \le \abs{\psi(t, t-R) - \phi(t, t-R)} + \left( \int_{\la t}^{t-R} \abs{ (\psi - \phi)_r(t, \rho)}^2 \, \rho \, d \rho\right)^{\frac{1}{2}} \left( \int_{\la t}^{t-R} \frac{d \rho}{\rho} \right)^{\frac{1}{2}}.
\end{multline*}
Hence, using also~\eqref{bound:infty_loc_infty}, Proposition~\ref{th:u-v_L}, we obtain that 
\ant{
\lim_{R \to \infty}  \limsup_{t \to \infty} \sup_{r \in [\la t, t-R]} \abs{ \psi(t, r) - \phi(t, r)} = 0.
}
Therefore, using that 
\ant{
\int_{\la t}^{t-R} \abs{ u(t, r) - v_L(t, r)}^2 \, r^2 \, \frac{dr}{r} \le  \sup_{r \in [\la t, t-R]} \abs{\psi(t, r) - \phi(t, r)} \log\left( \frac{1}{\la}- \frac{R}{\la t} \right), 
}
we see that, 
\ant{
\lim_{R \to \infty} \limsup_{t \to \infty} \int_{\la t}^{t-R} \abs{u(t, r) - v_L(t,r)} \, r \, dr = 0, 
}
and hence, 
\ant{
\lim_{R \to \infty} \limsup_{t \to \infty} \int_{\la t}^{t-R} \abs{u_r(t, r) - v_r(t, r)}^2 \, r^3 \, dr = 0, 
}
which combined with the second and third statements in Claim~\ref{cl:phi_psi}, gives Theorem~\ref{thm:en_decay_global}. Note that using Lemma~\ref{lem:hardy_loc}, \eqref{bound:hardy_loc_infty}, and Proposition~\ref{th:u-v_L}, we in fact have 
\ant{
\lim_{t \to \infty} \int_{\la t}^{\infty} \left\{ \abs{ \na_{t, x} (u-v_L)(t, r)}^2
+ \frac{\abs{(u-v_L)(t, r)}^2}{r^2} \right\} \, r^3 \, dr = 0
}

\end{proof}

\subsection{Proof of Proposition~\ref{prop:en_decay_global_1}}
We now turn to the proof of Proposition~\ref{prop:en_decay_global_1}. We recall the fact that $\psi$ satisfies a $2d$ equation as in~\eqref{psi eq}. 
\begin{equation} \label{eq:psi}
\partial_{tt} \psi - \partial_{rr} \psi - \frac{1}{r} \partial_r \psi + \frac{f(\psi)}{r^2} =0, \quad \text{where} \quad f(\psi) = \psi - \psi^3
\end{equation}
Again we let $$\ds F( \psi) =  \int_0^\psi f(\rho) d\rho = \frac{\psi^2}{2} - \frac{\psi^4}{4} = \frac{\psi^2}{2} (1-\psi^2/2),$$ so that if $|\psi| \le \sqrt 2$, then $F (\psi) \ge 0$.

As in the finite time blow-up case the crux of the argument will be that hypothesis~\eqref{hypo:lambda_psi} will guarantee the positivity of the flux so that the methods in~\cite{CTZduke} can be applied. We will need refinements of their results, which were developed in~\cite{CKLS2} and required in order to establish Theorem~\ref{global main} and Theorem~\ref{global 2W}. The proof below actually combines ideas of \cite{CTZduke} and \cite{STZ92}.

We re-introduce the following quantities: 
\begin{align*}
&e(t,r):= \frac{1}{2}(\psi_t^2(t, r) +  \psi_r^2(t, r)) + \frac{F(\psi(t,r))}{r^2}\\
&m(t,r):=  \psi_t (t, r)  \psi_r(t, r).
\end{align*}
And recall again for convenience the identities
\begin{align}
&  \partial_t(r e)- \partial_r(r m)  =0, \label{en id global} \\
& \partial_{t} (rm) - \partial_r (re) = -  \frac{1}{2} \psi_t^2 +  \frac{1}{2}\psi_r^2 + \frac{F(\psi)}{r^2} - \frac{2f(\psi)}{r} \psi_r  =: L, \label{en id 2}
\end{align}
We define the null coordinates 
\begin{align*}
\eta = t+r, \quad \xi =t-r.
\end{align*}
Let $\ds \la' = \frac{1-\lambda}{1+\lambda}$, and denote
\begin{align*}
\q A^2(\eta, \xi):= r (e(t,r)+m(t, r)) = \frac{r}{2} (\partial_t \psi + \partial_r \psi)^2  + \frac{F(\psi)}{r}, \\
\q B^2(\eta, \xi):= r(e(t,r)- m(t, r)) =  \frac{r}{2}(\partial_t \psi - \partial_r \psi)^2  + \frac{F(\psi)}{r}. 
\end{align*}

\begin{flushleft}\textbf{Step 1: Vanishing of the flux.} First integrate the energy identity \eqref{en id global} on the truncated cone 
\end{flushleft}
\[ \q C(T,\xi_0) := \{ (\eta,\xi) \mid \eta \ge \xi \ge \xi_0,\ \eta + \xi \le 2T \}, \]
where $t \ge \xi_0 \ge A$ (see Figure \ref{fig:1}).  
\begin{figure} \label{fig:1}
\begin{tikzpicture}[
	>=stealth',
	axis/.style={semithick,->},
	coord/.style={dashed, semithick},
	yscale = 1,
	xscale = 1]
	\newcommand{\xmin}{0};
	\newcommand{\xmax}{5};
	\newcommand{\ymin}{0};
	\newcommand{\ymax}{5};
	\newcommand{\ta}{3};
	\newcommand{\fsp}{0.2};
	\draw [axis] (\xmin-\fsp,0) -- (\xmax,0) node [right] {$r$};
	\draw [axis] (0,\ymin-\fsp) -- (0,\ymax) node [below left] {$t$};
	\draw [axis] (0,0) -- (\xmax, \xmax) node [below right] {$\eta$};
	\draw [axis] (0,0) -- (-1,1) node [above right] {$\xi$};
	\draw [dashed] (\ymax-2,\ymax-1)  -- (0,\ymax-1) node [left] {$t=T$};
	\filldraw[color=light-gray1] (\ymax-2,\ymax-1) -- (0,1)  -- (0, \ymax-1); 
	\draw [thick] (\xmax - 1.5,\xmax - 0.5) node [above] {$\xi = \xi_0$} -- (0,1)  -- (0, \ymax);
\end{tikzpicture}
\caption{The cone  {$\q C(T,\xi_0)$}  in gray.}
\end{figure}
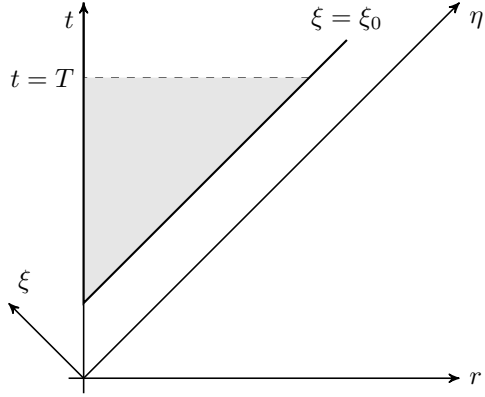
We see that
\ant{
0 & = \iint_{\q C(T,\xi_0)}  ( \partial_t(r e)- \partial_r(r m))  drdt \\
& = \int_0^{T-\xi_0} e(T,r) rdr  - \int_{\xi_0}^{2T-\xi_0}  \q A^2(\eta,\xi_0) d\eta,
} 
which implies that 
\ant{
\int_{\xi_0}^{2T-\xi_0}  \q A^2(\eta,\xi_0) d\eta = \int_0^{T-\xi_0} e(T,r) rdr.
}
Due to the type-II bound \eqref{typeII}, the term on the right-hand side is bounded by a constant depending on $M$ only as $T \to +\infty$. Now, as $\psi$ is smooth, $\ds \int_{\xi_0}^{\xi_0/\la'} \q A^2(\eta,\xi_0) d\eta$ is well defined. Also recall that for $\xi_0 \ge A$: if $\eta \ge \xi_0/\la'$, then $\q A^2(\eta,\xi_0) \ge 0$. Hence, by boundedness, $\ds \int_{\xi_0}^\infty  \q A^2 (\eta,\xi_0)d\eta$ converges. For all $\eta_0 \ge \xi_0 \ge A$, we can thus define the flux
\[ \operatorname{\mr{Flux}}(\eta_0, \xi_0) :=  \int_{\eta_0}^\infty  \q A^2  (\eta,\xi_0) d\eta. \]
Then for fixed $\xi_0$, $\operatorname{\mr{Flux}}(\eta_0, \xi_0) \ge 0$ as soon as $\la' \eta_0 \ge \xi_0$ and  
\[ \operatorname{\mr{Flux}}(\eta_0, \xi_0) \to 0 \quad \text{as} \quad \eta_0 \to +\infty. \]
Also, there exists a constant $C(M)$ such that
\begin{gather} \label{eq:flux_bd}
\forall \, \eta_0 \ge \xi_0 \ge A, \qquad |\operatorname{\mr{Flux}}(\eta_0, \xi_0)| \le C(M).
\end{gather}
Next, let $\eta_1 \ge \xi_1 \ge \xi_0 \ge A$. Integrating on the quadrangle
\[ Q(\eta_1; \xi_0,\xi_1) = \{ (\eta,\xi) \mid 0 \le \eta \le \eta_1, \ \xi_0 \le \xi \le \xi_1, \ \eta \ge \xi \} \]
with vertices $(\xi_0,\xi_0)$, $(\xi_0, \eta_1)$, $(\xi_1,\eta_1)$ and $(\xi_1,\xi_1)$ (see Figure \ref{fig:2}), we get
\begin{align*}
0 & = \iint_{Q(\eta_1; \xi_0,\xi_1)} \partial_t(r e)- \partial_r(r m) \\
& = \int_{\xi_0}^{\eta_1} \q A^2(\eta,\xi_0) d\eta + \int_{\xi_0}^{\xi_1} \q B^2 (\eta_1,\xi) d\xi - \int_{\xi_1}^{\eta_1} \q A^2 (\eta,\xi_1) d\eta
\end{align*}

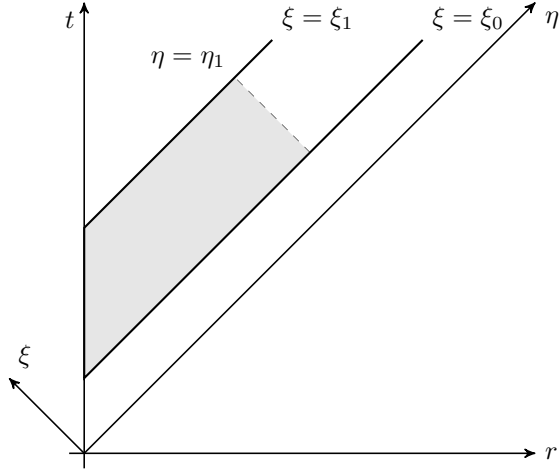
\begin{figure} \label{fig:2}
\begin{tikzpicture}[
	>=stealth',
	axis/.style={semithick,->},
	coord/.style={dashed, semithick},
	yscale = 1,
	xscale = 1]
	\newcommand{\xmin}{0};
	\newcommand{\xmax}{6};
	\newcommand{\ymin}{0};
	\newcommand{\ymax}{6};
	\newcommand{\ta}{3};
	\newcommand{\lamdba}{6};
	\newcommand{\fsp}{0.2};
	\draw [axis] (\xmin-\fsp,0) -- (\xmax,0) node [right] {$r$};
	\draw [axis] (0,\ymin-\fsp) -- (0,\ymax) node [below left] {$t$};
	\draw [axis] (0,0) -- (\xmax, \xmax) node [below right] {$\eta$};
	\draw [axis] (0,0) -- (-1,1) node [above right] {$\xi$};
	\draw [dashed] (\ymax-3,\ymax-2)  -- (\ymax-2.5- \ta/2,\ymax-2.5+\ta/2) node [above left] {$\eta=\eta_1$};
	\filldraw[color=light-gray1] (\ymax-3,\ymax-2) -- (0,1)  -- (0, \ta) -- (\ymax-2.5- \ta/2,\ymax-2.5+\ta/2); 
	\draw [thick] (\xmax - 1.5,\xmax - 0.5) node [above right] {$\xi = \xi_0$} -- (0,1)  -- (0, \ta) -- (\ymax-\ta-0.5, \ymax - 0.5) node [above right] {$\xi = \xi_1$};
\end{tikzpicture}
\caption{The quadrangle  {$Q(\eta_1; \xi_0,\xi_1)$}  in gray.}
\end{figure}

Letting $\eta_1 \to +\infty$,  we can define, for any $\xi_1 \ge \xi_0 \ge A$,
\begin{gather} \label{def:qF}
\operatorname{\q F}(\xi_0,\xi_1) := \lim_{\eta_1 \to +\infty} \int_{\xi_0}^{\xi_1} \q B^2 (\eta_1,\xi) d\xi  = \operatorname{\mr{Flux}}(\xi_0, \xi_0) - \operatorname{\mr{Flux}}(\xi_1, \xi_1).
\end{gather}
Now fix such $(\xi_0,\xi_1)$: for $\eta_1$ large enough, $\q B^2\ge 0$ on $[\xi_0,\xi_1] \times \{ \eta_1 \}$ . This proves that 
\[ \forall \xi_1 \ge \xi_0 \ge 0, \quad \operatorname{\q F}(\xi_0,\xi_1) \ge 0, \]
and so, $\xi \mapsto \operatorname{\mr{Flux}}(\xi, \xi)$ is non increasing. As it is bounded due to \eqref{eq:flux_bd}, there exists a limit as $\xi \to +\infty$, which we denote $\q E$:
\[ \q E : = \lim_{\xi \to +\infty} \operatorname{\mr{Flux}}(\xi, \xi). \]
Notice that we also have for $\xi_2 \ge \xi_1 \ge \xi_0$
\begin{gather} \label{eq:flux_add}
\operatorname{\q F}(\xi_0,\xi_2) =  \operatorname{\q F}(\xi_0,\xi_1) +  \operatorname{\q F}(\xi_1,\xi_2) \ge \operatorname{\q F}(\xi_0,\xi_1).
\end{gather}
Let us show that the map $(\xi_0,\xi_1) \mapsto \q F(\xi_0,\xi_1)$ is bounded on the set $\{ (\xi_0,\xi_1) \mid \xi_1 \ge \xi_0 \ge A \}$. Indeed, consider such $(\xi_0, \xi_1)$ and $\eta_0$ is so large that $\la' \eta_0 \ge \xi_1$, and the triangle with vertices $(\eta_0, \xi_0)$, $(\eta_0, \xi_1)$ and $(\xi_0, \eta_0 + \xi_1 - \xi_0)$:
\[ T(\eta_0,\xi_0,\xi_1) = \left\{ (\eta, \xi) \mid \ \eta_0 \le \eta,\  \xi_0 \le \xi, \ \eta+ \xi \le \eta_0 + \xi_1 \right\}. \]
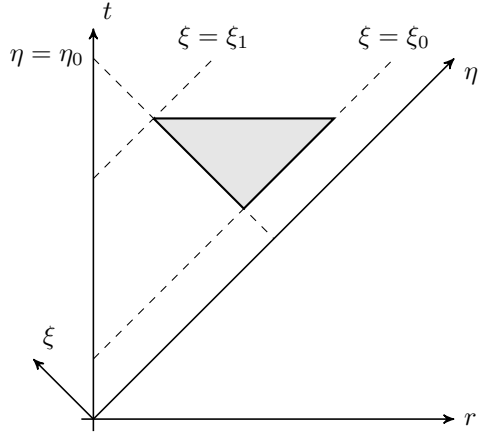
\begin{figure} \label{fig:3}
\begin{tikzpicture}[
	>=stealth',
	axis/.style={semithick,->},
	coord/.style={dashed, semithick},
	yscale = 0.8,
	xscale = 0.8]
	\newcommand{\xmin}{0};
	\newcommand{\xmax}{6};
	\newcommand{\ymin}{0};
	\newcommand{\ymax}{6};
	\newcommand{\ta}{3};
	\newcommand{\fsp}{0.2};
	\draw [axis] (\xmin-\fsp,0) -- (\xmax,0) node [right] {$r$};
	\draw [axis] (0,\ymin-\fsp) -- (0,\ymax+0.5) node [above right] {$t$};
	\draw [axis] (0,0) -- (\xmax, \xmax) node [below right] {$\eta$};
	\draw [axis] (0,0) -- (-1,1) node [above right] {$\xi$};
	\draw [dashed] (0,\ymax-2) --  (2,\ymax) node [above] {$\xi=\xi_1$};
	\draw [dashed] (0,1) --  (\ymax-1,\ymax) node [above] {$\xi=\xi_0$};
	\draw [dashed] (0,\ymax) node [left] {$\eta= \eta_0$} -- (\ymax/2, \ymax/2);
	\filldraw[color=light-gray1] (1,\ymax-1) -- (\ymax-2,\ymax-1) -- (\ymax/2-0.5, \ymax/2+0.5); 
	\draw [thick] (1,\ymax-1) -- (\ymax-2,\ymax-1) -- (\ymax/2-0.5, \ymax/2+0.5) -- (1,\ymax-1);
\end{tikzpicture}
\caption{The triangle {$T(\eta_0,\xi_0,\xi_1)$}  in gray.}
\end{figure}
(See Figure \ref{fig:3}). Observe that on $T(\eta_0; \xi_0,\xi_1)$, $\q A^2 \ge 0$ and $\q B^2 \ge 0$ and integrate the energy identity~\eqref{en id global} there:
\begin{align*}
0 & = \iint_{T(\eta_0,\xi_0,\xi_1)} \partial_t(re) - \partial_r(rm) drdt  \\
& = \int_{(\eta_0-\xi_0)/2}^{(\eta_0 - \xi_1)/2}  e(\eta_0+\xi_1,r) rdr - \int_{\eta_0}^{\eta_0+\xi_1-\xi_0} \q A^2(\eta,\xi_0) d\eta - \int_{\xi_0}^{\xi_1} \q B^2(\eta_0,\xi) d\xi. 
\end{align*}
Therefore, invoking again our type-II bound \eqref{typeII} and non-negativity of $\q A^2$:
\[ \int_{\xi_0}^{\xi_1} \q B^2(\eta_0,\xi) d\xi \le  \int_{(\eta_0-\xi_0)/2}^{(\eta_0 - \xi_1)/2} e(\eta_0+\xi_1,r) rdr \le C(M), \]
where $C(M)$ is independent of $(\eta_0,\xi_0,\xi_1)$. Letting $\eta_0 \to +\infty$ shows that
\[ \forall \xi_1 \ge \xi_0 \ge A, \quad \q F(\xi_0, \xi_1) \le C(M). \]
Hence, with the monotonicity \eqref{eq:flux_add}, for $\xi_0 \ge A$, we can define
\[  \operatorname{\q F}(\xi_0) = \lim_{\xi_1 \to + \infty} \operatorname{\q F}(\xi_0,\xi_1) \ge 0. \]
Let $\xi_1 \to +\infty$ in the definition \eqref{def:qF} of $\q F(\xi_0,\xi_1)$ and derive
\[ \q F(\xi_0) = \operatorname{Flux}(\xi_0,\xi_0) - \q E. \]
Therefore, letting $\xi_0 \to +\infty$, we finally obtain that $\q F(\xi_0) \to 0$ as $\xi_0 \to +\infty$.\\

\begin{flushleft}\textbf{Step 2. Bound on $\ds \int_{\xi_0}^{\xi_1} \q B^2(\eta_0, \xi) d\xi$.} 
We now work in the domain 
\end{flushleft}
\[ K = \left\{ (\eta,\xi) \middle| \ \eta \ge 2T-A, \ A \le \xi \le \la' \eta \right\} \subset \{ (t,r) \mid t \ge T \text{ and } \lambda t \le r \le t-A \}. \]
Notice that when $(\eta,\xi) \in K$, $\ds |\psi(\eta,\xi)| \le \frac{\sqrt{2}}{2} \le \sqrt 2$, so that $\q A^2(\eta,\xi), \q B^2(\eta,\xi) \ge 0$: for such $(\eta, \xi)$ we can then define
\[ \A(\eta,\xi) = \sqrt{\q A^2(\eta, \xi)}, \quad \B(\eta,\xi)  = \sqrt{\q B^2(\eta, \xi)}. \]
We now use the identities from \eqref{en id global}, \eqref{en id 2}: in the variables $(\eta,\xi)$, they read
\begin{align*}
\partial_\xi \A^2 =  L, \quad \partial_\eta \B^2 = -  L.
\end{align*}

\begin{claim}
On $K$, one has $\ds |L| \le \frac{C}{r} \A \B$.
\end{claim}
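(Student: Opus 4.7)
The proof plan is to follow the same algebraic computation carried out for the analogous claim in Section~\ref{self sim bu} (the bound $L^2 \le C \A^2 \B^2/r^2$ used there). The point is that in the region $K$ we have $|\psi| \le \sqrt{2}/2$ by the defining hypothesis~\eqref{hypo:lambda_psi}, which is exactly what was needed in the finite-time blow-up case to make $F(\psi) \ge 0$ and to obtain the crucial pointwise inequality $f(\psi)^2 \le 4 F(\psi)$. So the estimate is the same algebraic identity carried over to the global setting.

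More precisely, the first step is to expand $L$ as a sum of three terms
\[ L = \tfrac{1}{2}(\psi_r^2 - \psi_t^2) + \frac{F(\psi)}{r^2} - \frac{2 f(\psi)}{r}\psi_r, \]
and apply the elementary inequality $(a+b+c)^2 \le 3(a^2+b^2+c^2)$ to get
\[ L^2 \le C\Bigl[\, \tfrac{1}{4}(\psi_r^2-\psi_t^2)^2 + \frac{F(\psi)^2}{r^4} + \frac{f(\psi)^2 \psi_r^2}{r^2}\,\Bigr]. \]
Next, I would use the hypothesis $|\psi| \le \sqrt{2}/2$ on $K$, as in Section~\ref{self sim bu}, to deduce
\[ |f(\psi)| = |\psi(1-\psi^2)| \le |\psi|, \qquad F(\psi) = \tfrac{\psi^2}{2}(1-\tfrac{\psi^2}{2}) \ge \tfrac{\psi^2}{4}, \]
hence $f(\psi)^2 \le 4 F(\psi)$ throughout $K$. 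Substituting this in the previous bound gives
\[ L^2 \le C\Bigl[\, \tfrac{1}{4}(\psi_r^2-\psi_t^2)^2 + \frac{F(\psi)^2}{r^4} + \frac{F(\psi)(\psi_t^2+\psi_r^2)}{r^2}\,\Bigr]. \]

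Finally, I would compute $\A^2 \B^2 / r^2 = r(e+m)\cdot r(e-m)/r^2 = e^2 - m^2$ directly:
\[ e^2 - m^2 = \Bigl(\tfrac{1}{2}(\psi_t^2+\psi_r^2) + \tfrac{F(\psi)}{r^2}\Bigr)^2 - \psi_t^2\psi_r^2 = \tfrac{1}{4}(\psi_t^2-\psi_r^2)^2 + \frac{F(\psi)(\psi_t^2+\psi_r^2)}{r^2} + \frac{F(\psi)^2}{r^4}, \]
which matches term-by-term the right-hand side of the bound above (up to the universal constant $C$). Therefore $L^2 \le C \A^2 \B^2 / r^2$, and taking square roots (which is legitimate in $K$ since $\A, \B \ge 0$ there) yields the claim. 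There is no real obstacle here, since all the heavy lifting, namely ensuring positivity of $F$ and the inequality $f^2 \le 4F$, is already guaranteed by the pointwise size constraint built into the region $K$; the computation is purely algebraic.
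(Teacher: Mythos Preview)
The proposal is correct and follows essentially the same approach as the paper: both use the pointwise bound $|\psi|\le\sqrt{2}/2$ on $K$ to obtain $f(\psi)^2\le 4F(\psi)$, bound $L^2$ by the three natural terms, and then identify the result with the expansion of $\A^2\B^2/r^2=e^2-m^2$. The only cosmetic difference is that the paper refers back to the computation~\eqref{L2 bu} with explicit constants, whereas you use $(a+b+c)^2\le 3(a^2+b^2+c^2)$ with a generic constant; this makes no substantive difference.
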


\begin{proof}
Recall that on $K$, $\ds |\psi| \le \frac{\sqrt{2}}{2}$, so that $F(\psi) \ge 0$ and
\ant{
\abs{f ( \psi)} =  \abs{\psi( 1- \psi^2)} \le \abs{\psi},\, \, \, \abs{F(\psi)}  = \abs{\frac{\psi^2}{2}\left(1-\frac{ \psi^2}{2}\right)} \ge \frac{\psi^2}{4}.
}
Combining the above inequalities gives 
\ant{
f^2 (\psi) \le \abs{\psi}^2 \le 4 F(\psi), \quad \forall \,\,(t, r) \in K.
}
Then (using Cauchy-Schwarz inequality) and arguing as in~\eqref{L2 bu} we have
\ant{ 
L^2 &\le \frac{1}{2}( \psi_r^2- \psi_t^2)^2 +  \frac{4}{r^4} F^2( \psi) + \frac{64}{r^2} F( \psi) \psi_r^2 \\
& \le C \left[ \frac{1}{4}( \psi_r^2- \psi_t^2)^2 +  \frac{1}{r^4} F^2( \psi) + \frac{2}{r^2} F( \psi) (\psi_r^2+ \psi_t^2)\right],
}
which gives $L^2 \le C \frac{ \A^2 \B^2}{r^2}$.
The claim follows by taking the square root.
\end{proof}
We can thus conclude that 
\[ |\partial_\xi \A| \le C\frac{\B}{r}, \quad |\partial_\eta \B| \le C\frac{\A}{r}. \]
\begin{figure} \label{fig:4}
\begin{tikzpicture}[
	>=stealth',
	axis/.style={semithick,->},
	coord/.style={dashed, semithick},
	yscale = 1,
	xscale = 1]
	\newcommand{\xmin}{0};
	\newcommand{\xmax}{8};
	\newcommand{\ymin}{0};
	\newcommand{\ymax}{8};
	\newcommand{\ta}{3};
	\newcommand{\lamdba}{6};
	\newcommand{\fsp}{0.2};
	\draw [axis] (\xmin-\fsp,0) -- (\xmax,0) node [right] {$r$};
	\draw [axis] (0,\ymin-\fsp) -- (0,\ymax) node [below left] {$t$};
	\draw [axis] (0,0) -- (\xmax, \xmax) node [below right] {$\eta$};
	\draw [axis] (0,0) -- (-1,1) node [above right] {$\xi$};
	\draw [dashed] (0,0) -- (5/7,30/7);
	\draw [dashed] (0,1) -- (\ta-1,\ta);
	\draw [dashed]  (5/7,30/7) -- (0,2*\ta-1) node [left] {$\eta = 2T-A$};
	\draw [dashed] (-0.1, \ta) node [left] {$t=T$} -- (\xmax,\ta);
	\filldraw[color=light-gray1]  (\ta-1,\ta) -- (5/7,30/7) -- (7.5/\lamdba, 7.5) -- (\ymax-1.5,\ymax-0.5); 
	\draw [thick] (\xmax - 1.5,\xmax - 0.5) node [above right] {$\xi = A$} -- (\ta-1,\ta)  -- (5/7,30/7)  -- (7.5/\lamdba, \ymax-0.5) node [above] {$\xi = \la'\eta$}; 
	\draw [thick, densely dotted] (\xmax - 2,\xmax - 0.5) node [above] {$\xi_0$} -- (\ta-1,\ta+0.5) -- (\ta-1.5,\ta+1) node [above] {$\eta_0$} -- (\xmax - 3,\xmax - 0.5) node [above] {$\xi_1$} ;
	\draw [semithick, dashdotted] (\xmax - 3,\xmax - 1.5) -- (\xmax - 3.5,\xmax - 1) node [above] {$\eta_1$};
\end{tikzpicture}
\caption{ The quadrangle $K$ in gray and the rectangle of integration {$[\eta_0,\eta_1] \times [\xi_0,\xi_1]$}.}
\end{figure}
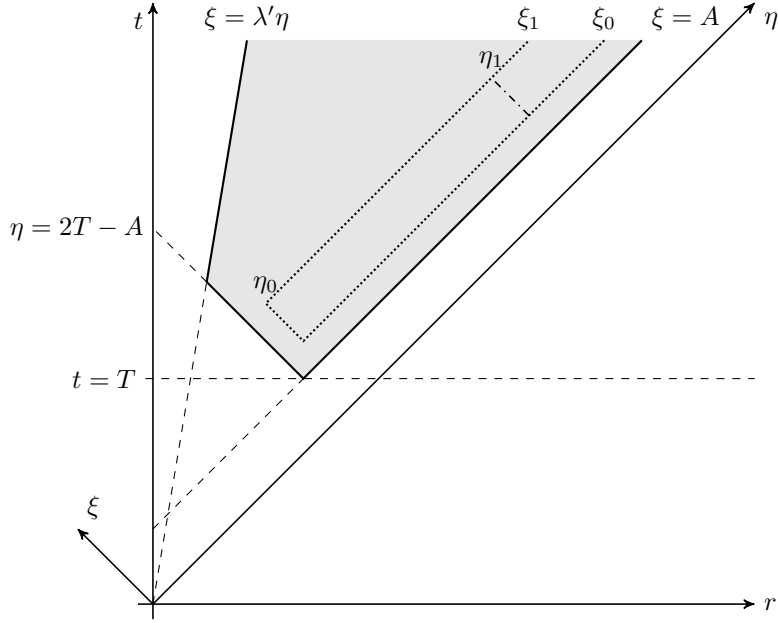
Consider a rectangle $[\eta_0, \eta_1] \times [\xi_0,\xi_1] \subset K$ (see Figure \ref{fig:4}), with $\eta_1$ meant to be large). In particular, for $0 < \la' < 1$ we have
\begin{gather} \label{rel:xi_eta}
0 \le \xi_0 \le \xi_1 \le \la' \eta_0 \le \la' \eta_1.
\end{gather}
Then for all $(\eta,\xi) \in [\eta_0, \eta_1] \times [\xi_0,\xi_1]$  we obtain 
\EQ{
\A(\eta, \xi) & \le \A(\eta,\xi_0) + C \int_{\xi_0}^{\xi} \frac{\B(\eta,\xi')}{\eta - \xi'} d\xi' ,\\
\B(\eta,\xi) & \le \B(\eta_1, \xi) + C \int_{\eta}^{\eta_1} \frac{\A(\eta', \xi)}{\eta' - \xi} d\eta' \\
 \le &\,   \B(\eta_1, \xi) + C \int_{\eta}^{\eta_1} \frac{\A(\eta',\xi_0)}{\eta' - \xi} d\eta' + C \int_{\xi_0}^{\xi} \int_{\eta}^{\eta_1} \frac{\B(\eta',\xi')}{(\eta' - \xi)(\eta' - \xi')} d\eta' d\xi' \label{eq:beta_bound}
}
Let us first evaluate the second term using the Cauchy-Schwarz inequality:
\begin{align}
\int_{\eta}^{\eta_1} \frac{\A(\eta',\xi_0)}{\eta' - \xi} d\eta' & \le \left( \int_{\eta}^{\eta_1} \A^2(\eta',\xi_0) d\eta' \right)^{1/2} \left( \int_{\eta}^{\eta_1} \frac{d\eta'}{(\eta' - \xi)^2} \right)^{1/2} \nonumber \\
& \le \sqrt{ \frac{\operatorname{\mr{Flux}}(\eta, \xi_0)}{\eta - \xi}} \le \sqrt{ \frac{\operatorname{\mr{Flux}}(\eta_0, \xi_0)}{(1- \la') \eta}} \label{eq:beta_2}
\end{align}
We now turn to the third term. It is convenient to denote
\[ \q F(\eta; \xi_0, \xi_1) = \int_{\xi_0}^{\xi_1} \q B^2(\eta,\xi) d\xi. \]
Notice that on the rectangle of integration, \eqref{rel:xi_eta} holds and we have 
\[ \frac{1}{(\eta' - \xi)(\eta'-\xi')} \le \frac{1}{(1-\la')^2 \eta'^2}. \]
Hence
\begin{align}
& \int_{\xi_0}^{\xi} \int_{\eta}^{\eta_1} \frac{\B(\eta',\xi')}{(\eta' - \xi)(\eta' - \xi')} d\eta' d\xi'  \le \frac{1}{(1-\la')^2} \int_{\eta} ^{\eta_1} \frac{1}{\eta'^2} \int_{\xi_0}^\xi \B(\eta',\xi') d\xi' d\eta' \nonumber \\
& \le \frac{\sqrt{\xi-\xi_0}}{(1-\la')^2} \int_{\eta} ^{\eta_1} \sqrt{\q F(\eta'; \xi_0,\xi)} \frac{ d\eta' }{\eta'^2}  \le \frac{\sqrt{\xi}}{(1-\la'^2)} \int_{\eta} ^{\eta_1} \sqrt{\q F(\eta'; \xi_0,\xi_1)} \frac{d\eta'}{\eta'^2}   \label{eq:beta_3}
\end{align}
Plugging the last two bounds \eqref{eq:beta_2} and \eqref{eq:beta_3} in the estimate for $\B(\eta,\xi)$ in \eqref{eq:beta_bound}, we infer that 
\[ \B(\eta,\xi)  \le \B(\eta_1, \xi) + C \sqrt{ \frac{\operatorname{\mr{Flux}}(\eta_0, \xi_0)}{(1- \la') \eta}} + \frac{C\sqrt{\xi}}{(1-\la'^2)} \int_{\eta} ^{\eta_1} \frac{\sqrt{\q F(\eta'; \xi_0,\xi_1)}}{\eta'^2} d\eta'. \]
Taking the square and integrating in $\xi$ over $[\xi_0,\xi_1]$ then yields
\begin{align*}
\q F(\eta; \xi_1,\xi_0)  & \le C \q F(\eta_1; \xi_1,\xi_0) + \frac{C}{(1-\la')} \frac{\xi_1-\xi_0}{\eta} \operatorname{\mr{Flux}}(\eta_0, \xi_0) \\
 & \qquad  + \frac{C(\xi_1-\xi_0)^2}{(1-\la')^2} \left( \int_{\eta} ^{\eta_1} \frac{\sqrt{\q F(\eta'; \xi_0,\xi_1)}}{\eta'^2} \right)^2 \\
& \le C \q F(\eta_1; \xi_1,\xi_0) + \frac{C\la' }{(1-\la')} \operatorname{\mr{Flux}}(\eta_0, \xi_0) \\
& \qquad + \frac{C \xi_1^2}{(1-\la')^2} \left( \int_{\eta}^{\eta_1} \q F(\eta'; \xi_0,\xi_1) \frac{d\eta'}{\eta'^2} \right) \left( \int_{\eta}^{\eta_1} \frac{d\eta'}{\eta'^2}  \right).
\end{align*}
We have obtained the following integral inequality for $\eta \mapsto \q F(\eta; \xi_1,\xi_0)$:
\[ \q F(\eta; \xi_1,\xi_0) \le C(\la') \left( \q F(\eta_1; \xi_1,\xi_0) + \operatorname{\mr{Flux}}(\eta_0, \xi_0) + \xi_1 \int_{\eta}^{\eta_1} \q F(\eta'; \xi_0,\xi_1) \frac{d\eta'}{\eta'^2} \right). \]
It follows from Gronwall's inequality  (in $\eta$) that
\begin{align*}  
 \q F(\eta; \xi_0, \xi_1) & \le C(\la') \left( \q F(\eta_1; \xi_0,\xi_1) + \operatorname{\mr{Flux}}(\eta_0, \xi_0) \right) \exp \left( C(\la') \xi_1  \int_\eta^{\eta_1} \frac{d\eta'}{\eta'^2} \right) \\
& \le C(\la')  \left( \q F(\eta_1; \xi_0,\xi_1) +  \operatorname{\mr{Flux}}(\eta_0, \xi_0) \right) .
\end{align*}
Letting $\eta_1 \to +\infty$ (and $\xi_0$, $\xi_1$ fixed and set $\eta = \eta_0$):
\[ \q F(\eta_0; \xi_0, \xi_1) \le  C(\la') \left( \q F(\xi_0, \xi_1) + \operatorname{\mr{Flux}}(\eta_0, \xi_0) \right). \]
As $\q F(\xi_0, \xi_1) \le \q F(\xi_0)$ we conclude that 
\[ \int_{\xi_0}^{\xi_1} \q B^2(\eta_0, \xi) d\xi = \q F(\eta_0; \xi_0, \xi_1) \le  C(\la') \left( \q F(\xi_0) + \operatorname{\mr{Flux}}(\eta_0, \xi_0) \right). \]

\begin{flushleft}\textbf{Step 3: Vanishing of the energy.}
Let $\e>0$. Let $\xi_\e \ge A$ be such that $0 \le \q F(\xi_\e)  \le \e$, then let $\eta_\e \ge \xi_\e$ be such that $\operatorname{\mr{Flux}}(\eta_\e, \xi_\e) \le \e$.
Define $R_\e = \xi_\e$ and $$\ds T_\e = \max \left( \frac{\xi_\e}{1+\lambda}, \frac{\eta_\e}{1-\lambda}, T \right).$$ 
Let $t \ge T_\e$. Denote $\xi = (1+\lambda)t \ge \xi_\e$ and $\eta = (1-\lambda)t \ge \eta_\e$. 
\end{flushleft}

We will integrate \eqref{en id global} on the triangle with vertices $(\eta, \xi_\e)$, $(2t-R_\e,\xi_\e)$ and $(\xi,\eta)$:
\[ \q T = \{ (\eta',\xi') \mid \eta' \ge \eta, \ \xi \ge \xi_\e, \eta' + \xi' \le \xi + \eta \}. \]
Then
\begin{align*}
0 & = \iint_{\q T} ( \partial_t (re) - \partial_r(rm)) drdt \\
& = \int_{\lambda t}^{t-R_\e} e(t,r) rdr - \int_{\eta}^{2t - R_\e} \q A^2(\eta',\xi_\e) d\eta' - \int_{\xi_\e}^{\xi} \q B^2(\eta, \xi') d\xi'.
\end{align*}
Therefore
\begin{align*}
 \int_{\lambda t}^{t-R_\e} e(t,r) rdr & = \int_{\eta}^{2t - R_\e} \q A^2(\eta',\xi_\e) d\eta' + \int_{\xi_\e}^{\xi} \q B^2(\eta, \xi') d\xi' \\
& \le  \operatorname{\mr{Flux}}(\eta,\xi_\e) +  \q F(\eta; \xi_\e, \xi) \\
& \le (C(\la')+1) \operatorname{\mr{Flux}}(\eta, \xi_\e) + C(\la') \q F(\xi_\e) \le (2C(\la')+1) \e.
\end{align*}
Hence,
\[ \limsup_{t \to + \infty} \int_{\lambda t}^{t-R_\e} e(t,r) rdr \le (2C(\la')+1) \e. \]
As $R_\e \ge A$ and $T_\e \ge T$, for all $R \ge R_\e$,
\[ \forall t \ge T_\e, \ r \in [\lambda t, t-R], \quad F(\psi(t,r)) \ge 0, \]
and so, for all $R \ge R_\e$,
\[  \limsup_{t \to + \infty} \int_{\lambda t}^{t-R} (|\partial_t \psi(t,r)|^2 + |\partial_r \psi(t,r)|^2) rdr \le (2C(\la')+1) \e. \]
This completes the proof of Proposition~\ref{prop:en_decay_global_1}.


\section{Proofs of Theorem~\ref{bu main} and Theorem~\ref{global main}}\label{mains}
In this section we establish Theorem~\ref{bu main} and Theorem~\ref{global main}. We remark that the details of this argument are very similar to  the arguments contained in~\cite[Section $5$]{CKLS1},~\cite[Section $3$]{CKLS2}, as well as~\cite[Section $5$]{DKM1}, \cite[Section $4$]{DKM3}  and thus we will only present a very brief sketch.  We begin with the case of finite time blow-up. 

\subsection{Proof of Theorem~\ref{bu main}}
 We assume that $\vec u(t) \in \HH$ is a smooth type-II solution to~\eqref{u eq} with $T_+( \vec u)=1$ and let $\vec v(t)$ be the regular part as defined in Section~\ref{reg part} and let $\vec a(t) = \vec u(t)- \vec v(t)$ be the singular part as defined in~\eqref{a def}. 
 
The first step in the proof of Theorem~\ref{bu main} is to use Theorem~\ref{self sim} to show that there exists a sequence of times $\{t_n\} \to 1$ so that the time derivative $a_t(t_n) = u_t(t_n)- v_t(t_n)$ tends to zero in $L^2$. We first prove this in a averaged sense for the blow-up solution $\vec u(t)$. 
\begin{lem} Let $\vec u(t) \in \HH$ be a smooth type-II solution to~\eqref{u eq} with $T_+( \vec u) = 1$. Then 
\EQ{ \label{av time dec}
 \frac{1}{1-t} \int_t^1 \int_0^{1-s} u_t^2(s, r) r^3 \, dr \to 0 \mas t \to 1.
 }
 \end{lem}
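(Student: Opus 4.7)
The plan is to prove~\eqref{av time dec} via a Morawetz-type multiplier identity combined with Theorem~\ref{self sim} and Lemma~\ref{v loc decay}. Multiplying~\eqref{u eq} by the scaling multiplier $r^3(r u_r + u)$ and rearranging yields, after a direct computation, the pointwise identity
\[
\partial_t\bigl[r^3(r u_r + u) u_t\bigr] + r^3 u_t^2 = \partial_r\bigl[\tfrac{1}{2} r^4 (u_t^2 + u_r^2) + r^3 u u_r + \tfrac{1}{4} r^4 u^4\bigr].
\]
Integrating this over the truncated backward light cone $\{(s,r) : t \le s < 1,\, 0 \le r \le 1-s\}$, and using that $\vec u = \vec v$ on the lateral boundary $\{r = 1-s\}$, one obtains an identity of the form
\[
\int_t^1 \int_0^{1-s} r^3 u_t^2(s,r) \, dr \, ds = J(t) - J(1) + \int_t^1 K(s) \, ds,
\]
where $J(s) := \int_0^{1-s} r^3(r u_r + u) u_t \, dr$ and $K(s)$ is the quadratic boundary trace
\[
K(s) = \tfrac{1}{2} (1-s)^4 (v_t - v_r)^2 + (1-s)^3 v (v_r - v_t) + \tfrac{1}{4} (1-s)^4 v^4
\]
evaluated at $(s, 1-s)$.

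To control $J$, I will show $|J(s)| = o(1-s)$ as $s \to 1$, which in particular gives $J(1) = 0$. The argument is to split the integral at $r = \lambda(1-s)$ for arbitrary $\lambda \in (0,1)$ and apply Cauchy--Schwarz. On the outer piece $[\lambda(1-s), 1-s]$, Theorem~\ref{self sim} furnishes $o(1)$ smallness of all the relevant integrated densities, making this contribution $o(1-s)$. On the inner piece $[0, \lambda(1-s)]$, Cauchy--Schwarz combined with Hardy's inequality from Lemma~\ref{lem:hardy_loc} only yields $O(\lambda(1-s))$, but sending $\lambda \to 0$ after taking $\limsup$ gives the claim.

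The hard part will be showing $\int_t^1 K(s)\, ds = o(1-t)$. The pointwise values of $v$, $v_t$, $v_r$ along the characteristic $r = 1-s$ can individually be singular as $s \to 1$, so the bound must come from integrated quantities. The plan is to combine three ingredients: (i)~the uniform decay $\sup_{r \le 1-s} |r v(s, r)| \to 0$ from Lemma~\ref{v loc decay}, which controls the pieces of $K$ involving $v^2$ and $v^4$; (ii)~the local energy identity applied to $\vec v$, which together with $E_0^{1-t}(\vec v(t)) \to 0$ furnishes integrated control on the outgoing flux $\int_t^1 (1-s)^3 (v_t - v_r)^2 \, ds$; and (iii)~a Cauchy--Schwarz step trading a factor of $(1-s)$ to upgrade this to $o(1-t)$ for $\int_t^1 (1-s)^4 (v_t - v_r)^2 \, ds$. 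Dividing the main identity by $1-t$ and sending $t \to 1$ will then yield~\eqref{av time dec}.
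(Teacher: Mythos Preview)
Your multiplier identity is correct and is in fact the $4d$ transcription of the paper's identity~\eqref{psi t id}: with $\psi=ru$ one has $r^2\psi_t\psi_r=r^3(ru_r+u)u_t$, so the two identities coincide. The treatment of $J(s)$ via the inner/outer splitting and Theorem~\ref{self sim} is fine.

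The gap is in step~(ii) for the boundary term. The local energy identity for $\vec v$ on the cone reads
\[
E_0^{1-t}(\vec v(t))=\int_t^1(1-s)^3\Bigl[\tfrac12(v_t-v_r)^2-\tfrac14 v^4\Bigr]\,ds,
\]
and the $v^4$ term enters with the \emph{wrong sign}. To extract $\int_t^1(1-s)^3(v_t-v_r)^2\,ds\to 0$ from $E_0^{1-t}(\vec v(t))\to 0$ you would need $\int_t^1(1-s)^3v^4(s,1-s)\,ds\to 0$, but Lemma~\ref{v loc decay} only gives $(1-s)^4v^4\to 0$ pointwise, and the missing factor $(1-s)^{-1}$ is not integrable near $s=1$. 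For $(v_0,v_1)\in\HH$ the trace $v(s,1-s)$ may well behave like $c/(1-s)$, in which case $(1-s)^3v^4\sim c^4/(1-s)$ and the integral diverges. So the $4d$ flux for $\vec v$ does not by itself give the control you claim.

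The fix is exactly the paper's device: work in the $\psi=ru$ variable. A direct computation shows your $K(s)$ equals
\[
K(s)=\tfrac{(1-s)^2}{2}(\psi_t-\psi_r)^2-F(\psi),\qquad F(\psi)=\tfrac12\psi^2-\tfrac14\psi^4,
\]
evaluated at $(s,1-s)$. On the cone $\psi=\phi=rv$, and Lemma~\ref{phi decay} gives $|\phi(s,1-s)|\to 0$, hence $F(\psi)\ge 0$ there. The $\psi$-energy flux
\[
\int_t^1\Bigl[\tfrac{1-s}{2}(\psi_t-\psi_r)^2+\tfrac{F(\psi)}{1-s}\Bigr]\,ds
\]
therefore has \emph{all non-negative} terms, and its vanishing was established in Section~\ref{self sim bu} (the proof of Proposition~\ref{prop2}). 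This yields $\int_t^1(1-s)(\psi_t-\psi_r)^2\,ds\to 0$ directly, whence $\int_t^1(1-s)^2(\psi_t-\psi_r)^2\,ds\le(1-t)\cdot o(1)=o(1-t)$; and $\int_t^1|F(\psi)|\,ds\le(1-t)\sup_{s\ge t}|F(\psi(s,1-s))|=o(1-t)$. Thus $\int_t^1K(s)\,ds=o(1-t)$. The positivity of the $2d$ flux is precisely the missing ingredient, and it is not visible in the $4d$ $v$-formulation you propose.
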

\begin{proof} This is a direct consequence of Theorem~\ref{self sim}. In fact, it is convenient to pass to the $2d$ formulation, and use Proposition~\ref{prop1}. We again set 
\ant{
\psi(t, r) = r u(t, r)
}
Writing~\eqref{av time dec} in terms of $\psi$ we need to prove that 
\EQ{\label{psi t dec}
\frac{1}{1-t} \int_t^1 \int_0^{1-s} \psi_t^2(s, r) r \, dr \to 0 \mas t \to 1.
 }
 Then~\eqref{psi t dec} can be deduced exactly as in \cite[Corollary~2.3]{STZ92}, by integrating~\eqref{psi t id} over the region $\{(s, r) \mid t \le s <1, \,,\, 0 \le r \le 1-s\}$, dividing by $1/ (1-t)$ and using Proposition~\ref{prop1} and the vanishing of the flux. We refer the reader to~\cite{STZ94} or the book,~\cite[Chapter 8]{SSbook} for the precise details. 
\end{proof}
Next, one can observe that since $\vec v(t)$ is regular at $t=1$,~\eqref{av time dec} holds for $a(t)$ as well, namely, 
\EQ{\label{a av dec}
 \frac{1}{1-t} \int_t^1 \int_0^{1-s} a_t^2(s, r) r^3 \, dr \to 0 \mas t \to 1.
 }
 Following~\cite{DKM1}, we can then deduce the following lemma as a consequence of~\eqref{a av dec}. We refer the reader to~\cite{DKM1} for the proof. 
 \begin{lem}\cite[Corollary $5.3$]{DKM1}\label{sig lem} There exist a sequence $t_n \to 1$ so that for every $n$ and for every $\s \in (0, 1-t_n)$ we have 
 \begin{align} 
 &\lim_{n \to \infty} \frac{1}{ \s} \int_{t_n}^{t_n + \s} \int_0^{\infty} a_t^2(t_n, r) \, r^3 \, dr \, dt =0,\label{sig}\\
 &\lim_{n \to \infty} \int_0^{\infty} a_t^2(t_n, r) \, r^3 \, dr =0\label{a t dec}.
 \end{align}
 \end{lem}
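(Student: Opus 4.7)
The plan is to treat both conclusions as statements about the single scalar function
\[
f(t) := \int_0^\infty a_t^2(t, r)\, r^3\, dr, \qquad h(t) := \int_t^1 f(s)\, ds,
\]
for which the hypothesis \eqref{a av dec} reads $\phi(t) := h(t)/(1-t) \to 0$ as $t \to 1$ (recall that $\vec a(t)$ is supported in $r \le 1-t$, so the inner $r$-integral over $[0, 1-t]$ coincides with the one over $[0,\infty)$). In this language \eqref{a t dec} becomes $f(t_n) \to 0$, while \eqref{sig} is the assertion that the maximal-type quantity
\[
\Phi(t) := \sup_{0 < \sigma < 1-t} \frac{1}{\sigma} \int_t^{t+\sigma} f(s)\, ds
\]
satisfies $\Phi(t_n) \to 0$ (this is the natural reading of ``for every $\sigma \in (0,1-t_n)$''). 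The goal is to produce a single sequence $t_n \to 1$ that is simultaneously good for $f$ and for $\Phi$.

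For each $\epsilon > 0$, I would bound the density at $t=1$ of the bad sets
\[
A_\epsilon := \{f > \epsilon\}, \qquad B_\epsilon := \{\Phi > \epsilon\}.
\]
Chebyshev immediately yields $\epsilon\,|A_\epsilon \cap [1-\delta, 1)| \le h(1-\delta) = \phi(1-\delta)\,\delta$, so $|A_\epsilon \cap [1-\delta, 1)|/\delta \to 0$ as $\delta \to 0$. The analogous statement for $B_\epsilon$ is the main nontrivial step: every $t \in B_\epsilon$ is the left endpoint of an interval $I_t = [t, t + \sigma_t] \subset [t, 1)$ satisfying $\int_{I_t} f \ge \epsilon \sigma_t$. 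Restricting to $t \in B_\epsilon \cap [1-\delta, 1)$, the Vitali covering lemma produces a disjoint subfamily $\{I_{t_k}\}$ contained in $[1-\delta, 1)$ whose threefold dilates cover $B_\epsilon \cap [1-\delta, 1)$, and summing gives
\[
|B_\epsilon \cap [1-\delta, 1)| \le 3 \sum_k \sigma_{t_k} \le \frac{3}{\epsilon} \sum_k \int_{I_{t_k}} f \le \frac{3}{\epsilon} h(1-\delta) = \frac{3 \phi(1-\delta)}{\epsilon}\, \delta.
\]
Thus $|B_\epsilon \cap [1-\delta, 1)|/\delta \to 0$ as well.

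With these two density estimates in hand, I would choose a diagonal sequence: pick $\epsilon_n \to 0$ slowly (for instance $\epsilon_n := \sqrt{\phi(1-\delta_n)}$) and $\delta_n \to 0$ fast enough that
\[
\bigl|\bigl(A_{\epsilon_n} \cup B_{\epsilon_n}\bigr) \cap [1-\delta_n, 1)\bigr| < \delta_n/2,
\]
and select any $t_n$ in the (nonempty) complement inside $[1-\delta_n, 1)$. Then $t_n \to 1$ with $f(t_n) \le \epsilon_n$ and $\Phi(t_n) \le \epsilon_n$, which yield \eqref{a t dec} and \eqref{sig} respectively. The only substantive obstacle is the weak-type/Vitali argument controlling $\Phi$; everything else is Chebyshev and Fubini applied to the averaged hypothesis \eqref{a av dec}.
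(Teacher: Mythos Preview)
Your argument is correct. The paper does not supply its own proof here but simply cites \cite[Corollary~5.3]{DKM1}; the approach you outline --- Chebyshev for $A_\epsilon$, a Vitali/weak-$(1,1)$ bound for the one-sided maximal function $\Phi$ to control $B_\epsilon$, and then a diagonal extraction of $t_n$ from the complement of $A_{\epsilon_n}\cup B_{\epsilon_n}$ --- is exactly the standard argument behind that result. One minor remark: since $f$ is continuous, letting $\sigma\to 0^+$ gives $\Phi(t)\ge f(t)$ for every $t$, so $A_\epsilon\subset B_\epsilon$ and the Vitali estimate for $B_\epsilon$ already delivers both \eqref{sig} and \eqref{a t dec}; your separate Chebyshev bound for $A_\epsilon$ is harmless but redundant. (Also, the constant in the Vitali step is $5$ in the general formulation; $3$ works for this one-sided setup via the rising-sun lemma, but either constant suffices.)
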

Now consider the bounded sequence $\vec a(t_n) \in \HH$. By Theorem~\ref{BG} and~\eqref{a t dec} we have a profile decomposition 
\ant{
 &a(t_n, r) = \sum_{j=1}^{k}  V_{L, n}^j(0, r) + w_{n, 0}^k(r),\\
 &a_t(t_n, r) = o_{\HH}(1).
 }
 By the argument in~\cite[Section $5.2$]{DKM1} we can again use~\eqref{a t dec} to conclude that any nonzero profile must in fact be either $W$ or $-W$ and there can only be finitely many of these. Indeed, we obtain the following preliminary result
 \begin{prop} \cite[Proposition~$5.1$]{DKM1} \cite[Corollary~$4.1$]{DKM3} \label{prelim dec}There exists an integer $J_0>0$,  and sequences of scales $\la_{j,n}$  for $1 \le j \le J_0$ with 
 \ant{ 
 \la_{1, n} \ll  \dots \ll \la_{ J_0,n} \ll 1- t_n,
 } 
and a sequence of signs $\io_j \in \{+1, -1\}$ so that 
 \EQ{\label{a W w}
 \vec u(t_n) =  (v_0, v_1) + \sum_{j=1}^{J_0} \left( \frac{\io_j}{  \la_{j,n}}W(\cdot/ \la_{j,n}), 0\right) + (w_n, 0),
 }
 where the linear evolution $\vec w_{L, n}(t) = S(t) (w_n, 0)$ satisfies 
 \EQ{ \label{wn strich}
 \lim_{n \to \infty} \| w_{L, n}\|_{S( \R)} = 0.
 }
 \end{prop}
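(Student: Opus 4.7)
The plan follows the channel-of-energy strategy of~\cite{DKM1, DKM3, CKLS1, CKLS2}, adapted to the $4d$ setting via the restricted exterior estimate~\eqref{ext en est}. First, I apply Theorem~\ref{BG} to the bounded sequence $\vec a(t_n)\in\HH$, producing profiles $\{\vec V^j_L\}$, parameters $\{\la_{j,n},t_{j,n}\}$, and a dispersive error $\vec w_n^k$. Since $\|a_t(t_n)\|_{L^2}\to 0$ by Lemma~\ref{sig lem}, the velocity Pythagorean identity~\eqref{free en dec} gives $\|\partial_t V^j_L(-t_{j,n}/\la_{j,n})\|_{L^2}\to 0$ for every $j$. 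For radial free $4d$ waves, energy equidistributes between kinetic and gradient components as time tends to $\pm\infty$, so any profile with $-t_{j,n}/\la_{j,n}\to\pm\infty$ must vanish identically. After passing to a subsequence and invoking Remark~\ref{prof rem}, I may therefore assume $t_{j,n}=0$ and $\partial_t V^j_L(0)=0$ for every surviving $j$. This is essential: it places every profile in the regime where the exterior estimate of Proposition~\ref{lin ext estimate} is available.

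Next, I locate the scales. Because $\vec a(t_n)$ is supported in $\{r\le 1-t_n\}$ by~\eqref{supp a}, and Theorem~\ref{self sim} combined with Lemma~\ref{v loc decay} kills the self-similar energy of $\vec u$, I obtain
\[\|\vec a(t_n)\|_{\HH(r\ge\la(1-t_n))}\longrightarrow 0\quad\text{for every }\la\in(0,1).\]
Applying the localized Pythagorean identity (Proposition~\ref{loc en dec}) at radius $r_n=\la(1-t_n)$ then forces each profile's $\HH$-mass to sit at scales $\la_{j,n}\ll 1-t_n$.

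The core step is to prove that the nonlinear profile $\vec U^j$ with initial data $(V_0^j,0)$ has the compactness property on its maximal interval of existence, for then Theorem~\ref{compactness} identifies $\vec U^j$ as $0$ or as $\pm W$ up to scaling. Suppose for contradiction that some nontrivial $\vec U^j$ lacks this property: then along a sequence $\sigma_m$ the rescaled trajectory of $\vec U^j$ admits a free-wave component of positive energy leaving every compact set (in the quotient by the scaling group). Inserting into the nonlinear profile decomposition of Proposition~\ref{nonlin profile} at times $t_n+\la_{j,n}\sigma_m$, that dispersive component survives as a constituent of $\vec u(t_n+\la_{j,n}\sigma_m)$, and because of Step~1 its velocity at $\sigma_m=0$ vanishes in the limit. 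The $4d$ exterior lower bound~\eqref{ext en est} then yields a fixed percentage of its $\HH$-norm in the exterior region $\{r\ge C\la_{j,n}\}$; Lemma~\ref{lem:scaled_lin_disp} and the orthogonality~\eqref{scales} guarantee that this lower bound cannot be cancelled by the other profiles, by $\vec w_n^k$, or by the regular part $\vec v$, so it propagates to a bona fide lower bound for $\|\vec u(t_n+\la_{j,n}\sigma_m)\|_{\HH}$ in an annulus $\la_{j,n}\ll r\ll 1-t_n$. This contradicts Theorem~\ref{self sim}. Hence each $\vec U^j$ is $0$ or $\pm W$ up to a fixed scaling.

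Each nonzero profile contributes $E(W,0)>0$ via the nonlinear energy orthogonality~\eqref{nonlin en dec}, so boundedness of $E(\vec u)$ limits the number of nonzero profiles to a finite $J_0$. Relabeling so that $\la_{1,n}\le\cdots\le\la_{J_0,n}$, the orthogonality~\eqref{scales} upgrades this to $\la_{1,n}\ll\cdots\ll\la_{J_0,n}$, while Step~2 gives $\la_{J_0,n}\ll 1-t_n$. The remainder $\vec w_n^{J_0}=(w_n,\partial_t w_n^{J_0}(0))$ has vanishing velocity component (from $a_t(t_n)\to 0$ together with $\partial_t W=0$), and since no further profile is nonzero, passing $k\to\infty$ in~\eqref{w in strich} yields $\|w_{L,n}\|_{S(\R)}\to 0$; the harmless velocity piece can be absorbed into the $o_\HH(1)$ error, leaving the form $(w_n,0)$ claimed in~\eqref{a W w}. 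The principal obstacle of the plan is the rigidity argument of the third paragraph: both the $4d$ channel estimate~\eqref{ext en est} (which only holds for data $(f,0)$) and the orthogonality bookkeeping are delicate, and it is precisely the zero-velocity structure forced by Step~1 that makes them applicable here.
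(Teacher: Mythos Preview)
Your Steps 1, 2, and 4 are correct and match the paper's approach. Step 3, however, has a genuine gap, and the route you propose via the exterior estimate~\eqref{ext en est} is not the one the paper (following~\cite[Section~5.2]{DKM1}) takes.

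The problem with your Step 3 is twofold. First, the exterior lower bound~\eqref{ext en est} applies only to data of the form $(f,0)$. Step 1 gives $\partial_t U^j(0)=0$, but at the later times $\sigma_m$ where a putative dispersive component of $\vec U^j$ would emerge there is no reason $\partial_t U^j(\sigma_m)$ should vanish, so you cannot invoke~\eqref{ext en est} there; the phrase ``its velocity at $\sigma_m=0$ vanishes'' conflates the initial time with the times where dispersion occurs. Second, even if you could place energy in a region $\{r\gtrsim \la_{j,n}\}$, this lies deep inside the backward cone (since $\la_{j,n}\ll 1-t_n$) and is not the self-similar annulus $[\la(1-t),1-t]$, so Theorem~\ref{self sim} yields no contradiction.

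The argument actually used in~\cite{DKM1} relies on the \emph{averaged} estimate~\eqref{sig} of Lemma~\ref{sig lem}, not only on~\eqref{a t dec}. For any $s$ in the lifespan of $\vec U^j$, one runs the nonlinear profile decomposition (Proposition~\ref{nonlin profile}) from $t_n$ to $t_n+\la_{j,n}s$; by orthogonality,~\eqref{sig} with $\sigma=\la_{j,n}|s|$ forces
\[
\int_0^{s}\|\partial_t U^j(\tau)\|_{L^2}^2\,d\tau=0,
\]
hence $\partial_t U^j\equiv 0$ on its entire interval of existence. Thus each $U^j$ is a radial stationary $\dot H^1$ solution of~\eqref{W eq}, and therefore $0$ or $\pm W$ by the elliptic classification. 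No channel-of-energy input is needed at this stage; the exterior estimate~\eqref{ext en est} is reserved for the next step, Proposition~\ref{prop w to 0}, where it is used to upgrade $(w_n,0)\to 0$ from the Strichartz sense~\eqref{wn strich} to convergence in $\HH$.
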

In order to establish Theorem~\ref{bu main} it remains to strengthen~\eqref{wn strich} by showing that the error $(w_n, 0)$ tends to zero in the energy space $\HH$. In particular we establish the following proposition
\begin{prop}\cite[Proposition 5.6]{CKLS1} \label{prop w to 0}Let $(w_n, 0)$ be as in~\eqref{a W w},~\eqref{wn strich}. Then, 
\EQ{\label{wn to 0}
\|(w_n, 0)\|_{\HH} \to 0 \mas n \to \infty.
}
\end{prop}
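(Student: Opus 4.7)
The plan is to argue by contradiction using the exterior energy estimate of Proposition~\ref{lin ext estimate}, which crucially applies to data of the form $(f,0)$ in four space dimensions. Assume that along a subsequence $\|w_n\|_{\dot{H}^1} \ge \de > 0$. Fix any $\mu \in (0, 1/2)$ and set $\tau_n := \mu(1-t_n)$. Since $\la_{j,n}/(1-t_n) \to 0$ for each $j = 1, \dots, J_0$ by Proposition~\ref{prelim dec}, one has $\la_{j,n} \ll \tau_n \ll 1-t_n$ for $n$ large.

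The core of the argument is a nonlinear profile decomposition at the shifted time $t_n + \tau_n$. The nonlinear profile associated to each steady state $(\io_j/\la_{j,n}) W(\cdot/\la_{j,n})$ is itself, since each $\pm W$ is a stationary solution of~\eqref{u eq}. The profile associated to the data $(v_0, v_1)$ is the regular part $\vec v(t)$, which is defined on a neighborhood of $t = 1$. The linear profile $(w_n, 0)$ has Strichartz-small linear evolution $\vec w_{L,n}(\tau) := S(\tau)(w_n, 0)$ by~\eqref{wn strich}. A nonlinear profile analysis in the spirit of Proposition~\ref{nonlin profile}, adapted to stationary soliton profiles as in \cite{CKLS1,CKLS2,DKM1,DKM3}, yields
\[
\vec u(t_n + \tau_n) = \vec v(t_n + \tau_n) + \sum_{j=1}^{J_0} \left( \frac{\io_j}{\la_{j,n}} W\!\left(\frac{\cdot}{\la_{j,n}}\right),\, 0 \right) + \vec w_{L,n}(\tau_n) + o_{\HH}(1).
\]

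Next, I restrict the identity to the region $r \ge \tau_n$. Scale invariance and $\la_{j,n}/\tau_n \to 0$ force $\|(\io_j/\la_{j,n}) W(\cdot/\la_{j,n})\|_{\dot{H}^1(r \ge \tau_n)} \to 0$ for every $j$. Combined with the support property $\supp \vec a(t) \subset \{r \le 1-t\}$ recalled in Section~\ref{reg part}, the restriction of the decomposition to the annulus $A_n := [\tau_n, (1-\mu)(1-t_n)]$ reads $\vec a(t_n + \tau_n) = \vec w_{L,n}(\tau_n) + o_{\HH}(1)$ on $A_n$, while on $r \ge (1-\mu)(1-t_n)$ (where $\vec a(t_n + \tau_n) \equiv 0$) it yields $\|\vec w_{L,n}(\tau_n)\|_{\HH(r \ge (1-\mu)(1-t_n))} = o(1)$. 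Taking $\HH$-norms and invoking Proposition~\ref{lin ext estimate},
\[
\al_0 \de \le \|\vec w_{L,n}(\tau_n)\|_{\HH(r \ge \tau_n)} \le \|\vec a(t_n+\tau_n)\|_{\HH(A_n)} + o(1) \le \|\vec u(t_n+\tau_n)\|_{\HH(A_n)} + \|\vec v(t_n+\tau_n)\|_{\HH(A_n)} + o(1).
\]
Both terms on the right tend to $0$: the first by Theorem~\ref{self sim} applied with $\la = \mu/(1-\mu)$ (so that $\la(1-(t_n+\tau_n)) = \tau_n$), and the second by Lemma~\ref{v loc decay}. This contradicts $\al_0 \de > 0$, forcing $\|w_n\|_{\dot H^1} \to 0$.

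The main technical obstacle lies in justifying the nonlinear profile decomposition in the second step. Proposition~\ref{nonlin profile} as stated requires the Strichartz norm of each nonlinear profile to be bounded, and for each rescaled soliton this norm on $[0, \tau_n]$ is of order $(\tau_n/\la_{j,n})^{1/3} \to \infty$. This is handled, as in the cited references, by taking the candidate approximate solution above directly and closing the perturbation argument by hand: each $\pm W$ is an exact stationary solution of~\eqref{u eq}, and the cubic cross terms that arise when one substitutes the sum into the nonlinearity are controlled using the pseudo-orthogonality~\eqref{scales} of scales together with the smallness~\eqref{wn strich} of the linear error's Strichartz norm.
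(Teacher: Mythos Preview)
Your approach has a genuine gap in the nonlinear propagation step. You correctly identify that Proposition~\ref{nonlin profile} does not apply on $[0,\tau_n]$ because the Strichartz norm of each rescaled soliton diverges like $(\tau_n/\la_{j,n})^{1/3}$, but your proposed remedy---substituting the candidate sum into~\eqref{u eq} and closing the perturbation by hand---does not work. Among the cubic cross terms in the error one finds $W_{\la_{j,n}}^2\, w_{L,n}$, and any dual Strichartz bound on this term takes the schematic form $C(\tau_n/\la_{j,n})^{\gamma}\,\|w_{L,n}\|_{S}$ for some $\gamma>0$. Since no relation between the rates $\la_{j,n}/(1-t_n)\to 0$ and $\|w_{L,n}\|_{S}\to 0$ is available, this quantity cannot be made small. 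The pseudo-orthogonality~\eqref{scales} governs pairs of \emph{profiles}, not a profile against the dispersive remainder $w_n$, so it does not help here. More structurally, the linearization around $W$ has an exponentially growing mode, so on time intervals of length $\tau_n\gg\la_{j,n}$ there is no reason for the decomposition to persist. Contrary to your assertion, the cited references do \emph{not} carry out this direct substitution.

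The paper's argument (imported from~\cite{CKLS1}, and rooted in~\cite{DKM1,DKM2}) avoids ever propagating past a soliton's own time scale. One evolves the decomposition only up to the scale of the outermost soliton, where Proposition~\ref{nonlin profile} \emph{does} apply, and then restricts to the exterior of the forward cone generated by that soliton, where it contributes negligibly; by finite speed of propagation the restricted evolution now involves only $J_0-1$ solitons. Iterating $J_0$ times, a fixed fraction $\al_0\,\|w_n\|_{\dot H^1}$ of the energy survives in exterior regions at all profile scales. The contradiction is then obtained not from Theorem~\ref{self sim} but from the regularity of $\vec u$ away from the singular point: after unscaling, one finds that $\vec u$ carries a fixed amount of energy at some point $(t_0,r_0)$ on the mantle of the backward light cone, i.e.\ with $r_0>0$ and $t_0=1-r_0<1$, which is impossible since $\vec u$ is smooth there.
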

It is in the proof of Proposition~\ref{prop w to 0} that the exterior energy estimates for the free equation, Proposition~\ref{lin ext estimate}  enter crucially. The proof of~\eqref{wn to 0} is identical to the argument in~\cite[Proof of Proposition $5.6$]{CKLS1} or \cite[Proposition $6.1$]{Cote13} and has its roots in the argument in  \cite[Proposition $3.4$]{DKM2}.

The argument goes by contradiction. The key idea is to use that the free wave $\vec w_n(t)$ with initial data $(w_n,0)$ actually maintains a fixed amount of energy outside the light cone (Proposition \ref{lin ext estimate}) . We prove that this forces $\vec u$ to concentrate energy on the boundary of the cone. For this, we proceed in two steps for each profile, both requiring evolving a nonlinear profile decomposition backwards in time. First, we show that the evolutions of $\vec w_n(t)$ and $\vec u(t)$ remain close on an exterior region during a time-scale on which we can control the first profiles (by means of Proposition \ref{nonlin profile}).
At this point, we focus the analysis outside the light cone: we need to evolve the decomposition past the time-scale on which we can control the first profile, but fortunately this large profile does not contribute in this exterior region. In fact, we evolve the profile decomposition with the first profile removed, exterior to the cone, up to the time scale of the \emph{second} profile, and infer that some energy remains outside the light cone. Arguing similarly for every profile, we conclude that some energy remains outside the light cone for \emph{all} times (in fact it concentrates on the boundary). Unscaling this information, we see that $\vec u(t)$ must concentrate some energy at some point $r_0 >0$ and time $t_0 = 1-r_0 <1$, which is a contradiction with our assumption that the blow-up time $T^+(\vec u)=1$.

We refer the reader to the previously mentioned references for the technical details of the argument. We also note that the energy quantization follows from the orthogonality of profiles~\eqref{nonlin en dec}. This completes our sketch of the proof of Theorem~\ref{bu main}. 

\subsection{Proof of Theorem~\ref{global main}}
We assume that $\vec u(t) \in \HH$ is a smooth,  type-II solution to \eqref{u eq} defined globally for positive times. We also assume that $\vec u(t)$ does not scatter to zero as $t \to \infty$. Let $\vec v_{L}(t) \in \HH$ be the radiation term constructed in Section~\ref{rad term} and denote by $\vec v(t)$ the nonlinear profile associated to $\vec v_{L}(t)$ as defined in Section~\ref{nonlinear prof}, i.e., $\vec v(t) \in \HH$ is the unique solution to~\eqref{u eq} so that 
\EQ{\label{v-vL}
\| \vec v(t)- \vec v_L(t)\|_{ \HH} \to 0 \mas t \to \infty.
}
We then set 
\ant{
\vec a(t) = \vec u(t) - \vec v(t).
}

The proof of Theorem~\ref{global main} follows the same general outline as the proof of Theorem~\ref{bu main} and is in fact very similar at this point to the argument in~\cite[Section $3$]{CKLS2} or~\cite[Section $4$]{DKM3}. 

Using Theorem~\ref{thm:en_decay_global} one can argue as in~\cite[Proof of Corollary $2.2$]{CKLS2} to deduce the following lemma.
\begin{lem}\label{global av dec} Let  $\vec u(t) \in \HH$ be a smooth,  type-II solution to \eqref{u eq} defined globally for positive times. Then 
\EQ{\label{ut av dec}
\limsup_{T \to \infty} \frac{1}{T} \int_0^T \int_0^{T-A} u_t^2(t, r) \, r^3 \, dr \to 0 \mas A \to \infty.
}
\end{lem}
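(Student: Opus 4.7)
The plan is to pass to the $2d$ formulation via $\psi(t,r) := r\, u(t,r)$. Since $r \psi_t^2 = r^3 u_t^2$, the goal becomes
\[ \frac{1}{T} \iint_{[0,T] \times [0, T-A]} r \psi_t^2 \, dr\, dt \to 0 \quad \text{as } T \to \infty \text{ then } A \to \infty. \]
The main tool is the multiplier identity~\eqref{psi t id} obtained by multiplying~\eqref{psi eq} by $r \psi_t$:
\[
\partial_t(r^2 \psi_t \psi_r) - \partial_r\bigl(\tfrac{r^2}{2}(\psi_t^2 + \psi_r^2) - F(\psi)\bigr) + r \psi_t^2 = 0.
\]

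I would integrate this identity over the truncated light cone $\Omega_T^A := \{(t, r) : A \le t \le T,\ 0 \le r \le t - A\}$ and apply the divergence theorem. The axis $r = 0$ contributes nothing (since $\psi_t = r u_t \to 0$, $r^2 \psi_r^2 \to 0$, and $F(\psi) \to 0$ at $r = 0$, using the pointwise bounds from Lemma~\ref{lem:hardy_loc}). The slanted boundary $\xi = t - r = A$ produces an integral involving the null quadratic form $\q A^2$ from Section~\ref{self sim global}; after the change of variable $\eta = t + r$ one finds $\int_0^{T-A} r\, \q A^2|_{\xi=A}\, dr \le \tfrac{T-A}{2}\operatorname{Flux}(A, A)$. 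By the flux analysis in Section~\ref{self sim global}, Step~1 (which itself rests on Theorem~\ref{thm:en_decay_global}), $\operatorname{Flux}(A, A) \to 0$ as $A \to \infty$, so this contribution, after division by $T$, is $o_A(1)$ uniformly in $T$. The discrepancy between $\Omega_T^A$ and the target rectangle $[0,T] \times [0, T-A]$ splits into a bottom strip $\{0 \le t \le A\}$ contributing $O(A M^2/T) = o(1)$ as $T \to \infty$, and an exterior region $\{r > t - A\}$ which is controlled on the self-similar annulus $r \in [\lambda t, t - A']$ via Theorem~\ref{thm:en_decay_global} and, outside the forward light cone, via the asymptotic $u \sim v_L$ from Proposition~\ref{th:u-v_L} together with Corollary~\ref{u-vL} and the pointwise dispersive decay of the linear radiation $v_L$.

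The main obstacle is the terminal-time contribution $\frac{1}{T}\int_0^{T-A} r^2 \psi_t \psi_r(T, r)\, dr$. A naive Cauchy--Schwarz only bounds this by $O\bigl((T-A)\, M^2\bigr)/T = O(1)$, since the weight $r^2$ is not directly dominated by the conserved $\HH$-energy. The standard remedy, following~\cite[Corollary~2.2]{CKLS2}, is to further average over a window of terminal times $T' \in [T, 2T]$: interchanging the order of integration converts the single-slice boundary term into a spacetime integral of $r\, \psi_t \psi_r$ over a slab, which is bounded by $\iint r(\psi_t^2 + \psi_r^2)\, dr\, dt$ and hence by the type-II energy bound~\eqref{typeII}. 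Taking $\limsup_{T\to\infty}$ and then $A \to \infty$ yields the claim.
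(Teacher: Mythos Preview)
Your overall strategy---passing to $\psi=ru$, integrating the momentum identity~\eqref{psi t id} over the truncated forward cone, and controlling boundary terms---matches the paper's. But two of your boundary estimates are incorrect as stated. For the slanted boundary you claim $\operatorname{Flux}(A,A)\to 0$ as $A\to\infty$; this is false. In the notation of Section~\ref{self sim global}, $\xi\mapsto\operatorname{Flux}(\xi,\xi)$ decreases to a limit $\q E$ which is generically positive (it carries the trapped soliton energy). What does vanish is $\operatorname{Flux}(\eta_0,A)$ as $\eta_0\to\infty$ for \emph{fixed} $A$, and that is what you need: split $\int_A^{2T-A}(\eta-A)\,\q A^2(\eta,A)\,d\eta$ at an intermediate $B$; the piece on $[A,B]$ is $O(B)$ and is killed by $1/T$, while the piece on $[B,2T-A]$ is at most $2T\cdot\operatorname{Flux}(B,A)$, then let $T\to\infty$ followed by $B\to\infty$.

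More seriously, the terminal-time term $\tfrac1T\int_0^{T-A}r^2\psi_t\psi_r(T,r)\,dr$ does not require---and is not handled by---an additional time average. Your averaging, after extracting $r\le 2T$, still leaves an $O(1)$ contribution. The paper's instruction to ``use Theorem~\ref{thm:en_decay_global}'' means exactly this: split the terminal integral at $r=\lambda T$. On $[0,\lambda T]$ the bound $r^2\le \lambda T\cdot r$ gives $\le\lambda\, C(M)$; on $[\lambda T,T-A]$ the bound $r^2\le T\cdot r$ together with Claim~\ref{cl:l0} (the $\psi$-version of Theorem~\ref{thm:en_decay_global}) gives $\limsup_{T\to\infty}\int_{\lambda T}^{T-A} r(\psi_t^2+\psi_r^2)(T,r)\,dr\to 0$ as $A\to\infty$. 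Finally send $\lambda\to 0$. This is the same splitting that resolves the analogous boundary term in the blow-up case; the extra averaging over terminal times is neither needed nor, as you sketch it, sufficient.
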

We again refer the reader to \cite[Proof of Corollary $2.2$]{CKLS2} for the proof of Lemma~\ref{global av dec}. As in the proof of Lemma~\ref{av time dec} the argument consists of rewriting~\eqref{ut av dec} in terms of $\psi = ru$ and integrating~\eqref{psi t id} over the region of integration in~\eqref{ut av dec} and then using Theorem~\ref{thm:en_decay_global}  to conclude. 

As in the blow-up argument, the next step is to use Lemma~\ref{global av dec} to identify a sequence of times for which the $L^2$ norm of $a_t$ tends to zero. One begins by deducing the analog of Lemma~\ref{sig lem} for global solutions.  Using Corollary~\ref{u-vL} we can argue as in~\cite[Lemma~$4.4$]{DKM3} or as in~\cite[Lemma $3.3$]{CKLS2}  to prove the following result. 
\begin{lem}\cite[Lemma $4.4$]{DKM3}\label{global sig lem} 
There exists a sequence of times $t_n \to \infty$ so that 
\EQ{
&\lim_{t \to \infty} \sup_{ \s >0}\frac{1}{\s} \int_{t_n}^{t_n + \s} \int_0^{\infty} a_t^2(t, r) \, r^3 \, dr \, dt = 0,\\
&\lim_{n \to \infty}  \int_0^{\infty} a_t^2(t_n, r) \, r^3 \, dr = 0.
}
\end{lem}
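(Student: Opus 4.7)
The plan is to first establish the Cesàro-averaged decay of $F(t):=\int_0^{\infty} a_t^2(t,r)\,r^3\,dr$, and then to extract the sequence $t_n$ from this by a one-sided Hardy--Littlewood maximal function argument.

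Since $a_t=u_t-v_t$ and $\|v_t(t)-(v_L)_t(t)\|_{L^2}\to 0$ by \eqref{v-vL}, it will suffice to prove
\[
\frac{1}{T}\int_0^T \int_0^\infty \big(u_t(t,r)-(v_L)_t(t,r)\big)^2\,r^3\,dr\,dt \longrightarrow 0 \quad \text{as } T\to\infty.
\]
I will split the inner integral at $r=t/2$. On $\{r\ge t/2\}$, Corollary~\ref{u-vL} applied with $\lambda=1/2$ yields the pointwise decay $\int_{t/2}^\infty (u_t-(v_L)_t)^2\,r^3\,dr\to 0$, so its Cesàro average vanishes. On $\{r\le t/2\}$, I bound $(u_t-(v_L)_t)^2\le 2u_t^2+2(v_L)_t^2$. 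For the $u_t$ piece, for $T\ge 2A$ and $t\in[0,T]$ one has $\{r\le t/2\}\subset \{r\le T-A\}$, so its Cesàro average is dominated by $\frac{1}{T}\int_0^T\int_0^{T-A} u_t^2\,r^3\,dr\,dt$, which vanishes by Lemma~\ref{global av dec} upon sending $T\to\infty$ and then $A\to\infty$. For the $(v_L)_t$ piece, since $\{r\le t/2\}\subset\{|r-t|\ge t/2\}$ and $t/2\to\infty$, Proposition~\ref{prop:6} gives pointwise decay as $t\to\infty$. Combining these yields the averaged decay of $F$.

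For the extraction, note that $F$ is uniformly bounded by \eqref{typeII} and continuous in $t$ (since $\vec u,\vec v\in C([0,\infty);\HH)$). Fix $\delta_n\downarrow 0$ and choose $T_n\to\infty$ so that $\int_0^T F(s)\,ds\le \delta_n^2 T$ for every $T\ge T_n$. Setting $M(t):=\sup_{\sigma>0}\frac{1}{\sigma}\int_t^{t+\sigma} F(s)\,ds$, for $t\in[T_n,2T_n]$ and $\sigma\ge T_n$ one has $\frac{1}{\sigma}\int_t^{t+\sigma}F\,ds\le \frac{t+\sigma}{\sigma}\,\delta_n^2\le 3\delta_n^2$, while for $0<\sigma\le T_n$ the one-sided Hardy--Littlewood maximal inequality gives
\[
\Big|\big\{t\in[T_n,2T_n]:\sup_{0<\sigma\le T_n}\tfrac{1}{\sigma}\!\int_t^{t+\sigma}\!F\,ds>\delta_n\big\}\Big|\le \frac{C}{\delta_n}\int_{T_n}^{3T_n} F\,ds\le 3C\,T_n\,\delta_n.
\]
For $n$ large this set has measure strictly less than $T_n$, so some $t_n\in[T_n,2T_n]$ satisfies $M(t_n)\le \delta_n+3\delta_n^2\to 0$, which is the first conclusion. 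The second conclusion follows by letting $\sigma\to 0^+$ in the definition of $M$: continuity of $F$ gives $F(t_n)=\lim_{\sigma\to 0^+}\frac{1}{\sigma}\int_{t_n}^{t_n+\sigma}F\,ds\le M(t_n)\to 0$.

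The main technical work lies in the averaged decay: the split at $r=t/2$ is chosen precisely so that Lemma~\ref{global av dec} controls the interior contribution from $u_t$ while Corollary~\ref{u-vL} controls the exterior difference $u_t-(v_L)_t$, with Proposition~\ref{prop:6} absorbing the free-wave radiation in the interior. The maximal-function extraction of $t_n$ is then a standard one-sided argument, directly parallel to the handling of Lemma~\ref{sig lem} in the finite-time blow-up case.
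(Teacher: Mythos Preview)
Your proof is correct and follows essentially the same approach as the paper, which does not give details but simply refers to \cite[Lemma~4.4]{DKM3} and \cite[Lemma~3.3]{CKLS2}: reduce from $a_t$ to $u_t-(v_L)_t$ via~\eqref{v-vL}, split at a fraction of the cone and combine Lemma~\ref{global av dec} with Corollary~\ref{u-vL} (and Proposition~\ref{prop:6}) to get the Ces\`aro decay, then extract $t_n$ by a one-sided maximal/pigeonhole argument. The only cosmetic remark is that for the interior free-wave term you could have invoked the first line of Claim~\ref{cl:phi_psi} directly instead of Proposition~\ref{prop:6}, but your route is equally valid.
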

We note that here we have stated Lemma~\ref{global sig lem} in terms of $\vec a(t) = \vec u(t) - \vec v(t)$ as opposed to for $\vec u(t) - \vec v_{L}(t)$ as in \cite{DKM3}. However, due to~\eqref{v-vL} this distinction makes no difference. 

Next, we can establish the global analog of Proposition~\ref{prelim dec}. 
 \begin{prop}\label{global prelim dec} \cite[Corollary~$4.2$]{DKM3} There exists and integer $J_0>0$,  and sequences of scales $\la_{j,n}$  for $1 \le j \le J_0$ with 
 \ant{ 
 \la_{1, n} \ll  \dots \ll \la_{ J_0, n} \ll  t_n,
 } 
and a sequence of signs $\io_j \in \{+1, -1\}$ so that 
 \EQ{\label{a W w global}
 \vec u(t_n) =  \vec v_L(t_n) + \sum_{j=1}^{J_0} \left( \frac{\io_j}{  \la_{j,n}}W(\cdot/ \la_{j,n}), 0\right) + (w_n, 0),
 }
 where the linear evolution $\vec w_{L, n}(t) = S(t) (w_n, 0)$ satisfies 
 \EQ{ \label{wn strich global}
 \lim_{n \to \infty} \| w_{L, n}\|_{S( \R)} = 0.
 }
 \end{prop}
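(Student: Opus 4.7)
The plan is to apply the Bahouri--Gerard profile decomposition to the bounded sequence $\vec a(t_n) = \vec u(t_n) - \vec v(t_n)$ furnished by Lemma~\ref{global sig lem}, and to argue, following~\cite[Section~5.2]{DKM1} and~\cite[Corollary~4.2]{DKM3}, that each nontrivial profile must be a rescaled copy of $\pm W$. Since $\|a_t(t_n)\|_{L^2} \to 0$ along our sequence, I would absorb the velocity component into the error term, so Theorem~\ref{BG} produces linear profiles $\vec U^j_L$ with parameters $\{t_{j,n}, \la_{j,n}\}$ and a remainder $\vec w_n^k$ satisfying \eqref{w in strich}, where by Remark~\ref{prof rem} I may assume that for each $j$ either $t_{j,n} = 0$ for all $n$ or $-t_{j,n}/\la_{j,n} \to \pm\infty$.

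The first key step is to rule out the second class. Any profile with $-t_{j,n}/\la_{j,n} \to \pm\infty$ has a nonlinear profile $\vec U^j$ which scatters in the relevant time direction, so its contribution at time $t_n$ is asymptotic to a rescaled free wave carrying nontrivial energy into a neighborhood of the light cone. But Corollary~\ref{u-vL} says that $\|\vec u(t_n) - \vec v_L(t_n)\|_{\HH(r \ge \la t_n)} \to 0$ for every $\la \in (0,1)$, meaning that $\vec v_L$ already captures all of the exterior linear energy of $\vec u$. Applying the localized Pythagorean identity of Proposition~\ref{loc en dec} on the exterior regions $\{r \ge \la t_n\}$ (or, for profiles concentrating away from the cone, on $\{r \ge r_n\}$ with $r_n$ adapted to $\la_{j,n}$ and $t_{j,n}$ via Lemma~\ref{lem:scaled_lin_disp}) then forces such a profile to vanish. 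Consequently all surviving profiles satisfy $t_{j,n} = 0$ for all $n$.

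For each remaining profile, I would invoke Proposition~\ref{nonlin profile} on time windows of length $\sim \sigma \la_{j,n}$ around $t_n$, and combine the time-averaged estimate from Lemma~\ref{global sig lem} with the quasi-orthogonality \eqref{scales} to produce, for each $j$, a sequence of times $\tau^j_n$ at which $\|\p_t U^j(\tau^j_n)\|_{L^2} \to 0$. A rigidity argument along the lines of~\cite[Proof of Proposition 5.1]{DKM1} then upgrades this to the compactness property for $\vec U^j$ on its maximal interval of existence, and Theorem~\ref{compactness} identifies $\vec U^j$ with a rescaled $\pm W$. Absorbing the resulting scale constant into $\la_{j,n}$ normalizes the profile so that $\vec U^j(0) = (\io_j W, 0)$. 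The almost-orthogonal energy identity \eqref{nonlin en dec} forces $J_0 < \infty$ since each nontrivial profile contributes $E(W,0) > 0$; reordering via \eqref{scales} yields $\la_{1,n} \ll \dots \ll \la_{J_0,n}$; and the constraint $\la_{J_0,n} \ll t_n$ follows from Theorem~\ref{thm:en_decay_global}, since any profile with $\la_{j,n} \gtrsim t_n$ would deposit fixed energy in the self-similar region $\{\la t_n \le r \le t_n - A\}$, contradicting the vanishing proved there.

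The main obstacle is the exclusion of radiation-like profiles in the second paragraph, which relies crucially on the maximality of the extraction of $\vec v_L$ together with careful bookkeeping between interior and exterior Pythagorean expansions using Proposition~\ref{loc en dec} and Lemma~\ref{lem:scaled_lin_disp}. A second delicate point is the rigidity step that upgrades the vanishing of $\|\p_t U^j(\tau^j_n)\|_{L^2}$ at a single sequence of times to full compactness of the trajectory of $\vec U^j$, since we work above the threshold energy and cannot directly invoke Kenig--Merle type variational rigidity.
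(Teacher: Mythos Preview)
Your overall strategy---profile decompose $\vec a(t_n)$, rule out profiles with $-t_{j,n}/\la_{j,n}\to\pm\infty$, then show the remaining ones are $\pm W$ via the $\sigma$-averaged estimate and rigidity---matches the paper's route (which simply cites \cite[Corollary~4.2]{DKM3} and \cite[Section~5.2]{DKM1}). The third paragraph, and the finiteness/ordering/$\la_{J_0,n}\ll t_n$ conclusions, are all in line with that argument.

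The gap is in your second paragraph. Your exterior-energy mechanism does not eliminate all profiles with $-t_{j,n}/\la_{j,n}\to\pm\infty$. By Lemma~\ref{lem:scaled_lin_disp} such a profile concentrates its $\HH$-norm in the annulus $\{\,\bigl||x|-|t_{j,n}|\bigr|\le R\la_{j,n}\,\}$; Corollary~\ref{u-vL} only controls $\vec a(t_n)$ on $\{r\ge \la t_n\}$. If $|t_{j,n}|=o(t_n)$ while still $|t_{j,n}|/\la_{j,n}\to\infty$ (which Lemma~\ref{lem:2.5} does not exclude), the profile sits strictly inside every such exterior region, and no localized Pythagorean expansion over $\{r\ge r_n\}$ yields a contradiction---there is genuine $\HH$-mass of $\vec a$ there. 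So the ``maximality of $\vec v_L$'' reasoning does not close.

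The correct (and much shorter) argument uses the second conclusion of Lemma~\ref{global sig lem} directly. From $\|a_t(t_n)\|_{L^2}\to 0$ and the Pythagorean expansion \eqref{free en dec} applied to the time component, one gets
\[
\bigl\|\partial_t U_L^j(-t_{j,n}/\la_{j,n})\bigr\|_{L^2}\longrightarrow 0\qquad\text{for every }j.
\]
If $-t_{j,n}/\la_{j,n}\to\pm\infty$, equipartition of energy for the free wave equation forces $\|\partial_t U_L^j(s)\|_{L^2}^2\to\tfrac12\|\vec U_L^j\|_{\HH}^2$ as $s\to\pm\infty$, so $\vec U_L^j\equiv 0$. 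Hence every nontrivial profile has $t_{j,n}=0$ and, by the same display, $U_1^j=\partial_t U_L^j(0)=0$. From there your third paragraph proceeds exactly as in \cite[Section~5.2]{DKM1}: the $\sigma$-averaged estimate in Lemma~\ref{global sig lem}, run through Proposition~\ref{nonlin profile} on windows of length $\sim\la_{j,n}$, gives the compactness property for each nonlinear profile $\vec U^j$, and Theorem~\ref{compactness} identifies it with $(\pm W,0)$.
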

Again, the main idea in the proof of Proposition~\ref{global prelim dec} is to use Lemma~\ref{global sig lem} to show that any nonzero profile must be either $W$ or $-W$ and we refer the reader to~\cite[Proof of Corollary~$4.2$]{DKM3} for the proof. 

Finally, the proof of Theorem~\ref{global main} is completed proving the analog of Proposition~\ref{prop w to 0} in the global setting. 

\begin{prop}\cite[Proposition $3.12$]{CKLS2}\label{global w to 0} Let $(w_n, 0)$ be as in~\eqref{a W w global},~\eqref{wn strich global}. Then
\begin{gather} \label{wn to 0 g}
 \|  (w_n, 0)\|_{\HH} \to 0 \mas n \to \infty.
\end{gather}
 \end{prop}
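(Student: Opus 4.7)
The plan is to argue by contradiction. Suppose $\beta := \limsup_n \|(w_n,0)\|_{\HH} > 0$; passing to a subsequence, $\|w_n\|_{\dot{H}^1} \to \beta$. Denote by $\vec W_n(\tau) := S(\tau)(w_n, 0)$ the free linear evolution of the error data, which has zero initial velocity. By the $4d$ exterior energy estimate (Proposition~\ref{lin ext estimate}), for every $\sigma > 0$ and $n$ large,
\[
\|\vec W_n(-\sigma)\|_{\HH(r \ge \sigma)} \ge \alpha_0 \|w_n\|_{\dot{H}^1} \ge \alpha_0 \beta / 2.
\]
Viewing \eqref{a W w global} as an initial configuration at time $t_n$ and propagating backward by $\sigma$, the linear part of the error evolves precisely to $\vec W_n(-\sigma)$. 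The strategy is to transfer this lower bound to the nonlinear difference $\vec u(t_n - \sigma_n) - \vec v_L(t_n - \sigma_n)$ on $\{r \ge \sigma_n\}$ for a suitable $\sigma_n$, and then contradict Corollary~\ref{u-vL}.

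Fix $\sigma_n := t_n/2$, so that $\sigma_n \gg \la_{J_0,n}$ (by $\la_{J_0,n} \ll t_n$). Three observations are in force on $\{r \ge \sigma_n\}$: (a) each soliton bubble in \eqref{a W w global} has exterior $\HH$-norm $\lesssim \la_{j,n}/\sigma_n = o_n(1)$, using the decay $W(r) \sim 8 r^{-2}$; (b) by the localized Pythagorean expansion (Proposition~\ref{loc en dec}), the exterior $\HH$-norms of $\vec v_L(t_n - \sigma_n)$ and $\vec W_n(-\sigma_n)$ decouple orthogonally as $n \to \infty$; (c) Corollary~\ref{u-vL} applied at $t = t_n - \sigma_n$ with $\la = 1/4$, together with the inclusion $\{r \ge \sigma_n\} \subset \{r \ge \la(t_n - \sigma_n)\}$, yields $\|\vec u(t_n - \sigma_n) - \vec v_L(t_n - \sigma_n)\|_{\HH(r \ge \sigma_n)} \to 0$. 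If the decomposition \eqref{a W w global} can be propagated backward from $t_n$ to $t_n - \sigma_n$ with $o_n(1)$ error in $\HH$, then (a), (b), and the exterior estimate combine to give
\[
\|\vec u(t_n - \sigma_n) - \vec v_L(t_n - \sigma_n)\|_{\HH(r \ge \sigma_n)} \ge \alpha_0 \beta / 2 - o_n(1),
\]
in direct contradiction with (c), forcing $\beta = 0$.

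The main obstacle is the backward propagation step. The time interval $[t_n - \sigma_n, t_n]$ has length $t_n/2$, much larger than any individual soliton scale $\la_{j,n}$; and since $W$ is a stationary, non-scattering solution, its Strichartz norm over this interval diverges in rescaled coordinates, so Proposition~\ref{nonlin profile} cannot be invoked in a single shot. The remedy, following closely \cite[Proof of Proposition 3.12]{CKLS2} (see also \cite[Section 6]{Cote13} for an analogous approach in the equivariant wave map setting), is to peel off the profiles inductively in stages of lengths successively matched to the soliton scales $\la_{J_0,n}, \la_{J_0-1,n}, \ldots, \la_{1,n}$ and ultimately $\sigma_n$. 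At each stage one applies Proposition~\ref{nonlin profile} over a time on which the current outermost active profile has controlled Strichartz dynamics, exploits the scale separation $\la_{j,n}/\la_{j+1,n} \to 0$ together with observation (a) to treat the remaining profiles as $o_n(1)$ $\HH$-perturbations on the relevant exterior region, invokes Lemma~\ref{local scat lem} to keep truncated free remainders small in Strichartz, and re-extracts a profile decomposition for the next stage. The radiation $\vec v_L$ is handled through its scattering properties and Proposition~\ref{th:u-v_L}, while $\vec W_n$ retains its exterior energy lower bound throughout as a free linear wave.
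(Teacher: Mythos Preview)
Your proposal is correct and follows essentially the same approach as the paper, which itself only sketches the argument and refers to \cite[Proof of Proposition~3.12]{CKLS2}: contradiction via the exterior energy estimate for $(w_n,0)$, inductive peeling of the soliton profiles over time-scales matched to $\la_{j,n}$, and a final contradiction with the vanishing of $\vec u - \vec v_L$ outside the cone (your use of Corollary~\ref{u-vL} is the global-in-time analog of the ``energy concentrating on the boundary of the cone'' endpoint in the blow-up case). The technical tools you invoke --- Proposition~\ref{nonlin profile}, Lemma~\ref{local scat lem}, Proposition~\ref{loc en dec} --- are exactly the ones the paper's sketch implicitly relies on.
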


The proof of this result follows the same scheme as in Proposition \ref{prop w to 0} and the exterior linear estimates for $4d$ free waves (Proposition~\ref{lin ext estimate}) plays a crucial role here.
For the details of this compactness argument, we refer the reader to \cite[Proof of Proposition~$3.12$]{CKLS2}. 
The energy quantization again follows from the orthogonality of profiles~\eqref{nonlin en dec}. This completes our sketch of the proof of Theorem~\ref{global main}.



\section{Type-II blow-up below $2 \| \na W\|_{L^2}^2$}\label{2W bu}

This section is devoted to proving Theorem~\ref{bu 2W} and we will assume throughout that $\vec u(t)$ is a smooth type-II solution with $T_+(\vec u) = 1$. Moreover suppose that 
\EQ{\label{2W}
\sup_{0 \le t < 1-t} \|  u(t) \|_{\dot{H}^1( 0 < r < 1-t)}^2 < 2 \| \na W\|_{L^2}^2 =2 \|(W, 0)\|_{\HH}^2.
 }
 We again denote the regular part of $\vec u(t)$ by $\vec v(t)$, and the singular part by $\vec a(t):= \vec u(t)- \vec v(t)$, as defined in Section~\ref{reg part}. We also recall that $\supp \vec a(t) \in B(0, 1-t)$ and that $\vec a(t) \rightharpoonup 0$ in $\HH$. 
 
 By Theorem~\ref{bu main} we know that there exists a sequence of times $t_n \to 1$, an integer $J_0 \ge 1$ scales $\la_{j,n} \ll 1-t_n$ and signs $\io_j$ for $1 \le j \le J_0$ so that  
 \EQ{\label{a dec}
 &\vec a(t_n) =  \sum_{j = 1}^{J_0} \left(\frac{ \io_j}{ \la_{j,n}}W\left( \frac{\cdot}{ \la_{j,n}}\right), \, 0\right) + o_{\HH}(1) \mas n \to \infty,\\
 &\la_{1,n} \ll \cdots \ll \la_{ J_0, n} \ll 1-t_n.
 } 
Using~\eqref{2W}, Lemma~\ref{lem:hardy_loc}, and the definition of $\vec a(t)$ we have 
\EQ{
\| \vec a(t_n) \|^2_{\HH} < 2\|\na W\|_{L^2}^2
} 
for $n$ large. Combining this with the orthogonality of the scales $\la_{j,n}$, it is clear that there can only be one profile above, i.e., $J_0=1$.  Moreover, by replacing $u$ by $-u$ if necessary we can assume $\io =1$. Thus,~\eqref{a dec} reduces to
\EQ{\label{an dec}
&\vec a(t_n) =   \left(\frac{ 1}{ \la_{n}}W\left( \frac{\cdot}{ \la_{n}}\right), \, 0\right) + o_{\HH}(1) \mas n \to \infty,\\
& \la_n \ll 1-t_n.
}
We break up the proof of Theorem~\ref{bu 2W} into several steps. 
\subsection{Step 1: preliminary observations on a profile decomposition}\label{prof pre}
In order to prove Theorem~\ref{bu 2W} we need to show that the decomposition~\eqref{an dec} holds for \emph{any} sequence $\tau_n \to 1$. Let $\tau_n \to 1$ be any such  sequence. Up to passing to a sequence we can use Theorem~\ref{BG} to find a profile decomposition 
\EQ{\label{an prof}
\vec a(\tau_n) =  \sum_{j=1}^J  \vec U_{L, n}^j + \vec w_{n}^J,
}
where 
\ant{
\vec U_{L, n}^j(t, r)= \left( \frac{1}{\la_{j,n}} U_L\left( \frac{t-t_{j,n}}{ \la_{j,n}}, \frac{r}{\la_{j,n}}\right), \,\frac{1}{\la_{j,n}^2} \p_tU_L \left( \frac{t-t_{j,n}}{ \la_{j,n}}, \frac{r}{\la_{j,n}}\right)\right).
}
As usual we denote the nonlinear profile associated to $\vec U_{L}^j$ by $\vec U^j$. We can also assume, via an application of Lemma~\ref{lem:3.7} that the profiles are \emph{pre-ordered} as in Definition~\ref{re order} with 
\[ \forall i \le j, \quad \{\vec U_{\mr L}^i, \lambda_{i,n}, t_{i,n}\} \preccurlyeq \{\vec U^j_{\mr L}, \lambda_{j,n}, t_{j,n}\}. \]
Note that we can also view~\eqref{an prof} as a profile decomposition for $\vec u(\tau_n)$ given the definition of $(v_0, v_1)$ as the weak limit of $\vec u(t)$ in $\HH$ as $t \to1$. Indeed we can view $\vec v(\tau_n)$, up to an $o_{\HH}(1)$ error, as a profile $\vec U_{L}^0$ with initial data $(v_0, v_1)$ and parameters $\la_{n, 0} = 1$, $t_{n, 0} =0$ and nonlinear profile equal to $\vec v(t, r)$ and we write 
\begin{gather}
\vec u(\tau_n) = \vec v(\tau_n) + \sum_{j=1}^J  \vec U_{L, n}^j + \vec w_{n}^J.
\end{gather}
Note that given the support properties of $\vec a(t)$ we must have $\abs{t_n^j} \le C (1-\tau_n)$ and $\la_{j,n} \le C(1-\tau_n)$ for all $n, j$, by Lemma~\ref{lem:2.5}. 

We  observe that given the fact that $\vec u(t)$ blows up at $t=1$ and that $\vec v$ is regular at $t=1$, at least one of the nonlinear profiles $\vec U^j$ with $j \ge 1$ does not scatter in forward time.  Given our pre-ordering this means that the nonlinear profile $\vec U^1$  does not scatter in forward time. In fact, we claim that
\EQ{\label{order 1}
 \{ U_L^1, \la_{1,n}, t_{1,n}\}\prec    \{U_L^0, 1, 0\},
  }
where again $\vec U_L^0$ is the profile with initial data $(v_0, v_1)$. Indeed, since $\vec U^1$ does not scatter in forward time we would need 
\EQ{\label{time order}
T< T_+(v_0, v_1), \Longrightarrow  \lim_{n \to \infty} \frac{ T - t_{1, n}}{ \la_{1,n}} < T_+( \vec U^1) < \infty,
}
where $T_+(v_0, v_1)$ is computed from the evolution starting at $t=1$. Since $\vec v(t)$ exists in a neighborhood of $t=1$, we can simply choose any $T>0$ with $T< T_+( v_0, v_1)$. We know that  $\abs{t_{1,n}} \le C(1-\tau_n) \to 0$ and $\la_{1,n} \le C(1-\tau_n) \to 0$. This means that $T- t_{1,n}>0$ for $n$ large enough and hence 
\ant{
 \frac{ T - t_{1,n}}{ \la_{1,n}} \to \infty \mas n \to \infty,
 } 
 which renders~\eqref{time order} impossible and proves~\eqref{order 1}. 
 
 Next, note that by the orthogonality of the free energy in our decomposition, i.e.,~\eqref{free en dec}, we must have 
\begin{gather}
\| \vec U_L^j(-t_{j,n}/ \la_{j,n}) \|_{\HH}^2 < 2 \|( W, 0)\|_{\HH}^2, \quad \text{and} \quad \| \vec w_n^J\|_{\HH}^2< 2\|(W, 0)\|_{\HH}^2.
\end{gather}
for $n$ large and $j  \ge 1$. By Lemma~\ref{E<2W} we can then deduce that the nonlinear energies
\begin{gather} \label{pos en}
E( \vec U_{L, n}^j(0))  \ge 0, \quad \text{and} \quad E( \vec w_n^J) \ge 0.
\end{gather}
for $n$ large enough. Moreover, if 
\begin{gather}
E( \vec U_{L, n}^j(0)) \to 0 \mas n \to \infty,
\end{gather}
then 
\ant{
\|  \vec U_{L, n}^j(0)\|_{\HH}  \to 0 \mas n \to \infty,
}
and since the $\HH$ norm is preserved by the linear flow this means that $\vec U_{L}^j \equiv(0, 0)$. Similarly, 
\ant{
E( \vec w_n^J)  \to 0 \Longrightarrow \| \vec w_n^J\|_{\HH} \to 0 .
}
Finally, if $\vec U^j$ is the nonlinear profile associated to $\{ U_{L}^j, \la_{j,n}, t_{j,n}\}$ then either $E( \vec U^j) >0$ or $ \vec U^j =(0, 0)$. 

Note that since $\vec u( \tau_n) \rightharpoonup (v_0, v_1)$ weakly in $\HH$ (\cite[Section 3]{DKM1}), by the construction of the profiles in \cite{BG}, $(v_0, v_1)$ with parameters $t_{n, 0} =0$ for the  time translations and $\la_{n, 0} = 1$ for the scaling, and nonlinear profile (with evolution starting at $t=1$) $\vec v(t)$, occurs in the profile decomposition of $\vec u(\tau_n)$. Thus, the previous situation is the general one for a profile decomposition of $\vec u(\tau_n)$, just as in Claim~\ref{cl:v_in_prof} below. 

\subsection{Step 2: Minimization process and consequences} Here we use the minimization process for profile decompositions of $\vec u(\tau_n)$ developed in~\cite{DKM6} adapted to the current situation. We begin by introducing some of the notation from \cite[Section~$4$]{DKM6}. First let $\cS_0$ denote the set of all sequences $\{\tau_n\} \to 1$ so that $\vec u(\tau_n)$ admits a $\precc$-ordered profile decomposition. Note that up to extracting subsequences, $\cS_0$ consists of all sequences $\tau_n \to 1$. 

Let $\q T  = \{\tau_n\}_{n \in \N} \in \cS_0$. Denote by 
\EQ{
J_0( \q T) = \textrm{\# of profiles of} \,  \, \vec u( \tau_n)\, \, \textrm{that do not scatter in forward time. }
}
This means that for $j \le J_0( \q T)$, $\vec U^j$ does not scatter in forward time and for $j \ge J_0( \q T) + 1$, $\vec U^j$ scatters in forward time. 

Since $\vec u(t)$ blows up at time $t=1$ we know that for any $\q T \in \cS_0$ we must have $J_0( \q T) \ge 1$. On the other hand, by the small data theory, there is a $\de_0>0$ so that if  $\| \vec U_L^j\|_{\HH} \le \de_0$ then a nonlinear profile, $\vec U^j$ associated to $\vec U_L$ must scatter in both time directions. Since we are also assuming that $\vec u(t)$ is a type-II solution, i.e., $$\sup_{t \in [0, 1)} \| \vec u(t) \|_{\HH} \le M < \infty,$$ we can use the almost orthogonality of the $\HH$ norms of the profiles,~\eqref{free en dec} to conclude that $J_0(\q T) \le CM/\delta_0^2$ is finite and uniformly bounded on $\cS_0$.

Next, define 
\EQ{
J_1( \q T):=  \min\{ j \ge 1 \mid j \prec j+1\},
}
where $\prec$ is the \emph{strict} order introduced in Definition~\ref{re order}. Since we have $J_0(\T) \prec J_0( \T) + 1$ we can conclude that $J_1( \T) \le J_0(\T)$ and hence $J_1( \T)$ is uniformly bounded on $\cS_0$ as well. 

We also define 
\EQ{ \label{JM S1}
&J_{\mr{max}}   = \max \{ J_0(\q T) \mid \q T \in \cS_0 \}, \\
&\cS_1  = \{ \q T \in \cS_0 \mid J_0(\q T)=  J_{\mr{max}} \}.
}
For $\T \in \cS_1$ we then define the non-scattering energy $\EE(\T)$, as the sum of the energies of the nonlinear profiles that do not scatter, in particular for $\T \in \cS_1$ we set
\EQ{
\EE(\T) = \sum_{j=1}^{J_{\max}} E( \vec U^j).
}
We now recall a result proved in~\cite{DKM6}.  
\begin{claim}\cite[Corollary $4.3$ and Lemma $4.5$]{DKM6} The infimum of $\EE(\T)$ is attained (and hence is a minimum): i.e., there exists $\T_0 \in \cS_1$ so that 
\EQ{
\E(\T_0) = \inf\{ \E(\T) \mid \T \in \cS_1\} =: \E_{\min}.
}
\end{claim}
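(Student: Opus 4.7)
The plan is to realize $\EE_{\min}$ by a diagonal extraction from a minimizing sequence, following the general strategy of~\cite{DKM6}. First, by definition of the infimum, pick $\T_k = \{\tau_{k,n}\}_{n \in \N} \in \cS_1$ with $\EE(\T_k) \searrow \EE_{\min}$ as $k \to \infty$. For each $k$, Theorem~\ref{BG} together with the $\precc$-ordering of Lemma~\ref{lem:3.7} furnishes a profile decomposition of $\vec u(\tau_{k,n})$, having exactly $J_{\max}$ non-scattering nonlinear profiles $\{\vec U^{j,k}, \la^k_{j,n}, t^k_{j,n}\}_{j=1,\ldots,J_{\max}}$ (with $\vec U^{0,k} = \vec v$ always present as in Section~\ref{prof pre}), each of positive energy, and summing to $\EE(\T_k)$. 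The total energy of the remainder and scattering profiles stays bounded by $M$ independently of $k$.

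Second, I would perform the diagonal extraction. Choose $n_k \to \infty$ in such a way that $\tau_{k,n_k} \to 1$ and that the parameters $\{\la_{j,n_k}^k, t_{j,n_k}^k\}_{j \le J_{\max}}$ satisfy the pseudo-orthogonality \eqref{scales} among themselves (in $k$) while remaining compatible with the support constraint $\la^k_{j,n_k}, |t^k_{j,n_k}| \lec 1-\tau_{k,n_k}$ inherited from $\vec a(\tau_{k,n_k})$. Set $\widetilde\T := \{\tau_{k,n_k}\}_k$. Up to a further subsequence we have $\widetilde\T \in \cS_0$, and Theorem~\ref{BG} produces a $\precc$-ordered profile decomposition of $\vec u(\tau_{k,n_k})$ whose linear profiles include, up to relabeling, the translated/rescaled data $\vec U^{j,k}_L(-t^k_{j,n_k}/\la^k_{j,n_k})$ for $j = 1, \ldots, J_{\max}$ (together with $\vec v$ in position $j=0$).

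Third, I would identify the non-scattering profiles of $\widetilde\T$. Each profile extracted from $\T_k$ at index $n_k$ is still non-scattering in forward time (the property of not scattering depends only on the underlying nonlinear solution $\vec U^{j,k}$ and on the limit $\lim_n -t^k_{j,n}/\la^k_{j,n}$, both preserved after subsequential extraction). Hence $\vec u(\tau_{k,n_k})$ has at least $J_{\max}$ non-scattering profiles, and by the very definition of $J_{\max}$ in \eqref{JM S1} it has exactly $J_{\max}$, so $\widetilde\T \in \cS_1$. Moreover, by the almost-orthogonality of the nonlinear energies \eqref{nonlin en dec} and the positivity \eqref{pos en} of the remainder energy, the sum of the energies of the non-scattering profiles in $\widetilde\T$ is bounded by $\liminf_k \EE(\T_k) = \EE_{\min}$, so $\EE(\widetilde\T) \le \EE_{\min}$. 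Since $\widetilde\T \in \cS_1$, the reverse inequality holds by definition of $\EE_{\min}$, giving $\EE(\widetilde\T) = \EE_{\min}$.

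The main obstacle is Step~2: ensuring that after the diagonal selection the collection $\{\la^k_{j,n_k}, t^k_{j,n_k}\}_{j,k}$ is pseudo-orthogonal in the $k$-variable, and that the scattering profiles (and remainder) from the original $\T_k$'s do not conspire to produce additional non-scattering profiles in the decomposition of $\vec u(\tau_{k,n_k})$. This is handled as in~\cite[\S4]{DKM6} by refining the subsequence so that the ratios $\la^k_{j,n_k}/\la^{k'}_{j',n_{k'}}$ and the time gaps $|t^k_{j,n_k}-t^{k'}_{j',n_{k'}}|/\la^k_{j,n_k}$ tend to $0$ or $\infty$, and by using that scattering profiles retain their scattering character under further subsequence extraction, so that the count $J_0(\widetilde\T)$ of non-scattering profiles cannot exceed $J_{\max}$.
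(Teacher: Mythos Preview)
The paper does not supply its own proof here; the claim is quoted directly from \cite[Corollary~4.3 and Lemma~4.5]{DKM6} as a black box.

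Your sketch, however, has a genuine gap in Step~2. In a Bahouri--G\'erard decomposition of the diagonal sequence $\{\vec u(\tau_{k,n_k})\}_k$, the profiles are \emph{fixed} free waves $\vec V^j_L$, independent of the sequence index $k$; they arise as weak limits of suitably rescaled data. The objects $\vec U^{j,k}_L(-t^k_{j,n_k}/\la^k_{j,n_k})$ depend on $k$ and therefore cannot serve as profiles for $\widetilde\T$. Likewise, the ``pseudo-orthogonality in the $k$-variable'' you request between parameters coming from \emph{different} decompositions is not condition~\eqref{scales} and does not yield a legitimate decomposition of $\widetilde\T$. The new sequence $\widetilde\T$ has its own profile decomposition, a~priori unrelated to the $\vec U^{j,k}$; you have no mechanism to show it has at least $J_{\max}$ non-scattering profiles (so $\widetilde\T\in\cS_1$ is unjustified), nor to bound their total energy by $\liminf_k \EE(\T_k)$.

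The argument in \cite{DKM6} avoids diagonal extraction. It evolves a given $\T\in\cS_1$ forward via the nonlinear profile decomposition (Proposition~\ref{nonlin profile}): pushing to times $\tau_n+s_n$ approaching the maximal existence time of the first non-scattering profile $\vec U^1$ produces a new sequence, still in $\cS_1$, whose non-scattering energy is no larger than before (the scattering part of the energy can only grow under this operation, by the Pythagorean expansion~\eqref{nonlin en dec} and positivity~\eqref{pos en}). This monotonicity, together with the uniform lower bound on $\EE$, yields a sequence in $\cS_1$ realizing $\EE_{\min}$.
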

With the above claim we can define 
\begin{align} \label{S2}
&\cS_2  = \{ \q T \in  \cS_1 \mid \q E(\q T) =  \q E_{\mr{min}} \} \ne  \varnothing, \\
&J_{\mr{min}}  =  \min \{ J_1(\q T) \mid \q T \in \cS_2 \}, \\
&\cS_3  = \{ \q T \in \cS_2 \mid J_1(\q T) = J_{\mr{min}}  \} \ne \varnothing.
\end{align}
We remark that in this radial setting, we necessarily have $J_{\min} =1$. This follows from the following lemma proved in~\cite{DKM6}. 
\begin{lem}\cite[Lemma~$4.12$]{DKM6} \label{lem412}
There exists $\q T_0 \in \cS_3$ such that for all $j=1, \dots, J_{\mr{min}}$, $\vec U^j \in \{ (\pm W,0) \} $ and hence $J_{\min} =1$. 
\end{lem}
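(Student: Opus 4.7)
Fix any $\q T = \{\tau_n\} \in \cS_3$ together with its $\precc$-ordered profile decomposition, linear profiles $\{\vec U_L^j\}$, parameters $\{\la_{j,n}, t_{j,n}\}$, and nonlinear profiles $\{\vec U^j\}$. By the definition of $J_{\min} = J_1(\q T)$, the profiles indexed by $j \in \{1, \dots, J_{\min}\}$ form a single $\precc$-equivalence class, and since $J_{\min} \le J_0(\q T) = J_{\max}$, none of them scatters in forward time. The goal is to show that each such $\vec U^j$ must equal $(\io_j W, 0)$ for some sign $\io_j\in\{\pm1\}$, and then that necessarily $J_{\min} = 1$.

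\textbf{Step 1: Each $\vec U^j$, $j\le J_{\min}$, has the compactness property on $[0, T_+(\vec U^j))$.} I would argue by contradiction. If compactness fails for some $j$, then by a standard Bahouri--Gerard extraction (cf.\ Theorem~\ref{BG}) along a sequence $s_n \to T_+(\vec U^j)$ where the trajectory of $\vec U^j$ is not pre-compact modulo scaling, the sequence $\vec U^j(s_n)$ admits a nontrivial profile decomposition with at least two nonzero components (two profiles, or one profile plus nontrivial linear radiation with nonzero Strichartz norm). Substituting this sub-decomposition into the $j$-th slot of the decomposition of $\vec u$ evaluated at the shifted times $\tau_n + \la_{j,n} s_n$ (which is itself an admissible sequence in $\cS_0$ after extraction) produces a new sequence $\q T' \in \cS_0$. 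This substitution strictly improves at least one of the three minimization parameters: either the number of non-scattering profiles strictly exceeds $J_{\max}$; or, using the positivity of nonlinear profile energies (from \eqref{pos en} and Lemma~\ref{E<2W}) together with the asymptotic Pythagorean expansion \eqref{nonlin en dec}, the total non-scattering energy drops strictly below $\E_{\min}$; or the $\precc$-order is rearranged so that $J_1(\q T') < J_{\min}$. Each alternative contradicts $\q T \in \cS_3$. This is exactly the iterative substitution mechanism behind \cite[Section~4]{DKM6}, which forces compactness.

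\textbf{Step 2: Classification and reduction to $J_{\min} = 1$.} Once $\vec U^j$ has the compactness property on $[0, T_+(\vec U^j))$, Theorem~\ref{compactness} implies $\vec U^j = (\pm W_{\la_0^j}, 0)$ for some $\la_0^j > 0$; absorbing $\la_0^j$ into $\la_{j,n}$ we may take $\vec U^j = (\io_j W, 0)$, and in particular $T_+(\vec U^j) = +\infty$. Suppose now for contradiction that $J_{\min} \ge 2$. Then profiles $1$ and $2$ are $\precc$-equivalent, both being stationary solutions $\pm W$. Applying Definition~\ref{re order}(2) in both directions, for every $T \in \R$ the limits
\[
\lim_{n \to \infty} \frac{\la_{1,n} T + t_{1,n} - t_{2,n}}{\la_{2,n}} \quad \text{and} \quad \lim_{n \to \infty} \frac{\la_{2,n} T + t_{2,n} - t_{1,n}}{\la_{1,n}}
\]
must both be finite. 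Letting $T$ vary forces $\la_{1,n}/\la_{2,n}$ to be bounded above and below, and taking $T = 0$ then forces $|t_{1,n} - t_{2,n}|/\la_{1,n}$ to be bounded. This directly contradicts the pseudo-orthogonality~\eqref{scales} of the profile parameters, so $J_{\min} = 1$.

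\textbf{Main obstacle.} The delicate point is Step~1: inserting the sub-decomposition of the non-compact profile $\vec U^j(s_n)$ into the decomposition of $\vec u(\tau_n + \la_{j,n} s_n)$ while preserving the pseudo-orthogonality~\eqref{scales} and simultaneously tracking the effect on the triple $(J_0, \E, J_1)$ to detect the required strict improvement. This bookkeeping is the heart of the minimization argument of~\cite[Section~4]{DKM6}. The only genuinely new feature in our setting is the presence of the $0$-th ``profile'' $(v_0,v_1)$ corresponding to the regular part $\vec v$, which is extracted as a strong limit on $\{r \ge 1-t\}$ rather than from the Bahouri--Gerard construction. Since $\vec v$ is fixed independent of $n$ and lives at a definite scale, it plays a passive role in the substitution step, and the argument of~\cite{DKM6} adapts with only cosmetic changes.
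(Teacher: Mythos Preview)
Your approach is exactly the one the paper invokes: the paper offers no self-contained proof but simply cites \cite[Lemma~4.12]{DKM6} and remarks that in the radial setting the stationary solutions reduce to $(\pm W,0)$, and that the conclusion $\la_{j,n}=\la_{1,n}$ for $j\le J_{\min}$ from \cite{DKM6} combined with the pseudo-orthogonality \eqref{scales} forces $J_{\min}=1$. Your Step~2 derivation of $J_{\min}=1$ directly from the two-sided $\precc$-equivalence and Definition~\ref{re order} is a clean variant of that last remark and is correct as written.

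There is one technical gap to flag in Step~1. You establish the compactness property for $\vec U^j$ only on $[0,T_+(\vec U^j))$, via sequences $s_n\to T_+(\vec U^j)$, but in Step~2 you apply Theorem~\ref{compactness}, whose hypothesis is compactness on the \emph{full} maximal interval $I_{\max}(\vec U^j)$. Forward compactness alone does not literally meet that hypothesis. The substitution mechanism of \cite[Section~4]{DKM6} in fact rules out failure of compactness along \emph{any} sequence of times in $I_{\max}(\vec U^j)$ (the shifted sequence $\tau_n+\la_{j,n}s_n$ is still admissible for the minimization, and one again contradicts $\q T\in\cS_3$), so the fix is only to state Step~1 for all of $I_{\max}$ rather than for the forward half. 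With that adjustment your sketch matches the argument the paper is citing.
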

This above is a much simplified version of \cite[Lemma 4.12]{DKM6}: as we are working in the radial setting, the only stationary solutions to \eqref{u eq} are $(\pm W,0)$. The result in~\cite[Lemma~$4.12$]{DKM6} states that all of the parameters $\la_{j,n} = \la_{1,n}$ for $1 \le j \le J_{\min}$, but this forces $J_{\min}=1$ by orthogonality of the parameters. 

To proceed, we distinguish between two cases: 
\begin{itemize}
\item[$(a)$] The nonlinear profile associated to $(v_0, v_1)$, namely $\vec v(t)$, scatters in forward time.
\item[$(b)$] The nonlinear profile associated to $(v_0, v_1)$, namely $\vec v(t)$, does \emph{not} scatter in forward time.
\end{itemize}
\begin{claim} \label{a b} In case $(a)$ above we have $J_{\max} = 1$ and $\EE_{\min} \ge E(W, 0)$. In case $(b)$ we have $J_{\max} = 2$ and $\EE_{\min} \ge E(W, 0) + E(v_0, v_1)$. 
\end{claim}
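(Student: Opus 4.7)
The plan is to analyze an arbitrary profile decomposition of $\vec u(\tau_n)$, for any sequence $\tau_n \to 1$, using two key inputs applied to the singular part $\vec a(\tau_n) = \vec u(\tau_n) - \vec v(\tau_n)$. First, the hypothesis~\eqref{2W} combined with Lemma~\ref{v loc decay} (which shows $\vec v$ has vanishing local energy inside $r \le 1-\tau_n$) yields both
\[
\|\vec a(\tau_n)\|_{\HH}^2 < 2\|\nabla W\|_{L^2}^2 \qquad \text{and} \qquad E(\vec a(\tau_n)) \to E(W,0);
\]
the second identity uses Theorem~\ref{bu main} applied along the original subsequence~\eqref{an dec} with $J_0=1$ to deduce the global relation $E(\vec u) = E(W, 0) + E(v_0, v_1)$, plus conservation of $E(\vec u)$ and $E(\vec v)$ together with Lemma~\ref{v loc decay} again to kill the cross-terms in $E(\vec v + \vec a)$. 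Second, by the Kenig--Merle dichotomy in $d=4$ (\cite{KM08, DKM2}), any non-scattering nonlinear profile $\vec U^j$ falls into at least one of two regimes: (i) $E(\vec U^j) \ge E(W,0)$, or (ii) $\|\nabla U^j(s_j)\|_{L^2}^2 > \|\nabla W\|_{L^2}^2$ (corresponding to finite-time blow-up with sub-threshold energy).

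I would then show that the decomposition of $\vec a(\tau_n)$ admits at most one nontrivial non-scattering profile. Suppose, for contradiction, that two profiles $\vec U^{j_1}, \vec U^{j_2}$ were non-scattering. Applying Lemma~\ref{E<2W} (whose hypothesis holds because $\|\vec U_L^j(s_j)\|_{\HH}^2 < 2\|\nabla W\|_{L^2}^2$ by orthogonality of the $\HH$-norms) yields $E(\vec U^j) \ge 0$ for all $j$; orthogonality of the nonlinear energies combined with the limit above then gives $\sum_j E(\vec U^j) \le E(W,0)$. The subcase (i)+(i) forces $\sum_j E(\vec U^j) \ge 2 E(W,0)$, a contradiction; subcase (ii)+(ii) forces $\sum_j \|\vec U_L^j(s_j)\|_{\HH}^2 > 2\|\nabla W\|_{L^2}^2$, contradicting the $\HH$-bound; and the mixed subcase (i)+(ii) forces $E(\vec U^{j_2}) \le 0$, whence $\vec U^{j_2} \equiv 0$ by~\eqref{pos en}, again a contradiction. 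In case $(a)$, $\vec v$ is scattering, so all non-scattering profiles come from $\vec a$ and $J_0(\T) \le 1$; equality is attained by $\{t_n\}$ from~\eqref{an dec}, giving $J_{\max} = 1$. In case $(b)$, $\vec v$ itself is non-scattering and contributes one profile, so $J_0(\T) \le 2$ with equality attained by $\{t_n\}$, yielding $J_{\max} = 2$.

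For the lower bound on $\EE_{\min}$, I would invoke Lemma~\ref{lem412}: this supplies $\T_0 \in \cS_3 \subset \cS_2$ whose $\vec a$-profile is $\pm W$, contributing exactly $E(W,0)$. In case $(a)$ this alone gives $\EE_{\min} = \EE(\T_0) \ge E(W,0)$, since the remaining profile energies are nonnegative; in case $(b)$, the additional non-scattering profile $\vec v$ contributes $E(v_0,v_1)$, so $\EE_{\min} \ge E(W,0) + E(v_0,v_1)$. The main technical obstacle is establishing the energy convergence $E(\vec a(\tau_n)) \to E(W,0)$ along \emph{every} sequence (not just the Theorem~\ref{bu main} subsequence), which requires carefully tracking the cross-terms when expanding $E(\vec v + \vec a)$ on the support of $\vec a$; and the most delicate point of the case analysis is the mixed subcase (i)+(ii), where it is precisely the tightness of the energy budget $E(W,0)$ that forces the second profile to vanish.
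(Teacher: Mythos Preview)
Your approach is correct and reaches the same conclusion, but it takes a genuinely different and more elaborate route than the paper. The paper's argument is cleaner: it works \emph{only} with the specific sequence $\T_0 \in \cS_3$ furnished by Lemma~\ref{lem412}, for which one already knows $\vec U^1 = (\pm W, 0)$ and $J_0(\T_0) = J_{\max}$. Writing the nonlinear Pythagorean expansion along $\T_0$ and invoking $E(\vec u) = E(W,0) + E(v_0,v_1)$ (from~\eqref{u=W v}) together with the positivity~\eqref{pos en} immediately forces every other non-scattering profile to vanish; this pins down $J_{\max}$ and $\EE_{\min}$ simultaneously, with no case analysis and no appeal to the Kenig--Merle dichotomy. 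By contrast, you work with an \emph{arbitrary} sequence $\tau_n \to 1$, prove $E(\vec a(\tau_n)) \to E(W,0)$ for all such sequences, and then run a three-case argument (i)+(i), (ii)+(ii), (i)+(ii) using \cite{KM08} to bound $J_0(\T)$ uniformly; you then invoke Lemma~\ref{lem412} separately at the end for the $\EE_{\min}$ bound. Your route proves the slightly stronger statement that every sequence has at most one non-scattering $\vec a$-profile, but the extra machinery (the Kenig--Merle sub-threshold trichotomy) is not needed once Lemma~\ref{lem412} is in hand.

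One minor correction: you write $\|\vec a(\tau_n)\|_{\HH}^2 < 2\|\nabla W\|_{L^2}^2$, but hypothesis~\eqref{2W} only controls the $\dot H^1$ part, not $\|a_t\|_{L^2}$. What you actually need, and what does follow, is $\|a(\tau_n)\|_{\dot H^1}^2 < 2\|\nabla W\|_{L^2}^2$; this suffices both for Lemma~\ref{E<2W} (which takes a single $\dot H^1$ function) and for your subcase (ii)+(ii), since the $\dot H^1$ Pythagorean expansion holds separately and the Kenig--Merle gradient threshold is a $\dot H^1$ condition.
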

\begin{proof} Choose the sequence $\T_0 = \{ \tau_n\}_{n \in \N}$ given by Lemma~\ref{lem412}. Since $J_{\min} = 1$ we have $\vec U_L^1 = ( \pm W, 0)$. We have $\T_0 \in \cS_3 \subset \cS_2 \subset \cS_1$ and hence we have $J_{\max}$ non-scattering profiles and 
\ant{
\EE_{\min} = \sum_{j=1}^{J_{\max}} E(\vec U^j).
}
Also, recall that for the sequence $\{t_n\}$ given by Theorem~\ref{bu main} we have 
\EQ{
\vec a(t_n, r) = \left( \frac{1}{\la_n} W\left( \frac{r}{\la_n}\right), 0\right) + o_{\HH}(1) \mas n \to \infty.
}
Recalling that 
\EQ{
\lim_{t \to 1} E( \vec a(t)) = E(\vec u) - E(v_0, v_1),
}
and by considering the sequence $t_n \to 1$ we have 
\EQ{\label{u=W v}
E(\vec u) = E(W, 0) +  E(v_0, v_1) .
}
Now consider the Pythagorean expansion for the sequence $\T_0 = \{ \tau_n\}$ give by Lemma~\ref{lem412}. Using the earlier established fact~\eqref{pos en} we know that all of the nonzero profiles, as well as $\vec w_n^J$ have positive energy. By the definition of $\EE_{\min}$, and the fact that $\vec U^1 = (\pm W, 0)$, we see that in case $(a)$ we have $\EE_{\min} \ge E(W, 0)$, and in case $(b)$ we have $\EE_{\min} \ge E(W, 0) + E(v_0, v_1)$. To prove the statements about $J_{\max}$ we will use~\eqref{u=W v} and the positivity of the energies of the profiles. Indeed, 
\ant{
E( \vec u)&= \sum_{j=1}^{J_{\max}} E(\vec U^j) + \sum_{j= J_{\max} +1}^J E( \vec U^j) + E( \vec w_n^J) + o_n(1)\\
&= E(W, 0) + \sum_{j=2}^{J_{\max}} E(\vec U^j) + \sum_{j= J_{\max} +1}^J E( \vec U^j) + E( \vec w_n^J) + o_n(1).
}
Using~\eqref{u=W v} we obtain
\ant{
E(v_0, v_1) =  \sum_{j=2}^{J_{\max}} E(\vec U^j) + \sum_{j= J_{\max} +1}^J E( \vec U^j) + E( \vec w_n^J) + o_n(1).
}
In  case $(a)$ we assume that $\vec v(t)$ scatters and hence corresponds to one of the nonlinear profiles $\vec U^j$ with $j  \ge J_{\max}+1$. Canceling $E(v_0, v_1)$ from both sides we have 
\ant{
0 &=  \sum_{j=2}^{J_{\max}} E(\vec U^j) + \sum_{j= J_{\max} +1, \, U^j \neq v}^J E( \vec U^j) + E( \vec w_n^J) + o_n(1)\\
& \ge \sum_{j=2}^{J_{\max}} E(\vec U^j) + o_n(1),
}
 since all the  profiles above have positive energy. Hence $J_{\max} = 1$. In case $(b)$ one similarly shows that $J_{\max}=2$. 
\end{proof}

\subsection{Step 3: Compactness of the singular part, $\vec a(t)$} 
We prove the following result. 
\begin{lem} For any sequence $\tau_n  \to 1$, there exists a subsequence, still denoted by $\tau_n$, and scales $\la_n>0$ so that  $(\la_n a(\tau_n, \la_n r), \la_n^2 a_t(\tau_n, \la_n r))$ converges in $\HH$. 
\end{lem}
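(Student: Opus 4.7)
The plan is to exploit the profile decomposition of Theorem~\ref{BG} together with the preliminary observations of Section~6.1 and the minimization machinery of Section~6.2 (following~\cite{DKM6}). Given $\tau_n \to 1$, first extract a subsequence on which $\vec u(\tau_n)$ admits a pre-ordered profile decomposition in the sense of Lemma~\ref{lem:3.7}. The weak $\HH$-limit $(v_0,v_1)$ of $\vec u(\tau_n)$ contributes a profile at fixed scale $\la_{V,n}=1$ whose nonlinear evolution is $\vec v(t)$, while the bulk profiles have scales $\la_{j,n} \to 0$ and time translations satisfying $|t_{j,n}| \lesssim 1-\tau_n$, by the support property of $\vec a(t)$ and Lemma~\ref{lem:2.5}. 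Subtracting $\vec v(\tau_n) \to (v_0,v_1)$ in $\HH$ then yields a decomposition of the singular part,
\[ \vec a(\tau_n) = \sum_{j \ge 1} \vec U^j_{L,n}(0) + \vec w_n^J + o_\HH(1). \]

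The key step is to argue that, after relabelling, the first bulk nonlinear profile $\vec U^1$ does not scatter forward, admits $t_{1,n}=0$, and can be taken to be $\pm W$ at the scale $\la_n := \la_{1,n} \ll 1-\tau_n$. The non-scattering of $\vec U^1$ is immediate since $\vec u$ blows up at $t=1$. The identification $\vec U^1=\pm W$ is delicate and is where the full minimization procedure of Section~6.2 enters: if $\vec U^1$ were any other non-scattering solution, one would construct a competitor sequence in $\cS_1$ with strictly smaller non-scattering energy $\EE$, contradicting the attained minimum secured by Lemma~\ref{lem412} (or alternatively, invoking the Duyckaerts--Merle threshold classification~\cite{DM} to force $\vec U^1 \in \{\pm W, W^\pm\}$, and then excluding $W^\pm$ via the same minimization). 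The case $t_{1,n}=0$ follows since a stationary $\pm W$ cannot be approached by a linear profile $\vec U^1_L$ with $|t_{1,n}|/\la_n \to \infty$.

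For the remaining profiles and the error I would invoke the $2\|W\|^2_{\dot H^1}$ bound~\eqref{2W}. By Lemma~\ref{E<2W} together with~\eqref{free en dec}, the quantities $E(\vec U^j)$ for $j \ge 2$ and $\lim_n E(\vec w_n^J)$ are all non-negative for $n$ large. The Pythagorean energy identity~\eqref{nonlin en dec}, combined with the energy balance $E(\vec u) = E(W,0)+E(v_0,v_1)$ from Claim~\ref{a b} and the convergence $E(\vec a(\tau_n)) \to E(W,0)$, then yields
\[ E(W,0) = E(\vec U^1) + \sum_{j \ge 2} E(\vec U^j) + \lim_n E(\vec w_n^J). \]
Since $E(\vec U^1) = E(W,0)$, all remaining summands must vanish, and the coercive lower bound of Lemma~\ref{E<2W} then forces $\vec U^j \equiv 0$ for $j \ge 2$ and $\|\vec w_n^J\|_\HH \to 0$.

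Rescaling $\vec a(\tau_n)$ by $\la_n$, and extracting a final subsequence on which the sign $\io_n \in \{\pm 1\}$ stabilizes, then produces the limit $(\la_n a(\tau_n,\la_n r),\la_n^2 a_t(\tau_n,\la_n r)) \to \pm(W,0)$ in $\HH$, which is the claimed compactness. The hardest step is undoubtedly the identification $\vec U^1 = \pm W$ for an \emph{arbitrary} sequence $\tau_n$, as Lemma~\ref{lem412} only guarantees this for one privileged minimizing sequence in $\cS_3$; passing to the arbitrary case requires the refined minimization of~\cite{DKM6}. By contrast, once this identification is in hand, the elimination of the remaining profiles and the strong vanishing of the error are purely energetic consequences of the sub-$2\|W\|^2_{\dot H^1}$ hypothesis.
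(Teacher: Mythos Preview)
Your proposal has a genuine gap, and it stems from trying to prove more than the lemma requires. You attempt to identify $\vec U^1 = \pm W$ for an \emph{arbitrary} sequence $\tau_n$, and you correctly flag this as ``the hardest step'' since Lemma~\ref{lem412} only delivers it for one minimizing sequence in $\cS_3$. You then appeal vaguely to ``the refined minimization of~\cite{DKM6}'', but no such identification for arbitrary $\tau_n$ is available there, and your subsequent arguments (the reason $t_{1,n}=0$, and the equality $E(\vec U^1)=E(W,0)$ used to kill the remaining profiles) all hang on it. Your suggested alternative via the Duyckaerts--Merle threshold classification would require first knowing $E(\vec U^1)=E(W,0)$, which in turn requires the energy argument, which in your order of steps requires the identification --- so the logic is circular.

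The paper's proof avoids this entirely by \emph{not} identifying $\vec U^1$ here. Two replacements make this work. First, the minimization of Section~6.2 supplies only the inequality $\EE(\{\tau_n\}) \ge \EE_{\min} \ge E(W,0)$ (Claim~\ref{a b}); combined with the Pythagorean expansion and positivity of the other energies, this already forces all scattering profiles and $\vec w_n^J$ to vanish, without knowing what $\vec U^1$ is. Second --- and this is the idea you are missing --- the paper shows directly that $\vec U^1$ cannot scatter \emph{backwards}: if it did, one could evolve the nonlinear profile decomposition from time $\tau_n$ back to a fixed time $t_0<1$ (Proposition~\ref{nonlin profile}), obtaining a nontrivial profile decomposition for the \emph{fixed} function $\vec u(t_0)-\vec v(t_0)$, which forces $\la_{1,n}=1$, contradicting $\la_{1,n}\to 0$. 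Since $\vec U^1$ scatters in neither direction, $|t_{1,n}/\la_{1,n}|$ is bounded and one may take $t_{1,n}=0$. The rescaled $\vec a(\tau_n)$ then converges to $\vec U^1(0)$ in $\HH$, which is all the lemma claims. The identification $\vec U^1(0)=(\pm W,0)$ is deferred to Step~4, where it follows cleanly from the compactness property just established (via~\cite[Lemma~8.5]{DKM1}) and the rigidity Theorem~\ref{compactness}.
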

\begin{proof}
Take an arbitrary sequence  $\tau_n \to 1$ which we assume, after passing to a subsequence and reordering, that $\{\tau_n\} \in \cS_0$ so that the profile decomposition for $\vec u(\tau_n)$ is pre-ordered. We summarize what we have established in the previous subsections. We know that $(v_0, v_1)$ is a profile and that either $J_{\max} = 1$ or $J_{\max}=2$ depending on whether or not, $\vec v(t)$ scatters in forward time, i.e., whether we are in case $(a)$ or $(b)$. We also know that the first profile $\vec U_L^1$ does not scatter in forward time and that $\vec U_{L}^1 \prec (v_0, v_1)$. Further, all of the profiles other than $(v_0, v_1)$ have positive nonlinear energy and so does $\vec w_n^J$. 

\begin{claim}All of the profiles that scatter in forward time must be identically $0$ and  the error 
\EQ{
\vec w_n^J \to 0 \quad  \textrm{in } \HH .
}
\end{claim}
\begin{proof} We again rely on the positivity of the nonlinear energies. Since we know that $J_{\max} =1 $ or $J_{\max} =2$ we know that $\{\tau_n\} \in \cS_1$. Thus in case $(a)$ we have 
\ant{
E( \vec u) &= E(W, 0)+ E(v_0, v_1) = E(U^1) + \sum_{j =2}^{J}E(\vec U^j)  + E( \vec w_n^J) + o_n(1)\\
& \ge \EE_{\min} + E(v_0, v_1) + \sum_{j =2}^{J}E(\vec U^j)+E( \vec w_n^J) + o_n(1)\\
& \ge E(W, 0)+ E(v_0, v_1) + o_n(1).
}
This proves the claim in case $(a)$. The same proof applies in case $(b)$. 
\end{proof}

\begin{claim} The profile $\vec U^1$ cannot scatter in backwards time. 
\end{claim} 
\begin{proof} Suppose that $ \vec U^1$ scatters as $t \to - \infty$. Then, the nonlinear profile decomposition Proposition~\ref{nonlin profile} gives (for all $t<0$ so that $\vec v(1+t)$ is defined, .i.e., for all $t \in (-T, 0$ for some fixed $T>0$)) for $n$ large 
\ant{
\vec u( \tau_n+ t) = \vec v( \tau_n + t) + \vec U^1_n(t) + \vec w_{L, n}^J(t) + \vec \eta_n^J(t),
  }
  where both $\| \vec w_{n, L}^J(t)\|_{\HH}$ and $\| \vec \eta_n^J\|_{\HH}$ are small for $t>-T$, $t \le 0$. Note that since $\vec U^1$ does not scatter in forward time, for $t \in (-T, 0]$ we have 
  $\| \vec U^1(t)\|_{\HH} \ge \de_0>0$. Choosing $t_0$ close to $1$ we then evolve the profile decomposition for time $s_n = t_0- \tau_n$  which gives 
  \ant{
  \vec u(t_0) - \vec v(t_0) = \vec U^1_n(t_0- \tau_n) + o_n(1),
  } 
  which is a nontrivial profile decomposition  for the fixed function $\vec u(t_0) - \vec v(t_0)$. This means that necessarily we must have $t_{1, n} = 0$ and $\la_{1, n} = 1$ for all $n$. But we have already observed in Section~\ref{prof pre} that we must have $\la_{1,n} \to 0$ as $n \to \infty$. Hence we have arrived at a contradiction and thus $\vec U^1$ does not scatter in backwards time. 
\end{proof}
Since $\vec U^1$ does not scatter backwards or forwards in time, we then have that $\abs{\frac{- t_{1,n}}{ \la_{1,n}}} \le C_0< \infty$. Hence we can assume without loss of generality that $t_{1,n} = 0$ for all $n$. We now have that 
\EQ{
\vec a_n(\tau_n, r) = \left( \frac{1}{ \la_n} U^1\left(0, \frac{r}{ \la_n}\right), \frac{1}{ \la_n^2} U_t^1\left( 0, \frac{r}{\la_n} \right) \right) + o_{\HH}(1),
}
which proves the desired compactness result for $\vec a(t)$. 
\end{proof}

\subsection{Step 4: Conclusion of the proof of Theorem~\ref{bu 2W}} \label{step 4 th 1.2}

Let $\{t_n\}_n$ be any sequence with  $t_n \to +\infty$. From Step 3, we have a function $\lambda$ such that $K(\vec a, \lambda)$ has compact closure in $\HH = \dot H^1 \times L^2$. Hence, after passing to a subsequence still denoted $\{t_n\}_n$, the sequence
\[ \left( \lambda(t_n) a \left( t_n, \lambda(t_n) \cdot \right),  \lambda(t_n)^2 \partial_t a \left( t_n, \lambda(t_n) \cdot \right) \right) \]
converges in $\HH$ to some $(U_0, U_1) \in \HH$; denote $\vec U(t)$ the nonlinear solution to \eqref{u eq} with initial data $\vec U(0) = (U_0, U_1)$. By \cite[Lemma 8.5]{DKM1}, we have the following claim. 
\begin{claim}\label{U comp}\cite[Lemma 8.5]{DKM1}
$\vec U$ has the compactness property on  $(T^-(U), T^+(U))$.
\end{claim}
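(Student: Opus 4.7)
The proof follows the structure of \cite[Lemma 8.5]{DKM1}, adapted to the four-dimensional setting. The strategy is to promote the compactness of the singular part $\vec a(t)$ modulo scaling, obtained in Step 3, to compactness of $\vec U$ modulo scaling, via a long-time perturbation argument that identifies suitably rescaled and translated copies of $\vec a$ with the nonlinear profile $\vec U$.

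First I would set $\lambda_n := \lambda(t_n)$ and introduce the rescaled approximate solutions
\[
\vec a_n(s,r) := \bigl(\lambda_n a(t_n + \lambda_n s, \lambda_n r),\ \lambda_n^2 \partial_t a(t_n + \lambda_n s, \lambda_n r)\bigr),
\]
which satisfy $\vec a_n(0) \to (U_0,U_1)$ in $\HH$ by hypothesis. One checks that $\lambda_n \to 0$: the support condition $\supp \vec a(t_n) \subset \{r \le 1-t_n\}$ together with $1-t_n \to 0$ forces $(U_0,U_1)$ to be trivial otherwise, which reduces the claim to a triviality. Since $a = u-v$ solves
\[
a_{tt} - \Delta a = u^3 - v^3 = a^3 + 3 a^2 v + 3 a v^2
\]
on the support of $a$, the rescaled function $\vec a_n$ solves \eqref{u eq} with an additional source term $3 a_n^2 v_n + 3 a_n v_n^2$, where $v_n(s,r) := \lambda_n v(t_n+\lambda_n s,\lambda_n r)$. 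Using the continuity of $t \mapsto \vec v(t) \in \HH$ at $t=1$ together with the pointwise decay $\|r v(t,r)\|_{L^\infty_r} \to 0$ as $t \to 1$ from Lemma \ref{v loc decay}, one verifies that $\|v_n\|_{L^\infty_{s,r}} \to 0$ and $\|v_n\|_{S(I)} \to 0$ on every compact interval $I$. The standard long-time perturbation lemma for \eqref{u eq}, as in \cite{KM08}, then upgrades $\vec a_n(0) \to \vec U(0)$ into
\[
\sup_{s \in I} \bigl\| \vec a_n(s) - \vec U(s) \bigr\|_{\HH} \longrightarrow 0
\quad\text{as } n \to \infty,
\]
for every $I \Subset (T^-(U),T^+(U))$.

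With this convergence, compactness of $\vec U$ modulo scaling follows by transfer from the compactness of $\vec a$. Let $\{s_n\} \subset (T^-(U),T^+(U))$ be arbitrary; after extraction $s_n \to s_\infty \in [T^-(U),T^+(U)]$. If $s_\infty$ is interior, continuity of the flow yields $\vec U(s_n) \to \vec U(s_\infty)$ strongly in $\HH$ and the scaling is trivial. Otherwise $s_\infty \in \{T^\pm(U)\}$; setting $\tau_n := t_n + \lambda_n s_n$, a further subsequence gives $\tau_n \to 1$, using $\lambda_n \to 0$ together with the constraint $\tau_n \in [t_-,1)$ coming from the domain of $\vec a$. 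Step 3 applied to $\{\tau_n\}$ then provides scales $\mu_n$ for which
\[
\bigl(\mu_n a(\tau_n,\mu_n\cdot),\ \mu_n^2 \partial_t a(\tau_n,\mu_n\cdot)\bigr)
\text{ converges in } \HH.
\]
Combining this with the perturbation estimate evaluated at $s=s_n$, which identifies the above quantity with the $(\mu_n/\lambda_n)$-rescaling of $\vec U(s_n)$ up to $o_{\HH}(1)$, one concludes that $(\tilde\mu_n U(s_n,\tilde\mu_n\cdot), \tilde\mu_n^2 \partial_t U(s_n,\tilde\mu_n\cdot))$ converges in $\HH$ with $\tilde\mu_n := \mu_n/\lambda_n$, proving the compactness property.

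The main technical obstacle is the long-time perturbation step at a time $s_n$ that can approach the endpoints $T^\pm(U)$ of the maximal interval: this requires the nonlinear Strichartz norm of $\vec U$ on a neighborhood of $s_n$ to be finite, which is ensured by the local well-posedness theory on any compact subinterval of $(T^-(U),T^+(U))$. A secondary subtlety is when $T^\pm(U)$ is infinite; the argument still goes through because the interval of existence $[(t_- - t_n)/\lambda_n, (1-t_n)/\lambda_n)$ of $\vec a_n$ exhausts $\mathbb R$ as $n \to \infty$ (thanks to $\lambda_n = o(1-t_n)$), so after subsequence extraction one can arrange $\lambda_n s_n \to 0$ and apply the same recipe.
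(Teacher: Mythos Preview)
The paper does not give its own proof here; it simply invokes \cite[Lemma 8.5]{DKM1}. Your sketch follows that reference and is correct in outline, but two technical points deserve correction.

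First, the assertion $\|v_n\|_{L^\infty_{s,r}}\to 0$ is neither what Lemma~\ref{v loc decay} gives (it only controls $r\,v(t,r)$ for $r\le 1-t$) nor what the perturbation lemma actually consumes. What you need, and what does hold, is $\|v_n\|_{S(I)}\to 0$ on every compact $I$: by scaling invariance of $S$ one has $\|v_n\|_{S(I)}=\|v\|_{S(t_n+\lambda_n I)}$, and the time window $t_n+\lambda_n I$ collapses to $\{1\}$ where $\vec v$ is regular with locally finite Strichartz norm. The error $3a_n^2 v_n+3a_n v_n^2$ is then small in the dual Strichartz norm by H\"older, after a standard bootstrap to control $\|a_n\|_{S(I)}$.

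Second, in the compactness transfer you cannot use the same running index $n$ for the fixed defining sequence $\{t_n\}$ and for the arbitrary test sequence $\{s_m\}\subset I_{\max}(\vec U)$. The correct argument is diagonal: for each $m$ choose $n=n(m)$ large enough that (i) the perturbation estimate on a compact interval containing $s_m$ gives $\|\vec a_{n(m)}(s_m)-\vec U(s_m)\|_{\HH}\le 1/m$, and (ii) $\tau_m:=t_{n(m)}+\lambda_{n(m)}s_m\in[t_-,1)$. Then apply Step~3 to the sequence $\{\tau_m\}$ and read off the scales $\tilde\mu_m=\mu_m/\lambda_{n(m)}$. Your remark about ``subsequence extraction'' when $T^\pm(\vec U)=\pm\infty$ does not fix this, since $\{t_n\}$ is fixed once $\vec U$ is defined; the diagonal choice of $n(m)$ is what makes both requirements compatible. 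With these adjustments your argument coincides with that of \cite{DKM1}.
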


This Claim and Theorem~\ref{compactness} show that $\vec U = (\pm W,0)$ up to scaling. As this true for any sequence $\{ t_n \}$, a diagonal argument gives that
\begin{gather} \label{eq:a_pm_conv}
d(\vec a(t), \q O^+ \cup \q O^-) \to 0 \quad \text{as } t \to +\infty,
\end{gather}
where $d$ is the $\HH$-distance to a set and
\[ \q O^\pm = \left\{ \left( \pm \frac{1}{\lambda} W \left( \frac{\cdot}{\lambda} \right), 0 \right) \middle| \ \lambda >0 \right\}. \]
Observe that
\[ d_0: = d(\q O^+, \q O^-) >0. \]
Indeed, 
\begin{align*}
d(\q O^+, \q O^-) & = \inf_{\lambda_1>0, \lambda_2>0} \left\|  \frac{1}{\lambda_1^2} \nabla W \left( \frac{\cdot}{\lambda_1} \right) + \frac{1}{\lambda_2^2} \nabla W \left( \frac{\cdot}{\lambda_2} \right) \right\|_{L^2} \\
& = \inf_{\lambda >0} \tilde d(\lambda), \quad \text{where} \quad \tilde d(\lambda) := \left\|  \frac{1}{\lambda^2} \nabla W \left( \frac{\cdot}{\lambda} \right) + \nabla W \right\|_{L^2}.
\end{align*}
Now $\tilde d(\lambda) \to 2 \| \nabla W \|_{L^2}$ as $\lambda \to 0$ or as $\lambda \to +\infty$, hence its minimum either greater or equal to $2 \| \nabla W \|_{L^2}$ and we are done; or attained at some $\lambda_0 >0$, and as $\nabla W \ne 0$, $d(\lambda_0) >0$.
Now define the sets of time
\[ \q U_\pm = \{ t \ge 0 \mid  d( \vec {a}(t), \q O^\pm) < d_0/2) \} . \]
By definition of $d_0$, $\q U_+$ and $\q U_-$ are disjoint. We also just proved that for some $T_0$ large, $[T_0, +\infty ) \subset \q U_+ \cup \q U_-$, and by continuity of $t \mapsto \vec a(t)$, both $\q U_+$ and $\q U_-$ are open.

Now recall that $\bar t_n \in \q U_+$ and $\bar t_n \to +\infty$. Therefore, $\q U_+ \cap [T_0, +\infty)$ is not empty, and by connectedness, $[T_0, +\infty) \subset \q U_+$. In view of \eqref{eq:a_pm_conv}, we infer that there exists a function  $\lambda(t) >0$ such that
\[ \left( \lambda(t) a \left( t, \lambda(t) \cdot \right), \lambda^2(t) \partial_t a \left( t, \lambda(t) \cdot \right) \right) \to (W,0) \quad \text{in } \HH \text{ as } t \to +\infty. \]
As $d_0 >0$, we see that the assumptions of Lemma~\ref{lem:cont_act} are fulfilled (with $G = ((0,+\infty), \times)$ acting on $\HH$ by $\lambda .(v_0,v_1) = (\lambda v_0(\lambda \cdot), \lambda^2 v_1 (\lambda \cdot))$ ), so that $\lambda$ can be chosen continuous. This concludes the proof of Theorem~\ref{bu 2W}.

\begin{rem}
Note that in proving the last step (proving that the sign of $(\pm  W,0)$ does not depend on the sequence $\{ t_n \}$),  the use of  Lemma~\ref{lem:cont_act} could be avoided by introducing the explicit scaling parameter 
\EQ{\label{cont scale}
\la(t):=  \left\{ \mu >0 \ \middle| \  \int_{r \le \mu} a_r^2(t, r) + a_t^2(t, r) \, r^3 \, dr \ge \int_{ r\le 1} W_r^2(r) \, r^3 \, dr\right\},
} 
and a continuity argument as in \cite[pages~590-591, Step~3]{DKM1}. But we present it in this way as Lemma~\ref{lem:cont_act} may be useful in other settings.
\end{rem}

\section{Global type-II solutions below $2\|\na W\|_{L^2}$}\label{2W global}

This section is devoted to proving Theorem~\ref{global 2W}. We assume that $\vec u(t)$ does not scatter in forward in time, so that our goal is to prove the second case of the dichotomy, namely relaxation to $W$. As in the statement of Theorem~\ref{global 2W}, we assume that there exists an $A>0$ so that 
\EQ{ \label{2W bound1}
\limsup_{t \to \infty}  \|  u(t) \|_{ \dot{H}^1(0\le r \le t-A)}^2 < 2 \|\na W\|_{L^2}^2.
}
Recall that we have already obtained a convergence for at least one sequence of times: more precisely, there exists a sequence of times $(\bar t_n)$ with $\bar t_n \to +\infty$, an integer $\bar J$,  scales $(\lambda_{1,n})_n,  \dots, (\lambda_{n ,\bar J})_n$ where
\[ 0 \ll \lambda_{1,n} \ll \cdots \ll \lambda_{ \bar J, n} \ll \bar t_n, \]
and $\bar J$ signs $\io_1, \dots \io_{\bar J} \in \{ -1, +1 \}$, such that
\begin{gather} \label{eq:conv_tn_bar}
\vec u(\bar t_n) = \sum_{j=1}^{\bar J} \left( \frac{\io_j}{\lambda_{j,n}} W \left( \frac{x}{\lambda_{j,n}} \right) ,0 \right) + \vec v_{\mr L}(\bar t_n) + o_{\HH}(1) \quad \text{as} \quad n \to +\infty.
\end{gather}
We again divide the proof of Theorem~\ref{global 2W} into several steps.

\subsection{Step 1: Preliminaries on profiles}

Denote by $\vec v(t)$ the nonlinear profile associated to $\vec v_{\mr L}$ at $+\infty$, that is, $\vec v$ is the unique solution to \eqref{u eq} such that
\[ \| \vec v(t) - \vec v_{\mr L}(t) \|_{\HH} \to 0 \quad \text{as} \quad t \to +\infty. \]
Again, we let
\begin{gather}\label{def:a}
\vec a(t) = \vec u(t) - \vec v_{\mr L}(t).
\end{gather}
We proved in the previous section that for all $\lambda >0$
\begin{gather*} \label{eq:a_out_cusp}
\int_{\lambda t}^{+\infty} \left( |\nabla_{t,x} a(t,x)|^2 + \frac{|a(t,x)|^2}{|x|^2} \right) dx \to  0 \quad \text{as} \quad t \to +\infty.
\end{gather*}
Due to the bound \eqref{2W bound1} and recalling the first statement of Claim \ref{cl:phi_psi}, (and making $T$ larger if necessary) we have
\begin{gather} \label{eq:a_bd_2W}
\forall t \ge T, \quad \| \nabla_x a(t) \|_{L^2}^2 \le 2 \| \nabla W \|_{L^2}^2 - \delta/2.
\end{gather}
The convergence \eqref{eq:conv_tn_bar} becomes
\begin{gather} \label{eq:a_tn_bar_decomp}
\vec a(\bar t_n) =  \sum_{j=1}^{\bar J} \left( \frac{\io_j}{\lambda_{j,n}} W \left( \frac{x}{\lambda_{j,n}} \right) ,0 \right) + o_{\HH}(1) \quad \text{as} \quad n \to +\infty.
\end{gather}
By orthogonality arguments, \eqref{eq:a_bd_2W}  implies $\bar J \le 1$.

First recall that $E(\vec a(t))$ has a limit as $t \to +\infty$.

\begin{claim} \label{cl:energy_a}
As $t \to +\infty$, $E(\vec a(t))$ converges to
\[ E(\vec u) - E(\vec v) = E(\vec u) - \frac{1}{2} \| \vec v_{\mr L}(0) \|_{\HH}^2 =  \bar J E(W). \]
\end{claim}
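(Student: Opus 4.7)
\textbf{Proof plan for Claim~\ref{cl:energy_a}.} The plan is to establish a Pythagorean expansion $E(\vec u) = E(\vec a(t)) + E(\vec v_{\mr L}(t)) + o(1)$ as $t \to +\infty$, to evaluate $E(\vec v)$ explicitly, and then to compute $\lim_n E(\vec a(\bar t_n))$ using the decomposition~\eqref{eq:a_tn_bar_decomp}.

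First I would show that $\|v_{\mr L}(t)\|_{L^4} \to 0$ as $t \to +\infty$. Using the radial pointwise bound~\eqref{bound:infty_loc_infty} and the Hardy inequality~\eqref{bound:hardy_loc_infty},
\[ \|v_{\mr L}(t)\|_{L^4}^4 \lesssim \|r \, v_{\mr L}(t,r)\|_{L^\infty_r}^2 \int_0^\infty v_{\mr L}^2(t,r)\, r\, dr \lesssim \|r \, v_{\mr L}(t,r)\|_{L^\infty_r}^2 \, \|\vec v_{\mr L}(0)\|_\HH^2, \]
and the vanishing of $\|r v_{\mr L}(t,r)\|_{L^\infty_r}$ follows, as in the proof of Corollary~\ref{u-vL}, via density in $\HH$ combined with the dispersive estimate~\eqref{disp est} and the pointwise bound~\eqref{bound:infty_loc_infty}. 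Since $\vec v(t) \to \vec v_{\mr L}(t)$ in $\HH$ (and hence in $L^4$ by Sobolev) and $E(\vec v(t)) \equiv E(\vec v)$ is conserved, passing to the limit yields
\[ E(\vec v) = \lim_{t \to +\infty} E(\vec v_{\mr L}(t)) = \tfrac12 \|\vec v_{\mr L}(0)\|_\HH^2, \]
which in particular gives $E(\vec v_{\mr L}(t)) \to E(\vec v)$.

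Next, writing $u = a + v_{\mr L}$, the quartic cross terms $\int a^k v_{\mr L}^{4-k} r^3 \, dr$ for $1 \le k \le 3$ are controlled by $\|a(t)\|_{L^4}^k \|v_{\mr L}(t)\|_{L^4}^{4-k}$; they vanish since $\|a(t)\|_{L^4}$ is bounded (Sobolev plus the type-II hypothesis~\eqref{typeII} and the conservation of linear energy) and $\|v_{\mr L}(t)\|_{L^4} \to 0$ from Step~1. For the quadratic cross term, given $\epsilon > 0$, Proposition~\ref{prop:6} provides $R$ such that $\|\vec v_{\mr L}(t)\|_{\HH(|x| \le t-R)} < \epsilon$ for $t$ large, while Proposition~\ref{th:u-v_L} gives $\|\vec a(t)\|_{\HH(|x| \ge t-R)} \to 0$; splitting the inner product at $|x| = t-R$ and applying Cauchy--Schwarz shows $\langle \vec a(t), \vec v_{\mr L}(t) \rangle_\HH \to 0$. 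Combining these yields the Pythagorean expansion, and hence $E(\vec a(t)) \to E(\vec u) - E(\vec v)$, which establishes the first two equalities of the claim.

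For the final equality, apply~\eqref{eq:a_tn_bar_decomp}: since the profiles $(\io_j \lambda_{j,n}^{-1} W(\cdot/\lambda_{j,n}), 0)$ are pairwise scale-orthogonal (by the condition $\lambda_{1,n} \ll \cdots \ll \lambda_{\bar J, n}$) and the remainder tends to $0$ in $\HH$, the orthogonality of the nonlinear energy~\eqref{nonlin en dec} together with the scale invariance $E(\lambda^{-1}W(\cdot/\lambda), 0) = E(W, 0)$ yield $E(\vec a(\bar t_n)) = \bar J \, E(W, 0) + o(1)$. Comparing with the limit already established gives $E(\vec u) - E(\vec v) = \bar J \, E(W)$. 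The main technical obstacle is isolated in the region-splitting argument for the quadratic cross term, but both required smallness estimates follow directly from Propositions~\ref{prop:6} and~\ref{th:u-v_L} of the previous sections.
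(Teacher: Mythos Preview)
Your proof is correct and uses the same ingredients as the paper's argument: the vanishing $\|v_{\mr L}(t)\|_{L^4}\to 0$, the smallness of $\vec v_{\mr L}$ inside the cone (Proposition~\ref{prop:6}/Claim~\ref{cl:phi_psi}), and the smallness of $\vec a$ outside (Proposition~\ref{th:u-v_L}/Corollary~\ref{u-vL}). The only difference is organizational: the paper splits the energy integral spatially at $|x|=t/2$ and replaces $u-v_{\mr L}$ by $u$ on the inner region and by $0$ on the outer one, whereas you expand $E(\vec a+\vec v_{\mr L})$ algebraically and kill the cross terms directly. Your cross-term argument is arguably a bit cleaner, since it avoids juggling localized energy densities; the paper's spatial-splitting version makes the geometric picture (decoupling of $\vec a$ and $\vec v_{\mr L}$ across the light cone) more explicit. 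Either way the substance is the same.
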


\begin{proof}
Observe that $\| \nabla_{t,x} v_{\mr L}(t) \|_{L^2}$ is constant because $\vec v_{\mr L}$ is a linear solution. Also, $\| v_{\mr L}(t) \|_{L^4} \to 0$. Therefore, $\ds E(\vec v_{\mr L}(t)) \to \frac{1}{2} \| \nabla_{t,x} v_{\mr L}(0) \|_{L^2}^2$. Hence $\ds E(\vec v) = \frac{1}{2} \| \nabla_{t,x} v_{\mr L}(0) \|_{L^2}^2$ and (recalling Claim~\ref{cl:phi_psi}) 
\begin{align*} 
& E(\vec a(t)) = E(\vec u(t) - \vec v_{\mr L}(t)) = \int_{|x| \le t/2} \left( \frac{1}{2} |\nabla_{t,x} u(t,x) |^2 - \frac{1}{4} |u(t,x)|^4 \right) dx + o(1) \\
& \qquad + \int_{x \ge t/2}  \left( \frac{1}{2} |\nabla_{t,x} u(t,x) - \nabla_{t,x} \vec v_{\mr L} (t,x) |^2 - \frac{1}{4} |u(t,x) - v_{\mr L}(t,x)|^4 \right) dx \\
& = \int_{|x| \le t/2} \left( \frac{1}{2} |\nabla_{t,x} u(t,x) |^2 - \frac{1}{4} |u(t,x)|^4 \right) dx + o(1) \\
& = \int \left( \frac{1}{2} |\nabla_{t,x} u(t,x) |^2 - \frac{1}{4} |u(t,x)|^4 \right) dx \\
& \qquad - \int_{|x| \ge t/2} \left( \frac{1}{2} |\nabla_{t,x} v_{\mr L} (t,x) |^2 - \frac{1}{4} |v_{\mr L}(t,x)|^4 \right) dx + o(1) \\
& = E(\vec u) - \frac{1}{2} \| \nabla_{t,x} v_{\mr L}(0) \|_{L^2}^2 + o(1).
\end{align*}
Hence $E(\vec a(t)) \to E(\vec u) - \frac{1}{2} \| \nabla_{t,x} v_{\mr L}(0) \|_{L^2}^2$.

Now consider the sequence $E(\vec a(\bar t_n))$: in view of the decomposition \eqref{eq:a_tn_bar_decomp}, and orhogonality, $E(\vec u(\bar t_n)) = \bar J E(W) + E(\vec v_{\mr L}(\bar t_n)) + o(1)$ as $n \to +\infty$. Taking the limit,  
there holds $E(\vec a(\bar t_n)) \to \bar J E(W)$. As we have seen that $E(\vec a(t))$ has a limit, it is $\bar J E(W)$.
\end{proof}

\begin{claim} \label{cl:J=1}
$\bar J=1$ and up to considering $-u$ instead of $u$, we may also assume $\io_1 = +1$.
\end{claim}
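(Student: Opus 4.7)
The plan is to use the assumption that $\vec u$ does not scatter forward in time in order to forbid $\bar J = 0$. The upper bound $\bar J \le 1$ has already been obtained from orthogonality of profiles combined with the strict bound \eqref{eq:a_bd_2W}, so it remains only to exclude $\bar J = 0$ and then adjust the sign.

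First I would suppose for contradiction that $\bar J = 0$. Then \eqref{eq:a_tn_bar_decomp} reduces to $\|\vec a(\bar t_n)\|_{\HH} \to 0$. Since by definition of the nonlinear profile $\|\vec v(t) - \vec v_L(t)\|_{\HH} \to 0$ as $t \to +\infty$, evaluating at $t = \bar t_n$ and combining with \eqref{def:a} gives
\[ \|\vec u(\bar t_n) - \vec v(\bar t_n)\|_{\HH} \le \|\vec a(\bar t_n)\|_{\HH} + \|\vec v(\bar t_n) - \vec v_L(\bar t_n)\|_{\HH} \to 0. \]
Now $\vec v$ scatters to $\vec v_L$ at $+\infty$, so $\|v\|_{S([T_1,\infty))}$ is finite and can be made arbitrarily small by taking $T_1$ large. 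For $n$ sufficiently large, $\bar t_n > T_1$ and $\vec u(\bar t_n)$ lies within the perturbation threshold of $\vec v(\bar t_n)$, so the long-time perturbation lemma from the Kenig--Merle framework~\cite{KM08} applies with reference solution $\vec v$ on $[\bar t_n, \infty)$, forcing $\|u\|_{S([\bar t_n, \infty))} < \infty$. By the scattering criterion recalled below \eqref{SI}, this means $\vec u$ scatters forward in time, contradicting the standing hypothesis. Hence $\bar J \ge 1$, and together with $\bar J \le 1$ we conclude $\bar J = 1$.

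For the sign, if $\io_1 = -1$ I would replace $u$ by $-u$. Since the nonlinearity in \eqref{u eq} is odd, $-u$ is again a radial, type-II, non-scattering solution satisfying \eqref{2W bound1}. Its radiation term is $-\vec v_L$, and $-\vec a(\bar t_n)$ provides the decomposition \eqref{eq:a_tn_bar_decomp} with $\io_1$ replaced by $+1$, as desired.

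The only technical ingredient in the argument above is the long-time perturbation lemma together with the smallness of the Strichartz tail norm of a scattering solution; both are standard for the energy-critical NLW in four dimensions, so no substantial new obstacle should appear once the profile-decomposition machinery recalled in Section~\ref{prelim} and the radiation extraction of Proposition~\ref{th:u-v_L} are in place.
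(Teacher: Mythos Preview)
Your argument is correct, but it takes a different route from the paper's. The paper does not argue along the single sequence $\{\bar t_n\}$; instead it invokes Claim~\ref{cl:energy_a}, which gives $E(\vec a(t))\to \bar J\,E(W,0)$ for \emph{all} $t\to\infty$. Assuming $\bar J=0$ one then has $E(\vec a(t))\to 0$, and the variational inequality of Lemma~\ref{E<2W} together with the uniform bound \eqref{eq:a_bd_2W} forces $\|\nabla_{t,x}a(t)\|_{L^2}\to 0$ for all $t$, which is exactly the statement that $\vec u$ scatters to $\vec v_L$. Your approach bypasses both Claim~\ref{cl:energy_a} and Lemma~\ref{E<2W}: from $\bar J=0$ you only use $\|\vec a(\bar t_n)\|_{\HH}\to 0$ along the sequence, match $\vec u(\bar t_n)$ with the scattering nonlinear solution $\vec v(\bar t_n)$, and close with the long-time perturbation lemma. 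The trade-off is that the paper's argument is self-contained within the variational/energy framework already set up (and incidentally yields convergence of $\vec a(t)$ for all times), while yours is more elementary but imports the perturbation theorem from~\cite{KM08}. Either way the sign adjustment is identical.
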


\begin{proof}
Claim~\ref{cl:energy_a} and the condition \eqref{eq:a_bd_2W} show that $\bar J$ is $0$ or $1$. Assume $\bar J=0$. In this case, $E(\vec a(t)) \to 0$. Now the second part of Lemma~\ref{E<2W} together with  \eqref{eq:a_bd_2W} implies that $\| \nabla_{t,x} a(t) \|_{L^2} \to 0$.
Therefore, $\| \nabla_{t,x} u(t) - \nabla_{t,x} v_{\mr L}(t) \|_{L^2} \to 0$ and $\vec u$ scatters forward in time. But this contradicts our initial assumption and hence $\bar J=1$.
\end{proof}

We now point out some properties of the profile decomposition for any sequence $\vec a(t_n)$ for large times.

Let $\{t_n\}_n$ be any sequence such that $t_n \to +\infty$. Up to extraction, the sequence $\vec a(t_n)$ admits a profile decomposition $\{\vec U_{\mr L}^j, \lambda_{j,n}, t_{j,n}\}_{j \ge 1}$ ordered for $\preccurlyeq$ (recall Lemma \ref{lem:3.7}). Let us denote by $\vec U^j$ the associated nonlinear profiles.

Using \cite[p. 154-155]{BG}, and  \eqref{eq:a_out_cusp} there exists $C$ independent of $j$ and $n$ such that
\[ \lambda_{j,n} \le C t_n, \quad |t_{j,n}| \le C t_n. \]

\begin{claim}
Define $\vec U_{\mr L}^0 = \vec v_{\mr L}(0)$, $\lambda_{0,n} = 1$ and $t_{0,n} =t_n$ (with nonlinear profile $U^0(t) = \vec v(t)$). Then $\{\vec U_{\mr L}^j, \lambda_{j,n}, t_{j,n}\}_{j \ge 0}$ is a profile decomposition for $\vec u(t_n)$.
\end{claim}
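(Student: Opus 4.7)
The plan is to verify the four characterizing properties of a profile decomposition (asymptotic expansion, pseudo-orthogonality of the parameters, weak convergence of the rescaled errors, and vanishing Strichartz norm) for the enlarged family $\{\vec U_{\mr L}^j, \lambda_{j,n}, t_{j,n}\}_{j \ge 0}$ applied to the sequence $\vec u(t_n)$. The expansion is the easy point: since we already have
\[ \vec a(t_n) = \sum_{j=1}^k \vec U_{\mr L,n}^j(0) + \vec w_n^k(0) + o_\HH(1), \]
and by the choice of parameters for $j=0$ (with $\lambda_{0,n} = 1$ and the time shift encoding the propagation of $\vec v_{\mr L}$ up to time $t_n$) we have $\vec U_{\mr L,n}^0(0) = \vec v_{\mr L}(t_n)$, adding $\vec v_{\mr L}(t_n)$ to both sides of $\vec a(t_n) = \vec u(t_n) - \vec v_{\mr L}(t_n)$ yields precisely the desired decomposition for $\vec u(t_n)$. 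The Strichartz vanishing of $\vec w_n^k$ as $k \to \infty$ is inherited verbatim from the decomposition of $\vec a(t_n)$.

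Next I would check the pseudo-orthogonality of the parameters. The only new relations to verify are those between $(\lambda_{0,n}, t_{0,n}) = (1, \pm t_n)$ and $(\lambda_{j,n}, t_{j,n})$ for $j \ge 1$. By the bounds $\lambda_{j,n} \le C t_n$ and $|t_{j,n}| \le C t_n$ recalled just before the Claim, the cases $\lambda_{j,n} \to 0$ and $\lambda_{j,n} \to +\infty$ immediately give the required blow-up of the orthogonality functional through the ratios $\lambda_{0,n}/\lambda_{j,n}$ and $\lambda_{j,n}/\lambda_{0,n}$ respectively. In the remaining case, up to extraction $\lambda_{j,n}$ converges to some $\lambda_j^* \in (0,\infty)$; here I would use Proposition~\ref{th:u-v_L}, which gives the localization of $\vec a(t_n)$ strictly inside the light cone in the sense that $\|\vec a(t_n)\|_{\HH(r \ge t_n - R)} \to 0$ for every fixed $R$, together with Lemma~\ref{lem:scaled_lin_disp}, to conclude that $t_{j,n}$ cannot be comparable to $-t_n$ (otherwise the $j$-th profile would be concentrated in the radiation region where $\vec a(t_n)$ vanishes). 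This yields $|t_n \pm t_{j,n}| \to \infty$ and closes the orthogonality check.

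For the remaining weak-convergence property (iii), the case $j \ge 1$ is inherited from the decomposition of $\vec a(t_n)$ because the errors $\vec w_n^k$ are unchanged. For $j = 0$, I need $S(-t_n)\vec w_n^k(0) \rightharpoonup 0$ in $\HH$, which I would obtain from the facts that (a) $\vec w_n^k(0)$ is a bounded sequence constructed from $\vec a(t_n)$ and hence, by Proposition~\ref{th:u-v_L} and Proposition~\ref{loc en dec}, carries no energy on the exterior $\{r \ge t_n - R\}$, and (b) this exterior localization is exactly what forces the backwards linear propagation by time $-t_n$ to vanish weakly, by a standard dispersive/weak-convergence argument as in Lemma~\ref{lem:2.5}. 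Finally, the Pythagorean expansions of the free and nonlinear energies follow from this weak convergence and standard orthogonality arguments.

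The main obstacle will be the intermediate regime in (ii) where $\lambda_{j,n}$ has a finite positive limit, since then the whole orthogonality must be extracted from the time parameters alone; this is precisely where the exterior vanishing provided by Proposition~\ref{th:u-v_L} plays its critical role, and where I would spend the most care, likely by a contradiction argument in which a profile with $t_{j,n} \approx -t_n$ would violate the strong exterior convergence $\vec u(t) \to \vec v_{\mr L}(t)$ outside the light cone.
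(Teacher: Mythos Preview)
Your approach is correct in substance but takes a considerably longer route than the paper. The paper's entire argument is one line: since $S(-t_n)\vec u(t_n) \rightharpoonup \vec v_{\mr L}(0)$ weakly in $\HH$, the Bahouri--G\'erard construction (which extracts profiles precisely as such weak limits) allows $\vec v_{\mr L}$ with parameters $(1,t_n)$ to be taken as a profile of $\vec u(t_n)$; the remainder after this extraction is exactly $\vec a(t_n)$, whose profile decomposition we already have, and the pseudo-orthogonality and weak-convergence axioms then come for free from the iterative structure of the construction. Your route instead verifies each axiom directly using the exterior vanishing of $\vec a(t_n)$ (Proposition~\ref{th:u-v_L}) together with the concentration of profiles (Lemma~\ref{lem:scaled_lin_disp}, Proposition~\ref{loc en dec}), which is more laborious but has the merit of not appealing to the internal mechanics (or implicit uniqueness) of the Bahouri--G\'erard extraction. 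Two small corrections to your sketch: for the weak convergence $S(-t_n)\vec w_n^k(0) \rightharpoonup 0$ the relevant dispersive input is Proposition~\ref{prop:6} (free waves concentrate on the light cone $\{|x|\approx t\}$, where $\vec w_n^k(0)$ carries no energy by Proposition~\ref{loc en dec}), not Lemma~\ref{lem:2.5}; and the sign ambiguity $\pm t_n$ you carry reflects a convention issue in the statement---once resolved, the dangerous case in your orthogonality check is indeed $|t_{j,n}| \approx t_n$, and your contradiction argument there via exterior vanishing is the right one.
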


\begin{proof}
The point is to prove the pseudo-orthogonality property: but this is a consequence of the construction of a profile decomposition and $S(-t) u(t) \tendf \vec v_{\mr L}(0)$ weakly in $\HH$.
\end{proof}
Next, observe that since $\vec u$ does not scatter in forward time, by Proposition~\ref{nonlin profile} at least one of the nonlinear profiles $\vec U^j$ does not scatter in forward time and due to the ordering, $\preccurlyeq$, this means that $\vec U^1$ does not scatter in forward time. 
Also, as $\vec U^0 =\vec v$ scatters as $t \to \infty$ and $\vec U^1$ does not, we can conclude that $0 \not\preccurlyeq 1$.

Fix $J \in \N$. Due to the Pythagorean expansion of the $\HH$ norm \eqref{free en dec} and the bound on $\vec a$ \eqref{eq:a_bd_2W}, we have
\[ \forall j \ge 1, \quad \| \nabla_x U^j(-t_{j,n}/\lambda_{j,n}) \|_{L^2}^2 \le 2 \| \nabla W \|_{L^2}^2 - \delta/2 + o_n(1). \]
and the same for $w^J_n(0)$. In particular, 
it follows from Lemma \ref{E<2W} that
\[ \forall j \ge 1, \exists \, n_0(j), \quad n \ge n_0(j) \Rightarrow  E \left( U^j \left( - \frac{t_{j,n}}{\lambda_{j,n}} \right), \frac{1}{\lambda_{j,n}} \partial_t U^j \left(- \frac{t_{j,n}}{\lambda_{j,n}} \right) \right) \ge 0. \]
and similarly,
\[ \forall j \ge 1, \quad n \ge n_0(j) \Rightarrow E( w_{n}^j(0), w_{n}^j(0)) \ge 0. \]
As 
\[ E(U^j) = \lim_{n \to +\infty} E \left( U^j_{\mr L} \left( - \frac{t_{j,n}}{\lambda_{j,n}} \right), \frac{1}{\lambda_{j,n}} \partial_t U^j_{\mr L} \left(- \frac{t_{j,n}}{\lambda_{j,n}} \right) \right), \]
using again Lemma \ref{E<2W}, one can prove: 

\begin{claim} \label{cl:En_pos}
For all $j \ge 1$:
\begin{enumerate}
\item Either $E(\vec U^j) >0$, or $\vec U^j = \vec U^j_{\mr L} =0$.
\item If $E( \vec w_{n}^j(0)) \to 0$ as $n \to +\infty$, then $\| \nabla_{t,x} w_{n}^J(0) \|_{L^2} \to 0$.
\end{enumerate}
\end{claim}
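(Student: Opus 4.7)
The plan is to upgrade the non-negativity $E(\vec U^j) \ge 0$ and $E(\vec w_n^J(0)) \ge 0$, already extracted above by applying Lemma~\ref{E<2W}, to a quantitative lower bound that will immediately yield both assertions. Concretely, for each $j \ge 1$ the Pythagorean expansion \eqref{free en dec} combined with the a priori bound \eqref{eq:a_bd_2W} gives, for $n$ large, the strict bound $\|\nabla_x U^j(\tau_{j,n})\|_{L^2}^2 \le 2\|\nabla W\|_{L^2}^2 - \delta/4$, where $\tau_{j,n} := -t_{j,n}/\lambda_{j,n}$. Applying the second part of Lemma~\ref{E<2W} to $U^j(\tau_{j,n})$ and then adding the kinetic contribution yields
\begin{equation*}
E(\vec U^j) \;\ge\; \tfrac12 \|\partial_t U^j(\tau_{j,n})\|_{L^2}^2 \;+\; c \min\!\left\{\|\nabla_x U^j(\tau_{j,n})\|_{L^2}^2,\; \delta/4\right\}.
\end{equation*}
An entirely analogous inequality holds for $\vec w_n^J(0)$, with $\tau_{j,n}$ replaced by $0$.

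For assertion (1), I would assume $E(\vec U^j) = 0$. Since each summand in the displayed inequality is non-negative, both must vanish for every $n$ large. The kinetic term gives $\|\partial_t U^j(\tau_{j,n})\|_{L^2} = 0$, and the minimum can vanish only through its first argument, because its second is a fixed positive constant; hence $\|\nabla_x U^j(\tau_{j,n})\|_{L^2} = 0$. Thus $\vec U^j(\tau_{j,n}) = (0,0)$ in $\HH$ for some large $n$, and uniqueness of the Cauchy problem for \eqref{u eq} forces $\vec U^j \equiv 0$ on $I_{\max}(\vec U^j)$. By Definition~\ref{nonlin prof def}, the associated linear profile $\vec U^j_L$ must then satisfy $\|\vec U^j_L(t)\|_\HH \to 0$ as $t \to \ell_j := \lim_n \tau_{j,n}$, and since the free linear flow preserves the $\HH$-norm this forces $\vec U^j_L \equiv 0$.

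For assertion (2), the analogous inequality for $\vec w_n^J(0)$ reads
\begin{equation*}
E(\vec w_n^J(0)) \;\ge\; \tfrac12\|\partial_t w_n^J(0)\|_{L^2}^2 \;+\; c\min\!\left\{\|\nabla_x w_n^J(0)\|_{L^2}^2,\; \delta/4\right\}.
\end{equation*}
If $E(\vec w_n^J(0)) \to 0$ then both summands tend to $0$; the first gives $\|\partial_t w_n^J(0)\|_{L^2} \to 0$, and the same minimum-argument reasoning as above shows $\|\nabla_x w_n^J(0)\|_{L^2} \to 0$. Together these yield $\|\nabla_{t,x} w_n^J(0)\|_{L^2} \to 0$.

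The only point requiring a little care is the uniform-in-$n$ strict gradient bound $\|\nabla_x U^j(\tau_{j,n})\|_{L^2}^2 \le 2\|\nabla W\|_{L^2}^2 - \delta/4$, which is what allows the minimum in Lemma~\ref{E<2W} to be realized only through its first argument; this follows directly from the orthogonality of the linear $\HH$-norm \eqref{free en dec} applied to the profile decomposition of $\vec a(t_n)$, together with \eqref{eq:a_bd_2W}, so no genuine obstacle arises.
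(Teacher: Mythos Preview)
Your proof is correct and follows essentially the same route as the paper, which is very terse at this point (it merely writes ``using again Lemma~\ref{E<2W}, one can prove'' after displaying $E(\vec U^j)=\lim_n E(\vec U^j_L(-t_{j,n}/\lambda_{j,n}))$); your argument spells out exactly the intended application of the second part of Lemma~\ref{E<2W} together with the strict gap $2\|\nabla W\|_{L^2}^2 - \|\nabla_x U^j(\tau_{j,n})\|_{L^2}^2 \ge \delta/4$ coming from \eqref{eq:a_bd_2W} and \eqref{free en dec}. The only cosmetic point is that the Pythagorean expansion \eqref{free en dec} literally bounds the linear profile $\vec U^j_L(\tau_{j,n})$, and the transfer to the nonlinear $\vec U^j(\tau_{j,n})$ uses $\|\vec U^j(\tau_{j,n})-\vec U^j_L(\tau_{j,n})\|_{\HH}\to 0$; this is implicit in both your write-up and the paper's.
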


This situation is the general one, more precisely, as $S(-t_n)\vec u(t_n) \tendf \vec v_{\mr L}(0)$ as $n \to +\infty$, and from the construction of profile decomposition (see \cite{BG}), we have

\begin{claim} \label{cl:v_in_prof}
Let $\{t_n\}_n$ be any sequence tending to $+\infty$.  The sequence $\vec u(t_n)$ admits a profile decomposition $\{\vec U_{\mr L}^j, \lambda_n^j, t_n^j\}_{j \ge 1}$ ordered for $\preccurlyeq$. Then $\ds \vec v_{\mr L}$ appears in the decomposition: i.e., for some $J_L \ge 2$, 
\[ \vec U_{\mr L}^{j_L} = \vec v_{\mr L}, \quad \lambda_n^{J_L} =1, \quad t_n^{J_L} = t_n. \]
Also, $\{\vec U_{\mr L}^j, \lambda_n^j, t_n^j\}_{j \ne J_L}$ is a $\preccurlyeq$-ordered profile decomposition of $\vec a(t_n)$.
\end{claim}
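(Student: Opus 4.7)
The proof hinges on the weak convergence $S(-t_n) \vec u(t_n) \rightharpoonup \vec v_{\mr L}(0)$ in $\HH$; once this is established, the identification of $\vec v_{\mr L}$ as a profile follows directly from the construction (and essential uniqueness modulo equivalent parameters) of Bahouri--G\'erard profile decompositions. Since $\vec v_{\mr L}(t_n) = S(t_n)(v_0,v_1)$ is a linear evolution, $S(-t_n)\vec v_{\mr L}(t_n) = \vec v_{\mr L}(0)$, and it suffices to prove $S(-t_n)\vec a(t_n) \rightharpoonup 0$ in $\HH$, where $\vec a(t) = \vec u(t) - \vec v_{\mr L}(t)$ as in~\eqref{def:a}.

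To prove this weak convergence, use that $S(t)$ is a unitary group on $\HH$ to write, for any test pair $\vec\varphi \in \mathcal{D}(\mathbb R^4) \times \mathcal{D}(\mathbb R^4)$,
\[ \langle S(-t_n)\vec a(t_n), \vec\varphi\rangle_\HH = \langle \vec a(t_n), S(t_n)\vec\varphi\rangle_\HH, \]
and split the right-hand inner product across the regions $\{|x|\le t_n-R\}$ and $\{|x|\ge t_n-R\}$. Cauchy--Schwarz yields
\[ \bigl|\langle \vec a(t_n), S(t_n)\vec\varphi\rangle_\HH\bigr| \le \|\vec a(t_n)\|_\HH \,\|S(t_n)\vec\varphi\|_{\HH(|x|\le t_n-R)} + \|\vec a(t_n)\|_{\HH(|x|\ge t_n-R)} \,\|\vec\varphi\|_\HH. \]
The first term tends to zero upon letting $n\to\infty$ and then $R\to\infty$, thanks to the type-II bound $\|\vec a(t_n)\|_\HH \le 2M$ combined with Proposition~\ref{prop:6} (decay of free-wave energy away from the cone). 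The second term tends to zero for each fixed $R$ by Proposition~\ref{th:u-v_L} (vanishing of $\vec a$ outside the light cone). Density of $\mathcal{D}\times\mathcal{D}$ in $\HH$ and uniform boundedness of $S(-t_n)\vec a(t_n)$ in $\HH$ promote this to full weak convergence.

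With $S(-t_n) \vec u(t_n) \rightharpoonup \vec v_{\mr L}(0)$ in hand, the construction of linear profile decompositions~\cite{BG} produces a profile of $\vec u(t_n)$ corresponding to the parameters $\lambda_n = 1$, $t_n^\star = t_n$ (in the paper's convention, with associated nonlinear profile $\vec v$, which scatters to $\vec v_{\mr L}$ at $+\infty$). Pseudo-orthogonality of parameters~\eqref{scales} ensures that any $\preccurlyeq$-ordered profile decomposition of $\vec u(t_n)$ must contain this profile (up to equivalent parameters) at some index $J_L$. As $\vec v$ scatters forward in time while the nonlinear profile $\vec U^1$ does not, we have $\vec U^1 \ne \vec v_{\mr L}$, so $J_L \ge 2$. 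Subtracting $\vec U_{L,n}^{J_L}(0) = \vec v_{\mr L}(t_n)$ from the profile decomposition of $\vec u(t_n)$ yields
\[ \vec a(t_n) = \sum_{j\ne J_L}\vec U_{L,n}^j(0) + \vec w_n^J, \]
and properties~\eqref{scales}, \eqref{w weak}, and~\eqref{w in strich} restrict to the sub-collection $\{j\ne J_L\}$, while the $\preccurlyeq$-ordering is inherited.

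The principal technical point is the weak convergence of $S(-t_n)\vec a(t_n)$ to zero, which ties together the interior decay of free waves (Proposition~\ref{prop:6}) with the exterior localization of the singular part (Proposition~\ref{th:u-v_L}); the remaining identification of $\vec v_{\mr L}$ as a profile is then a routine consequence of the Bahouri--G\'erard framework.
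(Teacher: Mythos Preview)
Your argument is correct and follows the same route as the paper: the paper merely asserts the key fact $S(-t_n)\vec u(t_n)\rightharpoonup \vec v_{\mr L}(0)$ and invokes the Bahouri--G\'erard construction, whereas you supply the details of that weak convergence via the interior/exterior splitting using Proposition~\ref{prop:6} and Proposition~\ref{th:u-v_L}. The remaining steps (identification of $\vec v_{\mr L}$ as a profile, $J_L\ge 2$ since $\vec v$ scatters forward while $\vec U^1$ does not, and subtraction to obtain the decomposition of $\vec a(t_n)$) match the paper's reasoning exactly.
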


\subsection{Step 2: Minimization process and consequences}
As in the finite time blow-up case we follow the scheme developed in \cite{DKM6}. We recall that we have assumed that $\vec u$ does not scatter.

We define $\cS_0$ to be set of sequences of times $\{t_n\}_n$ such that $t_n \to +\infty$ and $\vec u(t_n)$ admits a $\preccurlyeq$-ordered profile decomposition $\{\vec U_{\mr L}^j, \lambda_{j,n}, t_{j,n}\}$.



\begin{lem} \label{lem:S_0_1prof}
Let  $\{t_n\}_n \in \cS_0$, with $\preccurlyeq$-ordered profile decomposition $\{\vec U_{\mr L}^j, (\lambda_{j,n}, t_{j,n}\}$ and nonlinear profiles $\vec U^j$. Then $\vec U^1$ does not scatter forward in time and for all $j \ge 2$, $\vec U^j$ does scatter forward in time. Furthermore, $E(\vec U^1) \ge E(W,0)$.
\end{lem}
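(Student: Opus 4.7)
The plan is to establish the lemma in three successive steps: (1) $\vec U^1$ does not scatter forward in time; (2) the energy lower bound $E(\vec U^1) \ge E(W, 0)$; and (3) using energy accounting, all higher profiles scatter forward.

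For step (1), I observe that since $\vec u$ does not scatter forward by standing assumption, Proposition~\ref{nonlin profile} forces at least one nonlinear profile to fail to scatter forward in time. The key point is that the $\preccurlyeq$-ordering of Definition~\ref{re order} places all non-scattering profiles before all forward-scattering ones: if $\vec U^j$ scatters forward while $\vec U^k$ does not, then neither condition (1) nor condition (2) of Definition~\ref{re order} can hold, so $j \not\preccurlyeq k$. Therefore the first non-scattering profile must be $\vec U^1$.

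For step (2), I would start from the Pythagorean decomposition of the $\HH$-norm in Theorem~\ref{BG} applied to $\vec a(t_n)$, which together with~\eqref{eq:a_bd_2W} shows that $\|\nabla U^1(-t_{1,n}/\lambda_{1,n})\|_{L^2}^2 \le 2\|\nabla W\|_{L^2}^2 - \delta/2 + o_n(1)$. Assuming for contradiction that $E(\vec U^1) < E(W, 0)$, the Kenig--Merle dichotomy~\cite{KM08} applies to $\vec U^1$: either $\|\nabla U^1(s)\|_{L^2}^2 < \|\nabla W\|_{L^2}^2$ at some time $s$, in which case $\vec U^1$ scatters in both time directions and contradicts step~(1); or else $\|\nabla U^1(s)\|_{L^2}^2 > \|\nabla W\|_{L^2}^2$ for all $s$, in which case $\vec U^1$ blows up in finite time in both directions, so that $T_+(\vec U^1) < +\infty$. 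In the latter case, I would invoke the nonlinear profile decomposition of Proposition~\ref{nonlin profile} on intervals $[t_n, t_n + s_n]$ with $(s_n + t_{1,n})/\lambda_{1,n}$ approaching $T_+(\vec U^1)$ from below, to deduce that $\|u\|_{S([t_n, t_n+s_n])}$ must diverge as the upper endpoint limits toward $t_n + \lambda_{1,n} T_+(\vec U^1) + t_{1,n}$; this propagates finite-time blow-up of $\vec U^1$ to $\vec u$ itself, contradicting global existence in positive time. This propagation of blow-up from the first profile to the full solution is the main technical obstacle.

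Finally, for step (3), I combine Claim~\ref{cl:energy_a} with $\bar J = 1$ from Claim~\ref{cl:J=1} to get $E(\vec a(t_n)) \to E(W, 0)$, and use Claim~\ref{cl:v_in_prof} to extract the profile decomposition of $\vec a(t_n)$ by removing the profile $\vec U^{J_L} = \vec v_{\mr L}$. The Pythagorean expansion of the nonlinear energy~\eqref{nonlin en dec} applied to $\vec a(t_n)$ then gives, for each fixed $J$,
\[ E(\vec U^1) + \sum_{\substack{2 \le j \le J \\ j \ne J_L}} E(\vec U^j) + \lim_{n \to \infty} E(\vec w_n^J(0)) = E(W, 0). \]
By the non-negativity in Claim~\ref{cl:En_pos} and the lower bound $E(\vec U^1) \ge E(W, 0)$ from step~(2), every term other than $E(\vec U^1)$ must vanish. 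In particular, $E(\vec U^j) = 0$ for every $j \ge 2$ with $j \ne J_L$, and Claim~\ref{cl:En_pos} then forces $\vec U^j \equiv 0$, so such profiles trivially scatter forward; since $\vec U^{J_L} = \vec v$ scatters forward by construction, all profiles with $j \ge 2$ scatter forward, completing the proof.
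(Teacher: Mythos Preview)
Your steps~(1) and~(3) are sound, and step~(3) is essentially the same energy accounting the paper performs. The gap is in step~(2), Case~B. You claim that finite-time blow-up of $\vec U^1$ propagates to finite-time blow-up of $\vec u$ via Proposition~\ref{nonlin profile}, contradicting global existence. But this is backwards: $\vec u$ is global by hypothesis, and Proposition~\ref{nonlin profile} only goes in one direction---it says that if all profiles are controlled on the rescaled interval then $\vec u$ is controlled, not the converse. What the $\preccurlyeq$-ordering actually lets you do is evolve the decomposition forward for each $T<T_+(\vec U^1)$ and, by the Pythagorean expansion at time $t_n+\lambda_{1,n}T+t_{1,n}$ together with the type-II bound on $\vec u$, conclude $\|\vec U^1(T)\|_{\HH}\le M$ for all such $T$. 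So $\vec U^1$ is a \emph{type-II} finite-time blow-up solution with $E(\vec U^1)<E(W,0)$. Ruling this out is not immediate: the Kenig--Merle blow-up result does not assert type-I behavior, and appealing to Theorem~\ref{bu main} for $\vec U^1$ still leaves you needing to control the sign of the energy of its regular part. You would also need to propagate the $\dot H^1$ bound~\eqref{eq:a_bd_2W} to $\vec U^1$ for \emph{all} times in its maximal interval, not just along $-t_{1,n}/\lambda_{1,n}$.

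The paper avoids this entirely by taking a different route: it invokes the minimization process of \cite{DKM6}. One defines $J_{\max}$ and $\mathcal E_{\min}$ as extrema over \emph{all} sequences in $\cS_0$, and uses \cite[Lemma~4.12]{DKM6} (Lemma~\ref{lem412} here) to produce a single special sequence $\mathcal T_0\in\cS_3$ on which the first profile is exactly $(\pm W,0)$. On that sequence, your energy accounting (step~(3)) then gives $J_{\max}=1$ and $\mathcal E_{\min}=E(W,0)$. Since $J_{\max}$ and $\mathcal E_{\min}$ are global quantities, this immediately yields the lemma for an arbitrary sequence: $J_0(\mathcal T)\le J_{\max}=1$ forces only $\vec U^1$ to be non-scattering, and $E(\vec U^1)=\mathcal E(\mathcal T)\ge\mathcal E_{\min}=E(W,0)$. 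The heavy lifting---identifying a sequence where the first profile is a ground state---is packaged inside the cited result from \cite{DKM6}, which itself relies on channel-of-energy arguments.
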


\begin{proof}
We again use many ideas from \cite[Section 4]{DKM6}, adapted to the current situation. For $\q T = \{\tau_n\}_n \in \cS_0$, let $J_0(\q T)$ be the number of nonlinear profiles that do not scatter forward in time. By definition, if $j \le J_0$,  then $\vec U^j$ does not scatter forward in time, and for $j \ge J_0+1$, $U^j$ scatters forward in time.

As $\vec u$ does not scatter in forward time, $J_0(\q T) \ge 1$. On the other hand, recall that due to the small data theory, if a linear solution $\vec U_{\mr L}$ has small $\HH$ norm (say, less that $\| \vec U_{\mr L} \|_{\HH} \le \delta_0$), any nonlinear profile $\vec U^j$ associated to it scatters in both time directions. Due to the Pythagorean expansion \eqref{free en dec}  and the bound \eqref{typeII}, $J_0(\q T) \le CM/\delta_0^2$ is (finite and) uniformly bounded on $\cS_0$.

Similarly, let
\[ J_1(\q T)  = \min \{ j \ge 1 \mid j \prec j+1 \}. \]
By the definition of  $J_0(\q T)$, we have $J_0(\q T) \prec J_0(\q T)+1$, and therefore $J_1(\q T) \le J_0(\q T)$. In particular, $J_1$ is also uniformly bounded.
Then define
\begin{align} \label{def:S1}
&J_{\mr{max}}   = \max \{ J_0(\q T) \mid \q T \in \cS_0 \}, \\
&\cS_1  = \{ \q T \in \cS_0 \mid J_0(\q T)=  J_{\mr{max}} \}.
\end{align}
For $\q T \in \cS_1$, we define the non scattering energy $\q E(\T)$ as the sum of the energies of the nonlinear profiles that do not scatter as $t \to \infty$: more precisely, denoting $U^j$ the nonlinear profiles appearing in the profile decomposition derived from $\q T$, we let
\[ \q E(\q T) := \sum_{j=1}^{J_0(\q T)} E(\vec U^j) = \sum_{j=1}^{J_{\mr{max}}} E(\vec U^j). \]
We now recall the following result from~\cite{DKM6}. 

\begin{claim}[{\cite[Lemma 4.5 and Corollary 4.3]{DKM6}}]
The infimum of $\q E(\T)$ is attained (and hence is a minimum): i.e, there exists $\bar{\q T} \in \cS_1$ such that
\[  \q E(\bar{\q T}) = \inf \{  \q E(\q T) \mid \q T \in \cS_1 \} =: \q E_{\mr{min}}. \]
\end{claim}

With the above claim, we can then define
\begin{align} \label{def:S2}
&\cS_2  = \{ \q T \in \cS_1 \mid \q E(\q T) =  \q E_{\mr{min}} \} \ne  \varnothing, \\
&J_{\mr{min}}  =  \min \{ J_1(\q T) \mid \q T \in \cS_2 \}, \\
&\cS_3  = \{ \q T \in \cS_2 \mid J_1(\q T) = J_{\mr{min}}  \} \ne \varnothing.
\end{align}

We can again use Lemma~\ref{lem412} to conclude that $J_{\min}=1$ and that there exists a sequence $\T_0 \in \cS_3$ so that $\vec U^1 =(\pm W, 0)$. We need to also show that $J_{\max}=1$. 

 Recall that $\vec v_{\mr L}$ must appear in the profile decomposition, at some index $J_L$  with $J_L > J_{\mr{max}}$ because $\vec v_{\mr L}$ has a scattering nonlinear profile.
Now write the Pythagorean expansion of the energy \eqref{nonlin en dec} for $J=J_L$, denoting by $\{\vec U^j\}_j$ the nonlinear profiles associated to the profile decomposition of $\vec u (\tau_n)$:
\[ E(\vec u) = E(W,0) +  \sum_{j=2}^{J_{\mr{max}}} E(\vec U^j) +   \sum_{j=J_{\mr{max}}+1}^{J_L-1} E(\vec U^j) + E(\vec v) + E(\vec w^{J_L}_n(0)) + o_n(1). \]
Recall that $E(\vec u) = E(W,0) + E(\vec v)$. Along with Claim \ref{cl:En_pos} this allows us to  deduce that
\[ \forall j =2, \dots, J_L-1, \quad \vec U^j=0, \quad \text{and} \quad \vec w^{J_L}_n(0)= o_{\HH}(1),  \]
so that in particular $J_{\mr{max}}=1$, and $\q E_{\mr {min}} = \q E(\q T_0) = E(W,0)$. This proves Lemma~\ref{lem:S_0_1prof}. Also notice that we obtained for some $\io \in \{ \pm 1\}$,
\[ \vec u(\tau_n) = \left( \frac{\io}{\lambda_{1,n}} W \left( \frac{\cdot}{\lambda_{1,n}} \right) ,0 \right) + \vec v_{\mr L}(\tau_n) + o_{\HH}(1). \qedhere \]
\end{proof}

\subsection{Step 3: Compactness of the singular part up to scaling}

\begin{lem}
$\vec a(t)$ has the compactness property on $[0,+\infty)$, meaning that there exists a function $\la: [0, \infty) \to [0, \infty)$ so that the set 
\EQ{
K(\vec a, \la) =  \{ (\la(t) a(t, \la(t)  \cdot), \la^2(t) a_t(t, \la(t) \cdot) ) \mid t \in [0, \infty)\}
} 
is pre-compact in $\HH$. 
\end{lem}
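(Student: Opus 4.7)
The plan is to deduce the compactness property directly from Lemma~\ref{lem:S_0_1prof}, mimicking the structure of Step~3 in Section~\ref{2W bu} (compactness of $\vec a$ in the finite-time blow-up case). First, since $t \mapsto \vec a(t) \in \HH$ is continuous and $\vec a(t) = \vec u(t) - \vec v_L(t)$ has no possible issue on bounded time intervals, the set $\{\vec a(t) : 0 \le t \le T_0\}$ is already compact in $\HH$ for each $T_0 > 0$; on this range we simply set $\lambda(t) := 1$. The substantive content is thus the behavior for $t \to \infty$.

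For the large-time regime, Lemma~\ref{lem:S_0_1prof} is exactly what is needed. Given any sequence $t_n \to \infty$, after extraction it belongs to $\cS_0$, and the lemma produces scales $\lambda_{1,n} > 0$ and a sign $\io \in \{\pm 1\}$ such that
\[
\vec a(t_n) \;=\; \Bigl( \tfrac{\io}{\lambda_{1,n}} W\bigl(\cdot/\lambda_{1,n}\bigr),\, 0 \Bigr) + o_{\HH}(1)
\quad \text{as } n \to \infty,
\]
since $\vec U^1 = (\io W, 0)$ is the only profile, all others vanish and the error tends to zero in $\HH$ (recall the Pythagorean argument in the proof of Lemma~\ref{lem:S_0_1prof}). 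Rescaling by $\lambda_{1,n}$ then gives
\[
\bigl(\lambda_{1,n}\, a(t_n,\lambda_{1,n}\cdot),\; \lambda_{1,n}^2\,a_t(t_n,\lambda_{1,n}\cdot)\bigr) \;\longrightarrow\; (\io W, 0) \quad \text{in } \HH.
\]

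To promote this subsequential statement into a single function $\lambda:[0,\infty) \to (0,\infty)$, I would choose an explicit normalization for $t$ larger than some $T_0$, analogous to \eqref{cont scale}, namely
\[
\lambda(t) \;:=\; \inf\Bigl\{\, \mu > 0 \;:\; \int_{r \le \mu} \bigl(a_r^2(t,r) + a_t^2(t,r)\bigr)\, r^3\, dr \ge \tfrac{1}{2}\,\|W\|_{\dot H^1}^2 \,\Bigr\},
\]
so that $\lambda(t) \in (0,\infty)$ as soon as $\vec a(t)$ is close enough to a rescaled $(\pm W,0)$. For any sequence $t_n \to \infty$, the concentration statement above forces $\lambda(t_n)/\lambda_{1,n}$ to remain in a fixed compact interval of $(0,\infty)$: if it escaped to $0$ or $\infty$, the local energy distribution of $W_{\lambda_{1,n}}$ would violate the defining inequality. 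Extracting a subsequence with $\lambda(t_n)/\lambda_{1,n} \to c \in (0,\infty)$, the rescaled iterate converges in $\HH$ to $\bigl(\io c^{-1} W(\cdot/c), 0\bigr)$, establishing pre-compactness of $K(\vec a, \lambda)$.

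The main technical obstacle is the verification that $\lambda(t)$, defined intrinsically in terms of $\vec a(t)$, is actually comparable to the extrinsic scale $\lambda_{1,n}$ coming from the profile decomposition, and in particular that it is neither zero nor infinite in the limit. This is handled by the convergence $\vec a(t_n) \to (\io W_{\lambda_{1,n}}, 0)$ in $\HH$ combined with the explicit local energy profile of $W$: the integral $\int_{r \le \mu \lambda_{1,n}} W_r^2(r/\lambda_{1,n})^2 r^3\, dr / \|W\|_{\dot H^1}^2$ is a fixed strictly increasing function of $\mu$ crossing $1/2$ at a unique $\mu_0 > 0$, which pins down $\lambda(t_n)/\lambda_{1,n} \to \mu_0$. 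Once this is in hand, the proof is complete.
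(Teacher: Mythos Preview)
There is a genuine gap. Lemma~\ref{lem:S_0_1prof} does \emph{not} assert that $\vec U^1 = (\pm W,0)$ for an arbitrary sequence $\{t_n\}\in\cS_0$; its statement only guarantees that $\vec U^1$ fails to scatter forward, that the profiles $\vec U^j$ for $j\ge 2$ do scatter forward, and that $E(\vec U^1)\ge E(W,0)$. The identification $\vec U^1=(\pm W,0)$ appearing at the very end of the proof of that lemma holds only for the particular minimizing sequence $\T_0\in\cS_3$ furnished by Lemma~\ref{lem412}, not for a generic sequence. Hence your displayed equation
\[
\vec a(t_n)=\Bigl(\tfrac{\iota}{\lambda_{1,n}}W(\cdot/\lambda_{1,n}),0\Bigr)+o_{\HH}(1)
\]
is unjustified at this point, and the rest of your argument (the explicit $\lambda(t)$ calibrated to the energy profile of $W$, the comparison $\lambda(t_n)/\lambda_{1,n}\to\mu_0$) collapses with it.

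The paper's proof proceeds without identifying $\vec U^1$. After the Pythagorean energy expansion kills the remaining profiles and forces $\vec w_n^{J_L}\to 0$ in $\HH$ (this part you handle correctly), one still has
\[
\vec a(t_n)=\Bigl(\tfrac{1}{\lambda_{1,n}}U^1_L\bigl(-t_{1,n}/\lambda_{1,n},\cdot/\lambda_{1,n}\bigr),\ \tfrac{1}{\lambda_{1,n}^2}\partial_tU^1_L\bigl(-t_{1,n}/\lambda_{1,n},\cdot/\lambda_{1,n}\bigr)\Bigr)+o_{\HH}(1),
\]
and the essential missing step is to show $t_{1,n}=0$. Forward non-scattering of $\vec U^1$ rules out $-t_{1,n}/\lambda_{1,n}\to+\infty$; the case $-t_{1,n}/\lambda_{1,n}\to-\infty$ (backward scattering) is excluded by evolving the nonlinear profile decomposition backward to a fixed finite time $t_0$ and obtaining a nontrivial profile decomposition for the fixed function $\vec u(t_0)-\vec v(t_0)$, which forces $\lambda_{1,n}\equiv 1$, contradicting $\lambda_{1,n}\le Ct_n^{-1}\cdot t_n\to$ (more precisely, contradicting $-t_{1,n}/\lambda_{1,n}\to-\infty$). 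Once $t_{1,n}=0$, the rescaled $\vec a(t_n)$ converges in $\HH$ to $\vec U^1(0)$ --- an object that may a priori depend on the subsequence --- and this subsequential convergence is already pre-compactness of $K(\vec a,\lambda)$. The identification of the limit as $(\pm W,0)$ is deferred to Step~4 via the rigidity theorem (Theorem~\ref{compactness}).
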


\begin{proof}
$t\mapsto \vec a(t)$ is continuous so one only has to check compactness up to modulation in a neighbourhood of $+\infty$. Let $\{t_n\}_n$ be any sequence tending to $+\infty$. 
After passing to a subsequence, still denoted by $t_n$,  we can ensure  that $t_n \ge 1$ for all $n$ and $\{t_n\}_n \in \cS_0$, i.e. $\vec u(t_n)$ admits a  $\preccurlyeq$-ordered profile decomposition $\{\vec U_{\mr L}, \lambda_{j,n}, t_{j,n} \}$ with nonlinear profiles $\{\vec U^j\}_j$.

By Lemma \ref{lem:S_0_1prof}, we know that $\vec U^1$ does not scatter in forward time, and that $E(\vec U^1) \ge E(W,0)$. Also by Claim \ref{cl:v_in_prof}, $\vec v_{\mr L}$ appears in the profile decomposition, say as profile $\vec U^{J_L}_{\mr L} = \vec v_{\mr L}$ (we recall that its nonlinear profile is $\vec v$).

Let us first prove that all nonlinear profiles other than $\vec U^1$ and $\vec v$ vanish, that is: $\vec U^j=0$ for all $j \ge 2$, $j \ne J_L$. Indeed, write the Pythagorean expansion of the energy \eqref{nonlin en dec} for $J \ge J_L$:
\[ E(\vec u(t_n)) = E(U^1) + \sum_{j=2}^{J_L-1} E(\vec U^j) + E(\vec v) + \sum_{j= J}^{J+1} E(\vec U^j) + E(\vec w^{J_L}_n(0)) + o_n(1). \]
Recall again that all the profiles have positive energy or are 0 according to Claim~\ref{cl:En_pos}; in view of Claim \ref{cl:energy_a}, (and letting $n \to +\infty$) we infer that
\[ \forall j \ge 2, \ j \ne J_L, \quad \vec U^j =0, \quad \text{and} \quad \vec w^{J_L}_n(0) = o_{\HH}(1). \]
From there, we see $E(\vec U^1) = \q E(\{t_n\})$ and that the profile decomposition for $\vec u(t_n)$ can be written as
\begin{align} 
 \vec u(t_n,r) & = \left( \frac{1}{\lambda_{1,n}} U^1_{\mr L} \left( - \frac{t_{1,n}}{\lambda_{1,n}}, \frac{r}{\lambda_{1,n}} \right) , \frac{1}{\lambda_{1,n}^2} \partial_t U^1_{\mr L} \left( - \frac{t_{1,n}}{\lambda_{1,n}}, \frac{r}{\lambda_{1,n}} \right) \right) \nonumber \\
& \qquad + \vec v_{\mr L}(t_n,r) + o_{\HH}(1). \label{eq:u(tn')_prof}
\end{align}
Let us now show that

\begin{claim}
$t_{1,n}=0$ for all $n$. 
\end{claim}

\begin{proof}
In view of Remark~\ref{prof rem}, it suffices to show that $ - t_{1,n}/\lambda_{1,n}$ does not converge to $\pm \infty$. If  $ - t_{1,n}/\lambda_{1,n} \to +\infty$, then $\vec U^1$ would scatter forward by definition of a nonlinear profile: this is not the case. Let us argue by contradiction and assume that $\ds - t_{1,n}/\lambda_{1,n}\to - \infty$. Then again by definition of a nonlinear profile, $\vec U^1$ scatters backwards in time. Now let $t_0 > 1+ T^-(\vec v)$ be large enough (recall $\vec v$ is the nonlinear profile of $(\vec v_{\mr L}, +\infty)$), and  evolve the profile decomposition \eqref{eq:u(tn')_prof} with Proposition~\ref{nonlin profile} backwards in times up to time $\tau_n = t_0-  1 - t_n$ (which is possible by the choice of $t_0$, in view of the lifespans of $\vec U^1$ and $\vec v$). As $t_0 \in (t_n  + \tau_n, t_n] = (t_0-1,t_n]$, we have
\begin{align*} 
 \vec u(t_0,r) & = \left( \frac{1}{\lambda_{1,n}} U^1_{\mr L} \left( \frac{ t_0 - t_n -  t_{1,n}}{\lambda_{1,n}}, \frac{r}{\lambda_{1,n}} \right) , \frac{1}{\lambda_{1,n}^2} \partial_t U^1_{\mr L} \left( \frac{t_0 - t_n - t_{1,n}}{\lambda_{1,n}}, \frac{r}{\lambda_{1,n}} \right) \right)  \\
& \qquad + \vec v(t_0,r) + o_{\dot H^1 \times L^2}(1).
\end{align*}
This is a non trivial profile decomposition for the fixed function $\vec u(t_0) - \vec v(t_0)$, hence the only possibility is $\lambda_{1,n} =1$ and $t_0 - t_n -  t_{1,n}= c_0$ for all $n$. But then $t_{1,n} \to -\infty$ like $-t_n$ and $\ds - \frac{t_{1,n}}{\lambda_{1,n}}\to + \infty$, which is a contradiction. 
\end{proof}

We have obtained that
\begin{gather} \label{eq:u_decomp}
 \vec u(t_n) =  \left( \frac{1}{\lambda_{1,n}} U^1_{\mr L} \left(0, \frac{r}{\lambda_{1,n}} \right) , \frac{1}{\lambda_{1,n}^2} \partial_t U^1_{\mr L} \left( 0, \frac{r}{\lambda_{1,n}} \right) \right) + \vec v_{\mr L}(t_n) + o_{\HH}(1),
 \end{gather}
and so, 
\[ (\lambda_{1,n} a(t_n, \lambda_{1,n} \cdot), \lambda_{1,n}^2 \partial_t a(t_n, \lambda_{1,n} \cdot)) \to \vec U^1(0) \quad \text{as} \quad n \to +\infty. \]
As this is true for a subsequence of all sequences $t_n \to \infty$, we see that there exists a function $t \mapsto \lambda(t)$ such that $K(\vec a,\lambda)$ has compact closure in $ \HH$. 
\end{proof}

\subsection{Step 4: Convergence to $(W,0)$ and conclusion of the proof of Theorem~\ref{global 2W}}

Here the argument is exactly the same as in Step 4 of the proof of Theorem~\ref{bu 2W}, and we only sketch it.

Given any sequence $\{ t_n \}$ tending to $+\infty$, by by Step 3, \cite[Lemma 8.5]{DKM1} and Theorem \ref{compactness}, we have the convergence, up to a subsequence
\[ \left( \lambda(t_n) a \left( t_n, \lambda(t_n) \cdot \right),  \lambda(t_n)^2 \partial_t a \left( t_n, \lambda(t_n) \cdot \right) \right) \to (\pm W,0). \] 
Then a continuity argument shows that the sign does not depend on the sequence $\{ t_n \}$, and so there exists a scaling parameter $\lambda$ \emph{defined for all times}, such that
\[ \left( \lambda(t) a \left( t_n, \lambda(t) \cdot \right),  \lambda(t)^2 \partial_t a \left( t_n, \lambda(t) \cdot \right) \right) \to (W,0) \quad \text{as} \quad t \to +\infty. \]
Finally Lemma \ref{lem:cont_act} shows that $\lambda$ can be chosen continuous.
(As in the blow up case, we could have used the argument in \cite[pages~590-591, Step~3]{DKM1}).
 
This concludes the proof of Theorem~\ref{global 2W}.

\appendix

\section{Continuity of scaling functions}

\begin{lem} \label{lem:cont_act}
Let $(B, \| \cdot \|)$ be a Banach space and $G$ be a group of isometries of $B$ (it is a metric space endowed by the operator norm that we stil denote $\| \cdot \|$: $\| g \| = \sup \{ \| g.v \| \mid \| v \| \le 1\}$). We assume that $G$ is locally path connected. 

Let $v \in \q C([0,+\infty), B)$, and assume that there exists $v_0 \in B$ and a function $g: [0,+\infty) \to G$ such that $g(t). v(t) \to v_0$ in $B$ as $t \to +\infty$. Also assume that $G$ acts properly on $v_0$, in the sense that if $g_n .v_0 \to v_0$ in $B$, then $g_n \to \Id$ in $G$.

Then the action can be chosen to be continuous, i.e there exist $\gamma \in \q C( [0,+\infty), G)$ such that $\gamma(t).v(t) \to v_0$ in $B$ as $t \to +\infty$.
\end{lem}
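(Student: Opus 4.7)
The plan is to discretize time via a suitably chosen sequence $t_n \uparrow \infty$, show that the discrete jumps $h_{n+1}h_n^{-1}$ (where $h_n := g(t_n)$) converge to the identity in $G$ thanks to the proper-action hypothesis, and then interpolate continuously between the $h_n$'s using the local path connectedness of $G$. First, using the continuity of $v$ and the convergence $g(t).v(t) \to v_0$, I would inductively extract $0 < t_1 < t_2 < \cdots$ with $t_n \to \infty$ so that
\[
\|g(t_n).v(t_n) - v_0\| < 1/n \quad \text{and} \quad \sup_{t \in [t_n, t_{n+1}]} \|v(t) - v(t_n)\| < 1/n,
\]
and write $h_n := g(t_n)$.

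Second, using that $h_{n+1}h_n^{-1}$ is an isometry and hence preserves differences, the triangle inequality yields
\begin{align*}
\|h_{n+1}h_n^{-1}.v_0 - v_0\| &\le \|h_{n+1}h_n^{-1}.v_0 - h_{n+1}h_n^{-1}(h_n.v(t_{n+1}))\| + \|h_{n+1}.v(t_{n+1}) - v_0\| \\
&= \|v_0 - h_n.v(t_{n+1})\| + \|h_{n+1}.v(t_{n+1}) - v_0\| \\
&\le \|v_0 - h_n.v(t_n)\| + \|v(t_n) - v(t_{n+1})\| + \tfrac{1}{n+1} \to 0.
\end{align*}
By the proper-action hypothesis, this forces $h_{n+1}h_n^{-1} \to \Id$ in $G$.

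Third, local path connectedness of $G$ at $\Id$ provides, for each $n \ge N_0$ large enough, a continuous path $p_n\colon [0,1] \to G$ with $p_n(0) = \Id$, $p_n(1) = h_{n+1}h_n^{-1}$, and $\sup_s \|p_n(s) - \Id\| \le \e_n$ for some $\e_n \to 0$. I would define
\[
\gamma(t) := p_n\!\left(\frac{t-t_n}{t_{n+1}-t_n}\right) h_n \quad \text{for } t \in [t_n, t_{n+1}], \ n \ge N_0,
\]
and extend by $\gamma \equiv h_{N_0}$ on $[0, t_{N_0}]$. The two definitions agree at every $t_n$, so $\gamma \in \q C([0,\infty),G)$. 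The verification that $\gamma(t).v(t) \to v_0$ then follows from
\begin{align*}
\|\gamma(t).v(t) - v_0\| &\le \|p_n(s) - \Id\|\cdot\|h_n.v(t)\| + \|h_n.v(t) - h_n.v(t_n)\| + \|h_n.v(t_n) - v_0\| \\
&\le \e_n \|v(t)\| + 2/n,
\end{align*}
combined with the uniform bound $\|v(t)\| = \|h_n.v(t)\| \le \|v_0\| + 2/n$ valid for $t \in [t_n,t_{n+1}]$.

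The main subtlety lies in the second step: converting the \emph{a priori} arbitrary, possibly discontinuous selection $g$ into a discrete sequence $\{h_n\}$ whose consecutive ratios tend to $\Id$. The isometry property is essential to absorb $h_{n+1}h_n^{-1}$ into a norm difference, and the proper-action hypothesis is precisely what converts norm-convergence $(h_{n+1}h_n^{-1}).v_0 \to v_0$ into convergence of $h_{n+1}h_n^{-1}$ to $\Id$ in $G$. Once this is in hand, local path connectedness smooths out the discrete data in a routine manner.
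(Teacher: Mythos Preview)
Your approach is exactly the paper's: discretize time, show the jumps $h_{n+1}h_n^{-1}\to\Id$ via the proper-action hypothesis, then interpolate using local path connectedness. Steps~2 and~3 are correct as written.

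The one point that needs care is Step~1. You assert that one can extract $t_n\uparrow\infty$ with $\sup_{[t_n,t_{n+1}]}\|v(t)-v(t_n)\|<1/n$, but this is not automatic from continuity, and with the tolerance tied to the \emph{index} it can fail. Take $B=\R^2$, $G=SO(2)$, $v_0=(1,0)$, $v(t)=(\cos\theta(t),\sin\theta(t))$ and $g(t)$ the rotation by $-\theta(t)$, so that $g(t)v(t)\equiv v_0$ and all hypotheses hold; if $\theta$ is a Weierstrass-type function whose oscillation on every interval of length $\delta$ is $\gtrsim\delta^\alpha$ with $\alpha<1$, then your bound forces $t_{n+1}-t_n\lesssim n^{-1/\alpha}$, which is summable, so every admissible sequence stays bounded. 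The paper sidesteps this by tying the tolerance to \emph{time}: it sets
\[
d(t)=\sup\bigl\{\delta\in[0,1]:\ \|v(\tau)-v(\tau')\|\le 1/t\ \text{for all}\ \tau,\tau'\in[t-\delta,t+\delta]\bigr\},\qquad t_{n+1}=t_n+d(t_n).
\]
Now if $t_n\to t_\infty<\infty$ the tolerance $1/t_n\to 1/t_\infty>0$ stays bounded below, so $d(t_\infty)>0$ by continuity of $v$, and a short monotonicity argument for $d$ forces $t_{n+1}>t_\infty$ for large $n$---a contradiction. Your proof is easily repaired by this device, or equivalently by weakening the $1/n$ in the oscillation bound to some $\delta_n\to 0$ chosen \emph{after} the sequence is built (subdivide each $[k,k+1]$ with tolerance $1/k$ and relabel).
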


Notice that if $G$ is a Lie group, it is automatically locally path connected, and so only the proper action hypothesis is to be checked.

\begin{proof}
If $v_0 =0$, then $\| v(t) \| = \| g(t).v(t) \| \to 0$ and so $\gamma(t) = \Id$ works. Let us assume in the following that $v_0 \ne 0$.

As $G$ acts by isometries, we can assume without loss of generality that for all $t\ge 0$, $\| v(t) \| \ge 1$.
For $t \ge 1$, define an adequate modulus of continuity 
\[ d(t) = \sup \{\delta \in [0,1] \mid \forall \tau, \tau' \in [t-\delta, t+\delta], \ \| v(\tau) - v(\tau') \| \le 1/t \}. \]
Define now by induction the sequence of times $t_0=1$ and $t_{n+1} = t_{n} + d(t_{n})$ for $n \ge 0$. We claim that $t_n \to +\infty$. 

Indeed, if not, $t_n \to t_\infty \in [0,+\infty)$ and $t_n \le t_\infty$ for all $n$. Now observe that if $\tau \in [t-d(t),t]$, then for $\delta = d(t) - t+\tau \ge 0$, $[\tau - \delta, \tau + \delta] \subset [t-d(t), t+d(t)]$ so that $d(\tau) \ge d(t) -t +\tau$. Then for $n$ large enough, $t_n \ge t_\infty - d(\infty)/3$ and $d(t_n) \ge 2d(t_\infty)/3$ and $t_{n+1} \ge t_{\infty} + d(t_\infty)/3 > t_\infty$, a contradiction. Hence $t_n \to +\infty$.

\medskip

Then 
\begin{align*}
& \| g(t_{n+1})g(t_{n})^{-1} . v_0 - v_0 \|  =   \| g(t_{n+1})^{-1} . v_0 - g(t_{n})^{-1} . v_0 \| \\
& \le  \| g(t_{n+1})^{-1}  . v_0 - v(t_{n+1}) \| + \| v(t_{n+1}) - v(t_{n}) \| + \| v(t_{n})  - g(t_{n})^{-1}. v_0 \| \\
&  \le \| g(t_{n+1}) . v(t_{n+1}) - v_0 \|  + d(t_{n}) +  \| g(t_{n}) . v(t_{n}) - v_0 \| \to 0.
\end{align*}
Therefore, by proper action, $g(t_{n+1})g(t_{n})^{-1} \to \Id$ as $n \to +\infty$.

For $m \in \N$, let $\q V_m$ be a path connected open set of $G$ such that $\Id \in \q V_m \subset B_G(\Id, 1/m)$ (such a $\q V_m$ exists because $G$ is path connected). Let 
\[ m(n) = \begin{cases}
\max \{ m \mid g(t_{n+1})g(t_{n})^{-1} \in \q V_m \} & \text{if } g(t_{n+1}) \ne g(t_{n}), \\
n & \text{if } g(t_{n+1}) = g(t_{n}).
\end{cases} \]
This is constructed so that $m(n) \to +\infty$ as $n \to +\infty$.  As $\q V_{m(n)}$ is path connected, there exists a path $\gamma_n$ such that $\gamma_n(0) = \Id$, $\gamma_n(1) = g(t_{n+1})g(t_{n})^{-1}$ and $\gamma_n ([0,1]) \subset \q V_{m(n)}$.
 
Finally define $\gamma: [1,+\infty) \to G$ in the following way: let $t \ge 1$, then there exists a unique $n \in \N$ such that $t \in [t_{n-1}, t_n)$ and we set 
\[ \gamma(t) = \gamma_n \left( \frac{t - t_{n}}{t_{n+1} - t_{n}} \right) g(t_{n}). \]
Observe that $\gamma$ is continuous; for all $n \in \N$, $\gamma(t_n) = g(t_n)$; and for $t \in [t_{n}, t_{n+1})$,
\[  \| \gamma(t)g(t_n)^{-1} - \Id \| = \left\| \gamma_n\left( \frac{t - t_{n}}{t_{n+1} - t_{n}} \right) - \Id \right\| \le \frac{1}{m(n)}. \]
Therefore,
\begin{align*}
& \| \gamma(t) v(t)  - v(0) \|  =  \|v(t) - \gamma(t)^{-1}. v_0 \| \\
& \le \| v(t) - v(t_n) \| + \| v(t_n) - g(t_n)^{-1} . v_0 \| + \| g(t_n)^{-1} . v_0 - \gamma(t)^{-1}. v_0 \| \\
& \le \frac{1}{t_n} + o_n(1) + \| \gamma(t)g(t_n)^{-1} . v_0 - v_0 \| \\
& \le \frac{1}{t_n} + o_n(1) + \frac{1}{m(n)} \| v_0 \|.
\end{align*}
As $t_n \to +\infty$ and $m(n) \to +\infty$ as $n \to +\infty$, this means that $\gamma(t) v(t) \to v_0$ as $t \to +\infty$.
\end{proof}

\bibliographystyle{plain}
\bibliography{researchbib}

 \bigskip

  \centerline{\scshape Rapha\"el C\^{o}te }
\smallskip
{\footnotesize
\begin{center}
CNRS and \'Ecole Polytechnique \\
Centre de Math\'ematiques Laurent Schwartz UMR 7640 \\
Route de Palaiseau, 91128 Palaiseau cedex, France \\
\email{cote@math.polytechnique.fr}
\end{center}
} 

\medskip

\centerline{\scshape Carlos Kenig, Wilhelm Schlag}
\smallskip
{\footnotesize
 \centerline{Department of Mathematics, The University of Chicago}
\centerline{5734 South University Avenue, Chicago, IL 60615, U.S.A.}
\centerline{\email{cek@math.uchicago.edu, schlag@math.uchicago.edu}}
} 

 \medskip

\centerline{\scshape Andrew Lawrie}
\smallskip
{\footnotesize
 \centerline{Department of Mathematics, The University of California, Berkeley}
\centerline{970 Evans Hall \#3840, Berkeley, CA 94720, U.S.A.}
\centerline{\email{ alawrie@math.berkeley.edu}}
} 

\end{document}